\title[Cone and contraction theorem]
{Cone and contraction theorem for projective morphisms 
between complex analytic spaces}
\author{Osamu Fujino}
\date{2023/8/12, version 0.13}
\subjclass[2010]{Primary 14E30; Secondary 32C15}
\keywords{minimal model program, log canonical pairs, 
complex analytic spaces, cone and contraction theorem, basepoint-free 
theorem, extremal rational curves, vanishing theorems, strict 
support condition}
\address{Department of 
Mathematics, Graduate School of Science, 
Kyoto University, Kyoto 606-8502, Japan}
\email{fujino@math.kyoto-u.ac.jp}
\DeclareMathOperator{\Ass}{Ass}
\DeclareMathOperator{\xIm}{Im}
\DeclareMathOperator{\Supp}{Supp}
\DeclareMathOperator{\Nqlc}{Nqlc}
\DeclareMathOperator{\NE}{\overline{NE}}
\DeclareMathOperator{\WDiv}{WDiv}
\DeclareMathOperator{\Exc}{Exc}
\DeclareMathOperator{\Pic}{Pic}
\DeclareMathOperator{\Hom}{Hom}
\DeclareMathOperator{\Bs}{Bs}
\DeclareMathOperator{\Projan}{Projan}
\DeclareMathOperator{\Coker}{Coker}
\DeclareMathOperator{\Nlc}{Nlc}
\DeclareMathOperator{\NLC}{NLC}
\DeclareMathOperator{\Ker}{Ker}
\newtheorem{thm}{Theorem}[section]
\newtheorem{lem}[thm]{Lemma}
\newtheorem{cor}[thm]{Corollary}
\newtheorem{conj}[thm]{Conjecture}
\newtheorem{cla}{Claim}
\newtheorem*{claim}{Claim}
\theoremstyle{definition}
\newtheorem{step}{Step}
\newtheorem{defn}[thm]{Definition}
\newtheorem{rem}[thm]{Remark}
\newtheorem{ex}[thm]{Example}
\newtheorem*{ack}{Acknowledgments}  
\newtheorem{say}[thm]{}
\begin{document}

\maketitle 

\begin{abstract} 
We discuss the cone and contraction theorem 
in a suitable complex analytic setting. 
More precisely, we establish the cone and contraction theorem 
of normal pairs for projective morphisms between 
complex analytic spaces. 
This result is a starting point of the minimal model program 
for complex analytic log canonical pairs. 
In this paper, we are mainly interested in normal pairs 
whose singularities are worse than kawamata log terminal 
singularities. 
\end{abstract}

\tableofcontents

\section{Introduction}
In his epoch-making paper \cite{mori}, Shigefumi Mori 
established the cone theorem for smooth projective varieties 
defined over any algebraically closed field $k$ of arbitrary 
characteristic by his ingenious method of {\em{bend and break}}. 
Then he established the contraction theorem for 
smooth projective threefolds when the characteristic of the 
base field $k$ is zero. 
After that, in characteristic zero, 
the cone and contraction theorem was generalized 
for so-called log-terminal pairs in any dimension by 
using Hironaka's resolution of singularities 
and the Kawamata--Viehweg vanishing theorem. 
For the details, see \cite{kmm}, \cite{kollar-mori} and 
references therein. Now we know that, in characteristic zero,  
the cone and contraction theorem holds 
for more general settings (see \cite{fujino-fundamental}, 
\cite[Chapter 6]{fujino-foundations}, and references therein). 
In this paper, we will discuss the cone 
and contraction theorem of normal pairs 
for projective morphisms between complex 
analytic spaces. 
For kawamata log terminal pairs, it was 
known and has played an important 
role in \cite{nakayama1}, \cite{nakayama2}, and \cite{fujino-minimal}. 
In \cite{fujino-minimal}, we have already discussed 
the minimal model program for kawamata log 
terminal pairs in a complex analytic setting 
(see also \cite{fujino-inversion-pja}). 
Roughly speaking, we showed that \cite{bchm} 
and \cite{hacon-mckernan} 
can work for projective morphisms between complex analytic spaces. 
We note that the Kawamata--Viehweg vanishing theorem 
can be formulated and proved 
for projective morphisms of complex 
analytic spaces and 
is sufficient for the study of kawamata log terminal pairs. 
We also note that $L^2$-methods can work for kawamata 
log terminal pairs. 
For an alternative approach to the minimal 
model program of kawamata log terminal 
pairs for projective morphisms between 
complex analytic spaces, see \cite{dhp}, which 
uses the idea of \cite{cascini-lazic}. 
The reader can find a new approach to the relative minimal 
model program in \cite{lyu-murayama}, 
which can work in larger categories of spaces. 
In \cite{fujino-analytic-vanishing}, 
we established some vanishing theorems and related results 
necessary for the study of complex analytic log canonical pairs and 
quasi-log structures on complex analytic spaces. 
Note that \cite{fujino-analytic-vanishing} 
depends on Morihiko Saito's theory of mixed Hodge modules 
(see \cite{saito1}, 
\cite{saito2}, \cite{saito3}, 
\cite{fujino-fujisawa-saito}, 
and \cite{saito4}) 
and Takegoshi's generalization of Koll\'ar's torsion-free and vanishing 
theorem (see \cite{takegoshi}). 
For an approach without using the theory of mixed Hodge modules, 
see \cite{fujino-fujisawa}. 
In this paper, we will discuss the cone and contraction 
theorem of normal pairs for projective morphisms 
between complex analytic spaces as an 
application of \cite{fujino-analytic-vanishing}. 
This paper can be seen as a complex analytic generalization 
of \cite{fujino-fundamental} and as a generalization of 
Nakayama's paper \cite{nakayama1}. We note that 
Nakayama only treated kawamata log terminal pairs and 
$\mathbb Q$-divisors in \cite{nakayama1}. 
Finally, this paper is independent of 
\cite{fujino-minimal} and does not use any results obtained in 
\cite{fujino-minimal}.  

\begin{say}[Standard setting]\label{c-say1.1}
One of the main difficulties to discuss the 
minimal model theory for complex analytic spaces is how to formulate it. 

Let $\pi\colon X\to Y$ be a projective morphism 
between complex analytic spaces such that 
$X$ is a normal complex variety and let $W$ be 
a compact subset of $Y$. 
In this paper, we formulate and prove almost everything 
over some open neighborhood 
of $W$. 
Let $\Delta$ be an $\mathbb R$-divisor 
on $X$ such that 
$K_X+\Delta$ is $\mathbb R$-Cartier. 
The number of the irreducible components 
of $\Supp \Delta$ is only locally finite. 
In general, the support of $\Delta$ may have infinitely many irreducible 
components. 
By shrinking $Y$ around $W$ suitably, that is, 
by replacing $Y$ with a suitable relatively compact 
open neighborhood of $W$, 
we can always assume that $\Supp \Delta$ has only finitely many 
irreducible components. Moreover, 
we can assume that 
a given $\mathbb R$-Cartier divisor 
on $X$ is a finite $\mathbb R$-linear combination of 
Cartier divisors. 
Therefore, by considering some relatively compact open 
neighborhood of $W$, 
we can avoid subtle problems caused by 
the difference between the Zariski topology and 
the Euclidean topology. In \cite{fujino-minimal}, we almost always 
assume that $W$ is a Stein compact subset of $Y$ 
such that $\Gamma (W, \mathcal O_Y)$ is noetherian. 
In this paper, however, we usually assume that $W$ is only 
a compact subset of $Y$. 
When we consider the 
Kleiman--Mori cone $\NE(X/Y; W)$ of $\pi\colon X\to Y$ 
and 
$W$, 
we further assume that the dimension of 
$N^1(X/Y; W)$ is finite. 
For the details of $\NE(X/Y; W)$ and $N^1(X/Y; W)$, 
see Section \ref{c-sec11}. 
Note that if 
$W\cap V$ has only finitely many connected 
components for any analytic subset $V$ which 
is defined over an open neighborhood of $W$ 
then the dimension of $N^1(X/Y; W)$ is finite by 
Nakayama's finiteness (see Theorem \ref{c-thm11.10}). 
Therefore, if $W$ is a compact semianalytic subset 
of $Y$, then the dimension of $N^1(X/Y; W)$ is always finite.  
Thus, we can find many compact subsets $W$ with 
$\dim _{\mathbb R}N^1(X/Y; W)<\infty$. 
\end{say}

\subsection{Main theorem}\label{c-subsec1.1} 
In this paper, we call $(X, \Delta)$ a {\em{normal pair}} 
if it consists of a normal complex variety $X$ and an effective 
$\mathbb R$-divisor $\Delta$ on $X$ such that 
$K_X+\Delta$ is $\mathbb R$-Cartier. 
The main purpose of this paper is to establish the 
following cone and contraction theorem of 
normal pairs for projective morphisms between complex analytic 
spaces. 

\begin{thm}[{Cone and contraction theorem, 
see Theorems \ref{c-thm12.1}, \ref{c-thm12.2}, \ref{c-thm13.2}, and 
\ref{c-thm14.4}}] 
\label{c-thm1.2}
Let $\pi \colon X\to Y$ be a projective 
morphism of complex 
analytic spaces such that $X$ is a normal complex variety and 
let $W$ be a compact subset of $Y$. 
Assume that the dimension of $N^1(X/Y; W)$ is finite. 
Let $\Delta$ be an effective $\mathbb R$-divisor on $X$ 
such that $K_X+\Delta$ is $\mathbb R$-Cartier. 
Then we have 
\begin{equation*}
\NE(X/Y; W)=\NE(X/Y; W)_{(K_X+\Delta)\geq 0} 
+\NE(X/Y; W)_{\Nlc (X, \Delta)}+\sum R_j
\end{equation*} 
with the following properties.  
\begin{itemize}
\item[(1)] $\Nlc (X, \Delta)$ is the non-lc locus 
of $(X, \Delta)$ and 
$\NE(X/Y; W)_{\Nlc(X, \Delta)}$ is the subcone 
of $\NE(X/Y; W)$ which is 
the closure of the convex cone spanned by 
the projective integral curves $C$ on $\Nlc(X, \Delta)$ 
such that $\pi(C)$ is a point of $W$. 
\item[(2)] 
$R_j$ is a $(K_X+\Delta)$-negative 
extremal ray of $\NE(X/Y; W)$ which satisfies  
\begin{equation*}
R_j\cap \NE(X/Y; W)_{\Nlc (X, \Delta)}=\{0\}
\end{equation*} for 
every $j$. 
\item[(3)] Let $\mathcal A$ be a $\pi$-ample $\mathbb R$-line bundle 
on $X$. 
Then there are only finitely many $R_j$'s included in 
$\NE(X/Y; W)_{(K_X+\Delta+\mathcal A)<0}$. In particular, 
the $R_j$'s are discrete in the half-space 
$\NE(X/Y; W)_{(K_X+\Delta)<0}$. 
\item[(4)] 
Let $F$ be any face of $\NE(X/Y; W)$ such 
that 
\begin{equation*} 
F\cap \left(\NE(X/Y; W)_{(K_X+\Delta)\geq 0}+
\NE(X/Y; W)_{\Nlc (X, \Delta)}\right)=\{0\}. 
\end{equation*}
Then, after shrinking $Y$ around 
$W$ suitably, there exists a contraction morphism 
$\varphi_F\colon X\to Z$ 
over $Y$ satisfying the following properties. 
\begin{itemize}
\item[(i)] 
Let $C$ be a projective integral curve on $X$ such that 
$\pi(C)$ is a point of $W$. 
Then $\varphi_F(C)$ is a point if and 
only if the numerical equivalence class 
$[C]$ of $C$ is in $F$. 
\item[(ii)] The natural map 
$\mathcal O_Z\to (\varphi_F)_*\mathcal O_X$ is an 
isomorphism.  
\item[(iii)] Let $\mathcal L$ be a line bundle on $X$ such 
that $\mathcal L\cdot C=0$ for 
every curve $C$ with $[C]\in F$. 
Then, after shrinking $Y$ around 
$W$ suitably again, 
there exists a line bundle $\mathcal L_Z$ on $Z$ such 
that $\mathcal L\simeq \varphi^*_F\mathcal L_Z$ holds. 
\end{itemize}
\item[(5)] 
Every $(K_X+\Delta)$-negative extremal 
ray $R$ with 
\begin{equation*}
R\cap \NE(X/Y; W)_{\Nlc(X, \Delta)}=\{0\}
\end{equation*} 
is spanned by a {\em{(}}possibly singular{\em{)}} rational 
curve $C$ with 
\begin{equation*}
0<-(K_X+\Delta)\cdot C\leq 2\dim X. 
\end{equation*} 
\end{itemize}

From now on, we further assume that 
$(X, \Delta)$ is log canonical, 
equivalently, $\Nlc (X, \Delta)=\emptyset$. 
Then we have the following properties. 
\begin{itemize}
\item[(6)] 
Let $H$ be an effective $\mathbb R$-Cartier 
$\mathbb R$-divisor on $X$ such that 
$K_X+\Delta+H$ is $\pi$-nef over $W$ and $(X, \Delta+H)$ 
is log canonical. 
Then, either $K_X+\Delta$ is also $\pi$-nef over $W$ or there 
exists a $(K_X+\Delta)$-negative 
extremal ray $R$ of $\NE(X/Y; W)$ such that 
\begin{equation*}
(K_X+\Delta+\lambda H)\cdot R=0, 
\end{equation*} 
where 
\begin{equation*}
\lambda:=\inf\{ t\geq 0 \, |\,  K_X+\Delta+
tH \ {\text{is $\pi$-nef over $W$}} \}. 
\end{equation*} 
Of course, $K_X+\Delta+\lambda H$ is $\pi$-nef over $W$.  
\end{itemize}
Similarly, we have: 
\begin{itemize}
\item[(7)] 
Let $\mathcal H$ be an $\mathbb R$-line bundle  
on $X$ which is $\pi$-ample over $W$ such that 
$K_X+\Delta+\mathcal H$ is $\pi$-nef over $W$. 
Then, either $K_X+\Delta$ is also $\pi$-nef over $W$ or there 
exists a $(K_X+\Delta)$-negative 
extremal ray $R$ of $\NE(X/Y; W)$ such that 
\begin{equation*}
(K_X+\Delta+\lambda \mathcal H)\cdot R=0, 
\end{equation*} 
where 
\begin{equation*}
\lambda:=\inf\{ t\geq 0 \, |\,  K_X+\Delta+
t\mathcal H \ {\text{is $\pi$-nef over $W$}} \}. 
\end{equation*} 
Note that $K_X+\Delta+\lambda \mathcal H$ is $\pi$-nef over $W$.  
\end{itemize}
\end{thm}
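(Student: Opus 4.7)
The plan is to deduce Theorem \ref{c-thm1.2} from three core ingredients in the analytic setting, each established for normal pairs over a compact $W$ using the vanishing theorems of \cite{fujino-analytic-vanishing}: a basepoint-free theorem, a rationality theorem, and Kawamata's length bound for extremal rays. Nakayama's finite-dimensionality of $N^1(X/Y;W)$, together with the ability to shrink $Y$ around $W$ so that $\Supp\Delta$ has only finitely many irreducible components (see \ref{c-say1.1}), allows us to mimic slice by slice the algebraic cone-theorem arguments of \cite{fujino-fundamental}.

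For the cone decomposition and parts (1)--(3), I would follow the Kawamata--Shokurov scheme adapted to $\NE(X/Y;W)$. The role of the subcone $\NE(X/Y;W)_{\Nlc(X,\Delta)}$ is to absorb classes on which the usual vanishing theorems fail because of the non-lc locus; working modulo this subcone reduces the problem to the (quasi-)log canonical setting, where the rationality theorem produces rational supporting hyperplanes and the basepoint-free theorem realizes them as contractions. Finiteness in (3) comes from the gap provided by the rationality theorem applied to a hyperplane slightly inside $\NE(X/Y;W)_{(K_X+\Delta+\mathcal A)<0}$, ruling out accumulation.

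For part (4), given a face $F$ disjoint from the union of $\NE(X/Y;W)_{(K_X+\Delta)\geq 0}$ and $\NE(X/Y;W)_{\Nlc(X,\Delta)}$, I would find a $\mathbb Q$-Cartier supporting divisor $L$ with $L^{\perp}\cap\NE(X/Y;W)=F$, apply the basepoint-free theorem to get $\varphi_F\colon X\to Z$ over $Y$ after shrinking $Y$, replace $Z$ by the Stein factorization to obtain (ii), and deduce (i) from $L\cdot C=0\iff[C]\in F$. For (iii), the line bundle $\mathcal L$ is descended by applying the basepoint-free theorem to $\mathcal L\otimes \mathcal O_X(mL)^{\otimes k}$ and $\mathcal O_X(mL)^{\otimes k}$ for $k\gg 0$, after possibly shrinking $Y$ further. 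Part (5) reduces, after perturbing $\Delta$ to a klt pair near a suitably chosen point of $W$, to Kawamata's length theorem, whose proof rests on Mori's bend-and-break applied to a resolution; since the extremal curve lies in a fiber of $\pi$ over a point of $W$, the argument is effectively projective and algebraic. Parts (6) and (7) are then formal consequences of the cone theorem: the infimum $\lambda$ is well-defined, and continuity together with (3) produces an extremal $R_j$ on the boundary with $(K_X+\Delta+\lambda H)\cdot R_j=0$ (respectively with $\mathcal H$).

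The hardest step will be establishing the basepoint-free theorem for normal pairs in the analytic, non-lc setting over a compact $W$. The algebraic argument leans on noetherian induction and X-method constructions which must be recast using the vanishing theorems with strict support condition from \cite{fujino-analytic-vanishing}, while carefully managing the successive shrinkings of $Y$ around $W$ needed to keep divisor supports finite. Handling the contribution of the non-lc locus, and ensuring that the descent of line bundles in (4)(iii) can be realized compatibly with all prior shrinkings, is the principal technical difficulty.
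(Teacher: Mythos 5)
Your overall architecture for (1)--(4) and (7) matches the paper: the decomposition and (1)--(3) come from the rationality and basepoint-free theorems adapted to $\NE(X/Y;W)$ (Theorems \ref{c-thm9.1}, \ref{c-thm10.1}, \ref{c-thm12.2}), (4) is the contraction theorem via basepoint-freeness and Stein factorization with descent of $\mathcal L$ by comparing two twists (Theorem \ref{c-thm12.1}), and (7) is indeed an easy consequence of (3). However, there are two genuine gaps.

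First, your treatment of (5) by ``perturbing $\Delta$ to a klt pair near a suitably chosen point of $W$'' and then invoking Kawamata's length theorem does not work in this generality. The pair $(X,\Delta)$ is only assumed to be a normal pair (possibly with non-empty non-lc locus), and even in the log canonical case $X$ itself may have non-klt singularities along the contracted locus, so no perturbation of the boundary produces a klt pair while keeping the ray $(K_X+\Delta)$-negative; the classical klt length bound is simply not applicable. The paper instead proves Theorem \ref{c-thm13.1} by putting a quasi-log structure on the relevant fiber component $E$ (using the strict support condition of Theorem \ref{c-thm5.5} (i) to kill the connecting homomorphism and \cite[Theorem 4.9]{fujino-pull-back} to get adjunction), and then applies the bend-and-break--based cone theorem for quasi-log schemes \cite[Theorem 1.12]{fujino-cone}; Theorem \ref{c-thm13.2} follows. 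This quasi-log step is the essential content of (5) and is missing from your proposal.

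Second, (6) is not a formal consequence of the cone theorem: statement (3) gives finiteness only after adding a $\pi$-ample class, whereas $H$ in (6) is merely an effective $\mathbb R$-Cartier divisor, so the $(K_X+\Delta)$-negative rays with $(K_X+\Delta+tH)\cdot R<0$ for $t$ slightly less than $\lambda$ could a priori accumulate against the hyperplane $(K_X+\Delta+\lambda H)^{\perp}$, and ``continuity together with (3)'' does not produce a ray with $(K_X+\Delta+\lambda H)\cdot R=0$. The paper's proof (Theorem \ref{c-thm14.4}) needs the length bound from (5) for extremal curves (Lemma \ref{c-lem14.2}) together with Shokurov's rational polytope argument (Theorem \ref{c-thm14.3}) to show that the nef threshold is attained by one of the $\mu_j$; note Remark \ref{c-rem1.3} explicitly records that (6) is an application of (5), in contrast with (7). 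You would need to incorporate these ingredients (or an equivalent boundedness statement for the denominators of the relevant intersection numbers) to close this step.
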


\begin{rem}\label{c-rem1.3}
In Theorem \ref{c-thm1.2}, the proof of (5) needs Mori's bend and break method and 
(6) is an application of (5). On the other hand, (7) is an easy consequence of (3). 
Note that $\pi$-very ample line bundles do not always have global sections. 
Hence (7) is not a special case of (6). 
We need (7) in order to discuss the minimal 
model program of log canonical pairs with ample scaling 
for projective morphisms between complex analytic spaces. 
\end{rem}

For the minimal model program, the following theorem, 
which is a supplement to Theorem \ref{c-thm1.2}, may be useful 
(see \cite{fujino-minimal}). 

\begin{thm}\label{c-thm1.4} 
Let $(X, \Delta)$ be a log canonical pair. 
Let $\pi\colon X\to Y$ be a projective morphism 
of complex analytic spaces and let $W$ be a compact 
subset of $Y$ such that the dimension of $N^1(X/Y; W)$ is 
finite. 
Suppose that $\pi\colon X\to Y$ is decomposed as 
\begin{equation*}
\xymatrix{
\pi\colon X\ar[r]^-f& Y^\flat \ar[r]^-g& Y
}
\end{equation*}
such that $Y^\flat$ is projective over $Y$. 
Let $\mathcal A_{Y^\flat}$ be a $g$-ample line bundle on $Y^\flat$. 
Let $R$ be a $\left(K_X+\Delta+(\dim X+1)
f^*\mathcal A_{Y^\flat}\right)$-negative 
extremal ray of $\NE(X/Y; W)$. 
Then $R$ is a $(K_X+\Delta)$-negative extremal ray of $\NE\left(X/{Y^\flat}; 
g^{-1}(W)\right)$, that is, $R\cdot f^*\mathcal A_{Y^\flat}=0$. 
\end{thm}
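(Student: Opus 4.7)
The plan is to combine the $\pi$-nefness of $f^*\mathcal{A}_{Y^\flat}$ with the bend-and-break length bound of Theorem~\ref{c-thm1.2}(5). First I would observe that $f^*\mathcal{A}_{Y^\flat}$ is $\pi$-nef: for any projective integral curve $C\subset X$ with $\pi(C)$ a point of $Y$, the image $f(C)$ lies in a fiber of $g$, so by $g$-ampleness of $\mathcal{A}_{Y^\flat}$ we have $f^*\mathcal{A}_{Y^\flat}\cdot C=\deg(f|_C)\cdot(\mathcal{A}_{Y^\flat}\cdot f(C))\geq 0$. Combined with the hypothesis $(K_X+\Delta+(\dim X+1)f^*\mathcal{A}_{Y^\flat})\cdot R<0$, this forces $(K_X+\Delta)\cdot R<0$, so $R$ is also a $(K_X+\Delta)$-negative extremal ray of $\NE(X/Y;W)$.

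Next, since $(X,\Delta)$ is log canonical, $\Nlc(X,\Delta)=\emptyset$ and Theorem~\ref{c-thm1.2}(5) provides a rational curve $C$ spanning $R$ with $\pi(C)$ a point of $W$ and $0<-(K_X+\Delta)\cdot C\leq 2\dim X$. Inserting this into the hypothesis gives
\begin{equation*}
(\dim X+1)(f^*\mathcal{A}_{Y^\flat}\cdot C)<-(K_X+\Delta)\cdot C\leq 2\dim X,
\end{equation*}
so $f^*\mathcal{A}_{Y^\flat}\cdot C<\frac{2\dim X}{\dim X+1}<2$. Since $f^*\mathcal{A}_{Y^\flat}\cdot C$ is a non-negative integer (the degree of a pulled-back line bundle along $f|_C$), I would conclude $f^*\mathcal{A}_{Y^\flat}\cdot C=0$. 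Then $[C]\in\NE(X/Y^\flat;g^{-1}(W))$, so $R\subseteq\NE(X/Y^\flat;g^{-1}(W))$; the extremality of $R$ in this subcone is inherited from its extremality in the ambient $\NE(X/Y;W)$.

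The main obstacle I anticipate lies in this final integer step: the strict inequality only gives $f^*\mathcal{A}_{Y^\flat}\cdot C\in\{0,1\}$, and the value $1$ would correspond to $f|_C$ being birational onto a rational curve of $\mathcal{A}_{Y^\flat}$-degree $1$ in a fiber of $g$. In the kawamata log terminal setting this value is excluded immediately because the length bound of Theorem~\ref{c-thm1.2}(5) sharpens to $\dim X+1$; in the genuinely log canonical case where only $2\dim X$ is a priori available, closing the gap will require either a sharper length bound specific to the pair $\bigl(K_X+\Delta,\,f^*\mathcal{A}_{Y^\flat}\bigr)$, or a bend-and-break deformation of $C$ producing an $f$-contracted rational component whose class must again lie in $R$ by extremality of $R$.
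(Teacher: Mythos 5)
Your first step is fine: $f^*\mathcal A_{Y^\flat}$ is $\pi$-nef over $W$, so the hypothesis forces $(K_X+\Delta)\cdot R<0$, and once one knows $R\cdot f^*\mathcal A_{Y^\flat}=0$ the ray does lie in the subcone $\NE(X/Y^\flat;g^{-1}(W))$ and inherits extremality. But the core of the argument is exactly the step you flag yourself, and that gap is genuine: the length bound available for log canonical pairs (Theorem \ref{c-thm1.2} (5), i.e.\ Theorem \ref{c-thm13.2}) is $2\dim X$, so from $(\dim X+1)\,f^*\mathcal A_{Y^\flat}\cdot C<-(K_X+\Delta)\cdot C\leq 2\dim X$ you only get $f^*\mathcal A_{Y^\flat}\cdot C\in\{0,1\}$ once $\dim X\geq 2$, and nothing in the paper (or in the known lc theory) rules out the value $1$ along these lines; the sharper bound $\dim X+1$ is only available in the kawamata log terminal case. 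Moreover, the curve produced by Theorem \ref{c-thm13.2} is just \emph{some} rational curve spanning $R$, not one of minimal degree, so even a bend-and-break refinement would have to be carried out from scratch. As it stands the proposal does not prove the statement.

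The paper deliberately avoids the length bound (see Remark \ref{c-rem1.3}: Theorem \ref{c-thm1.2} (5) is not used) and instead argues via the vanishing theorem for projective quasi-log schemes. Concretely: one contracts $R$ (Theorems \ref{c-thm12.2} and \ref{c-thm12.1}) to get $\varphi_R\colon X\to Z$ over $Y$ with $-(K_X+\Delta)$ $\varphi_R$-ample. If $\dim Z=0$, then $\rho(X)=1$ and $[X,K_X+\Delta]$ is a projective quasi-log scheme, so Lemma \ref{c-lem12.3} (an Euler-characteristic argument: if $\omega+r\mathcal M$ is numerically trivial with $\mathcal M$ ample, then $r\leq \dim V+1$) forbids $\mathcal L=f^*\mathcal A_{Y^\flat}$ from being ample, whence $R\cdot\mathcal L=0$. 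If $\dim Z\geq 1$, one picks a positive-dimensional fiber of $\varphi_R$, adds a divisor $\varphi_R^*B$ to cut out a positive-dimensional log canonical center $C$ there, and uses the strict support condition (Theorem \ref{c-thm5.5} (i)) to equip a closed subspace $X'$ supported on $C$ with a quasi-log structure for $\omega=(K_X+\Delta)|_{X'}$; since $-\omega$ is ample on $X'$ and $\dim X'<\dim X$, Lemma \ref{c-lem12.3} again excludes $R\cdot\mathcal L>0$. So the paper's mechanism is a vanishing/Hilbert-polynomial bound on the contracted fibers, which is strictly stronger in this situation than the $2\dim X$ curve-length bound; if you want to complete your argument you should switch to that route rather than try to sharpen the length estimate.
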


We prove Theorem \ref{c-thm1.4} as an application of 
the vanishing theorem for projective quasi-log schemes. 
We do not need Theorem \ref{c-thm1.2} (5) for the proof of 
Theorem \ref{c-thm1.4}. We have the following result as an easy 
consequence of Theorem \ref{c-thm1.4}. 

\begin{cor}[{see \cite[Corollary 1.2]{fujino-relative}}]\label{c-cor1.5}
Let $(X, \Delta)$ be a log canonical pair. 
Let $\pi\colon X\to Y$ be a projective morphism 
of complex analytic spaces 
and let $\mathcal A$ be any $\pi$-ample line bundle 
on $X$. Then $K_X+\Delta+(\dim X+1)\mathcal A$ is 
always nef over $Y$. 
\end{cor}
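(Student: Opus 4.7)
The plan is to derive Corollary \ref{c-cor1.5} from Theorem \ref{c-thm1.4} by contradiction. Assume for contradiction that $K_X+\Delta+(\dim X+1)\mathcal A$ is not nef over $Y$. Then there is a projective integral curve $C\subset X$ with $\pi(C)$ a single point $y\in Y$ such that $(K_X+\Delta+(\dim X+1)\mathcal A)\cdot C<0$. I would set $W:=\{y\}$; this is a compact semianalytic subset of $Y$, so the discussion in \ref{c-say1.1} guarantees $\dim_{\mathbb R}N^1(X/Y;W)<\infty$, and by construction $K_X+\Delta+(\dim X+1)\mathcal A$ fails to be $\pi$-nef over $W$.

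Next I would invoke the cone theorem to produce an extremal ray on which the line bundle is negative. Write $L:=K_X+\Delta+(\dim X+1)\mathcal A$. Since $(X,\Delta)$ is log canonical, $\Nlc(X,\Delta)=\emptyset$ and Theorem \ref{c-thm1.2} gives the decomposition $\NE(X/Y;W)=\NE(X/Y;W)_{(K_X+\Delta)\geq 0}+\sum R_j$. The $\pi$-ampleness of $\mathcal A$ makes $L$ non-negative on the first summand and on every $R_j$ that is not already $(K_X+\Delta+\mathcal A)$-negative; since $L$ fails to be $\pi$-nef over $W$, one of the finitely many extremal rays $R_j\subset\NE(X/Y;W)_{(K_X+\Delta+\mathcal A)<0}$ provided by Theorem \ref{c-thm1.2}(3), call it $R$, must satisfy $L\cdot R<0$.

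Finally I would apply Theorem \ref{c-thm1.4} to the trivial factorisation
\[
\pi\colon X\xrightarrow{\mathrm{id}_X}X\xrightarrow{\pi}Y,
\]
that is, with $Y^\flat:=X$, $f:=\mathrm{id}_X$, $g:=\pi$, and $\mathcal A_{Y^\flat}:=\mathcal A$. This is admissible because $g=\pi$ is projective, so $Y^\flat$ is projective over $Y$, and $\mathcal A_{Y^\flat}=\mathcal A$ is $g$-ample by hypothesis. Since $f^*\mathcal A_{Y^\flat}=\mathcal A$, Theorem \ref{c-thm1.4} forces $\mathcal A\cdot R=R\cdot f^*\mathcal A_{Y^\flat}=0$. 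But $R$ is spanned by a projective curve on $X$ contracted by $\pi$, and the $\pi$-ampleness of $\mathcal A$ gives $\mathcal A\cdot R>0$, a contradiction.

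The heavy lifting is entirely contained in Theorem \ref{c-thm1.4} (which rests on quasi-log vanishing); granted that, this is a short reduction. The only real care is to produce a single compact $W$ with $\dim_{\mathbb R}N^1(X/Y;W)<\infty$ that still witnesses the failure of nefness — a single point of $Y$ does this cleanly — and to extract a $(K_X+\Delta+(\dim X+1)\mathcal A)$-negative extremal ray from the cone decomposition, which is immediate from part (3) and the log canonical hypothesis in Theorem \ref{c-thm1.2}.
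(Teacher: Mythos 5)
Your proposal is correct and follows essentially the same route as the paper: take $W$ to be a single point of $Y$ (finiteness of $N^1(X/Y;W)$ via Nakayama's finiteness), extract a $(K_X+\Delta+(\dim X+1)\mathcal A)$-negative extremal ray from the cone theorem, and apply Theorem \ref{c-thm1.4} with the trivial factorization so that $f^*\mathcal A_{Y^\flat}=\mathcal A$, contradicting $\pi$-ampleness. Your choice $Y^\flat:=X$, $f:=\mathrm{id}_X$, $g:=\pi$ is in fact the correct reading of the data the paper intends (its own proof writes $Y^\flat:=Y$, $f:=\mathrm{id}_Y$, which is evidently a slip), so nothing further is needed.
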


We make an important remark on Theorem \ref{c-thm1.2}. 
By Remark \ref{c-rem1.6}, we see that 
the cone and contraction theorem of normal pairs holds 
for projective morphisms between compact analytic 
spaces. 

\begin{rem}\label{c-rem1.6} 
Let $\pi\colon X\to Y$ be a projective morphism 
of complex analytic spaces and let $W$ be 
a compact subset of $Y$ as in Theorem \ref{c-thm1.2}. 
Then the dimension of $N^1(X/Y; W)$ 
is not always finite (see Example \ref{c-ex11.9}). 
In \cite[Chapter II.~5.19.~Lemma]{nakayama2} 
(see Theorem \ref{c-thm11.10}), 
Noboru Nakayama proved that 
if 
\begin{itemize}
\item\label{c-rem-item} $W$ is a compact subset of $Y$ 
such that $W\cap V$ has only finitely many 
connected components 
for any analytic subset $V$ which is defined 
over an open neighborhood of $W$, 
\end{itemize} 
then 
the dimension of $N^1(X/Y; W)$ is finite. 
We note that the above assumption is satisfied in the following 
cases: 
\begin{itemize}
\item[(i)] $W$ is a point of $Y$. 
\item[(ii)] $W$ is a compact semianalytic 
subset of $Y$. 
\item[(iii)] $W=Y$ when $Y$ is compact. 
\end{itemize}
Case (i) is obvious. 
In Case (ii), 
$W\cap V$ is a compact semianalytic subset of $Y$. 
Thus we see that $W\cap V$ has only finitely many 
connected components (see, for example, 
\cite[Corollary 2.7 (2)]{bierstone-milman1}). 
In Case (iii), $W\cap V=V$ is a compact analytic 
subset of $Y$. 
Hence it has only finitely many connected components.  
\end{rem}

By Remark \ref{c-rem1.6}, we see that 
there are many compact subsets $W$ of $Y$ 
such that the dimension of $N^1(X/Y; W)$ is finite. 

We note that we can formulate and prove the basepoint-free 
theorem for projective morphisms of complex 
analytic spaces as follows. 
In Theorem \ref{c-thm1.7}, $L$ is only assumed to be 
$\pi$-nef over $W$, that is, $L|_{\pi^{-1}(w)}$ is nef in the usual sense 
for every $w\in W$. Equivalently, $L\cdot C\geq 0$ for every 
projective integral curve $C$ on $X$ such that 
$\pi(C)$ is a point of $W$. However, Theorem 
\ref{c-thm1.7} claims that it is $\pi$-semiample 
over some open neighborhood of $W$. 

\begin{thm}[Basepoint-free theorem:~Theorem \ref{c-thm9.1}]\label{c-thm1.7}
Let $\pi\colon X\to Y$ be a projective morphism 
of complex analytic spaces such that 
$X$ is a normal complex variety and let $W$ be a compact 
subset of $Y$. 
Let $\Delta$ be an effective 
$\mathbb R$-divisor on $X$ 
such that $K_X+\Delta$ is 
$\mathbb R$-Cartier. 
Let $L$ be a Cartier divisor on $X$ 
which is $\pi$-nef over 
$W$. 
We assume that 
\begin{itemize}
\item[(i)] $aL-(K_X+\Delta)$ is $\pi$-ample 
over $W$ for 
some positive real number $a$, and 
\item[(ii)] $\mathcal O_{\Nlc(X, \Delta)}(mL)$ is 
$\pi|_{\Nlc(X, \Delta)}$-generated 
over some open neighborhood of $W$ for every $m\gg 0$. 
\end{itemize} 
Then there exists a relatively compact open neighborhood 
$U$ of $W$ such that 
$\mathcal O_X(mL)$ is $\pi$-generated 
over $U$ for 
every $m\gg 0$. 
\end{thm}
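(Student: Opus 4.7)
The plan is to split the proof into two parts: a lifting step that reduces to generation away from $\Nlc(X,\Delta)$, and a Kawamata--Shokurov X-method argument that removes the remaining base locus, with all steps carried out over relatively compact open neighborhoods of $W$.

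By the Standard Setting \ref{c-say1.1}, after shrinking $Y$ around $W$ I may assume that $\Supp \Delta$ has only finitely many irreducible components and that all $\mathbb R$-Cartier divisors in sight are finite $\mathbb R$-linear combinations of Cartier divisors. Since $L$ is $\pi$-nef over $W$ and $aL-(K_X+\Delta)$ is $\pi$-ample over $W$, the $\mathbb R$-Cartier divisor $mL-(K_X+\Delta)$ is $\pi$-ample over $W$ for every $m\geq a$. The vanishing theorem for the non-lc ideal sheaf established in \cite{fujino-analytic-vanishing} then gives
\[R^1\pi_*\bigl(\mathcal I_{\Nlc(X,\Delta)}\otimes \mathcal O_X(mL)\bigr)=0\]
over some open neighborhood of $W$ for every such $m$. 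The short exact sequence
\[0\to \mathcal I_{\Nlc(X,\Delta)}(mL)\to \mathcal O_X(mL)\to \mathcal O_{\Nlc(X,\Delta)}(mL)\to 0\]
then produces a surjection $\pi_*\mathcal O_X(mL)\twoheadrightarrow \pi_*\mathcal O_{\Nlc(X,\Delta)}(mL)$, and combined with hypothesis (ii) this shows that, for $m\gg 0$, $\Bs|mL|$ over a neighborhood of $W$ is disjoint from $\Nlc(X,\Delta)$.

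It remains to remove base points $z\in X\setminus\Nlc(X,\Delta)$ with $\pi(z)\in W$. For this I would imitate the classical Kawamata--Shokurov X-method, inducting on the maximal dimension of a component of $\Bs|mL|$ meeting $X\setminus\Nlc(X,\Delta)$. Starting from a non-vanishing input (which in the presence of (ii) already follows from the lifting above, and otherwise requires a parallel X-method argument), choose a divisor $D_0\in|m_0L|$ for some $m_0\gg 0$. After a small $\mathbb Q$-ample perturbation reducing to $\mathbb Q$-coefficients, compute the log canonical threshold $c$ of $(X,\Delta+tD_0)$ at $z$, and apply the standard tiebreaking trick to produce a unique minimal non-klt center $V$ through $z$ with $V\not\subset\Nlc(X,\Delta)$. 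Another application of \cite{fujino-analytic-vanishing}, now to the non-lc ideal of the modified pair $(X,\Delta+cD_0)$ twisted by $\mathcal O_X(mL)$, yields a surjection from $\pi_*\mathcal O_X(mL)$ onto the sections along $V$; by induction on $\dim V$ (reducing to the classical $0$-dimensional case) this produces a section of $\mathcal O_X(mL)$ not vanishing at $z$, strictly shrinking the base locus and closing the induction.

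The main obstacle is the analytic globalization of the X-method: the log resolution of $(X,\Delta+cD_0)$, the tiebreaking, the construction of the minimal non-klt center, and the application of the vanishing theorem all must be carried out uniformly over a relatively compact open neighborhood of $W$. This is where the argument departs from its algebraic counterpart in \cite{fujino-fundamental}: I would need to repeatedly shrink $Y$ around $W$ to restore the finiteness properties of the Standard Setting \ref{c-say1.1} (finitely many boundary components, finite-dimensionality of $N^1(X/Y;W)$, local finiteness of the resolved configuration) and verify at each stage that the analytic vanishing theorems of \cite{fujino-analytic-vanishing} remain valid over the shrunken base. Once these technical adjustments are handled, the combinatorial structure of the Kawamata--Shokurov proof carries over, with hypothesis (ii) supplying the base case of the induction via lifting through $\Nlc(X,\Delta)$.
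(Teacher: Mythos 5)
Your overall architecture is the same as the paper's: lift sections through $\Nlc(X,\Delta)$ by the vanishing theorem for the non-lc ideal, cut with $n+1$ general members of $|m_0L|$, pass to the log canonical threshold $c$, and strictly shrink the relative base locus by lifting sections from a newly created non-klt center, iterating until the base locus is empty (the paper organizes the iteration as a noetherian induction on the base locus, run for two primes $p$ and $p'$, precisely to handle the fact that the neighborhood $U_m$ of $W$ over which the vanishing and non-vanishing hold depends on $m$; see Remark \ref{c-rem8.3}).

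The genuine gap is the step you compress into ``by induction on $\dim V$ (reducing to the classical $0$-dimensional case) this produces a section of $\mathcal O_X(mL)$ not vanishing at $z$.'' To shrink the base locus you need $\pi_*\mathcal O_V(mL)\ne 0$ for the minimal non-klt center $V$ of the modified pair $(X,\Delta+cD_0)$, and no induction on $\dim V$ is available inside the category of normal pairs: the structure induced on $V$ is not of the form $(V,\Delta_V)$ with $K_V+\Delta_V$ $\mathbb R$-Cartier, so neither the statement being proved nor classical Shokurov non-vanishing applies to $V$ directly. Note also that when $(X,\Delta)$ is log canonical, hypothesis (ii) is vacuous and your lifting step supplies no initial non-vanishing at all; your ``parallel X-method argument'' is exactly the missing content unless $(X,\Delta)$ is kawamata log terminal (where Theorem \ref{c-thm8.4} applies). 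The paper fills this hole with the quasi-log machinery: by Step \ref{c-thm6.1-step1} of the proof of Theorem \ref{c-thm6.1}, $V$ carries $f\colon (T,\Delta_T)\to V$ with $\mathcal O_V\simeq f_*\mathcal O_T(\lceil -(\Delta^{<1}_T)\rceil)$; Lemma \ref{c-lem8.2} (a consequence of the strict support condition, Theorem \ref{c-thm5.5} (i)) shows that an analytically sufficiently general fiber of $V\to \pi(V)$ is a projective quasi-log canonical pair; Lemma \ref{c-lem3.5} makes $L$ nef on such a fiber; and the basepoint-free theorem for projective qlc pairs then yields $\pi_*\mathcal O_V(mL)\ne 0$ (the Claim in the proof of Theorem \ref{c-thm8.1}), after which Theorem \ref{c-thm6.1} lifts these sections to $X$. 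Without this ingredient, or an equivalent substitute for non-vanishing along the centers, your inductive step does not close; the analytic shrinking issues you flag are real but secondary, and are handled in the paper exactly by the repeated shrinking you describe.
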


In Theorem \ref{c-thm1.7}, $W$ is only assumed to be 
a compact subset of $Y$. 
We do not need the assumption that $\dim_{\mathbb R} 
N^1(X/Y; W)<\infty$ holds. 
When $(X, \Delta)$ is log canonical, we will also prove 
the basepoint-free theorem for $\mathbb R$-Cartier divisors 
(see Theorem \ref{c-thm15.1}). 
In Theorem \ref{c-thm15.1}, we have to assume that 
the dimension of $N^1(X/Y; W)$ is finite since we need the 
cone theorem for the proof of Theorem \ref{c-thm15.1}. 

In the proof of Theorems \ref{c-thm1.2}, 
\ref{c-thm1.7}, and so on, the following basic properties 
of log canonical centers play an important role. 

\begin{thm}[Basic properties of log canonical 
centers:~Theorem \ref{c-thm7.1}]\label{c-thm1.8}
Let $(X, \Delta)$ be a log canonical pair. 
Then the following properties hold. 
\begin{itemize}
\item[(1)] The number of log canonical centers 
of $(X, \Delta)$ is locally finite. 
\item[(2)] The intersection of two log canonical centers 
is a union of some log canonical centers. 
\item[(3)] Let $x\in X$ be any point such that $(X, \Delta)$ 
is log canonical but is not kawamata log terminal at $x$. 
Then there exists a unique minimal {\em{(}}with respect to the 
inclusion{\em{)}} log canonical center $C_x$ passing through $x$. 
Moreover, $C_x$ is normal at $x$. 
\end{itemize}
\end{thm}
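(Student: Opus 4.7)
The plan is to prove the three assertions in order, with (1) being a direct consequence of a log resolution, (2) being the technical core that requires the complex-analytic vanishing theorem of \cite{fujino-analytic-vanishing}, and (3) being a formal corollary of (1), (2), together with an adjunction argument. Throughout, I would work over a relatively compact open neighborhood of an arbitrary compact subset $W\subset Y$ (for some chosen projective morphism $\pi\colon X\to Y$) so that, by the discussion in \ref{c-say1.1}, $\Supp\Delta$ has only finitely many irreducible components and $K_X+\Delta$ is a finite $\mathbb R$-linear combination of Cartier divisors.

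For (1), I would take a projective log resolution $f\colon Z\to X$ of $(X,\Delta)$, which exists after shrinking $Y$ around $W$. Writing $K_Z+\Delta_Z=f^*(K_X+\Delta)$, the log canonical centers of $(X,\Delta)$ are exactly the images under $f$ of the strata (i.e.\ irreducible components of intersections) of the reduced part $\Delta_Z^{=1}$. After shrinking, $\Delta_Z^{=1}$ has only finitely many components, so only finitely many strata, and hence only finitely many lc centers over the chosen neighborhood, giving the local finiteness.

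For (2), the strategy is to equip the reduced analytic subspace $W_0:=C_1\cup C_2$ with the natural quasi-log canonical structure induced from $(X,\Delta)$ via the analytic adjunction/vanishing machinery of \cite{fujino-analytic-vanishing}. The qlc strata of $W_0$ are then precisely the lc centers of $(X,\Delta)$ contained in $W_0$. Combining the strict support condition with the vanishing theorem for projective qlc pairs, one obtains that the intersection of two qlc strata is a union of qlc strata, so $C_1\cap C_2$ is a union of lc centers of $(X,\Delta)$. An alternative, equivalent route is to perturb $\Delta$ slightly on $Z$ so that, in an open neighborhood of $f^{-1}(C_1\cap C_2)$, the only remaining lc places are those whose centers lie in $C_1\cap C_2$, and then to invoke a Koll\'ar--Shokurov-type connectedness statement on $f\colon Z\to X$; both approaches rest on the same analytic vanishing input.

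For (3), uniqueness of the minimal lc center $C_x$ through $x$ is immediate from (1) and (2): among the finitely many lc centers through $x$, the intersection of any two is again a union of lc centers containing $x$ by (2), so iterating produces a unique minimal element. For normality at $x$, I would apply the analytic analogue of Kawamata's subadjunction (in Fujino's qlc form) to $C_x$, using that $C_x$ is a minimal qlc stratum of the quasi-log structure induced on $W_0=C_x$ by $(X,\Delta)$; minimal qlc strata are normal at every point in their relative interior, which follows from the standard vanishing argument. The main obstacle is to verify that the complex-analytic vanishing theorem and the quasi-log formalism on reducible analytic spaces developed in \cite{fujino-analytic-vanishing} really do produce the intersection property in (2) and the normality statement in (3) with the same strength as in the algebraic setting of \cite{fujino-fundamental} and \cite[Chapter 6]{fujino-foundations}; once these analytic inputs are in place, the remaining arguments are formal adaptations.
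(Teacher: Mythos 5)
Your treatment of (1) and of the uniqueness statement in (3) matches the paper. The real issue is with (2) and the normality claim in (3): as you yourself flag, you have outsourced their entire content to the quasi-log formalism on (reducible) complex analytic spaces --- adjunction for qlc pairs, the fact that an intersection of qlc strata is a union of qlc strata, and normality of minimal qlc strata. But \cite{fujino-analytic-vanishing} only supplies the strict support condition and the vanishing theorem (Theorem \ref{c-thm5.5}); it does not set up quasi-log structures on analytic spaces, and this paper deliberately postpones that theory to \cite{fujino-quasi-log-analytic} (see Remark \ref{c-rem1.10}). So the ``main obstacle'' you name is not a verification to be checked later: it is exactly the theorem to be proved, and your proposal does not prove it. In addition, invoking an analytic analogue of Kawamata's subadjunction for the normality of $C_x$ is the wrong tool --- subadjunction is a much stronger statement (and in its usual formulations one works on the normalization of the minimal center, or normality is an input), whereas normality here is obtained by a connectedness argument, not by producing a boundary on $C_x$.

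The paper closes this gap with a direct argument that needs only Theorem \ref{c-thm5.5} (i) (the strict support condition; the vanishing theorem (ii) is not used, contrary to your phrasing). For any union $V$ of lc centers, Step 1 of the proof of Theorem \ref{c-thm6.1} shows that the connecting homomorphism in the sequence for $0\to\mathcal O_Z(A-N-T)\to\mathcal O_Z(A-N)\to\mathcal O_T(A-N)\to 0$ vanishes, and hence $f_*\mathcal O_T\simeq\mathcal O_V$, where $T$ is the union of the components of $\Delta^{=1}_Z$ mapped into $V$ on a log resolution $f\colon Z\to X$; this is the analytic substitute for qlc adjunction. For (2) one takes $V=C_1\cup C_2$ and a point $P\in C_1\cap C_2$: connectedness of the fiber of $T\to V$ over $P$, together with the simple normal crossing structure of $T$, produces components $T_1,T_2$ of $T$ with $f(T_i)\subset C_i$ and $T_1\cap T_2\cap f^{-1}(P)\ne\emptyset$, so the image of a stratum of $T_1\cap T_2$ is an lc center inside $C_1\cap C_2$ through $P$. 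For (3), one takes $V=C_x$, shrinks so that every stratum of $T$ dominates $V$ (using minimality), applies Hironaka's flattening \cite{hironaka} and desingularization \cite{bierstone-milman} to the graph of $T\dashrightarrow V^\nu$ to get $p\colon T^\dag\to T$ with $p_*\mathcal O_{T^\dag}\simeq\mathcal O_T$ and $q\colon T^\dag\to V^\nu$, and then the chain $\mathcal O_V\hookrightarrow\nu_*\mathcal O_{V^\nu}\hookrightarrow\nu_*q_*\mathcal O_{T^\dag}\simeq f_*\mathcal O_T\simeq\mathcal O_V$ forces $\nu_*\mathcal O_{V^\nu}\simeq\mathcal O_V$, i.e.\ normality. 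If you want to keep your qlc-flavored presentation, you would first have to prove these adjunction-type statements in the analytic category, which amounts to reproducing exactly this argument.
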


Theorem \ref{c-thm1.8} is new for complex analytic log canonical 
pairs although it is well known when $(X, \Delta)$ is algebraic. 
It will be useful for the study of complex analytic 
log canonical singularities (see also \cite{fujino-acc}). 

Theorem \ref{c-thm1.2} is a starting point of the 
minimal model program of log canonical pairs 
for projective morphisms between complex analytic 
spaces. We can formulate the minimal 
model theory of log canonical pairs for projective 
morphisms between complex analytic spaces by 
using Theorem \ref{c-thm1.2} as in the algebraic case. 
On the other hand, 
one of the main goals of the minimal model 
theory for projective morphisms between complex analytic spaces is the 
following conjecture. 

\begin{conj}[Finite generation]\label{c-conj1.9}
Let $\pi\colon X\to Y$ be a projective morphism of 
complex analytic spaces and let $\Delta$ be an 
effective $\mathbb Q$-divisor on $X$ 
such that $(X, \Delta)$ is log canonical. 
Then 
\begin{equation*}
R(X/Y, K_X+\Delta):=\bigoplus _{m\in \mathbb N}\pi_*\mathcal O_X
(\lfloor m(K_X+\Delta)\rfloor)
\end{equation*} 
is a locally finitely generated graded $\mathcal O_Y$-algebra. 
\end{conj}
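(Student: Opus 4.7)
The plan is to approach Conjecture \ref{c-conj1.9} by running a relative minimal model program with scaling on $(X, \Delta)$ and reducing finite generation to an abundance-type statement on the output log minimal model, following the algebraic template in which log canonical finite generation is effectively equivalent to log abundance plus termination of flips.

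First, since finite generation of $R(X/Y, K_X+\Delta)$ is a local question on $Y$, I would fix an arbitrary point $y\in Y$ and apply the Standard setting \ref{c-say1.1} to replace $Y$ by a relatively compact Stein open neighborhood $U$ of $y$ over which $\Supp \Delta$ has only finitely many irreducible components and every $\mathbb{R}$-Cartier divisor in sight is a finite $\mathbb{R}$-linear combination of Cartier divisors. By Remark \ref{c-rem1.6}(i) and Theorem \ref{c-thm11.10}, $\dim_{\mathbb R} N^1(X/U; \{y\})<\infty$, putting us in the hypothesis of Theorem \ref{c-thm1.2}. Next, I would try to run the $(K_X+\Delta)$-MMP over $U$ with scaling of a $\pi$-ample $\mathbb R$-line bundle $\mathcal H$, using Theorem \ref{c-thm1.2}(7) at each stage to produce the extremal ray $R$ that computes the nef threshold $\lambda$, and Theorem \ref{c-thm1.2}(4) to build the corresponding contraction. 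If every step is either divisorial or followed by the requisite log flip, and the process terminates with a log minimal model $f\colon (X,\Delta)\dashrightarrow (X',\Delta')$ where $K_{X'}+\Delta'$ is nef over $U$, then because log flips and divisorial contractions do not change the canonical ring (the pushforward along each step identifies the relevant reflexive sheaves), finite generation of $R(X/U, K_X+\Delta)$ is reduced to finite generation of $R(X'/U, K_{X'}+\Delta')$ for a nef lc pair.

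At that point I would invoke the basepoint-free theorem for $\mathbb R$-Cartier divisors (Theorem \ref{c-thm15.1}) to upgrade nefness to $\pi$-semiampleness. Given semiampleness, local finite generation follows from the standard argument: $K_{X'}+\Delta'$ is a $\mathbb{Q}$-pullback from a relatively projective $\mathcal O_U$-scheme, whose section ring is a locally finitely generated graded $\mathcal O_U$-algebra by \v{C}ech/Stein-type relative projectivity, and the Veronese subring it induces on $X'$ is what we want.

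The hard part — and what keeps Conjecture \ref{c-conj1.9} genuinely open — is that every single link of this chain except the first is itself a major conjecture in the lc setting. Existence and termination of lc flips are unknown even algebraically; the lc case cannot be accessed by $\mathrm{klt}$ approximation because finite generation is not stable under $\mathbb R$-perturbation of the boundary; and Theorem \ref{c-thm15.1} only produces semiampleness under a bigness-type hypothesis, so for non-big $K_X+\Delta$ one needs log abundance. Thus any serious attack on Conjecture \ref{c-conj1.9} would have to either (a) establish the relative lc-MMP with termination and lc-abundance in the analytic category, transferring recent algebraic partial results, or (b) bypass the MMP entirely and prove finite generation directly, for instance by analytic generalizations of the Cascini--Lazi\'c or Lyu--Murayama approaches referenced in the introduction. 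I would expect the abundance step to be the ultimate obstacle.
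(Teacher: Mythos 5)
This statement is Conjecture \ref{c-conj1.9}, not a theorem: the paper offers no proof of it, and indeed explicitly remarks that it is solved only in the kawamata log terminal case (in \cite{fujino-minimal}) and remains widely open even when $\pi\colon X\to Y$ is algebraic. So there is no proof in the paper to compare yours against, and your write-up, to its credit, does not pretend to supply one: it is a strategy outline whose essential links --- existence and termination of lc flips over a Stein base, and lc abundance (needed because Theorem \ref{c-thm15.1} only applies when $aD-(K_X+\Delta)$ is relatively ample, which for $D=K_{X'}+\Delta'$ amounts to a positivity hypothesis one does not have) --- are themselves open problems. Your reduction steps that \emph{are} available in the paper are correctly identified: localizing over a Stein neighborhood as in \ref{c-say1.1}, finiteness of $N^1(X/Y;W)$ via Remark \ref{c-rem1.6} and Theorem \ref{c-thm11.10}, and the cone/contraction machinery of Theorem \ref{c-thm1.2}(4) and (7) for running an MMP with ample scaling; this is exactly the role the paper assigns to Theorem \ref{c-thm1.2} as a ``starting point'' of the lc minimal model program. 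One small caution: your aside that the klt case cannot be accessed ``because finite generation is not stable under $\mathbb R$-perturbation'' is not quite the right diagnosis --- the issue is that perturbing $\Delta$ to a klt boundary changes the divisorial ring itself (the lc centers and the relevant multiples), so the klt result of \cite{fujino-minimal} simply does not compute the ring you want; the instability is of the ring, not of finite generation as a property. In short: the statement is open, the paper proves nothing here, and your proposal is an accurate map of the obstructions rather than a proof.
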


We note that in \cite{fujino-minimal} Conjecture 
\ref{c-conj1.9} was already solved completely when $(X, \Delta)$ 
is kawamata log terminal. 
We also note that Conjecture \ref{c-conj1.9} is still widely open 
even when $\pi\colon X\to Y$ is algebraic (see \cite{fujino-gongyo}). 

\medskip 

The author first prepared a short manuscript which only 
explains how to modify arguments in \cite{fujino-fundamental}. 
Unfortunately, however, it seemed to be hard to read. 
Hence he made great efforts to make this paper as self-contained as 
possible except for the results established 
in \cite{fujino-analytic-vanishing} (see also \cite{fujino-vanishing-pja}). 
He sometimes repeats arguments in \cite{fujino-fundamental} and 
\cite{fujino-foundations}.  
Thus, some parts of this paper are very similar to 
those of \cite{fujino-fundamental} and \cite{fujino-foundations}. 

\begin{rem}[Quasi-log structures]\label{c-rem1.10}
By \cite[Theorems 1.1 and 1.2]{fujino-analytic-vanishing} (see 
Theorems \ref{c-thm5.5} and \ref{c-thm5.7}), 
we can formulate and discuss quasi-log structures on complex 
analytic spaces (see \cite[Chapter 6]{fujino-foundations}). 
Hence we can 
establish the cone and contraction theorem 
for highly singular complex analytic spaces. 
However, in this paper, we will only discuss the cone 
and contraction theorem of normal 
pairs (see Theorem \ref{c-thm1.2}). 
This is because Theorem \ref{c-thm1.2} is sufficient 
for many geometric applications and it is not so easy psychologically 
to treat reducible complex analytic spaces. 
We will describe the theory of quasi-log complex analytic spaces 
in \cite{fujino-quasi-log-analytic}. 
\end{rem}

We look at the organization of this paper. In Section \ref{c-sec2}, 
we collect some necessary definitions and results for the reader's 
convenience. Since we have to work in the complex analytic setting, 
some of them become much more subtle than the usual ones in the 
algebraic setting. 
In Section \ref{c-sec3}, 
we collect some basic properties of 
relatively nef and relatively 
ample $\mathbb R$-line bundles for the sake of completeness. 
They are indispensable in subsequent sections. 
In Section \ref{c-sec4}, we define non-lc ideal sheaves 
in the complex analytic setting and prove some elementary lemmas. 
In Section \ref{c-sec5}, we quickly recall the main result of 
\cite{fujino-analytic-vanishing} without proof. 
Note that the proof of the main result in \cite{fujino-analytic-vanishing} 
depends on Saito's theory of mixed Hodge modules and 
Takegoshi's generalization of Koll\'ar's 
torsion-free and vanishing theorem. 
However, we can make the main result of \cite{fujino-analytic-vanishing} 
free from the theory of mixed Hodge modules by \cite{fujino-fujisawa}. 
In Section \ref{c-sec6}, we prepare some necessary vanishing theorems 
as applications of the vanishing result explained in 
Section \ref{c-sec5}. 
In Section \ref{c-sec7}, we establish the basic properties of 
log canonical centers. They are new and very important in the theory 
of minimal models in the complex analytic setting. 
In Section \ref{c-sec8}, 
we prove the non-vanishing theorem 
in the complex analytic setting with the aid of the theory of 
quasi-log schemes. 
Note that Lemma \ref{c-lem8.2} is new and will be useful for the study of 
quasi-log structures. 
In Section \ref{c-sec9}, we establish the basepoint-free theorem 
for normal pairs in the complex analytic setting by using the 
non-vanishing theorem proved in Section \ref{c-sec8}. 
It is well known and is not difficult to prove for kawamata log 
terminal pairs. 
In Sections \ref{c-sec10}, 
we prove the rationality theorem 
for normal pairs in the complex analytic setting. 
The proof is essentially the same as 
the one for algebraic varieties explained in \cite{fujino-fundamental}. 
In Section \ref{c-sec11}, we define Kleiman--Mori cones 
for projective morphisms of complex analytic spaces. 
In Subsection \ref{c-subsec11.1}, we briefly explain Nakayama's finiteness 
without proof for 
the reader's convenience. Note that in this paper 
we do not need it except in the proof of Corollary \ref{c-cor1.5}. 
Then, in Section \ref{c-sec12}, we prove the cone theorem 
for normal pairs in the complex analytic setting. 
The results in Section \ref{c-sec12} are easy consequences of the 
basepoint-free theorem in Section \ref{c-sec9} and the rationality theorem 
in Section \ref{c-sec10}. 
In Subsection \ref{c-subsec12.1}, we prove Theorem \ref{c-thm1.4} 
as an easy application of the vanishing theorem for 
projective quasi-log schemes. 
In Section \ref{c-sec13}, we discuss lengths of extremal rational curves. 
The result in Section \ref{c-sec13} seems to be indispensable for 
the minimal model program with scaling. 
Here, we use the framework of quasi-log schemes. 
In Section \ref{c-sec14}, we discuss 
Shokurov's polytopes and some applications. 
The results in this section are well known and have 
already played an important role in the usual algebraic setting. 
In Section \ref{c-sec15}, 
we prove the basepoint-free theorem of log canonical 
pairs for $\mathbb R$-Cartier 
divisors. 
It can be seen as an application of the cone theorem. 
In Section \ref{c-sec16}, which is the final section, 
we prove the main result of this paper, that is, 
the cone and contraction theorem of 
normal pairs for projective morphisms between 
complex analytic spaces:~Theorem \ref{c-thm1.2}. 

\begin{ack}\label{c-ack}
The author was partially 
supported by JSPS KAKENHI Grant Numbers 
JP19H01787, JP20H00111, JP21H00974, JP21H04994. 
He thanks Professor Noboru Nakayama very much for useful 
suggestions and 
answering his questions. 
He also would like to thank Professors Taro Fujisawa,  
Shigefumi Mori, and Morihiko Saito very much. 
\end{ack}

In this paper, every complex analytic space is assumed to be 
{\em{Hausdorff}} and {\em{second-countable}}. 
An irreducible and reduced complex analytic space 
is called a {\em{complex variety}}. 
We will freely use the standard notation in \cite{fujino-fundamental}, 
\cite{fujino-foundations}, \cite{fujino-minimal}, and so on. 
We will also freely use the basic results on complex analytic geometry 
in \cite{banica} and \cite{fischer}. 
For the minimal model program for projective 
morphisms between complex analytic spaces, 
see \cite{nakayama1}, \cite{nakayama2}, and \cite{fujino-minimal}. 
For the traditional framework of 
the minimal model program, see \cite{kmm} and \cite{kollar-mori}. 
We note that $\mathbb Z$, $\mathbb Q$, 
and $\mathbb R$ denote the set of {\em{integers}}, 
{\em{rational numbers}}, 
and {\em{real numbers}}, respectively. We also note 
that $\mathbb N$ (resp.~$\mathbb Z_{>0}$) is the set of 
{\em{non-negative integers}} (resp.~{\em{positive integers}}). 

\section{Preliminaries}\label{c-sec2}

In this section, we collect basic definitions and results necessary for 
this paper. For the details, see \cite{fujino-fundamental}, 
\cite{fujino-foundations}, \cite{fujino-minimal}, and so on. 
Since we are working in the complex analytic setting, 
some of them become subtle. 

Let us start with the definition of {\em{singularities of pairs}}, which is 
indispensable in the theory of minimal models. 

\begin{say}[Singularities of pairs, log canonical 
centers, and non-lc loci]\label{c-say2.1}
We consider a normal complex variety 
$X$. Let $X_{\mathrm{sm}}$ denote the smooth locus 
of $X$.  
Then the {\em{canonical 
sheaf}} $\omega_X$ of $X$ is the unique reflexive sheaf 
whose restriction to $X_{\mathrm{sm}}$ is isomorphic 
to the sheaf $\Omega^n_{X_{\mathrm{sm}}}$, 
where $n=\dim X$. 
Let $\Delta$ be an $\mathbb R$-divisor on $X$, 
that is, $\Delta$ is a locally finite $\mathbb R$-linear combination 
of prime divisors on $X$. 
We say that $K_X+\Delta$ is {\em{$\mathbb R$-Cartier}} 
at $x\in X$ if there exist an open neighborhood 
$U_x$ of $x$ and 
a Weil divisor $K_{U_x}$ on $U_x$ with $\mathcal O_{U_x}
(K_{U_x})\simeq \omega_X|_{U_x}$ such that 
$K_{U_x}+\Delta|_{U_x}$ is $\mathbb R$-Cartier, 
that is, $K_{U_x}+\Delta|_{U_x}$ is 
a finite $\mathbb R$-linear combination of Cartier divisors on 
$U_x$. For any subset $L$ of $X$, 
we say that $K_X+\Delta$ is $\mathbb R$-Cartier at $L$ if 
it is $\mathbb R$-Cartier at any point $x$ of $L$. 
We simply say that $K_X+\Delta$ is $\mathbb R$-Cartier 
when $K_X+\Delta$ is $\mathbb R$-Cartier at any point 
$x\in X$. Unfortunately, however, we can not always define 
$K_X$ globally with $\mathcal O_X(K_X)\simeq 
\omega_X$. 
In general, it only exists locally on $X$. We usually use the 
symbol $K_X$ as a formal divisor class 
with an isomorphism 
$\mathcal O_X(K_X)\simeq \omega_X$ and call it 
the {\em{canonical divisor}} of $X$ if there is no danger of 
confusion. 

Let $f\colon Y\to X$ be a proper bimeromorphic morphism 
from a normal complex variety $Y$. 
Suppose that $K_X+\Delta$ is $\mathbb R$-Cartier 
in the above sense. 
We take a small Stein open subset $U$ of $X$ where 
$K_U+\Delta|_U$ is a well-defined 
$\mathbb R$-Cartier $\mathbb R$-divisor on $U$. 
In this situation, we can define $K_{f^{-1}(U)}$ and $K_U$ such that 
$f_*K_{f^{-1}(U)}=K_U$. 
Then we can write 
\begin{equation*}
K_{f^{-1}(U)}=f^*(K_U+ 
\Delta|_U)+E_U
\end{equation*}
as usual. Note that $E_U$ is a well-defined 
$\mathbb R$-divisor on $f^{-1}(U)$ such that 
$f_*E_U=\Delta|_U$. 
Then we have the following formula 
\begin{equation*}
K_Y=f^*(K_X+\Delta)+
\sum _Ea(E, X, \Delta)E
\end{equation*} 
as in the algebraic case. 
We note that $\sum _Ea(E, X, \Delta)E$ is a globally well-defined 
$\mathbb R$-divisor on $Y$ such that 
$\left(\sum_E a(E, X, \Delta)E\right)|_{f^{-1}(U)}=E_U$ 
although $K_X$ and $K_Y$ are well defined 
only locally. 

If $\Delta$ is a {\em{boundary $\mathbb R$-divisor}}, that is, 
all the coefficients of $\Delta$ are in $[0, 1]\cap \mathbb R$, and 
$a(E, X, \Delta)\geq -1$ holds for any 
$f\colon Y\to X$ and every $f$-exceptional divisor $E$, 
then $(X, \Delta)$ is called a {\em{log canonical}} pair. 
If $(X, \Delta)$ is log canonical and 
$a(E, X, \Delta)>-1$ for any $f\colon Y\to X$ and 
every $f$-exceptional divisor $E$, 
then $(X, \Delta)$ is called a {\em{purely log terminal}} pair. 
If $(X, \Delta)$ is purely log terminal and $\lfloor 
\Delta\rfloor =0$, that is, 
the coefficients of $\Delta$ are in 
$[0, 1)\cap \mathbb R$, then $(X, \Delta)$ is called a 
{\em{kawamata log terminal}} pair. 
When $\Delta=0$ and $a(E, X, 0)\geq 0$ (resp.~$>0$) 
for 
any $f\colon Y\to X$ and 
every $f$-exceptional divisor $E$, 
we simply say that $X$ has only {\em{canonical 
singularities}} (resp.~{\em{terminal singularities}}). 
In this paper, we will only use log canonical pairs and kawamata 
log terminal pairs. 

More generally, let $X$ be a normal complex variety and let $\Delta$ be an 
effective $\mathbb R$-divisor 
on $X$. 
We say that $(X, \Delta)$ is {\em{log canonical}} 
(resp.~{\em{kawamata log terminal}}) {\em{at $x\in X$}} 
if there exists an open neighborhood $U_x$ of $x$ such that 
$(U_x, \Delta|_{U_x})$ is a log canonical 
pair (resp.~kawamata log terminal pair). 
Let $L$ be any subset of $X$. 
We say that $(X, \Delta)$ is {\em{log canonical}} 
(resp.~{\em{kawamata log terminal}}) {\em{at $L$}} if 
$(X, \Delta)$ is log canonical 
(resp.~kawamata log terminal) at any point $x$ of $L$. 
We note that $(X, \Delta)$ is log 
canonical (resp.~kawamata log terminal) in the 
above sense if and only if 
$(X, \Delta)$ is log canonical 
(resp.~kawamata log terminal) at any point $x$ of $X$. 

Let $X$ be a normal complex variety and let $\Delta$ be an 
effective $\mathbb R$-divisor on $X$ such that 
$K_X+\Delta$ is $\mathbb R$-Cartier. 
The image of $E$ with $a(E, X, \Delta)= -1$ for some $f\colon Y\to X$ 
such that 
$(X, \Delta)$ is log canonical around general points of $f(E)$ is 
called a {\em{log canonical center}} of $(X, \Delta)$. 
The {\em{non-lc locus}} of $(X, \Delta)$, 
denoted by $\Nlc(X, \Delta)$, is the smallest closed 
subset $Z$ of $X$ such that the complement 
$(X\setminus Z, \Delta|_{X\setminus Z})$ is log canonical. 
We can define a 
natural complex analytic space structure 
on $\Nlc(X, \Delta)$ by the non-lc ideal 
sheaf $\mathcal J_{\NLC}(X, \Delta)$ of $(X, \Delta)$. 
For the definition of $\mathcal J_{\NLC}(X, \Delta)$, see 
Section \ref{c-sec4} below. 

\medskip 

The above definition is compatible with the usual definition 
for algebraic varieties. 

\begin{rem}\label{c-rem2.2}
Let $(X, \Delta)$ be a pair consisting of a normal algebraic variety $X$ and 
an effective $\mathbb R$-divisor on $X$ such that $K_X+\Delta$ is 
$\mathbb R$-Cartier. 
Then $(X, \Delta)$ is kawamata log terminal (resp.~log canonical) 
in the usual sense (see 
\cite{fujino-fundamental}, 
\cite{fujino-foundations}, and so on) 
if and only if $(X^{\mathrm{an}}, \Delta^{\mathrm{an}})$ 
is kawamata log terminal (resp.~log canonical) in the above sense, 
where $X^{\mathrm{an}}$ is the complex analytic space 
naturally associated to $X$ and let $\Delta^{\mathrm{an}}$ be 
the $\mathbb R$-divisor on $X^{\mathrm{an}}$ associated to $\Delta$. 
\end{rem}

The following lemma is well known for 
algebraic varieties. 

\begin{lem}\label{c-lem2.3}
Let $X$ be a normal complex variety and let $\Delta$ be 
an effective $\mathbb R$-divisor 
on $X$ such that $K_X+\Delta$ is $\mathbb R$-Cartier. 
Let $P$ be a point of $X$ and let $D_i$ be an effective 
Cartier divisor on $X$ with $P\in \Supp D_i$ for every $i$. 
If $\left(X, \Delta+\sum _{i=1}^k D_i\right)$ 
is log canonical at $P$, then $k\leq \dim X$ 
holds. 
\end{lem}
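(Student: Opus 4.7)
The plan is to argue by induction on $n=\dim X$, adapting the classical algebraic argument.

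For the base case $n=1$, $X$ is a smooth curve at $P$ by normality. Each effective Cartier divisor $D_i$ through $P$ contributes at least $1$ to the coefficient of $P$ in $\Delta+\sum_{i=1}^{k}D_i$, and log canonicity on a smooth curve requires this coefficient to be at most $1$. Hence $k\leq 1=\dim X$.

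For the induction step $n\geq 2$, assume the lemma in dimensions less than $n$. After replacing $D_1$ by one of its prime components through $P$ (which remains Cartier in a sufficiently small open neighborhood of $P$), I may assume $D_1$ is prime. Let $\nu\colon D_1^{\nu}\to D_1$ denote the normalization and pick a preimage $P'\in\nu^{-1}(P)$. Adjunction for log canonical pairs produces an effective $\mathbb{R}$-divisor $\Theta$ on $D_1^{\nu}$ (the \emph{different} of $\Delta+\sum_{i\geq 2}D_i$) such that $(D_1^{\nu},\Theta)$ is log canonical at $P'$ and
\begin{equation*}
\Theta\ \geq\ \sum_{i=2}^{k}\nu^{*}D_i|_{D_1^{\nu}},
\end{equation*}
where each $\nu^{*}D_i|_{D_1^{\nu}}$ is an effective Cartier divisor on $D_1^{\nu}$ passing through $P'$. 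Setting $\Theta_{0}=\Theta-\sum_{i=2}^{k}\nu^{*}D_i|_{D_1^{\nu}}\geq 0$ and applying the inductive hypothesis to $(D_1^{\nu},\Theta_{0})$ at $P'$ with the $k-1$ Cartier divisors $\nu^{*}D_i|_{D_1^{\nu}}$ for $i\geq 2$ yields $k-1\leq\dim D_1^{\nu}=n-1$, hence $k\leq n$.

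The main obstacle is establishing adjunction for log canonical (rather than merely purely log terminal) pairs in the complex analytic setting, including the construction of the different on the normalization of the possibly non-normal divisor $D_1$. In the algebraic category this is standard but nontrivial; the transfer to the analytic category rests on Hironaka's resolution of singularities and a local discrepancy calculation on a log resolution of $(X,\Delta+D_1+\sum_{i\geq 2}D_i)$, both available in the framework of the paper. A more elementary alternative would be to exhibit a single exceptional valuation $E$ over $P$ with $\operatorname{ord}_{E}(f^{*}D_i)\geq 1$ for each $i$ and $a(E,X,\Delta)\leq n-1$, so that log canonicity directly gives $-1\leq n-1-k$; this is immediate when $X$ is smooth at $P$ (blow up $P$ and use that the exceptional divisor has discrepancy $n-1$), but the construction becomes delicate when $(X,\Delta)$ has terminal-type singularities at $P$, where all natural exceptional divisors over $P$ have large positive discrepancy. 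For this reason the adjunction route is cleaner in general.
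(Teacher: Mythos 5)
Your overall strategy---induction on dimension via adjunction to the normalization of a coefficient-one component---is sound, and there is no real divergence of method to compare: the paper gives no argument of its own for Lemma \ref{c-lem2.3}, deferring to the standard algebraic proof (\cite[Lemma 13.2]{fujino-fundamental}), and your induction-plus-adjunction is one standard way that argument is run. The base case and the final counting in the inductive step are fine.

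There is, however, one genuinely false step: the parenthetical claim that a prime component of $D_1$ through $P$ ``remains Cartier in a sufficiently small open neighborhood of $P$.'' This fails already for $X=\{xy=z^2\}\subset \mathbb C^3$ with $P$ the vertex and $D_1=\mathrm{div}(z)|_X=L_1+L_2$: each ruling $L_i$ is a prime component through $P$ but is not Cartier at $P$ (only $2L_i$ is), and $(X,L_1+L_2)$ is log canonical at $P$, so nothing in your hypotheses excludes this situation. Because of this, after ``replacing $D_1$ by $S$'' you are not entitled to form the different of $\Delta+\sum_{i\ge 2}D_i$ along $S$: discarding $D_1-S$ presupposes that $K_X+\Delta+S+\sum_{i\ge 2}D_i$ is $\mathbb R$-Cartier, which is exactly what is unknown once $S$ fails to be ($\mathbb R$-)Cartier. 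The repair is standard and stays inside your argument: do not replace $D_1$. Log canonicity at $P$ forces any prime component $S$ of $D_1$ through $P$ to occur with total coefficient exactly one in $\Delta+\sum_i D_i$ near $P$; write $\Delta+\sum_i D_i=S+B$ with $B\ge 0$ not containing $S$ near $P$, and apply adjunction to $K_X+S+B$, which is $\mathbb R$-Cartier by hypothesis. Since the $D_i$ with $i\ge 2$ are Cartier and do not contain $S$ near $P$, one has $\mathrm{Diff}_{S^\nu}(B)=\mathrm{Diff}_{S^\nu}\bigl(B-\sum_{i\ge 2}D_i\bigr)+\sum_{i\ge 2}\nu^*(D_i|_S)$ with the first term effective, so your inequality $\Theta\ge \sum_{i\ge 2}\nu^*(D_i|_S)$ and the induction go through; each $\nu^*(D_i|_S)$ is an effective Cartier divisor on $S^\nu$ passing through every point of $\nu^{-1}(P)$ because $P\in S\cap \Supp D_i$.

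Two smaller remarks. The input you defer to---existence, effectivity, and lc-preservation of the different for coefficient-one components in the analytic category---is a purely local log-resolution computation available via \cite{bierstone-milman}, so invoking it is consistent with the paper's own decision to defer the whole lemma to the algebraic case. And your caution about the ``single valuation'' shortcut is warranted: producing $E$ over $P$ with $a(E,X,\Delta)\le \dim X-1$ amounts to the bound on minimal log discrepancies at a closed point, which is not elementary for singular $X$ and is usually deduced from statements like this lemma rather than used to prove them.
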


We omit the proof of Lemma \ref{c-lem2.3} here since 
the usual proof for algebraic varieties can work without 
any changes (see, for example, \cite[Lemma 13.2]{fujino-fundamental}). 
We will use Lemma \ref{c-lem2.3} in order to create a new log 
canonical center. 
\end{say}

In this paper, we sometimes implicitly use Serre's GAGA. 

\begin{say}[Serre's GAGA]\label{c-say2.4} 
Let $\pi\colon X\to Y$ be a projective morphism 
of complex analytic spaces and let $F$ be a fiber 
of $\pi\colon X\to Y$. 
Then $F$ is projective. 
Hence we can apply various results of projective schemes 
to $F$ with the aid of Serre's GAGA (see \cite{serre}). 
\end{say} 

In the theory of minimal models, we need the notion of 
{\em{$\mathbb R$-line bundles}} 
and {\em{$\mathbb Q$-line bundles}}. 

\begin{say}[Line bundles, 
$\mathbb R$-line bundles, and $\mathbb Q$-line bundles]\label{c-say2.5}
Let $X$ be a complex analytic space and let $\Pic(X)$ 
denote  
the group of line bundles on $X$, that is, the {\em{Picard group}} of $X$. 
An element of $\Pic(X)\otimes_{\mathbb Z}\mathbb R$ 
(resp.~$\Pic(X)\otimes _{\mathbb Z}\mathbb Q$) 
is called an {\em{$\mathbb R$-line bundle}} (resp.~a 
{\em{$\mathbb Q$-line bundle}}) 
on $X$. In this paper, we usually write the group law of 
$\Pic(X)\otimes _{\mathbb Z} \mathbb R$ additively for simplicity of notation. 
Hence we sometimes use $m\mathcal L$ to denote $\mathcal L^{\otimes m}$ 
for $\mathcal L\in \Pic(X)$ and $m\in \mathbb Z$.  
\end{say}

We also need the notion of {\em{$\mathbb R$-divisors}} and 
{\em{$\mathbb Q$-divisors}}. 

\begin{say}[Divisors, $\mathbb R$-divisors, 
and $\mathbb Q$-divisors]\label{c-say2.6}
Let $X$ be a reduced equidimensional complex analytic space. 
A {\em{prime divisor}} on $X$ is an irreducible 
and reduced closed analytic subspace of codimension one. 
An {\em{$\mathbb R$-divisor}} $D$ on $X$ is a formal 
sum 
\begin{equation*}
D=\sum _i a_i D_i, 
\end{equation*} 
where $D_i$ is a prime divisor on $X$ with 
$D_i\ne D_j$ for $i\ne j$, 
$a_i\in \mathbb R$ for every $i$, and the {\em{support}} 
\begin{equation*}
\Supp D:=\bigcup _{a_i\ne 0}D_i
\end{equation*} 
is a closed analytic subset of $X$. 
In other words, the formal sum $\sum _i a_i 
D_i$ is locally finite. 
If $a_i\in \mathbb Z$ (resp.~$a_i\in 
\mathbb Q$) for 
every $i$, then $D$ is called 
a {\em{divisor}} (resp.~{\em{$\mathbb Q$-divisor}}) on $X$. 
Note that a divisor is sometimes called 
an {\em{integral Weil divisor}} in 
order to emphasize the condition that $a_i\in \mathbb Z$ for every $i$. 
If $0\leq a_i\leq 1$ (resp.~$a_i\leq 1$) 
holds for every $i$, then 
an $\mathbb R$-divisor $D$ is called a {\em{boundary}} 
(resp.~{\em{subboundary}}) $\mathbb R$-divisor. 

Let $D=\sum _i a_i D_i$ be an $\mathbb R$-divisor 
on $X$ such that $D_i$ is a prime divisor 
for every $i$ with $D_i\ne D_j$ for $i\ne j$. 
The {\em{round-down}} $\lfloor D\rfloor$ of $D$ is 
defined to be the divisor 
\begin{equation*}
\lfloor D\rfloor =\sum _i \lfloor a_i\rfloor D_i, 
\end{equation*} 
where $\lfloor x\rfloor$ is 
the integer defined by $x-1<\lfloor x\rfloor \leq x$ 
for every real number $x$. 
The {\em{round-up}} and the 
{\em{fractional part}} of $D$ are defined to be 
\begin{equation*}
\lceil D \rceil :=-\lfloor -D\rfloor, \quad 
\text{and} \quad \{D\}:=D-\lfloor D\rfloor, 
\end{equation*} 
respectively. We put 
\begin{equation*}
D^{=1}:=\sum _{a_i=1}D_i, \quad 
D^{<1}:=\sum _{a_i<1} a_i D_i, \quad \text{and} \quad 
D^{>1}:=\sum _{a_i>1}a_i D_i. 
\end{equation*}

Let $D$ be an $\mathbb R$-divisor on $X$ 
and let $x$ be a point of $X$. 
If $D$ is written as a finite $\mathbb R$-linear 
(resp.~$\mathbb Q$-linear) combination of Cartier 
divisors on some open 
neighborhood of $x$, 
then $D$ is said to be {\em{$\mathbb R$-Cartier at $x$}} 
(resp.~{\em{$\mathbb Q$-Cartier at $x$}}). 
If $D$ is $\mathbb R$-Cartier 
(resp.~$\mathbb Q$-Cartier) at $x$ for every $x\in X$, 
then $D$ is said to be {\em{$\mathbb R$-Cartier}} 
(resp.~{\em{$\mathbb Q$-Cartier}}). 
More generally, for any subset $L$ of $X$, 
if $D$ is $\mathbb R$-Cartier (resp.~$\mathbb Q$-Cartier) 
at $x$ for every $x\in L$, then $D$ is said to be 
{\em{$\mathbb R$-Cartier}} (resp.~{\em{$\mathbb Q$-Cartier}}) 
{\em{at}} $L$. 
Note that a $\mathbb Q$-Cartier $\mathbb R$-divisor 
$D$ is automatically a $\mathbb Q$-Cartier 
$\mathbb Q$-divisor by definition. 
If $D$ is a finite $\mathbb R$-linear (resp.~$\mathbb Q$-linear) 
combination of Cartier divisors on $X$, 
then we sometimes say that $D$ 
is a {\em{globally $\mathbb R$-Cartier $\mathbb R$-divisor}} 
(resp.~{\em{globally $\mathbb Q$-Cartier $\mathbb Q$-divisor}}).  

Two $\mathbb R$-divisors $D_1$ and $D_2$ are said to 
be {\em{linearly equivalent}} if 
$D_1-D_2$ is a principal Cartier divisor. 
The linear equivalence is denoted by $D_1\sim D_2$. 
Two $\mathbb R$-divisors $D_1$ and $D_2$ are 
said to be {\em{$\mathbb R$-linearly equivalent}} 
(resp.~{\em{$\mathbb Q$-linearly equivalent}}) 
if $D_1-D_2$ is a {\em{finite}} $\mathbb R$-linear 
(resp.~$\mathbb Q$-linear) combination 
of principal Cartier divisors. 
When $D_1$ is $\mathbb R$-linearly (resp.~$\mathbb Q$-linearly) 
equivalent to 
$D_2$, we write $D_1\sim _{\mathbb R}D_2$ 
(resp.~$D_1\sim _{\mathbb Q}D_2$). 

\begin{ex}\label{c-ex2.7}
Let $X$ be a non-compact Riemann surface and let 
$\{P_k\}_{k=1}^\infty$ be a 
set of mutually distinct discrete points of 
$X$. 
We put $D:=\sum _{k=1}^\infty \frac{1}{k} P_k$. 
Then $D$ is obviously a $\mathbb Q$-Cartier 
$\mathbb Q$-divisor 
on $X$. 
However, $D$ is not a finite $\mathbb Q$-linear 
combination of Cartier divisors on $X$.  
\end{ex}
We note that in this paper we can almost always assume that 
$\Supp D$ has only finitely many irreducible components. 
\end{say}

\begin{say}[Hybrids of $\mathbb R$-line bundles 
and $\mathbb R$-Cartier divisors]\label{c-say2.8} 
In this paper, we usually treat 
hybrids of $\mathbb R$-line bundles and $\mathbb R$-Cartier divisors. 

Let $\pi\colon X\to Y$ be a projective morphism 
between complex analytic spaces and let $W$ be a compact 
subset of $Y$. 
Let $A$ and $B$ be $\mathbb R$-Cartier divisors 
on $X$ and let $\mathcal L$ and $\mathcal M$ be 
$\mathbb R$-line bundles on $X$. 

We sometimes say that  
\begin{equation*}
\mathcal L+A\sim _{\mathbb R} \mathcal M+B
\end{equation*} holds 
over some open neighborhood $U$ of $W$. 
This means: 
\begin{itemize}
\item[(i)] We implicitly assume that 
$A|_{\pi^{-1}(U)}$ and $B|_{\pi^{-1}(U)}$ are finite 
$\mathbb R$-linear combinations of 
Cartier divisors on $\pi^{-1}(U)$. 
Thus we can obtain $\mathbb R$-line bundles 
$\mathcal A$ and $\mathcal B$ naturally associated 
to $A|_{\pi^{-1}(U)}$ and $B|_{\pi^{-1}(U)}$, respectively. 
\item[(ii)] In $\Pic(\pi^{-1}(U))\otimes _{\mathbb Z}\mathbb R$, 
the following equality 
\begin{equation*}
\mathcal L|_{\pi^{-1}(U)}+\mathcal A=\mathcal M|_{\pi^{-1}(U)}+\mathcal B
\end{equation*} 
holds. 
\end{itemize}
If $X$ is a normal complex variety and 
$U$ is a relatively compact open subset of $Y$, then 
$A|_{\pi^{-1}(U)}$ and $B|_{\pi^{-1}(U)}$ are automatically finite 
$\mathbb R$-linear combinations of 
Cartier divisors on $\pi^{-1}(U)$. 
Therefore, (i) is harmless for applications. 

Similarly, we say that $\mathcal L+A$ is $\pi$-ample 
over some open neighborhood $U$ of $W$ if 
$A|_{\pi^{-1}(U)}$ is a finite $\mathbb R$-linear 
combination of Cartier divisors on $\pi^{-1}(U)$, 
$\mathcal A$ is the $\mathbb R$-line bundle 
naturally associated to $A|_{\pi^{-1}(U)}$, 
and $\mathcal L|_{\pi^{-1}(U)}+\mathcal A$ is $\pi$-ample 
over $U$, that is, $\mathcal L|_{\pi^{-1}(U)}+\mathcal A$ 
is a finite positive $\mathbb R$-linear combination 
of $\pi$-ample line bundles on $\pi^{-1}(U)$. 
\end{say}

\begin{say}\label{c-say2.9}
Let $\pi\colon X\to Y$ be a projective morphism 
of complex analytic spaces such that 
$X$ is a normal complex variety and 
let $\Delta$ be an $\mathbb R$-divisor 
on $X$ such that 
$K_X+\Delta$ is $\mathbb R$-Cartier. 
Let $y$ be an arbitrary point of $Y$ and let $U_y$ be 
any relatively compact Stein open neighborhood of 
$y\in Y$. 
In this case, we can always find a Weil divisor 
$K_{\pi^{-1}(U_y)}$ on $\pi^{-1}(U_y)$ such that 
$\mathcal O_{\pi^{-1}(U_y)}(K_{\pi^{-1}(U_y)})\simeq 
\omega_{\pi^{-1}(U_y)}$ holds since $\pi$ is projective 
and $U_y$ is Stein. 
Since $U_y$ is relatively compact, 
$\Supp \Delta|_{\pi^{-1}(U_y)}$ has only finitely many irreducible 
components. Thus, we can easily see that 
$K_{\pi^{-1}(U_y)}+\Delta|_{\pi^{-1}(U_y)}$ is a globally $\mathbb R$-Cartier 
$\mathbb R$-divisor on $\pi^{-1}(U_y)$. 
Moreover, for any $\mathbb R$-line bundle $\mathcal L$ on $X$, 
we can take a globally $\mathbb R$-Cartier $\mathbb R$-divisor 
$L$ on $\pi^{-1}(U_y)$ such that 
$\mathcal L|_{\pi^{-1}(U_y)}$ is the 
$\mathbb R$-line bundle naturally associated to $L$. 
\end{say}

In the theory of minimal models, 
we often use the 
following formulation. 
We will repeatedly use it in subsequent sections. 

\begin{say}\label{c-say2.10}
Let $X$ be a normal complex variety. 
A real vector space 
spanned by the prime divisors on $X$ is 
denoted by $\WDiv_{\mathbb R}(X)$, 
which has a canonical basis given 
by the prime divisors. 
Let $D$ be an element of $\WDiv_{\mathbb R}(X)$. 
Then the sup norm of $D$ with respect to 
this basis is denoted by $|\!|D|\!|$. 
Note that an $\mathbb R$-divisor $D$ on $X$ is an 
element of $\WDiv_{\mathbb R}(X)$ if and only if 
$\Supp D$ has only finitely many irreducible components. 

Let $V$ be 
a finite-dimensional affine subspace of 
$\WDiv_{\mathbb R} (X)$, which is defined over the 
rationals. 
We put 
\begin{equation*}
\mathcal R(V; x):=\{ \Delta\in V \mid {\text{$K_X+\Delta$ is 
$\mathbb R$-Cartier at $x$}}\}. 
\end{equation*} 
It is obvious that $\mathcal R(V; x)$ is an affine subspace of 
$V$. 
We take an arbitrary element $\Delta$ of $\mathcal R(V; x)$. 
Then $K_X+\Delta$ is $\mathbb R$-Cartier at $x$ by definition. 
Therefore, there exist a small open neighborhood 
$U_x$ of $x$ such that 
\begin{equation*}
K_{U_x}+\Delta|_{U_x}=\sum _{i=1}^k a_iD_i, 
\end{equation*} 
where $D_i$ is a Cartier divisor on $U_x$ and 
$a_i$ is a real number for every $i$. 
By this description, we can easily see that 
there exists an affine subspace $\mathcal T$ 
of $V$ defined over the rationals such that 
$\Delta\in \mathcal T\subset \mathcal R(V; x)$. 
Hence $\mathcal R(V; x)$ itself is an affine 
subspace of $V$ defined over the rationals. 
Let $L$ be a compact subset of $X$. 
We put 
\begin{equation*}
\mathcal R(V; L):=\{\Delta \in V\mid {\text{$K_X+\Delta$ 
is $\mathbb R$-Cartier at $L$}}\}. 
\end{equation*} 
Then the following equality 
\begin{equation*}
\mathcal R(V; L)=\bigcap _{x\in L} \mathcal R(V; x)
\end{equation*} 
obviously holds. 
Therefore, $\mathcal R(V; L)$ is an affine subspace 
of $V$ defined over the rationals. 
After shrinking $X$ around 
$L$ suitably, we may assume 
that $K_X+\Delta$ is $\mathbb R$-Cartier for 
every $\Delta\in \mathcal R(V; L)$ since 
$V$ is finite-dimensional and $L$ is compact. 
Let $\Theta$ be the union of the support of any element 
of $\mathcal R(V; L)$. 
By \cite[Theorem 13.2]{bierstone-milman}, 
after shrinking $X$ around $L$ suitably, 
we can construct a projective bimeromorphic 
morphism $f\colon Y\to X$ from a smooth 
complex analytic space $Y$ such that 
$\Exc(f)$ and $\Exc(f)\cup \Supp f^{-1}_*\Theta$ are 
simple normal crossing divisors on $Y$, where 
$\Exc(f)$ denotes the exceptional locus of $f\colon Y\to X$. 
Thus, for any $\Delta\in \mathcal R(V; L)$, 
we can write 
\begin{equation*}
K_Y+\Delta_Y:=f^*(K_X+\Delta) 
\end{equation*} 
such that $\Supp \Delta_Y$ is a simple normal crossing divisor 
on $Y$. In this situation, $(X, \Delta)$ is log canonical at 
$L$ if and only if $\Delta$ is effective at $L$ and 
the coefficients of $\Delta_Y$ are less than or equal to 
one over $L$. 
Hence, we can easily check that 
\begin{equation*}
\mathcal L(V; L):=\{\Delta\in V\, |\, 
\text{$K_X+\Delta$ is log 
canonical at $L$}\} 
\end{equation*} 
is a rational polytope contained in $\mathcal R(V; L)$. 
We can also check that there 
exists an open neighborhood $U$ of $L$ such that 
$(U, \Delta|_U)$ is log canonical 
for every $\Delta\in \mathcal L(V; L)$. 
\end{say}

\begin{say}\label{c-say2.11}
Let $X$ be a complex analytic space. 
An {\em{analytic subset}} (resp.~A {\em{locally closed 
analytic subset}}) of $X$ is the support of 
a closed analytic subspace (resp.~a locally closed 
analytic subspace) of $X$. 
A {\em{Zariski open subset}} of $X$ means 
the complement of an analytic subset. 
We note the following easy example. 
\begin{ex}\label{c-ex2.12}
We consider $\Delta:=\{z\in \mathbb C \mid |z|<1\}$ and 
$\Delta^*:=\Delta\setminus \{0\}$. 
Then $\Delta^*$ is a Zariski open subset of $\Delta$. 
We put 
\begin{equation*}
U:=\Delta^*\setminus \left\{ \frac{1}{n}\, \middle|\,  {\text{$n\in \mathbb Z$ 
with $n\geq 2$}}\right\}. 
\end{equation*} 
Then $U$ is a Zariski open subset of $\Delta^*$ since 
\begin{equation*}
\left\{ \frac{1}{n}\, \middle|\,  {\text{$n\in \mathbb Z$ 
with $n\geq 2$}}\right\}
\end{equation*} 
is a closed analytic subset of $\Delta^*$. 
However, $U$ is not a Zariski open subset of $\Delta$. 
This is because 
\begin{equation*}
\{0\} \cup \left\{ \frac{1}{n}\, \middle|\,  {\text{$n\in \mathbb Z$ 
with $n\geq 2$}}\right\}
\end{equation*} 
is not a closed analytic subset of $\Delta$. 
\end{ex}
\end{say}

\begin{say}\label{c-say2.13}
A subset $\mathcal S$ of a complex analytic space $X$ 
is said to be {\em{analytically
meagre}} if
\begin{equation*}
\mathcal S\subset \bigcup _{n\in \mathbb N} Y_n, 
\end{equation*} 
where each $Y_n$ is a locally closed analytic 
subset of $X$ of codimension $\geq 1$.

Let $X$ be a complex analytic space. We say that 
a property $P$ holds for an analytically sufficiently 
general point $x\in X$ when $P$ holds 
for every point $x$ contained in $X\setminus \mathcal S$ for some 
analytically meagre subset $\mathcal S$ of $X$. 

Let $\pi\colon X\to Y$ be a morphism of analytic spaces. 
Similarly, we say that a property $P$ holds for an 
{\em{analytically sufficiently general fiber}} of $\pi\colon X\to Y$ when 
$P$ holds for $\pi^{-1}(y)$ for 
every $y\in Y\setminus \mathcal S$, 
where $\mathcal S$ is some analytically meagre subset of $Y$. 
\end{say}

In this paper, we will freely use the following facts, which 
can be found in \cite[Chapter III]{banica}. 

\begin{say}\label{c-say2.14}
Let $\pi\colon X\to Y$ be a projective surjective morphism 
of complex analytic spaces and let $\mathcal L$ be 
a line bundle on $X$. 
If $R^p\pi_*\mathcal L=0$ holds, then $H^p(F, \mathcal L|_F)=0$ 
for an analytically sufficiently general fiber $F$ of $\pi \colon X\to Y$. 
If $H^0(F, \mathcal L|_F)\ne 0$ for an analytically sufficiently 
general fiber $F$ of $\pi\colon X\to Y$, then 
$\pi_*\mathcal L\ne 0$ holds. 
\end{say}

We will use the following convention throughout this paper. 

\begin{say}\label{c-say2.15}
The expression \lq ... for every $m\gg 0$\rq \ means 
that \lq there exists a positive real number $m_0$ such that ... 
for every $m\geq m_0$.\rq
\end{say}

\section{Basic properties of relatively 
ample and relatively nef $\mathbb R$-line bundles}\label{c-sec3}
In this section, we will collect some basic properties of 
relatively nef and relatively ample $\mathbb R$-line bundles for the 
reader's convenience. 
We will frequently use them in subsequent sections. 

Let us recall the definition of 
{\em{projective morphisms of complex analytic spaces}} 
for the sake of completeness. 

\begin{defn}[Projective morphisms of complex analytic spaces]
\label{c-def3.1}
Let $\pi\colon X\to Y$ be a proper 
morphism of complex analytic spaces and 
let $\mathcal L$ be a line bundle on $X$. 
Then $\mathcal L$ is said to be {\em{$\pi$-very 
ample}} or {\em{relatively very ample over 
$Y$}} if $\mathcal L$ is {\em{$\pi$-free}}, 
that is, 
\begin{equation*} 
\pi^*\pi_*\mathcal L\to \mathcal L
\end{equation*} 
is surjective, 
and the induced morphism 
\begin{equation*} 
X\to \mathbb P_Y(\pi_*\mathcal L)
\end{equation*} 
over 
$Y$ is a closed embedding. 
A line bundle $\mathcal L$ on $X$ is called 
{\em{$\pi$-ample}} or {\em{ample over 
$Y$}} if for any point $y\in Y$ there are an 
open neighborhood $U$ of $y$ and a positive 
integer $m$ such that $\mathcal L^{\otimes m}|_{\pi^{-1}(U)}$ 
is relatively very ample over $U$. 
Let $D$ be a Cartier divisor on $X$. Then 
we say that $D$ is {\em{$\pi$-very ample}}, {\em{$\pi$-free}}, 
and {\em{$\pi$-ample}} if the line bundle $\mathcal O_X(D)$ is 
so, respectively. 
We note that $\pi\colon X\to Y$ is said to be {\em{projective}} 
when there exists a $\pi$-ample line bundle 
on $X$. 
\end{defn}

For the basic properties of $\pi$-ample line 
bundles, see \cite[Chapter IV]{banica} and \cite[Chapter II. ~\S1.c.~Ample 
line bundles]{nakayama2}. 
Since we are mainly interested in $\mathbb R$-line bundles 
in this paper, the following easy lemma is indispensable. 

\begin{lem}\label{c-lem3.2}
Let $\pi\colon X\to Y$ be a projective morphism 
between complex analytic spaces and let $W$ be 
a compact subset of $Y$. 
Let $\mathcal L$ be an $\mathbb R$-line bundle on $X$. 
Then the following two conditions are equivalent. 
\begin{itemize}
\item[(i)] $\mathcal L$ is $\pi$-ample over $W$, 
that is, $\mathcal L|_{\pi^{-1}(w)}$ is ample in the 
usual sense for every $w\in W$. 
\item[(ii)] $\mathcal L$ is $\pi$-ample over some open neighborhood 
$U$ of $W$, that is, $\mathcal L|_{\pi^{-1}(U)}$ is a finite 
positive $\mathbb R$-linear combination of $\pi|_{\pi^{-1}(U)}$-ample 
line bundles. 
\end{itemize}
\end{lem}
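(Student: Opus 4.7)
The implication $(ii) \Rightarrow (i)$ is immediate from the definition: if $\mathcal{L}|_{\pi^{-1}(U)}$ is a finite positive $\mathbb{R}$-linear combination $\sum_j r_j \mathcal{A}_j$ of $\pi|_{\pi^{-1}(U)}$-ample line bundles, then restricting to any fiber $\pi^{-1}(w)$ with $w\in W\subset U$ yields a positive $\mathbb{R}$-linear combination of ample line bundles on $\pi^{-1}(w)$, which is ample. So the substance is in $(i)\Rightarrow (ii)$.

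My plan for $(i)\Rightarrow (ii)$ is as follows. First I would shrink $Y$ around $W$ (using the conventions of \ref{c-say1.1}) so that $\mathcal{L}$ is represented by a finite $\mathbb{R}$-linear combination $\mathcal{L}=\sum_{i=1}^{k} a_i \mathcal{L}_i$ of honest line bundles $\mathcal{L}_i$ on $X$, and fix a $\pi$-ample line bundle $\mathcal{A}$ on $X$ (which exists by projectivity). The core of the argument relies on the analogue of the statement for genuine line bundles, which is standard in the analytic category (see \cite[Chapter IV]{banica} and \cite[Chapter II.\S1.c]{nakayama2}): if a line bundle $\mathcal{M}$ on $X$ is fiberwise ample on $\pi^{-1}(w)$ for every $w\in W$, then $\mathcal{M}$ is $\pi$-ample over some open neighborhood of $W$. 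Passing to tensor powers, the same holds for $\mathbb{Q}$-line bundles.

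Next I would establish the key uniformity: there exists a rational $\epsilon_0>0$ such that $\mathcal{L}-\epsilon\mathcal{A}$ is fiberwise ample on $\pi^{-1}(w)$ for every $w\in W$ and every $0<\epsilon\le\epsilon_0$. On each single fiber this follows from the openness of the ample cone in the finite-dimensional $N^1(\pi^{-1}(w))_{\mathbb{R}}$, and uniformity over the compact $W$ is extracted from the standard step above combined with openness of ampleness in projective families. Having $\epsilon_0$, I would pick rational $b_i$ close enough to $a_i$ so that $\mathcal{N}:=\sum_i b_i\mathcal{L}_i-(\epsilon_0/2)\mathcal{A}$ remains fiberwise ample over $W$ (allowed by the extra slack in $\epsilon_0$); by the standard step, $\mathcal{N}$ is then $\pi$-ample over some open neighborhood of $W$. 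Now I would choose a large integer $N$ so that both $N\mathcal{A}+\mathcal{L}_i$ and $N\mathcal{A}-\mathcal{L}_i$ are $\pi$-ample over some open neighborhood of $W$ for every $i$. Writing
\begin{equation*}
\mathcal{L}=\mathcal{N}+\tfrac{\epsilon_0}{2}\mathcal{A}+\sum_{i=1}^{k}(a_i-b_i)\mathcal{L}_i,
\end{equation*}
and splitting the $\mathcal{A}$-term equally among the $k$ indices as $\frac{\epsilon_0}{2}\mathcal{A}=\sum_i \frac{\epsilon_0}{2k}\mathcal{A}$, each summand $\frac{\epsilon_0}{2k}\mathcal{A}+(a_i-b_i)\mathcal{L}_i$ can, provided $|a_i-b_i|<\epsilon_0/(2kN)$, be rewritten as a positive $\mathbb{R}$-combination of the $\pi$-ample line bundles $\mathcal{A}$ and $N\mathcal{A}\pm\mathcal{L}_i$. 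Intersecting the finitely many neighborhoods produced yields an open neighborhood $U$ of $W$ on which $\mathcal{L}$ is a finite positive $\mathbb{R}$-linear combination of $\pi$-ample line bundles, which is $(ii)$.

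The main obstacle is the uniformity in Step 2. On a single fiber the openness of the ample cone is trivial, but promoting this to a uniform slack over all fibers $\pi^{-1}(w)$, $w\in W$, must be extracted from the analytic theory of projective morphisms, since the algebraic prototype relies on noetherian hypotheses not available here. Once this uniformity is in hand, the rest of the argument is a finite-dimensional convexity manipulation carried out in the subspace of $\mathrm{Pic}(X)\otimes_{\mathbb{Z}}\mathbb{R}$ spanned by $\mathcal{A}$ and the $\mathcal{L}_i$.
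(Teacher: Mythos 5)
Your proposal is correct and follows essentially the route the paper intends: the paper only sketches the proof of (i)$\Rightarrow$(ii) by citing \cite[Lemmas 6.1 and 6.2]{fujino-miyamoto} together with \cite[Proposition 1.4]{nakayama1}, and your argument—rational approximation of the coefficients inside the finite-dimensional span of $\mathcal A$ and the $\mathcal L_i$, Nakayama's openness statement for honest (hence $\mathbb Q$-) line bundles, and a compactness argument over $W$ using the auxiliary bundles $N\mathcal A\pm\mathcal L_i$—is exactly that modification spelled out. The only point to tighten is the uniformity of $\epsilon_0$ in your Step 2, which should be derived after fixing $N$ and the rational perturbations (so that Nakayama's proposition, which applies only to genuine line bundles, can be invoked), rather than before; with that reordering the covering argument over the compact set $W$ goes through as you indicate.
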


\begin{proof}[Sketch of Proof of Lemma \ref{c-lem3.2}]
It is obvious that (i) follows from (ii). 
Hence it is sufficient to prove that (ii) follows from (i). 
It is an easy exercise to modify the proof of 
\cite[Lemmas 6.1 and 6.2]{fujino-miyamoto} suitably with 
the aid of \cite[Proposition 1.4]{nakayama1}. 
\end{proof}

Throughout this paper, we will freely use Lemma \ref{c-lem3.2} 
without mentioning it explicitly. 
The following lemma is more or less well known to the experts. 
We describe it here for the sake of completeness. 

\begin{lem}\label{c-lem3.3}
Let $\pi\colon X\to Y$ be a projective surjective morphism 
of complex analytic spaces such that 
$X$ and $Y$ are both irreducible. Let $\mathcal L$ be a line bundle 
on $X$. Assume that 
$\mathcal L|_{\pi^{-1}(y)}$ is ample for some $y\in Y$. 
Then there exists a Zariski open neighborhood $U$ of $y$ in $Y$ and 
a positive integer $m$ such that 
$\mathcal L^{\otimes m}|_{\pi^{-1}(U)}$ is $\pi$-very ample 
over $U$. In particular, $\mathcal L|_{\pi^{-1}(U)}$ is $\pi$-ample 
over $U$. 
\end{lem}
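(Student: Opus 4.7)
The plan is to use Grauert's theorems to construct a relatively very ample embedding in a Euclidean open neighborhood of $F := \pi^{-1}(y)$, and then to upgrade that Euclidean neighborhood to a Zariski one by expressing the ``non-embedding locus'' as a proper analytic subset of $Y$.

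First, since $F$ is projective (by \ref{c-say2.4}) and $\mathcal L|_F$ is ample, Serre's vanishing yields a positive integer $m$ such that $\mathcal L^{\otimes m}|_F$ is very ample and $H^i(F,\mathcal L^{\otimes m}|_F)=0$ for all $i>0$. By Grauert's coherence theorem, each $R^i\pi_*\mathcal L^{\otimes m}$ is a coherent $\mathcal O_Y$-module. The analytic cohomology-and-base-change theorem (see \cite[Chapter III]{banica}) then gives, after shrinking $Y$ to a Euclidean open neighborhood $V$ of $y$, that $\pi_*\mathcal L^{\otimes m}$ is locally free of rank $h^0(F,\mathcal L^{\otimes m}|_F)$, that $R^i\pi_*\mathcal L^{\otimes m}=0$ for every $i>0$, and that the formation of $\pi_*\mathcal L^{\otimes m}$ commutes with base change on $V$. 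The evaluation map $\pi^*\pi_*\mathcal L^{\otimes m}\to\mathcal L^{\otimes m}$ then restricts on $F$ to the (surjective) complete linear system of the very ample line bundle $\mathcal L^{\otimes m}|_F$; by Nakayama's lemma applied to its coherent cokernel, it is surjective in a neighborhood of $F$. Shrinking $V$ further produces a morphism $\varphi\colon\pi^{-1}(V)\to\mathbb P_V(\pi_*\mathcal L^{\otimes m})$ over $V$ whose restriction to $F$ is the closed embedding determined by the very ample system.

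The substantive step is converting the Euclidean open $V$ into a Zariski open $U\subset Y$. My strategy is to encode each failure mode of $\varphi$ being a fiberwise closed embedding by the support of a coherent sheaf on $X$: non-generation by $\Coker\bigl(\pi^*\pi_*\mathcal L^{\otimes m}\to\mathcal L^{\otimes m}\bigr)$, non-separation of fiber points by coherent sheaves built from $\varphi\times_V\varphi$ or from Fitting ideals that detect positive-dimensional fibers of $\varphi$, and non-unramifiedness by a Fitting ideal of the sheaf of relative differentials $\Omega^1_{\varphi}$. Each such subset of $X$ is closed analytic, and its image in $Y$ under the proper map $\pi$ is analytic by Remmert's proper mapping theorem. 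Letting $B\subset Y$ denote the union of these (finitely many) analytic images, the previous step shows $y\notin B$, so $U:=Y\setminus B$ is a Zariski open neighborhood of $y$. On $\pi^{-1}(U)$ the morphism $\varphi$ is a fiberwise closed embedding of a proper morphism into $\mathbb P_U(\pi_*\mathcal L^{\otimes m}|_U)$, hence a closed embedding, and $\mathcal L^{\otimes m}|_{\pi^{-1}(U)}$ is $\pi$-very ample.

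The main obstacle is this final step: one must package the failure of $\varphi$ to be a fiberwise closed embedding into finitely many coherent sheaves on $X$ to which Remmert's proper mapping theorem can be applied. Steps one and two are essentially routine adaptations of EGA III-type arguments to the analytic setting, but cleanly extracting the coherent ``obstruction'' sheaves---particularly for fiberwise injectivity---requires some care.
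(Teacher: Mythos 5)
Your first two steps (Grauert coherence, cohomology and base change, Nakayama's lemma, and the construction of $\varphi$ over a Euclidean neighborhood $V$ of $y$ with $\varphi|_F$ a closed embedding) are fine and coincide with the paper's starting point, which simply quotes Nakayama's result that $\mathcal L$ is $\pi$-ample over some Euclidean neighborhood of $y$. The gap lies in the decisive step, the passage from Euclidean to Zariski. Among your proposed obstruction sheaves, only $\Coker\bigl(\pi^*\pi_*\mathcal L^{\otimes m}\to\mathcal L^{\otimes m}\bigr)$ lives on all of $X$; the sheaves detecting non-separation (built from $\varphi\times_V\varphi$ or from Fitting ideals of fibers of $\varphi$) and non-unramifiedness (Fitting ideals of $\Omega^1_{\varphi}$) presuppose the morphism $\varphi$, which exists only over the generation locus, i.e.\ over the Zariski open set $U_3:=Y\setminus\pi\bigl(\Supp\Coker(\pi^*\pi_*\mathcal L^{\otimes m}\to\mathcal L^{\otimes m})\bigr)$ (or, as you set things up, only over the Euclidean $V$). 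Consequently those loci are closed analytic in $\pi^{-1}(U_3)$, not in $X$; their $\pi$-images are analytic in $U_3$, not in $Y$; and removing them from $U_3$ yields a set which is Zariski open in $U_3$ but in general \emph{not} Zariski open in $Y$ --- exactly the phenomenon of Example \ref{c-ex2.12}, since the closure in $Y$ of an analytic subset of $U_3$ need not be analytic. As written, your argument therefore only reproduces the Euclidean statement, whereas the entire content of the lemma is the Zariski openness of $U$ in $Y$.

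This is precisely the difficulty the paper's proof is designed to circumvent: it eliminates the indeterminacy globally by blowing up the coherent ideal $\mathcal I=\xIm\bigl(\pi^*\pi_*\mathcal L^{\otimes m}\to\mathcal L^{\otimes m}\bigr)\otimes\mathcal L^{\otimes(-m)}\subset\mathcal O_X$, which is defined on all of $X$. On the blow-up $p\colon Z\to X$ one obtains an everywhere-defined morphism $\alpha\colon Z\to\mathbb P_Y(\pi_*\mathcal L^{\otimes m})$ over $Y$, with $Z$ and $\alpha(Z)$ proper over $Y$; all the bad loci (where $p$ is not an isomorphism, where $\alpha$ is not flat, where $\alpha$ fails to be an isomorphism) then sit inside spaces proper over $Y$, so by Remmert their images are analytic subsets of $Y$ itself, and removing them produces a genuine Zariski open neighborhood $U$ of $y$. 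To repair your scheme you would need to replace $\varphi$ by such a globally defined object (the closure of its graph, i.e.\ the blow-up, or alternatively a relative Douady space of length-two subspaces on which very ampleness becomes a coherent surjectivity condition); that is an additional idea, not a routine refinement of what you wrote. Your remaining points (fiberwise closed embedding plus properness implies closed embedding; base change near $y$) are standard and unproblematic.
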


\begin{proof}
It is well known that there exists a small open neighborhood 
$U_1$ of $y$ in $Y$ such that 
$\mathcal L|_{\pi^{-1}(U_1)}$ is $\pi$-ample 
over $U_1$ (see \cite[Proposition 1.4]{nakayama1}). 
Therefore, we can take some positive integer $m$ such that 
$\mathcal L^{\otimes m}$ is $\pi$-very ample over some small 
open neighborhood $U_2$ of $y$ in $Y$. 
We consider $\pi^*\pi_*\mathcal L^{\otimes m} 
\to \mathcal L^{\otimes m}$. 
It is obviously surjective over $U_2$. 
Therefore, 
\begin{equation*}
\pi\left(\Supp \Coker (\pi^*\pi_*\mathcal L^{\otimes m} \to 
\mathcal L^{\otimes m})\right)\cap U_2=\emptyset. 
\end{equation*}
Then we put 
\begin{equation*}
U_3:=Y\setminus \pi\left(\Supp \Coker (\pi^*\pi_*\mathcal L^{\otimes m} \to 
\mathcal L^{\otimes m})\right). 
\end{equation*} 
Hence $U_3$ is a non-empty Zariski open subset of 
$Y$ such that $y\in U_3$ and that 
$\pi^*\pi_*\mathcal L^{\otimes m} \to \mathcal L^{\otimes m}$ 
is surjective over $U_3$. 
We put 
\begin{equation*}
\mathcal I:=\xIm \left(\pi^*\pi_*\mathcal L^{\otimes m} 
\to \mathcal L^{\otimes m} \right) \otimes \mathcal L^{\otimes (-m)} 
\subset \mathcal O_X. 
\end{equation*}
Then $\mathcal I$ is a coherent ideal sheaf on $X$. 
We take the blow-up $p\colon Z\to X$ of $X$ along the ideal sheaf 
$\mathcal I$, that is, 
$p\colon Z:=\Projan _X\bigoplus _{d=0}^{\infty} \mathcal I^d\to X$. 
By construction,  
\begin{equation*}
\mathcal M:= \xIm \left(p^*\pi^* \pi_*\mathcal L^{\otimes m} 
\to p^*\mathcal L^{\otimes m}\right)
\end{equation*} 
becomes a line bundle on $Z$. 
This gives a closed embedding 
\begin{equation*}
Z\simeq \mathbb P_Z(\mathcal M)\hookrightarrow 
\mathbb P_Y(\pi_*\mathcal L^{\otimes m})\times _Y Z. 
\end{equation*} 
Thus we obtain a morphism $\alpha\colon Z\to 
\mathbb P_Y(\pi_*\mathcal L^{\otimes m})$ over $Y$. 
By construction again, 
$p$ is an isomorphism over $U_3$ and $\alpha$ is a closed 
embedding over $U_2$. 
We can take a non-empty Zariski open subset $V$ of $\alpha (Z)$ 
such that $\alpha$ is flat over $V$. Without loss of generality,  
we may assume that $V$ contains $q^{-1}(U_2)$, 
where $q\colon \alpha (Z)\to Y$, and 
that $\alpha$ is an isomorphism over $V$. 
\begin{equation*}
\xymatrix{
& Z\ar[dl]_-p\ar[dr]^-\alpha 
&\\ 
X\ar@{-->}[rr]^-{\alpha\circ p^{-1}}\ar[dr]_-\pi& &\alpha(Z)\ar[dl]^-q \\
& Y&
}
\end{equation*}
We put 
\begin{equation*}
U:=U_3\cap \left ( Y\setminus q\left(\alpha(Z)\setminus V\right)\right). 
\end{equation*}
Then $U$ is a non-empty Zariski open subset 
of $Y$ such that $y\in U$ and $\alpha \circ p^{-1}\colon 
X\dashrightarrow 
\mathbb P_Y(\pi_*\mathcal L^{\otimes m})$ is a closed embedding 
over $U$. 
Therefore, $\mathcal L^{\otimes m}$ is $\pi$-very ample 
over $U$. 
\end{proof}

As an application of Lemma \ref{c-lem3.3}, 
we have: 

\begin{lem}\label{c-lem3.4}
Let $\pi\colon X\to Y$ be a projective surjective morphism 
of complex analytic spaces. 
Let $\mathcal L$ be an $\mathbb R$-line bundle 
on $X$. Assume that 
$\mathcal L|_{\pi^{-1}(y)}$ is ample for some $y\in Y$. 
Then there exists a Zariski open neighborhood $U$ of $y$ in $Y$ 
such that 
$\mathcal L|_{\pi^{-1}(U)}$ is $\pi$-ample 
over $U$. 
\end{lem}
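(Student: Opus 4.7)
The plan is to reduce $\mathcal L$ to a finite positive $\mathbb R$-linear combination of $\mathbb Q$-line bundles each of which is ample when restricted to $\pi^{-1}(y)$, and then apply Lemma \ref{c-lem3.3} to each piece separately. After straightforward reduction (replacing $Y$ by the irreducible component through $y$ and treating the finitely many irreducible components of $X$ one at a time using that $\pi$-ampleness can be checked componentwise), I may assume that $X$ and $Y$ are irreducible. Write $\mathcal L = \sum_{i=1}^{k} r_i \mathcal L_i$ as a finite $\mathbb R$-linear combination with $\mathcal L_i \in \operatorname{Pic}(X)$ and $r_i \in \mathbb R$.

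The fiber $\pi^{-1}(y)$ is projective (by Serre's GAGA, \ref{c-say2.4}), so its ample cone is open in the finite-dimensional real Néron--Severi space. The affine-linear map
\begin{equation*}
\mathbb R^{k} \longrightarrow N^{1}(\pi^{-1}(y))_{\mathbb R}, \qquad (a_1,\dots,a_k) \longmapsto \Bigl[\sum_{i} a_{i} \mathcal L_{i}|_{\pi^{-1}(y)}\Bigr]
\end{equation*}
therefore pulls back the ample cone to an open subset $\Omega \subset \mathbb R^{k}$ containing the point $(r_1,\dots,r_k)$. Since $\mathbb Q^{k}$ is dense in $\mathbb R^{k}$, I can pick rational points $v_{1},\dots,v_{k+1} \in \Omega \cap \mathbb Q^{k}$ forming a simplex whose interior contains $(r_{1},\dots,r_{k})$, and write
\begin{equation*}
(r_{1},\dots,r_{k}) \;=\; \sum_{j=1}^{k+1} t_{j} v_{j}, \qquad t_{j} > 0,\ \sum_{j} t_{j} = 1.
\end{equation*}
Setting $v_{j} = (q_{i}^{(j)})_{i=1}^{k}$ and $\mathcal M_{j} := \sum_{i} q_{i}^{(j)} \mathcal L_{i}$, I obtain $\mathbb Q$-line bundles with $\mathcal M_{j}|_{\pi^{-1}(y)}$ ample, and an identity $\mathcal L = \sum_{j} t_{j} \mathcal M_{j}$ in $\operatorname{Pic}(X) \otimes_{\mathbb Z} \mathbb R$.

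For each $j$ pick $N_{j} \in \mathbb Z_{>0}$ clearing denominators, so that $N_{j}\mathcal M_{j}$ is a genuine line bundle with $N_{j}\mathcal M_{j}|_{\pi^{-1}(y)}$ ample. Lemma \ref{c-lem3.3} then provides a Zariski open neighborhood $U_{j}$ of $y$ in $Y$ over which $N_{j}\mathcal M_{j}$, and hence $\mathcal M_{j}$, is $\pi$-ample. Put $U := \bigcap_{j=1}^{k+1} U_{j}$, still a Zariski open neighborhood of $y$. Over $\pi^{-1}(U)$, $\mathcal L$ is the positive $\mathbb R$-linear combination $\sum_{j} t_{j} \mathcal M_{j}$ of $\pi|_{\pi^{-1}(U)}$-ample $\mathbb Q$-line bundles, which upon expanding each $\mathcal M_{j}$ in terms of ample line bundles exhibits $\mathcal L|_{\pi^{-1}(U)}$ as a finite positive $\mathbb R$-linear combination of $\pi$-ample line bundles, i.e.~$\pi$-ample over $U$ in the sense of \ref{c-say2.8}.

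The main technical point is the openness of the ample cone in $N^{1}(\pi^{-1}(y))_{\mathbb R}$: this is what allows the rational perturbations $v_{j}$ to preserve fiberwise ampleness. It is standard for projective schemes (Kleiman's criterion), and it transfers to the complex analytic fiber via Serre's GAGA since $\pi^{-1}(y)$ is a compact complex analytic space equipped with an ample line bundle. The only other mildly delicate step is the reduction to the irreducible case, which must handle components of $X$ whose image does not pass through $y$; these can simply be removed by shrinking $Y$ to the Zariski open complement of the images of the other components.
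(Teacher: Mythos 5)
Your proof is correct and follows essentially the same route as the paper: write $\mathcal L$ as a finite positive combination of ($\mathbb Q$-)line bundles each ample on $\pi^{-1}(y)$, apply Lemma \ref{c-lem3.3} to each piece componentwise, intersect the finitely many Zariski open neighborhoods, and discard the images of the irreducible components of $X$ not meeting $\pi^{-1}(y)$. The only real difference is that you justify the decomposition step explicitly via the openness of the ample cone on the projective fiber, whereas the paper simply asserts in its first line that such a presentation $\mathcal L=\sum a_i\mathcal L_i$ with $a_i>0$ and each $\mathcal L_i|_{\pi^{-1}(y)}$ ample exists.
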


In the theory of minimal models, we have to treat $\mathbb R$-line bundles. 
Therefore, Lemma \ref{c-lem3.4} is indispensable. 
Since we can not directly apply geometric arguments 
to $\mathbb R$-line bundles, Lemma \ref{c-lem3.4} is not so obvious. 

\begin{proof}[Proof of Lemma \ref{c-lem3.4}]
We can write $\mathcal L=\sum _{i\in I} a_i \mathcal L_i$ in $
\Pic(X)\otimes _{\mathbb Z} \mathbb R$ such that 
$a_i$ is a positive real number, $\mathcal L_i\in \Pic(X)$, 
and $\mathcal L_i|_{\pi^{-1}(y)}$ is ample for every $i\in I$. 
Let $X=\bigcup _{j\in J}X_j$ be the irreducible decomposition. 
We put 
\begin{equation*} 
J_1:=\{j\in J\, |\, y\in \pi(X_j)\} \quad\text{and}\quad 
J_2:=\{j\in J\, |\, y\not\in \pi(X_j)\}. 
\end{equation*}
We take an irreducible component $X_j$ of $X$ with $j\in J_1$. 
By applying Lemma \ref{c-lem3.3} to $\mathcal L_i|_{X_j}$, 
we can find a Zariski closed subset $\Sigma_j$ of $\pi(X_j)$ such that 
$y\in \pi(X_j)\setminus \Sigma_j$, 
$\mathcal L_i|_{X_j}$ is ample over 
$\pi(X_j)\setminus \Sigma_j$ for every $i\in I$. 
This implies that 
$\mathcal L|_{X_j}$ is ample over $\pi(X_j)\setminus \Sigma_j$. 
We put 
\begin{equation*}
\Sigma:=\left(\bigcup _{i\in J_1} \Sigma_j\right) 
\cup \pi\left(\bigcup_{j\in J_2}X_j\right). 
\end{equation*} 
Then $\Sigma$ is a Zariski closed subset of $Y$ such that 
$y\in Y\setminus \Sigma$ and that 
$\mathcal L$ is $\pi$-ample 
over $Y\setminus \Sigma$. 
Therefore, $U:=Y\setminus \Sigma$ is a desired Zariski open neighborhood 
of $y$ in $Y$. 
\end{proof}

By Lemma \ref{c-lem3.4}, we can easily obtain: 

\begin{lem}\label{c-lem3.5}
Let $\pi\colon X\to Y$ be a projective surjective 
morphism of complex analytic spaces. 
Let $\mathcal L$ be an $\mathbb R$-line bundle on $X$. 
Assume that $\mathcal L|_{\pi^{-1}(y_0)}$ is nef for some 
$y_0\in Y$. 
Then there exists an analytically meagre subset 
$\mathcal S$ such that 
$\mathcal L|_{\pi^{-1}(y)}$ is nef 
for every $y\in Y\setminus \mathcal S$. 
\end{lem}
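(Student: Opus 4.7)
The plan is to approximate $\mathcal L$ by $\pi$-ample $\mathbb R$-line bundles and apply Lemma \ref{c-lem3.4} countably many times. Since $\pi$ is projective, we may fix a $\pi$-ample line bundle $\mathcal A$ on $X$. For each positive integer $n$, set
\[
\mathcal L_n:=\mathcal L+\tfrac{1}{n}\mathcal A\in \Pic(X)\otimes_{\mathbb Z}\mathbb R.
\]
Restricted to the projective fibre $\pi^{-1}(y_0)$, the class $\mathcal L|_{\pi^{-1}(y_0)}$ is nef while $\tfrac{1}{n}\mathcal A|_{\pi^{-1}(y_0)}$ is ample, so by the standard nef $+$ ample $=$ ample principle on the projective scheme $\pi^{-1}(y_0)$ (which is applicable via Serre's GAGA, see \ref{c-say2.4}), $\mathcal L_n|_{\pi^{-1}(y_0)}$ is ample.

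Next I would apply Lemma \ref{c-lem3.4} to $\mathcal L_n$ at the point $y_0$: for each $n$ there exists a Zariski open neighborhood $U_n$ of $y_0$ in $Y$ such that $\mathcal L_n|_{\pi^{-1}(U_n)}$ is $\pi$-ample over $U_n$. In particular $\mathcal L_n|_{\pi^{-1}(y)}$ is ample, and a fortiori nef, for every $y\in U_n$. Put $Z_n:=Y\setminus U_n$, a closed analytic subset of $Y$ that does not contain $y_0$. After decomposing $Y$ into irreducible components and carrying out the above construction on the component(s) containing $y_0$ (together with the matching irreducible components of $X$, so that Lemma \ref{c-lem3.4} applies to a projective surjective morphism), each $Z_n$ becomes a locally closed analytic subset of codimension $\geq 1$. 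Then
\[
\mathcal S:=\bigcup_{n=1}^{\infty}Z_n
\]
is analytically meagre in the sense of \ref{c-say2.13}.

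Finally, for any $y\in Y\setminus \mathcal S$ and any projective integral curve $C\subset \pi^{-1}(y)$, we have $\mathcal L_n\cdot C\geq 0$ for every $n$ (ample implies nef on the projective fibre). Rewriting this as
\[
\mathcal L\cdot C \;=\; \mathcal L_n\cdot C-\tfrac{1}{n}\mathcal A\cdot C \;\geq\; -\tfrac{1}{n}\mathcal A\cdot C
\]
and letting $n\to\infty$ yields $\mathcal L\cdot C\geq 0$. Hence $\mathcal L|_{\pi^{-1}(y)}$ is nef for every $y\in Y\setminus \mathcal S$, as desired.

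The main obstacle is producing each $U_n$ as a \emph{Zariski} open neighborhood of $y_0$ rather than merely a Euclidean one, because only then is $Z_n$ an analytic subset of codimension $\geq 1$ and only then can we assemble the countable union into an analytically meagre set; this is precisely what Lemma \ref{c-lem3.4} supplies for $\mathbb R$-line bundles, having been bootstrapped from Lemmas \ref{c-lem3.2} and \ref{c-lem3.3}. The remaining issue—the possible reducibility of $Y$—is purely bookkeeping and is handled by passing to irreducible components at the outset.
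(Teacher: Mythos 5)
Your argument is essentially the paper's own proof: you perturb $\mathcal L$ by a small multiple of a $\pi$-ample line bundle (the paper uses $m\mathcal L+\mathcal H$ rather than $\mathcal L+\tfrac{1}{n}\mathcal A$, which is the same up to rescaling), apply Lemma \ref{c-lem3.4} at $y_0$ for each $n$ to get Zariski open neighborhoods $U_n$, take $\mathcal S=\bigcup_n(Y\setminus U_n)$, and let $n\to\infty$ on curves in the fibres. The reasoning is correct and coincides with the paper's, with your remark on reducible $Y$ being the only (harmless) addition.
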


Although Lemma \ref{c-lem3.5} is easy, it will play a very important 
role in our framework of the minimal model program of 
complex analytic spaces. 
We note that we can not make $\mathcal S$ a Zariski 
closed subset of $Y$ in Lemma \ref{c-lem3.5}. 

\begin{proof}[Proof of Lemma \ref{c-lem3.5}] 
We take a $\pi$-ample line bundle $\mathcal H$ on $X$. 
Then $(m\mathcal L+\mathcal H)|_{\pi^{-1}(y_0)}$ is 
ample for every positive integer $m$. 
Therefore, by Lemma \ref{c-lem3.4}, 
for each $m\in \mathbb Z_{>0}$, 
we can take a Zariski open neighborhood 
$U_m$ of $y_0$ in $Y$ such that 
$m\mathcal L+\mathcal H$ is $\pi$-ample 
over $U_m$. 
We put $\mathcal S:=\bigcup _{m\in \mathbb Z_{>0}} 
(Y\setminus U_m)$. 
Then $(m\mathcal L+\mathcal H)|_{\pi^{-1}(y)}$ is ample 
for every $m\in \mathbb Z_{>0}$ and every 
$y\in Y\setminus \mathcal S$. 
This means that $\mathcal L|_{\pi^{-1}(y)}$ is nef for 
every $y\in Y\setminus \mathcal S$. 
\end{proof}

The following obvious corollary of Lemma \ref{c-lem3.5} 
is also useful for geometric applications. 
We note that we sometimes have to treat a countably infinite set of 
line bundles. 

\begin{cor}\label{c-cor3.6}
Let $\pi\colon X\to Y$ be a projective surjective 
morphism of complex analytic spaces. 
Let $\mathcal L_i$ be an $\mathbb R$-line bundle on $X$ for 
$i\in \mathbb N$. 
Assume that $\mathcal L_i|_{\pi^{-1}(y_0)}$ is nef for some 
$y_0\in Y$ and for every $i\in \mathbb N$. 
Then there exists an analytically meagre subset 
$\mathcal S$ such that 
$\mathcal L_i|_{\pi^{-1}(y)}$ is nef 
for every $y\in Y\setminus \mathcal S$ and every $i\in \mathbb N$. 
Therefore, if $\mathcal H$ is 
any $\pi$-ample $\mathbb R$-line bundle 
on $X$, then $(\mathcal H+\mathcal L_i)|_{\pi^{-1}(y)}$ 
is ample for every $y\in Y\setminus \mathcal S$ and 
every $i\in \mathbb N$. 
\end{cor}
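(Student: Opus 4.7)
The plan is to derive the corollary as a straightforward extension of Lemma \ref{c-lem3.5} from a single $\mathbb R$-line bundle to a countable family. For each $i\in\mathbb N$, Lemma \ref{c-lem3.5} applied to $\mathcal L_i$ (using the hypothesis that $\mathcal L_i|_{\pi^{-1}(y_0)}$ is nef) produces an analytically meagre subset $\mathcal S_i\subset Y$ such that $\mathcal L_i|_{\pi^{-1}(y)}$ is nef for every $y\in Y\setminus \mathcal S_i$. I would then simply set $\mathcal S:=\bigcup_{i\in\mathbb N}\mathcal S_i$.

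The key point is that $\mathcal S$ is itself analytically meagre. By definition, each $\mathcal S_i$ is contained in a countable union $\bigcup_{n\in\mathbb N}Y_{i,n}$ of locally closed analytic subsets of $Y$ of codimension $\geq 1$. Since $\mathbb N\times\mathbb N$ is countable, we have $\mathcal S\subset \bigcup_{(i,n)\in\mathbb N\times\mathbb N}Y_{i,n}$, and this is again a countable union of locally closed analytic subsets of codimension $\geq 1$, so $\mathcal S$ satisfies the definition in \ref{c-say2.13}. For any $y\in Y\setminus\mathcal S$ we have $y\in Y\setminus\mathcal S_i$ for every $i$, so $\mathcal L_i|_{\pi^{-1}(y)}$ is nef for every $i\in\mathbb N$, establishing the first assertion.

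For the second assertion, fix any $\pi$-ample $\mathbb R$-line bundle $\mathcal H$ on $X$ and any $y\in Y\setminus\mathcal S$. Then $\mathcal H|_{\pi^{-1}(y)}$ is ample and $\mathcal L_i|_{\pi^{-1}(y)}$ is nef by the first part. On the projective fiber $\pi^{-1}(y)$, the ample $\mathbb R$-line bundles form an open cone in the real N\'eron--Severi space which is stable under addition by nef classes (this is a standard consequence of Kleiman's criterion applied to the projective scheme $\pi^{-1}(y)$, which is available via Serre's GAGA \ref{c-say2.4}). Consequently $(\mathcal H+\mathcal L_i)|_{\pi^{-1}(y)}$ is ample for every $i\in\mathbb N$ and every $y\in Y\setminus\mathcal S$.

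There is no real obstacle here; the argument is essentially bookkeeping on top of Lemma \ref{c-lem3.5}. The only mild subtlety worth recording is the closure of the class of analytically meagre sets under countable unions, which as noted above follows immediately from the fact that a countable union of countable families is countable.
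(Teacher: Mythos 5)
Your proposal is correct and follows essentially the same route as the paper: apply Lemma \ref{c-lem3.5} to each $\mathcal L_i$, take $\mathcal S:=\bigcup_{i\in\mathbb N}\mathcal S_i$, and note that a countable union of analytically meagre sets is analytically meagre. The paper leaves the last verification and the ample-plus-nef observation implicit, and your filling in of those details is accurate.
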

\begin{proof}
By Lemma \ref{c-lem3.5}, for each $i\in \mathbb N$, 
we can find an analytically meagre subset 
$\mathcal S_i$ of $Y$ such that $\mathcal L_i|_{\pi^{-1}(y)}$ is 
nef for every $y\in Y\setminus \mathcal S_i$. 
We put $\mathcal S:=\bigcup _{i\in \mathbb N} \mathcal S_i$. 
Then it is easy to see that $\mathcal S$ is a desired 
analytically meagre subset of $Y$. 
\end{proof}

By the proof of Lemma \ref{c-lem3.5} and Corollary \ref{c-cor3.6}, 
we have: 

\begin{rem}\label{c-rem3.7} In Lemma \ref{c-lem3.5} and 
Corollary \ref{c-cor3.6}, we can make $Y\setminus \mathcal S$ 
a countable intersection of non-empty Zariski open subsets of 
$Y$. 
\end{rem}

\section{Non-lc ideal sheaves}\label{c-sec4}

Let us recall the notion of {\em{non-lc ideal sheaves}}. 
It is well defined even in the complex analytic setting. 

\begin{defn}[{Non-lc ideal sheaves, 
see \cite[Definition 7.1]{fujino-fundamental}}]\label{c-def4.1}
Let $X$ be a normal complex variety and let $\Delta$ be 
an effective $\mathbb R$-divisor on $X$ such that 
$K_X+\Delta$ is $\mathbb R$-Cartier. 
Let $f\colon Z\to X$ be a projective bimeromorphic 
morphism from a smooth complex variety $Z$ with 
$K_Z+\Delta_Z:=f^*(K_X+\Delta)$ such that 
$\Supp \Delta_Z$ is a simple normal crossing divisor on $Z$. 
Then we put 
\begin{equation*}
\mathcal J_{\NLC} (X, \Delta):=f_*\mathcal O_Z(\lceil 
-(\Delta^{<1}_Z)\rceil -\lfloor \Delta^{>1}_Z\rfloor)
=f_*\mathcal O_Z(-\lfloor \Delta_Z\rfloor +\Delta^{=1}_Z)
\end{equation*} 
and call it the {\em{non-lc ideal sheaf associated 
to 
$(X, \Delta)$}}. We put 
\begin{equation*}
\mathcal J(X, \Delta):=
f_*\mathcal O_Z(-\lfloor \Delta_Z\rfloor). 
\end{equation*} 
Then $\mathcal J(X, \Delta)$ is the well-known {\em{multiplier ideal 
sheaf associated to $(X, \Delta)$}}. 
By definition, the following inclusion 
\begin{equation*}
\mathcal J(X, \Delta)\subset \mathcal J_{\NLC}(X, \Delta)
\end{equation*} 
always holds. 
By definition again, we can easily see that 
the support of $\mathcal O_X/\mathcal J_{\NLC}(X, \Delta)$ 
is the non-lc locus $\Nlc(X, \Delta)$ of $(X, \Delta)$. 
\end{defn}

By the standard argument 
(see, for example, \cite[Lemma 7.2]{fujino-fundamental}), 
there are 
no difficulties to check the following lemma. 

\begin{lem}\label{c-lem4.2}
In Definition \ref{c-def4.1}, $\mathcal J_{\NLC}(X, \Delta)$ and 
$\mathcal J(X, \Delta)$ are  
independent of the choice of the resolution $f\colon Z\to X$. 
Hence $\mathcal J_{\NLC}(X, \Delta)$ and 
$\mathcal J(X, \Delta)$ are well-defined 
coherent ideal sheaves on $X$. 
\end{lem}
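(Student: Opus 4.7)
The plan is to reduce the independence statement to the comparison of two resolutions linked by a bimeromorphic morphism, then perform a purely local divisorial calculation that is identical to the algebraic case.

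First I would use the fact that any two log resolutions as in Definition \ref{c-def4.1} are dominated by a third such resolution. Given $f_1\colon Z_1\to X$ and $f_2\colon Z_2\to X$, I apply the analytic Hironaka resolution theorem (\cite[Theorem~13.2]{bierstone-milman}, already invoked in \ref{c-say2.10}) to a suitable irreducible component of $Z_1\times_X Z_2$ to obtain a smooth complex variety $W$ with projective bimeromorphic morphisms $h_i\colon W\to Z_i$ such that $g:=f_1\circ h_1=f_2\circ h_2$ is again a log resolution of $(X,\Delta)$ with $\Supp\Delta_W$ simple normal crossing. Thus it suffices to prove the following: if $h\colon W\to Z$ is a projective bimeromorphic morphism of smooth complex varieties, $\Supp\Delta_Z$ and $\Supp\Delta_W$ are SNC, and $K_W+\Delta_W=h^*(K_Z+\Delta_Z)$, then
\begin{equation*}
h_*\mathcal O_W\!\left(-\lfloor \Delta_W\rfloor+\Delta_W^{=1}\right)
=\mathcal O_Z\!\left(-\lfloor \Delta_Z\rfloor+\Delta_Z^{=1}\right),
\end{equation*}
and similarly with $\Delta_W^{=1}=\Delta_Z^{=1}=0$ for the multiplier ideal.

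Next I would prove this comparison locally on $Z$. Since the statement is local, I restrict to a relatively compact Stein open $U\subset Z$, on which $\Delta_Z$ and $\Delta_W|_{h^{-1}(U)}$ have only finitely many irreducible components, so the standard divisorial calculus applies in the analytic category just as in the algebraic one. Writing $K_W=h^*K_Z+E$ with $E\ge0$ an $h$-exceptional integral divisor (using smoothness of $Z$), one has $\Delta_W=h^*\Delta_Z-E$. Setting $A_Z:=-\lfloor\Delta_Z\rfloor+\Delta_Z^{=1}$ and $A_W:=-\lfloor\Delta_W\rfloor+\Delta_W^{=1}$ and decomposing divisors according to strict transforms versus $h$-exceptional components, one uses $\lfloor a+n\rfloor=\lfloor a\rfloor+n$ for $n\in\mathbb Z$ to check, component by component, that $A_W-h^*A_Z$ is an $h$-exceptional integral divisor whose positive part $F^+$ is effective and $h$-exceptional and whose negative part $F^-$ is supported on the exceptional set. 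A standard negativity calculation (identical to \cite[Lemma~7.2]{fujino-fundamental}) then gives $h_*\mathcal O_W(F^+-F^-)=\mathcal O_Z$, and the projection formula yields the desired equality.

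Finally, for coherence and for the ideal sheaf property: $\mathcal O_Z(A_Z)$ is a line bundle on $Z$, and since $f$ is projective (hence proper), Grauert's direct image theorem shows that $f_*\mathcal O_Z(A_Z)$ is coherent on $X$. Because $\Delta$ is effective, on each strict transform of a prime component of $\Delta$ the coefficient of $A_Z$ is non-positive, so writing $A_Z=A_Z^+-A_Z^-$ with $A_Z^\pm\ge0$, the positive part $A_Z^+$ is $f$-exceptional. The inclusion $\mathcal O_Z(A_Z)\subset \mathcal O_Z(A_Z^+)$ gives
\begin{equation*}
\mathcal J_{\NLC}(X,\Delta)\subset f_*\mathcal O_Z(A_Z^+)=\mathcal O_X,
\end{equation*}
the last equality holding because $X$ is normal and $f(\Supp A_Z^+)$ has codimension $\ge 2$ in $X$ (Hartogs extension). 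The same argument, with $\Delta_Z^{=1}$ replaced by $0$, handles $\mathcal J(X,\Delta)$.

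The main obstacle I expect is the existence of the common log resolution $W$ with the required SNC structure; once that is supplied by Bierstone--Milman, the divisorial computation itself is entirely local and formal, so the algebraic proof transfers without further modification.
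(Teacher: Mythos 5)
Your overall route coincides with the paper's: both proofs reduce to the case of a single projective bimeromorphic morphism between two log resolutions (the paper localizes at a point of $X$ and eliminates the indeterminacy of $Z_2\dashrightarrow Z_1$; your common log resolution of the graph in $Z_1\times_X Z_2$ via Bierstone--Milman is the same device), and then both appeal to the known algebraic comparison statement for SNC models — the paper quotes \cite[Proposition 6.3.1]{fujino-foundations}, you quote \cite[Lemma 7.2]{fujino-fundamental}, which is the same result. Your closing verifications (coherence of the direct image by Grauert, and the inclusion $\mathcal J_{\NLC}(X,\Delta)\subset\mathcal O_X$ from effectivity of $\Delta$, exceptionality of the positive part, and normality of $X$) are correct and usefully fill in what the paper's sketch leaves implicit.

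The one genuine weak point is your in-line description of the key computation. Set $F:=A_W-h^*A_Z$ with $A_W=-\lfloor\Delta_W\rfloor+\Delta_W^{=1}$, $A_Z=-\lfloor\Delta_Z\rfloor+\Delta_Z^{=1}$; then $F$ is $h$-exceptional, but your plan \lq\lq positive part effective, negative part supported on the exceptional set, then a standard negativity calculation gives $h_*\mathcal O_W(F^+-F^-)=\mathcal O_Z$\rq\rq\ does not work as stated: $F$ is not anti-nef over $Z$, so the negativity lemma has no purchase, and if $F^-\ne 0$ no such conclusion can hold in general (for an exceptional prime $E$, $h_*\mathcal O_W(-E)$ is a nontrivial ideal). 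The actual content of \cite[Lemma 7.2]{fujino-fundamental} and \cite[Proposition 6.3.1]{fujino-foundations} is that $F$ is \emph{effective}, and this is not a component-by-component floor/ceiling identity. Indeed $F=K_W+\{\Delta_W\}+\Delta_W^{=1}-h^*\bigl(K_Z+\{\Delta_Z\}+\Delta_Z^{=1}\bigr)$, so the coefficient of an $h$-exceptional prime $E$ equals $\{e\}+\varepsilon(e)+a\bigl(E,Z,\{\Delta_Z\}+\Delta_Z^{=1}\bigr)$, where $e$ is the coefficient of $E$ in $\Delta_W$ and $\varepsilon(e)=1$ if $e=1$ and $0$ otherwise; since this is an integer, it can only be negative when $e\in\mathbb Z\setminus\{1\}$ and $a\bigl(E,Z,\{\Delta_Z\}+\Delta_Z^{=1}\bigr)=-1$. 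That case is excluded because such an $E$ would be an lc place of the SNC log canonical pair $\bigl(Z,\{\Delta_Z\}+\Delta_Z^{=1}\bigr)$, so its center is a stratum of $\Delta_Z^{=1}$, and by simple normal crossings such a stratum is not contained in the support of any component of $\Delta_Z$ with coefficient $>1$ or $<0$; hence the pullback of $\lfloor\Delta_Z^{<1}\rfloor+\lfloor\Delta_Z^{>1}\rfloor$ has multiplicity $0$ along $E$ and $e=1$, a contradiction. So either carry out this effectivity argument (which uses where the relevant centers sit, not just $\lfloor a+n\rfloor=\lfloor a\rfloor+n$), or simply quote the algebraic lemma, whose purely local proof transfers verbatim to the analytic setting — which is exactly what the paper does.
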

\begin{proof}[Sketch of Proof of Lemma \ref{c-lem4.2}]
Since we do not use $\mathcal J(X, \Delta)$ in this paper and 
the proof for $\mathcal J(X, \Delta)$ is simpler than for 
$\mathcal J_{\NLC}(Z, \Delta)$, we only treat $\mathcal J_{\NLC}(X, \Delta)$ 
here. 
Let $f_1\colon Z_1\to X$ and $f_2\colon Z_2\to X$ be two 
resolutions with $K_{Z_1}+\Delta_{Z_1}=f^*_1(K_X+\Delta)$ and 
$K_{Z_2}+\Delta_{Z_2}=f^*_2(K_X+\Delta)$ as in Definition \ref{c-def4.1}. 
We take an arbitrary point $x\in X$. It is sufficient to 
prove that 
\begin{equation*}
{f_1}_*\mathcal O_{Z_1}(-\lfloor \Delta_{Z_1}\rfloor 
+\Delta^{=1}_{Z_1})
={f_2}_*\mathcal O_{Z_2}(-\lfloor \Delta_{Z_2}\rfloor +\Delta^{=1}_{Z_2})
\end{equation*}
holds on some open neighborhood of $x$. 
Therefore, by shrinking $X$ around $x$ and taking an elimination 
of indeterminacy of $Z_2\dashrightarrow Z_1$, 
we may further assume that $f_2$ decomposes as 
\begin{equation*}
\xymatrix{
f_2\colon Z_2\ar[r]&  Z_1\ar[r]^-{f_1}& X.  
}
\end{equation*} 
Then, by \cite[Proposition 6.3.1]{fujino-foundations}, 
we can directly check that ${f_1}_*\mathcal O_{Z_1}(-\lfloor \Delta_{Z_1}\rfloor 
+\Delta^{=1}_{Z_1})
={f_2}_*\mathcal O_{Z_2}(-\lfloor \Delta_{Z_2}\rfloor +\Delta^{=1}_{Z_2})
$ holds. We finish the proof. 
\end{proof}

In this paper, we need the following Bertini-type theorem 
for $\mathcal J_{\NLC}(X, \Delta)$. 

\begin{lem}[{\cite[Proposition 7.5]{fujino-fundamental}}]\label{c-lem4.3}
Let $X$ be a normal complex variety and let $\Delta$ be 
an effective $\mathbb R$-divisor on $X$ such that 
$K_X+\Delta$ is $\mathbb R$-Cartier. 
Let $\Lambda$ {\em{(}}$\simeq \mathbb P^N${\em{)}} 
be a finite-dimensional linear system on $X$. 
Let $X^\dag$ be any relatively compact open subset 
of $X$. 
Then there exists an analytically meagre subset 
$\mathcal S$ of $\Lambda$ such that 
\begin{equation*}
\mathcal J_{\NLC}(X^\dag, \Delta+tD)
=\mathcal J_{\NLC}(X^\dag, \Delta)
\end{equation*} 
holds outside the base locus $\Bs\Lambda$ of 
$\Lambda$ for every element 
$D$ of $\Lambda \setminus \mathcal S$ and 
every $0\leq t\leq 1$. 
\end{lem}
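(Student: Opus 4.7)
The plan is to reduce the statement to a computation on a single simultaneous log resolution, and then invoke an analytic Bertini argument to throw away an analytically meagre subset of $\Lambda$.

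First I would set up the geometry. Since $X^\dag$ is relatively compact in $X$, after replacing $X$ by a relatively compact open neighbourhood of $\overline{X^\dag}$ (as in \ref{c-say1.1}), we may assume that $\Supp\Delta$ has only finitely many irreducible components and that $K_X+\Delta$ is a globally $\mathbb R$-Cartier $\mathbb R$-divisor. Using \cite[Theorem~13.2]{bierstone-milman} (as recalled in \ref{c-say2.10}), I take a projective bimeromorphic morphism $f\colon Z\to X$ from a smooth complex analytic space $Z$ that simultaneously (i) is a log resolution of $(X,\Delta)$ and (ii) resolves the base ideal of $\Lambda$, so that $f^{*}\Lambda=|M_Z|+F_Z$ with $|M_Z|$ base-point-free and $F_Z$ the fixed part (in particular $f(F_Z)\subset\Bs\Lambda$). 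Writing $K_Z+\Delta_Z=f^{*}(K_X+\Delta)$, the divisor $\Supp\Delta_Z+F_Z+\Exc(f)$ is a simple normal crossing divisor.

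Next I would apply a Bertini-type exclusion. The linear subsystem $|M_Z|\subset\Lambda$ is base-point-free on $Z$ and defines a morphism $Z\to\mathbb P^N$; by the analytic Bertini theorem applied over the relatively compact open subset $f^{-1}(\overline{X^\dag})$, the set of $D\in\Lambda$ for which the strict transform $D_Z$ is either singular or fails to meet $\Supp\Delta_Z+F_Z+\Exc(f)$ transversally over $f^{-1}(\overline{X^\dag})$ is contained in a countable union of proper locally closed analytic subsets of $\Lambda\simeq\mathbb P^N$, hence in an analytically meagre subset $\mathcal S$ in the sense of \ref{c-say2.13}. For every $D\in\Lambda\setminus\mathcal S$, the divisor $\Supp(\Delta_Z+D_Z+F_Z)$ is simple normal crossing on $f^{-1}(X^\dag)$, and $D_Z$ shares no component with $\Delta_Z$ or $F_Z$.

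The computation then finishes the proof. For $D\in\Lambda\setminus\mathcal S$ and $0\leq t\leq1$, I have
\begin{equation*}
K_Z+\Delta_Z+tD_Z+tF_Z=f^{*}(K_X+\Delta+tD),
\end{equation*}
so by Definition \ref{c-def4.1} applied to $f$,
\begin{equation*}
\mathcal J_{\NLC}(X^\dag,\Delta+tD)
=f_{*}\mathcal O_{Z}\bigl(-\lfloor\Delta_Z+tD_Z+tF_Z\rfloor
+(\Delta_Z+tD_Z+tF_Z)^{=1}\bigr)\bigr|_{X^\dag}.
\end{equation*}
Now I would restrict to $X^\dag\setminus\Bs\Lambda$. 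On $f^{-1}(X^\dag\setminus\Bs\Lambda)$ the fixed part $F_Z$ is zero, so the formula becomes $f_{*}\mathcal O_Z(-\lfloor\Delta_Z+tD_Z\rfloor+(\Delta_Z+tD_Z)^{=1})$. Since $D_Z$ is a reduced prime divisor disjoint from $\Supp\Delta_Z$ in components, an elementary case analysis on the coefficient of $D_Z$ (namely $t<1$ versus $t=1$) gives
\begin{equation*}
-\lfloor\Delta_Z+tD_Z\rfloor+(\Delta_Z+tD_Z)^{=1}
=-\lfloor\Delta_Z\rfloor+\Delta_Z^{=1},
\end{equation*}
which is precisely the expression computing $\mathcal J_{\NLC}(X^\dag,\Delta)$ via $f$.

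The main obstacle is the second step: formulating the analytic Bertini statement for a linear system $\Lambda\simeq\mathbb P^N$ so that the bad locus is \emph{analytically meagre} (not merely nowhere dense), and doing so uniformly over the parameter $t\in[0,1]$. Uniformity in $t$ is actually free once the snc transversality is achieved for $D_Z$, because all coefficient inequalities needed are continuous in $t$; the real content is the meagreness statement for the locus of non-transverse $D$, which reduces to a countable union of proper analytic conditions on the parameter space $\Lambda$ by stratifying $f^{-1}(\overline{X^\dag})$ and applying the analytic Bertini theorem stratum by stratum.
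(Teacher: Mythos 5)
Your proposal is correct and follows essentially the same route as the paper: pass to a relatively compact neighbourhood of $\overline{X^\dag}$, take one log resolution, remove an analytically meagre subset of $\Lambda$ by an analytic Bertini argument, and compare coefficients to see that $\lceil -(\Delta_Z+tf^{-1}_*D)^{<1}\rceil-\lfloor (\Delta_Z+tf^{-1}_*D)^{>1}\rfloor$ agrees with the corresponding divisor for $\Delta_Z$ for all $0\leq t\leq 1$. The only (cosmetic) difference is that the paper simply replaces $X$ by $X\setminus\Bs\Lambda$ before resolving, instead of resolving the base ideal and tracking the fixed part $F_Z$, which it then discards outside $\Bs\Lambda$ as you do.
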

\begin{proof}
Without loss of generality, 
we can freely replace $X$ with a relatively compact open neighborhood 
of $\overline{X^\dag}$. 
Therefore, by the desingularization theorem (see 
\cite[Theorem 13.2]{bierstone-milman}), 
we can take a projective bimeromorphic 
morphism $f\colon Z\to X$ from a smooth 
complex variety $Z$ with $K_Z+\Delta_Z=f^*(K_X+\Delta)$ such that 
$\Supp \Delta_Z$ is a simple normal crossing divisor on $Z$. 
By replacing $X$ with $X\setminus \Bs \Lambda$, 
we may further assume that $\Bs\Lambda=\emptyset$. 
By Bertini's theorem, 
there exists an analytically meagre subset $\mathcal S$ of $\Lambda$ 
such that $f^*D$ is smooth, 
$f^*D=f^{-1}_*D$, $f^*D$ and $\Supp \Delta_Z$ 
have no common irreducible components, and 
the support of $f^*D+\Supp \Delta_Z$ is a simple 
normal crossing divisor on $Z$ for every element 
$D$ of $\Lambda \setminus \mathcal S$. 
Then $K_Z+\Delta_Z+f^*tD=f^*(K_X+\Delta+tD)$ holds over 
$X^\dag$ with $f^*tD=tf^{-1}_*D$. 
Thus, 
\begin{equation*}
\lceil 
-(\Delta^{<1}_Z)\rceil -\lfloor \Delta^{>1}_Z\rfloor
=\lceil 
-(\Delta_Z+f^*tD)^{<1}\rceil -\lfloor (\Delta_Z+f^*tD)^{>1}\rfloor
\end{equation*}
holds over $X^\dag$ for every $0\leq t\leq 1$ and 
every element $D$ of $\Lambda\setminus \mathcal S$. 
Thus, we obtain 
\begin{equation*}
\mathcal J_{\NLC}(X^\dag, \Delta+tD)
=\mathcal J_{\NLC}(X^\dag, \Delta) 
\end{equation*} 
by definition. 
This is what we wanted. 
\end{proof}

We need the following lemma in order to reduce the 
problems for $\mathbb R$-divisors to simpler problems for 
$\mathbb Q$-divisors. 

\begin{lem}\label{c-lem4.4}
Let $X$ be a normal complex variety and let 
$L$ be a compact subset of $X$. 
Let $\Delta$ be an effective $\mathbb R$-divisor 
on $X$ such that $K_X+\Delta$ is $\mathbb R$-Cartier at $L$. 
Then, after shrinking $X$ around $L$ suitably, 
there exist effective $\mathbb Q$-divisors $\Delta_1, \ldots, 
\Delta_k$ on $X$ and positive real numbers $r_1, \ldots, r_k$ with 
$\sum _{i=1}^k r_i =1$ such that 
$K_X+\Delta_i$ is $\mathbb Q$-Cartier 
for every $i$, 
$\Delta=\sum _{i=1}^k r_i \Delta_i$, and 
$\mathcal J_{\NLC}(X, \Delta_i)=\mathcal J_{\NLC}(X, \Delta)$ 
holds for every $i$. In particular, 
if $(X, \Delta)$ is log canonical, then 
$(X, \Delta_i)$ is log canonical for every $i$. 
\end{lem}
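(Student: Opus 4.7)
The plan is to realize the $\Delta_i$'s as vertices of a small rational simplex around $\Delta$ in a rational affine subspace on which the non-lc ideal sheaf is constant, leveraging the rational-polytope setup of \ref{c-say2.10}.

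\textbf{Setup.} After shrinking $X$ around $L$, assume $\Supp \Delta$ has only finitely many irreducible components. Let $V \subset \WDiv_{\mathbb R}(X)$ be the finite-dimensional $\mathbb Q$-subspace spanned by these components, so $\Delta \in V$; by \ref{c-say2.10}, $\mathcal R(V; L) \subset V$ is a rational affine subspace containing $\Delta$, and after further shrinking we may assume $K_X + \Gamma$ is globally $\mathbb R$-Cartier for every $\Gamma \in \mathcal R(V; L)$. Choose a projective bimeromorphic morphism $f \colon Z \to X$ from a smooth $Z$ as in \ref{c-say2.10}, so that $\Exc(f) \cup \Supp f^{-1}_*\Theta$ is simple normal crossing, where $\Theta$ is the union of supports of elements of $\mathcal R(V; L)$. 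Writing $K_Z + \Gamma_Z = f^*(K_X + \Gamma)$, the map $f$ is a log resolution of $(X, \Gamma)$ for every $\Gamma \in \mathcal R(V; L)$, and for each of the finitely many prime divisors $E_j$ on $Z$ in $\Exc(f) \cup \Supp f^{-1}_*\Theta$, the function $a_j(\Gamma) := \mathrm{mult}_{E_j} \Gamma_Z$ is rational affine on $\mathcal R(V; L)$.

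\textbf{A stable rational subspace.} Define
\[
\mathcal V^\sharp := \mathcal R(V; L) \cap \bigcap_{a_j(\Delta) \in \mathbb Z}\{\Gamma : a_j(\Gamma) = a_j(\Delta)\},
\]
a rational affine subspace containing $\Delta$. For $\Gamma \in \mathcal V^\sharp$ sufficiently close to $\Delta$, we have $a_j(\Gamma) = a_j(\Delta)$ whenever $a_j(\Delta) \in \mathbb Z$, and by continuity $a_j(\Gamma)$ lies in the open unit interval $(\lfloor a_j(\Delta) \rfloor, \lfloor a_j(\Delta) \rfloor + 1)$ whenever $a_j(\Delta) \notin \mathbb Z$. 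In both cases $\lfloor a_j(\Gamma) \rfloor = \lfloor a_j(\Delta) \rfloor$, and the condition $a_j = 1$ either holds for both $\Gamma$ and $\Delta$ (the pinned integer case $a_j(\Delta) = 1$) or fails for both (every other case). Hence
\[
-\lfloor \Gamma_Z \rfloor + \Gamma_Z^{=1} = -\lfloor \Delta_Z \rfloor + \Delta_Z^{=1}
\]
as divisors on $Z$, so Definition \ref{c-def4.1} yields $\mathcal J_{\NLC}(X, \Gamma) = \mathcal J_{\NLC}(X, \Delta)$. Effectiveness of $\Gamma$ near $L$ is automatic from the strict positivity of the coefficients of $\Delta$ on $\Supp \Delta$.

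\textbf{Conclusion.} Since $\mathcal V^\sharp$ is a rational affine subspace containing $\Delta$, we can choose rational points $\Delta_1, \dots, \Delta_k \in \mathcal V^\sharp$ forming the vertices of a rational simplex that contains $\Delta$ in its interior and is small enough that each vertex lies in the neighborhood above. The barycentric coordinates of $\Delta$ then give $\Delta = \sum r_i \Delta_i$ with $r_i > 0$ and $\sum r_i = 1$. Each $\Delta_i$ is an effective $\mathbb Q$-divisor in $\mathcal R(V; L)$ with $\mathcal J_{\NLC}(X, \Delta_i) = \mathcal J_{\NLC}(X, \Delta)$; since $\Delta_i$ is rational and $K_X + \Delta_i$ is $\mathbb R$-Cartier (hence a finite $\mathbb R$-linear combination of Cartier divisors), a standard linear-algebra argument shows $K_X + \Delta_i$ is actually $\mathbb Q$-Cartier. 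The final clause is immediate: if $(X, \Delta)$ is log canonical then $\mathcal J_{\NLC}(X, \Delta) = \mathcal O_X$, so the same holds for each $(X, \Delta_i)$. The main technical subtlety is the middle paragraph: integer coefficients of $\Delta_Z$ must be pinned by the affine condition defining $\mathcal V^\sharp$, since otherwise arbitrarily small rational perturbations could cross an integer threshold and shift the round-down.
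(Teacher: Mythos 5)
Your proof is correct and essentially reproduces the paper's own argument: the same space $V$ spanned by the components of $\Supp\Delta$, the same rational affine subspace $\mathcal R(V;L)$ from \ref{c-say2.10}, the same log resolution, and then a rational affine subspace through $\Delta$ obtained by pinning certain discrepancies so that $\mathcal J_{\NLC}$ is unchanged for nearby rational points, which are then taken as vertices of a small rational simplex. The only (harmless) deviation is that you pin only the coefficients of $\Delta_Z$ that are integers, whereas the paper's $\mathcal S_\Delta(V;L)$ pins all rational discrepancies --- a slightly stronger constraint that also keeps the multiplier ideal constant (cf.~Remark \ref{c-rem4.5}) but is not needed for $\mathcal J_{\NLC}$, as your floor/$=1$ analysis correctly shows.
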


\begin{proof}
By shrinking $X$ around $L$ suitably, 
we may assume that 
$\Supp \Delta$ has only finitely many irreducible components. 
Let $\Supp \Delta:=\sum _{j=1}^l D_j$ be the irreducible 
decomposition. We consider the $\mathbb R$-vector space 
$V:=\bigoplus_{j=1}^l\mathbb R D_j$. 
We put 
\begin{equation*}
\mathcal R (V; L):=\{D\in V\, |\, {\text{$K_X+D$ is 
$\mathbb R$-Cartier at $L$}}\}. 
\end{equation*}
Then $\mathcal R(V; L)$ is an affine subspace of $V$ defined over 
the rationals (see \ref{c-say2.10}). 
By shrinking $X$ around $L$ suitably again, 
we may assume that $K_X+D$ is $\mathbb R$-Cartier for 
every $D\in \mathcal R(V; L)$. 
By \cite[Theorem 13.2]{bierstone-milman}, 
we may further assume that there exists a projective bimeromorphic 
morphism $f\colon Z\to X$ from a smooth complex 
analytic space $Z$ such that $\Exc(f)$ and $\Exc(f)\cup 
\sum _{j=1}^l \Supp f^{-1}_*D_j$ are simple 
normal crossing divisors on $Z$. 
We put 
\begin{equation*} 
\mathcal S_{\Delta}(V; L):=
\left\{D \in \mathcal R(V; L) \middle| 
\begin{array}{l}  {\text{$a(E, X, D)=a(E, X, \Delta)$ holds for every}}\\
{\text{divisor $E$ on $Z$ with $a(E, X, \Delta)\in \mathbb Q$}} 
\end{array} \right\}.  
\end{equation*} 
Then $\mathcal S_{\Delta}(V; L)$ is an affine subspace of $V$ defined 
over the rationals with $\Delta\in \mathcal S_{\Delta}(V; L)$. 
Since $\mathcal S_{\Delta}(V; L)$ is defined over the 
rationals, 
we can take effective $\mathbb Q$-divisors $\Delta_1, 
\ldots, \Delta_k$ from $\mathcal S_{\Delta}(V; L)$ such that 
they are close to $\Delta$ in $\mathcal S_{\Delta}(V; L)$ and 
positive real numbers $r_1, \ldots, r_k$ with all the 
desired properties. 
\end{proof}

Although we do not use multiplier ideal sheaves in this paper, 
we note: 

\begin{rem}\label{c-rem4.5}
In Lemma \ref{c-lem4.4}, 
we see that $\mathcal J(X, \Delta_i)=\mathcal J(X, \Delta)$ holds 
for every $i$ by construction. 
In particular, $(X, \Delta_i)$ is kawamata log terminal for 
every $i$ if $(X, \Delta)$ is kawamata log terminal. 
\end{rem}

\section{Quick review of vanishing theorems}\label{c-sec5}

In this section, let us quickly recall 
the vanishing theorems established in 
\cite{fujino-analytic-vanishing} (see also 
\cite{fujino-fujisawa} and \cite{fujino-vanishing-pja}). 
Let us start with the definition of 
{\em{analytic simple normal crossing pairs}}. 

\begin{defn}[Analytic simple normal crossing pairs]\label{c-def5.1}
Let $X$ be a simple normal crossing divisor 
on a smooth complex analytic space $M$ and 
let $B$ be an $\mathbb R$-divisor on $M$ such that 
$\Supp (B+X)$ is a simple normal crossing divisor on $M$ and 
that $B$ and $X$ have no common irreducible components. 
Then we put $D:=B|_X$ and 
consider the pair $(X, D)$. 
We call $(X, D)$ an {\em{analytic globally embedded simple 
normal crossing pair}} and $M$ the {\em{ambient space}} 
of $(X, D)$. 

If the pair $(X, D)$ is locally isomorphic to an analytic 
globally embedded 
simple normal crossing pair at any point of $X$ and the irreducible 
components of $X$ and $D$ are all smooth, 
then $(X, D)$ is called an {\em{analytic simple normal crossing 
pair}}. 

When $(X, D)$ is an analytic simple normal crossing pair, 
$X$ has an invertible dualizing sheaf $\omega_X$ since it 
is Gorenstein. We use the symbol $K_X$ as a formal 
divisor class with an isomorphism $\mathcal O_X(K_X)\simeq 
\omega_X$ if there is no danger of confusion. 
Note that we can not always define $K_X$ globally with 
$\mathcal O_X(K_X)\simeq \omega_X$. 
In general, it only exists locally on $X$. 
\end{defn}

\begin{rem}\label{c-rem5.2}
Let $X$ be a smooth complex analytic space and let $D$ be 
an $\mathbb R$-divisor on $X$ such that $\Supp D$ is a simple 
normal crossing divisor on $X$. 
Then, by considering $M:=X\times \mathbb C$, 
we can see $(X, D)$ as an analytic globally embedded simple 
normal crossing pair. 
\end{rem}
The notion of {\em{strata}}, 
which is a generalization of that of 
log canonical centers, plays a crucial role. 

\begin{defn}[Strata]\label{c-def5.3}
Let $(X, D)$ be an analytic simple normal crossing pair 
such that $D$ is effective. 
Let $\nu\colon X^\nu\to X$ be the normalization. 
We put 
\begin{equation*} 
K_{X^\nu}+\Theta=\nu^*(K_X+D). 
\end{equation*}  
This means that $\Theta$ is the union of $\nu^{-1}_*D$ and the 
inverse image of the singular locus of $X$. 
If $S$ is an irreducible component of $X$ 
or the $\nu$-image 
of some log canonical center of $(X^\nu, \Theta)$, 
then $S$ is called a {\em{stratum}} of $(X, D)$. 
\end{defn}

We recall Siu's theorem on coherent analytic sheaves, 
which is a special case of \cite[Theorem 4]{siu}. 

\begin{thm}\label{c-thm5.4} 
Let $\mathcal F$ be a coherent sheaf on a complex 
analytic space $X$. 
Then there exists a locally finite family $\{Y_i\}_{i\in I}$ 
of complex analytic subvarieties of $X$ such that 
\begin{equation*}
\Ass _{\mathcal O_{X,x}}(\mathcal F_x)=\{\mathfrak{p}_{x, 1}, 
\ldots, \mathfrak{p}_{x, r(x)}\}
\end{equation*}
holds for every point $x\in X$, where 
$\mathfrak{p}_{x, 1}, 
\ldots, \mathfrak{p}_{x, r(x)}$ are the prime ideals 
of $\mathcal O_{X, x}$ associated to the irreducible components 
of the germs $Y_{i, x}$ of $Y_i$ at $x$ with $x\in Y_i$. 
We note that each $Y_i$ is called an {\em{associated subvariety}} 
of $\mathcal F$. 
\end{thm}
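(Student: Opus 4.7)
The plan is to follow the classical strategy due to Siu: decompose the problem pointwise using the noetherianness of analytic local rings, and then globalize using the coherence of $\mathcal F$. First I would fix a point $x\in X$ and observe that $\mathcal F_x$ is a finitely generated module over the noetherian local ring $\mathcal O_{X,x}$. Hence $\Ass_{\mathcal O_{X,x}}(\mathcal F_x)$ is a finite set of prime ideals by standard commutative algebra, and each associated prime $\mathfrak p$ cuts out, via $V(\mathfrak p)$, an irreducible germ of an analytic subvariety through $x$.

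The core of the proof is to promote this pointwise description to a locally finite global family. By the coherence of $\mathcal F$ and of the structure sheaf, every irreducible germ $V(\mathfrak p)$ at $x$ extends, on a small open neighborhood $U_x$ of $x$, to an irreducible closed analytic subvariety $Y\subset U_x$ whose germ at $x$ is prescribed. The next step is a semicontinuity property for associated primes: if $\mathfrak p\in \Ass(\mathcal F_x)$ corresponds to an irreducible subvariety $Y$ through $x$, then for every $y\in Y$ sufficiently close to $x$ the prime ideal $\mathfrak p_y\subset \mathcal O_{X,y}$ defining the germ of $Y$ at $y$ is itself an associated prime of $\mathcal F_y$. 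This ensures that the local extensions agree on overlaps and that every associated prime at a nearby point is accounted for by one of the extended subvarieties.

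With this semicontinuity in hand, I would cover $X$ by countably many such neighborhoods (possible since $X$ is second countable), collect the extensions into a family $\{Y_i\}$, and verify local finiteness by a compactness argument: on any relatively compact open set $U\Subset X$ only finitely many $Y_i$ can meet $U$, for otherwise one could extract a limit point $x_\infty\in \overline{U}$ whose stalk $\mathcal F_{x_\infty}$ would inherit infinitely many associated primes, contradicting noetherianness of $\mathcal O_{X,x_\infty}$. Uniqueness of the germ-to-subvariety extension then forces the equality $\Ass_{\mathcal O_{X,x}}(\mathcal F_x)=\{\mathfrak p_{x,1},\dots,\mathfrak p_{x,r(x)}\}$ at every $x$.

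The main obstacle is precisely the semicontinuity step, namely showing that associated primes of a coherent analytic sheaf propagate along their associated subvarieties in a controlled manner. This is the technical heart of Siu's theorem and is proved using flatness properties of convergent power series rings together with Oka's coherence theorem; a naive appeal to algebraic semicontinuity over noetherian schemes is not available because $\mathcal O_{X,x}$ is only an analytic local ring. I would therefore not reprove this from scratch, but simply invoke Siu's Theorem 4 as cited, which is exactly what the author does.
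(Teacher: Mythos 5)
Your proposal matches the paper's treatment exactly: the paper offers no proof of this statement, presenting it as a special case of Siu's Theorem 4 in \cite{siu}, which is precisely what you ultimately invoke. Your preliminary sketch of the pointwise-to-global strategy is a reasonable outline of what lies behind Siu's result, but since both you and the author defer the technical heart (propagation of associated primes along their subvarieties and local finiteness) to Siu's paper, the two approaches coincide.
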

Now we are ready to state the main result of \cite{fujino-analytic-vanishing}. 

\begin{thm}[{\cite[Theorem 1.1]{fujino-analytic-vanishing}}]\label{c-thm5.5}
Let $(X, \Delta)$ be an analytic simple 
normal crossing pair such that $\Delta$ is a boundary 
$\mathbb R$-divisor on $X$. 
Let $f\colon X\to Y$ be a projective morphism 
to a complex analytic space $Y$ and let $\mathcal L$ 
be a line bundle on $X$. 
Let $q$ be an arbitrary non-negative integer. 
Then we have the following properties. 
\begin{itemize}
\item[(i)] $($Strict support condition$)$.~If 
$\mathcal L-(\omega_X+\Delta)$ is $f$-semiample,  
then every 
associated subvariety of $R^qf_*\mathcal L$ is the $f$-image 
of some stratum of $(X, \Delta)$. 
\item[(ii)] $($Vanishing theorem$)$.~If 
$\mathcal L-(\omega_X+\Delta)\sim _{\mathbb R} f^*\mathcal H
$ holds 
for some $\pi$-ample 
$\mathbb R$-line bundle $\mathcal H$ on $Y$, where 
$\pi\colon Y\to Z$ is a 
projective morphism to a complex analytic space 
$Z$, then we have 
\begin{equation*}
R^p\pi_*R^qf_*\mathcal L=0
\end{equation*}
for every $p>0$. 
\end{itemize} 
\end{thm}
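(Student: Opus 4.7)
The plan is to reduce the statement to a Hodge-theoretic assertion on the normalizations of strata, and then invoke either Saito's theory of mixed Hodge modules or Takegoshi's analytic generalization of Koll\'ar's torsion-free and vanishing theorem. First I would reduce to the case where $\Delta$ is a $\mathbb Q$-divisor. Since $\mathcal L-(\omega_X+\Delta)$ is only $f$-semiample (resp.~$\mathbb R$-linearly equivalent to the pullback of a $\pi$-ample $\mathbb R$-line bundle), an argument parallel to Lemma \ref{c-lem4.4} lets me write $\Delta$ locally on any relatively compact open set as a finite convex combination $\Delta=\sum r_i\Delta_i$ of $\mathbb Q$-divisors $\Delta_i$ that preserve the analytic SNC combinatorics, so that $\lceil -\Delta_i^{<1}\rceil$ and $\Delta_i^{=1}$ agree with those of $\Delta$ and $\mathcal L-(\omega_X+\Delta_i)$ retains the required positivity property; it suffices to treat each $\Delta_i$ in turn.

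Next I would reduce the $\mathbb Q$-divisor case to the situation where $\Delta=\Delta^{=1}$ is reduced, using a Kawamata-type cyclic covering construction applied to a sufficiently divisible multiple of the fractional part $\{\Delta\}$. On the covering $\widetilde X\to X$ (which remains an analytic SNC variety by the usual construction), the pullback of $\mathcal L$ differs from the log canonical sheaf $\omega_{\widetilde X}(\widetilde \Delta^{=1})$ by a line bundle enjoying the same semiampleness (resp.~ample-pullback) property over $Y$, and the higher direct images $R^qf_*\mathcal L$ appear as direct summands of the corresponding direct images on $\widetilde X$. With the fractional part thus eliminated, I would pass to the normalization $\nu\colon X^\nu\to X$ with $K_{X^\nu}+\Theta=\nu^*(K_X+\Delta^{=1})$, so that the strata of $(X,\Delta)$ are exactly the $\nu$-images of log canonical centers of $(X^\nu,\Theta)$ together with irreducible components of $X$.

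For (i) the key step is the strict support decomposition for the direct image of the variation of Hodge structure $\mathcal O_{X^\nu}(K_{X^\nu}+\Theta)\otimes \nu^*(\mathcal L-(\omega_X+\Delta^{=1}))^{1/N}$ obtained on a Kawamata cover; by Saito's decomposition theorem for polarizable pure Hodge modules (or, working more directly, by Takegoshi's analytic torsion-freeness theorem applied componentwise), the associated subvarieties of each $R^qf_*\mathcal L$, in the sense of Siu's Theorem \ref{c-thm5.4}, must be images of the strict supports, i.e.~$f$-images of strata. Part (ii) then follows formally from (i) combined with a relative Koll\'ar-type argument: the strict-support property forces $R^qf_*\mathcal L$ to be a sum of analytic sheaves whose associated subvarieties are $f$-images of strata, and on each such image the hypothesis that $\mathcal L-(\omega_X+\Delta)\sim_{\mathbb R}f^*\mathcal H$ allows one to apply a Kodaira-type analytic vanishing after restriction, making $R^p\pi_*R^qf_*\mathcal L=0$ for $p>0$ via a Leray-spectral-sequence/induction argument on the dimension of support.

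The hard part will be carrying through the Hodge-theoretic machinery purely in the complex analytic category: Saito's theory and Takegoshi's vanishing were originally formulated for algebraic or K\"ahler settings, and one must check that the Kawamata covering, the decomposition theorem, and the subsequent analysis of associated subvarieties via Siu's result all survive for projective morphisms of arbitrary complex analytic spaces. This is precisely the content of \cite{fujino-analytic-vanishing} (and, in a mixed-Hodge-module-free form, \cite{fujino-fujisawa}), so in practice I would invoke that work rather than reprove it.
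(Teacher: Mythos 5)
The paper itself gives no proof of Theorem \ref{c-thm5.5}: it quotes the statement verbatim as \cite[Theorem 1.1]{fujino-analytic-vanishing}, remarking only that the proof there rests on Saito's theory of mixed Hodge modules and Takegoshi's analytic generalization of Koll\'ar's torsion-free and vanishing theorems, with a mixed-Hodge-module-free alternative in \cite{fujino-fujisawa}. Your proposal, which sketches precisely that reduction-plus-Hodge-theoretic strategy and then explicitly defers to the same reference rather than reproving it, is therefore in line with the paper's own treatment.
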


We make a supplementary remark on Theorem \ref{c-thm5.5}. 

\begin{rem}\label{c-rem5.6} 
In Theorem \ref{c-thm5.5} (and Theorem \ref{c-thm5.7} below), 
we always assume that $\Delta$ is {\em{globally $\mathbb R$-Cartier}}, 
that is, $\Delta$ is a finite $\mathbb R$-linear combination 
of Cartier divisors. We note that if the support of $\Delta$ 
has only finitely many irreducible components 
then it is globally $\mathbb R$-Cartier. 
Since we are mainly interested in the standard setting explained in 
\ref{c-say1.1}, this assumption is harmless to 
geometric applications.  
Under this assumption, we can obtain an $\mathbb R$-line bundle 
$\mathcal N$ 
on $X$ naturally associated to $\mathcal L-(\omega_X+\Delta)$. 
The assumption in (i) means that $\mathcal N$ is a finite 
positive $\mathbb R$-linear combination of $\pi$-semiample 
line bundles on $X$. 
The assumption in (ii) says that $\mathcal N=f^*\mathcal H$ holds 
in $\Pic(X)\otimes _{\mathbb Z} \mathbb R$. 
\end{rem}

We do not prove Theorem \ref{c-thm5.5} here. 
For the details of the proof of Theorem \ref{c-thm5.5}, see 
\cite{fujino-analytic-vanishing}, which depends on 
Saito's theory of mixed Hodge modules (see 
\cite{saito1}, \cite{saito2}, \cite{saito3}, 
\cite{fujino-fujisawa-saito}, 
and \cite{saito4}) and 
Takegoshi's analytic generalization of Koll\'ar's torsion-free and 
vanishing theorem (see \cite{takegoshi}). 
The reader can find an alternative approach 
to Theorem \ref{c-thm5.5} without using Saito's theory 
of mixed Hodge modules in \cite{fujino-fujisawa}. 
We note that 
Theorem \ref{c-thm5.5} is one of the main ingredients of 
this paper. Or, we can see this paper as an application of 
Theorem \ref{c-thm5.5}. 

\subsection{Vanishing theorems of Reid--Fukuda type}\label{c-subsec5.1}
Although we do not need vanishing theorems of Reid--Fukuda type 
in this paper, we will shortly discuss them here for future usage. 

\begin{thm}[{Vanishing theorem of Reid--Fukuda type, 
see \cite[Theorem 1.2]{fujino-analytic-vanishing}}]\label{c-thm5.7}
Let $(X, \Delta)$ be an analytic simple 
normal crossing pair such that $\Delta$ is a boundary 
$\mathbb R$-divisor on $X$. 
Let $f\colon X\to Y$ and $\pi\colon Y\to Z$ be projective morphisms 
between complex analytic spaces and let $\mathcal L$ 
be a line bundle on $X$. 
If $\mathcal L-(\omega_X+\Delta)\sim _{\mathbb R} f^*\mathcal H$ 
holds such that $\mathcal H$ is an $\mathbb R$-line bundle, 
which is nef and 
log big over $Z$ with respect to $f\colon (X, \Delta)\to Y$, on $Y$, then  
\begin{equation*}
R^p\pi_*R^qf_*\mathcal L=0
\end{equation*} 
holds for every $p>0$ and every $q$. 
\end{thm}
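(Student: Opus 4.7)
The plan is to proceed by induction on the number of strata of $(X, \Delta)$, reducing ultimately to the vanishing statement in Theorem \ref{c-thm5.5} (ii).

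For the base case, suppose that $\Delta^{=1}$ has no irreducible components, so that the only strata of $(X, \Delta)$ are its irreducible components. Then the hypothesis that $\mathcal{H}$ is nef and log big over $Z$ with respect to $f\colon (X, \Delta) \to Y$ just says that $\mathcal{H}$ is $\pi$-nef and $\pi$-big in the usual relative sense. A relative Kodaira-type argument (after shrinking around the relevant compact set in $Z$) lets us write $\mathcal{H} \sim_{\mathbb R} \mathcal{A} + E$ with $\mathcal{A}$ a $\pi$-ample $\mathbb R$-line bundle on $Y$ and $E$ an effective $\mathbb R$-Cartier divisor on $Y$. After a projective bimeromorphic modification of $X$ making $\Supp(f^*E) \cup \Supp\Delta$ simple normal crossing, and after absorbing $\epsilon f^*E$ into $\Delta$ for small $\epsilon>0$ — which preserves the boundary condition and leaves $\Delta^{=1}$ unchanged, so the stratum structure is unaffected — Theorem \ref{c-thm5.5} (ii) applied with the $\pi$-ample class $(1-\epsilon)\mathcal{H} + \epsilon\mathcal{A}$ yields $R^p\pi_*R^qf_*\mathcal{L}=0$ for $p>0$.

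For the inductive step, choose an irreducible component $S$ of $\Delta^{=1}$ and write $\Delta = S + \Delta'$. Adjunction produces an analytic simple normal crossing pair $(S, \Delta_S)$ with $K_S + \Delta_S = (K_X + \Delta)|_S$, whose strata are exactly the strata of $(X, \Delta)$ contained in $S$. The pair $(X, \Delta')$ is again analytic simple normal crossing, and its strata are the strata of $(X, \Delta)$ not contained in $S$; in particular both pairs have strictly fewer strata than $(X, \Delta)$. One checks
\begin{equation*}
\mathcal{L}|_S - (\omega_S + \Delta_S) \sim_{\mathbb R} (f|_S)^*\mathcal{H}, \qquad \mathcal{L}(-S) - (\omega_X + \Delta') \sim_{\mathbb R} f^*\mathcal{H},
\end{equation*}
and the nef-and-log-big hypothesis on $\mathcal{H}$ is inherited by both pairs since their strata are strata of $(X, \Delta)$. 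The inductive hypothesis therefore yields
\begin{equation*}
R^p\pi_* R^q (f|_S)_* (\mathcal{L}|_S) = 0 = R^p\pi_* R^q f_* \mathcal{L}(-S)
\end{equation*}
for every $p > 0$ and every $q$.

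To conclude, the short exact sequence $0 \to \mathcal{L}(-S) \to \mathcal{L} \to \mathcal{L}|_S \to 0$ yields a long exact sequence of $R^qf_*$, and Theorem \ref{c-thm5.5} (i) applied to $\mathcal{L}$ on $(X, \Delta)$ and to $\mathcal{L}(-S)$ on $(X, \Delta')$ forces every connecting map $R^q(f|_S)_*(\mathcal{L}|_S) \to R^{q+1}f_*\mathcal{L}(-S)$ to vanish: the associated subvarieties of the source are $f$-images of strata of $(S, \Delta_S)$, i.e. strata of $(X, \Delta)$ contained in $S$, which are disjoint from the strata of $(X, \Delta')$ controlling the associated subvarieties of the target. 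The long exact sequence therefore splits into short exact sequences
\begin{equation*}
0 \to R^q f_* \mathcal{L}(-S) \to R^q f_* \mathcal{L} \to R^q (f|_S)_* (\mathcal{L}|_S) \to 0,
\end{equation*}
and applying $R\pi_*$ with the inductive vanishing of the two outer terms completes the inductive step. The main obstacle I expect is the base case, in particular the relative Kodaira perturbation in the analytic setting while preserving the analytic simple normal crossing structure compatibly with the hypothesis of Theorem \ref{c-thm5.5} (ii); the strict-support splitting in the inductive step is a direct consequence of Theorem \ref{c-thm5.5} (i), but requires careful bookkeeping of which strata belong to which pair.
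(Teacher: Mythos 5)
First, a point of reference: this paper does not prove Theorem \ref{c-thm5.7} at all. It is quoted from \cite{fujino-analytic-vanishing} (Theorem 1.2 there), is not used elsewhere in the paper, and the author explicitly notes that its proof is \emph{harder} than that of Theorem \ref{c-thm5.5} (ii). So your proposal can only be judged on its own merits, and while the induction on the number of strata is the natural skeleton and your inductive step mimics arguments the paper does use (Step 1 of the proof of Theorem \ref{c-thm6.1}, the proof of Theorem \ref{c-thm13.1}), there is a genuine gap in it: the vanishing of the connecting map $\delta\colon R^q(f|_S)_*(\mathcal L|_S)\to R^{q+1}f_*\mathcal L(-S)$. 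The strict support condition tells you that every associated subvariety of the target is the $f$-\emph{image} of a stratum of $(X,\Delta')$, and what you need for $\delta=0$ is that no such image is contained in $f(S)$, the support of the source. You argue instead that the strata of $(X,\Delta')$ are not contained in $S$ inside $X$; that is true by the simple normal crossing condition, but it says nothing about images: $f$ may contract a stratum of $(X,\Delta')$ to a point of $f(S)$, or map it into $f(S)$, and then $\delta$ has no reason to vanish. In the paper's own uses of this trick the geometry is arranged in advance ($f^{-1}(V)$ is simple normal crossing and $T$ is taken to be the union of \emph{all} components of $\Delta_Z^{=1}$ mapped into $V$), precisely so that no stratum of the residual pair maps into the bad locus; no such arrangement is available in your setting.

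The base case is also incomplete, as you suspected, and for more reasons than the one you flag. The hypothesis only gives bigness of $\mathcal H$ over $Z$ on each $f(X_i)$, not on $Y$ (and $f$ need not be surjective), so Kodaira's lemma has to be applied on the stratum images, and the resulting effective divisor $E$ cannot in general be chosen to avoid, or even not to contain, the images $f(X_i)$; in that case $f^*E$ is not a divisor and $\Delta+\epsilon f^*E$ is meaningless. More seriously, after the bimeromorphic modification $g\colon X'\to X$ you invoke to restore simple normal crossings, Theorem \ref{c-thm5.5} (ii) yields vanishing for $R^q(f\circ g)_*$ of a sheaf on $X'$, not for $R^qf_*\mathcal L$; to descend you would need a line bundle $\mathcal L'$ on $X'$ with $g_*\mathcal L'\simeq\mathcal L$ and $R^{>0}g_*\mathcal L'=0$, together with the fact that the modified pair is again an analytic (embedded) simple normal crossing pair, none of which is automatic for modifications of simple normal crossing spaces. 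This untreated descent, combined with the connecting-map issue above, is exactly where the real difficulty of the Reid--Fukuda statement sits, and it is why the paper defers the proof to \cite{fujino-analytic-vanishing} rather than deriving it from Theorem \ref{c-thm5.5}.
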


Theorem \ref{c-thm5.7} is obviously a generalization of 
Theorem \ref{c-thm5.5} (ii). 
The reader can find the detailed proof of 
Theorem \ref{c-thm5.7} in \cite{fujino-analytic-vanishing}, 
which is harder than that of Theorem \ref{c-thm5.5} (ii). 
As an easy application of Theorem \ref{c-thm5.7}, 
we can establish the vanishing theorem of Reid--Fukuda type 
for log canonical pairs in the complex analytic setting. 

\begin{thm}[Vanishing theorem of Reid--Fukuda type for 
log canonical pairs]\label{c-thm5.8}
Let $(X, \Delta)$ be a log canonical pair and let $\pi\colon X\to Y$ 
be a projective morphism of complex analytic spaces. 
Let $L$ be a $\mathbb Q$-Cartier integral Weil divisor on $X$. 
Assume that $L-(K_X+\Delta)$ is nef and 
big over $Y$ and 
that 
$(L-(K_X+\Delta))|_C$ is big 
over $\pi(C)$ for 
every log canonical center $C$ of $(X, \Delta)$. 
Then 
\begin{equation*}
R^q\pi_*\mathcal O_X(L)=0
\end{equation*} 
holds for every $q>0$. 
\end{thm}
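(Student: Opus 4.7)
My plan is to deduce Theorem~\ref{c-thm5.8} directly from the vanishing theorem of Reid--Fukuda type for analytic simple normal crossing pairs (Theorem~\ref{c-thm5.7}) by passing to a log resolution. This parallels the algebraic treatment in \cite[Chapter~6]{fujino-foundations}.

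First I would take a common log resolution $f\colon Z\to X$ of $(X,\Delta)$ and of the $\mathbb Q$-Cartier divisor $L$, chosen so that $\Supp\Delta_Z$ is a simple normal crossing divisor (where $K_Z+\Delta_Z=f^*(K_X+\Delta)$) and so that $f^*L$ is Cartier on $Z$. I then set
\begin{equation*}
A:=\lceil -\Delta_Z^{<1}\rceil \quad\text{and}\quad B:=\{\Delta_Z^{<1}\}+\Delta_Z^{=1}.
\end{equation*}
Because $(X,\Delta)$ is log canonical, $A$ is effective and $f$-exceptional (its support consists of the exceptional prime divisors with positive discrepancy), and $B$ is a boundary $\mathbb R$-divisor whose support is simple normal crossing on $Z$. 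A short calculation using the identity $\lceil -\Delta_Z^{<1}\rceil+\lfloor \Delta_Z^{<1}\rfloor=0$ yields $A+\Delta_Z-B=0$, so
\begin{equation*}
f^*L+A-(K_Z+B)\sim _{\mathbb R} f^*\bigl(L-(K_X+\Delta)\bigr)
\end{equation*}
holds in $\Pic(Z)\otimes_{\mathbb Z}\mathbb R$.

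Next I would check that the strata of the analytic simple normal crossing pair $(Z,B)$ are $Z$ itself together with the irreducible components of intersections of components of $B^{=1}=\Delta_Z^{=1}$, and that their $f$-images are exactly $X$ and the log canonical centers of $(X,\Delta)$ respectively. Under this identification, the hypotheses that $L-(K_X+\Delta)$ is nef and big over $Y$ and that $(L-(K_X+\Delta))|_C$ is big over $\pi(C)$ for every log canonical center $C$ translate precisely into the assertion that the $\mathbb R$-line bundle on $X$ associated to $L-(K_X+\Delta)$ is nef and log big over $Y$ with respect to $f\colon(Z,B)\to X$, which is the hypothesis of Theorem~\ref{c-thm5.7}.

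Applying Theorem~\ref{c-thm5.7} to the pair $(Z,B)$, the morphisms $f\colon Z\to X$ and $\pi\colon X\to Y$, and the line bundle $\mathcal O_Z(f^*L+A)$, I obtain
\begin{equation*}
R^p\pi_*R^qf_*\mathcal O_Z(f^*L+A)=0 \quad\text{for every $p>0$ and every $q\geq 0$.}
\end{equation*}
Since $A\geq 0$ is $f$-exceptional, the projection formula gives $f_*\mathcal O_Z(f^*L+A)=\mathcal O_X(L)$. Specializing the vanishing to $q=0$ then yields $R^p\pi_*\mathcal O_X(L)=0$ for every $p>0$, which is the desired conclusion. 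The main obstacle I anticipate is the technical bookkeeping needed to arrange a log resolution along which $f^*L$ is genuinely Cartier (so that $\mathcal O_Z(f^*L+A)$ is a bona fide line bundle and the $\mathbb Q$-Cartier assumption on $L$ is properly used) and to verify carefully in the complex analytic setting that the stratum-to-log-canonical-center correspondence preserves the log bigness hypothesis; both points are essentially routine but must be handled with care.
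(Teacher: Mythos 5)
Your overall strategy (pass to a log resolution, rewrite $\mathcal O_X(L)$ as a direct image from the resolution, and apply Theorem~\ref{c-thm5.7} with $q=0$) is exactly the route the paper intends, since its proof just invokes the argument of \cite[Theorem 5.7.6]{fujino-foundations}. However, there is a genuine gap at the very first step: you cannot, in general, choose the log resolution so that $f^*L$ is Cartier. The pullback of the $\mathbb Q$-Cartier integral Weil divisor $L$ is defined by pulling back local equations of $mL$ and dividing by $m$; if $L$ is not Cartier, $f^*L$ acquires non-integral multiplicities along exceptional divisors over the non-Cartier locus, and these persist under any further blow-up (the strict transform of such a divisor keeps the same multiplicity). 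So the sheaf $\mathcal O_Z(f^*L+A)$ in your argument is not a line bundle, and the point you dismiss as ``routine bookkeeping'' is precisely where the $\mathbb Q$-Cartier Weil hypothesis has to be used. The obvious repair, replacing $f^*L$ by $\lceil f^*L\rceil$ and absorbing the fractional part $F:=\lceil f^*L\rceil-f^*L$ into $B$, fails in exactly the situation the theorem is designed for: $\Supp F$ can meet, or coincide with, components of $\Delta_Z^{=1}$, making $B+F$ exceed $1$ there. For instance, let $X$ be the $A_1$ surface germ, $\Delta=L_1+L_2$ the two rulings, and $L=L_1$; on the minimal resolution one has $K_Z+\tilde L_1+\tilde L_2+E=f^*(K_X+\Delta)$ and $f^*L=\tilde L_1+\tfrac12E$, so the exceptional curve $E$ has coefficient $1$ in $\Delta_Z$ and carries a nonzero fractional part of $f^*L$, on every resolution.

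A correct execution has to choose the Cartier divisor $\mathcal L$ on $Z$ more carefully, e.g.\ with coefficient the integer in the interval $[\,\mathrm{mult}_E(f^*L-\Delta_Z),\,\mathrm{mult}_E(f^*L-\Delta_Z)+1\,]$ chosen so that the companion boundary $B:=\Delta_Z+\mathcal L-f^*L$ has coefficients in $[0,1]$, keeps coefficient $1$ along the non-exceptional components of $\Delta_Z^{=1}$ (otherwise $f_*\mathcal L\neq L$ and the pushforward drops below $\mathcal O_X(L)$), and satisfies $\mathcal L\geq\lfloor f^*L\rfloor$ so that $f_*\mathcal O_Z(\mathcal L)=\mathcal O_X(L)$ still holds; in the example above this forces $\mathcal L=\tilde L_1$ and $B=\tilde L_1+\tilde L_2+\tfrac12E$. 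One then has $\mathcal L-(K_Z+B)=f^*\bigl(L-(K_X+\Delta)\bigr)$ with $(Z,B)$ a genuine boundary SNC pair whose strata map onto $X$ or onto log canonical centers of $(X,\Delta)$, so the nef-and-log-big hypothesis of Theorem~\ref{c-thm5.7} applies and the conclusion follows as you indicate. Without this adjustment of $\mathcal L$ and $B$ your argument proves only the Cartier case of the statement.
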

\begin{proof}
The proof of \cite[Theorem 5.7.6]{fujino-foundations} works 
by Theorem \ref{c-thm5.7}. 
\end{proof}

We leave the details of Theorems \ref{c-thm5.7} and 
\ref{c-thm5.8} for the interested readers. 

\section{Vanishing theorems for normal pairs}\label{c-sec6}

In this section, we will prepare some vanishing theorems, 
which are suitable for geometric applications. 
They will play a crucial role in subsequent sections. 
We note that the results in this section follow from 
Theorem \ref{c-thm5.5}. 

\begin{thm}[{see \cite[Theorem 8.1]{fujino-fundamental}}]\label{c-thm6.1}
Let $\pi\colon X\to Y$ be a projective 
morphism of complex analytic spaces such that 
$X$ is a normal complex variety and let $W$ be a compact 
subset of $Y$. 
Let $\Delta$ be an effective $\mathbb R$-divisor on 
$X$ such that $K_X+\Delta$ is $\mathbb R$-Cartier 
and let $\mathcal L$ be a line bundle on $X$. 
We assume that $\mathcal L-(K_X+\Delta)$ is 
$\pi$-ample over $W$, that is, 
$\left(\mathcal L-(K_X+\Delta)\right)|_{\pi^{-1}(w)}$ is 
ample for every $w\in W$. 
Let $\{C_i\}_{i\in I}$ be any set of log canonical 
centers of the pair $(X, \Delta)$. 
We put $V:=\bigcup _{i\in I}C_i$ with the 
reduced structure. 
We further assume that $V$ is disjoint from the 
non-lc locus $\Nlc(X, \Delta)$ of $(X, \Delta)$. 
Then there exists some open neighborhood 
$U$ of $W$ such that 
\begin{equation*}
R^i\pi_*(\mathcal J\otimes \mathcal L)=0
\end{equation*} 
holds 
on $U$ for every $i>0$, 
where $\mathcal J :=\mathcal I_V\cdot \mathcal J_{\NLC}(X, \Delta)
\subset \mathcal O_X$ 
and $\mathcal I_V$ is the defining ideal sheaf 
of $V$ on $X$. 
Therefore, the restriction 
map 
\begin{equation*}
\pi_*\mathcal L\to \pi_*(\mathcal L|_V)\oplus 
\pi_*(\mathcal L|_{\Nlc(X, \Delta)})
\end{equation*}
is surjective on $U$ and 
\begin{equation*}
R^i\pi_*(\mathcal L|_V)=0 
\end{equation*}
holds on $U$ for every $i>0$. 
In particular, the restriction maps 
\begin{equation*}
\pi_*\mathcal L\to \pi_*(\mathcal L|_V)
\end{equation*} 
and 
\begin{equation*}
\pi_*\mathcal L\to \pi_*(\mathcal L|_{\Nlc(X, \Delta)})
\end{equation*} 
are surjective on $U$. 
\end{thm}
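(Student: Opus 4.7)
The plan is to realize $\mathcal{J}\otimes \mathcal{L}$ as a direct image from a log resolution of $(X,\Delta)$, and then apply the vanishing theorem for analytic simple normal crossing pairs (Theorem \ref{c-thm5.5}) to the resulting setup.

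First I would shrink $Y$ around $W$ so that $\Supp \Delta$ has only finitely many components, $K_X+\Delta$ is globally $\mathbb{R}$-Cartier, $\mathcal{L}-(K_X+\Delta)$ is $\pi$-ample on all of $X$ with an associated $\pi$-ample $\mathbb{R}$-line bundle $\mathcal{A}$, and the family $\{C_i\}_{i\in I}$ becomes finite. Using Lemma \ref{c-lem4.4} I may further replace $\Delta$ by a nearby $\mathbb{Q}$-divisor with the same non-lc ideal and the same prescribed list of log canonical centers, so I may assume $\Delta$ is a $\mathbb{Q}$-divisor. Next I take a log resolution $f\colon Z\to X$ of $(X,\Delta)$ such that $\Exc(f)\cup \Supp f^{-1}_*\Delta$ is simple normal crossing and each $C_i$ in the collection is the $f$-image of a prime divisor appearing with coefficient $1$ in $\Delta_Z$, where $K_Z+\Delta_Z:=f^*(K_X+\Delta)$. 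Let $T:=\Delta_Z^{=1}$, split $T=T_V+T'$ according to whether the component maps into $V$ or not, and set $A:=\lceil -\Delta_Z^{<1}\rceil -\lfloor \Delta_Z^{>1}\rfloor$, so that by definition $f_*\mathcal{O}_Z(A)=\mathcal{J}_{\NLC}(X,\Delta)$.

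The key computation is that $B':=\{\Delta_Z^{<1}\}+(T-T_V)+\{\Delta_Z^{>1}\}$ is a boundary $\mathbb{R}$-divisor whose support is SNC, hence $(Z,B')$ is an analytic SNC pair, and an elementary bookkeeping gives
\[
f^*\mathcal{L}+A-T_V-(K_Z+B')\;=\;f^*\mathcal{L}-f^*(K_X+\Delta)\;\sim_{\mathbb{R}}\;f^*\mathcal{A}.
\]
Applying Theorem \ref{c-thm5.5}(ii) to the composition $Z\xrightarrow{f} X\xrightarrow{\pi} Y$, the pair $(Z,B')$, and the line bundle $\mathcal{M}:=\mathcal{O}_Z(f^*\mathcal{L}+A-T_V)$, I obtain $R^p\pi_*R^qf_*\mathcal{M}=0$ for every $p>0$ on some neighborhood $U$ of $W$.

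The main obstacle is then to establish the two identifications $f_*\mathcal{M}\simeq \mathcal{J}\otimes \mathcal{L}$ and $R^qf_*\mathcal{M}=0$ for $q>0$. For the first, by the projection formula it suffices to verify $f_*\mathcal{O}_Z(A-T_V)=\mathcal{I}_V\cdot \mathcal{J}_{\NLC}(X,\Delta)$, which is a local statement that I would check on Stein open neighborhoods by unwinding the definitions; the disjointness of $V$ from $\Nlc(X,\Delta)$ is crucial, as it guarantees that removing $T_V$ produces precisely the factor $\mathcal{I}_V$ in the pushforward. For the higher direct images, I would invoke Theorem \ref{c-thm5.5}(i): the strict support condition forces every associated subvariety of $R^qf_*\mathcal{M}$ to be the $f$-image of a stratum of $(Z,B')$, hence either $X$ itself or a log canonical center of $(X,\Delta)$ not contained in $V$; localizing at the generic point of a putative associated subvariety and inducting on stratum dimension then forces $R^qf_*\mathcal{M}=0$ for $q>0$. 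Combining these with $R^p\pi_*R^qf_*\mathcal{M}=0$ for $p>0$ yields $R^p\pi_*(\mathcal{J}\otimes \mathcal{L})=0$ for $p>0$ on $U$.

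The remaining assertions then follow. Using the short exact sequence
\[
0\to \mathcal{J}\otimes \mathcal{L}\to \mathcal{L}\to \mathcal{L}/(\mathcal{J}\otimes \mathcal{L})\to 0,
\]
together with the splitting $\mathcal{L}/(\mathcal{J}\otimes \mathcal{L})\simeq \mathcal{L}|_V\oplus \mathcal{L}|_{\Nlc(X,\Delta)}$ that comes from $V\cap \Nlc(X,\Delta)=\emptyset$, the surjectivity of the restriction maps is immediate from the vanishing of $R^1\pi_*(\mathcal{J}\otimes \mathcal{L})$. Finally, $R^i\pi_*(\mathcal{L}|_V)=0$ for $i>0$ is obtained by running the same strategy on the restricted SNC pair $(T_V,B'|_{T_V})$, noting that $T_V$ is a union of strata of $(Z,B'+T_V)$ and that $f(T_V)=V$.
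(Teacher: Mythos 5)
The decisive problem is your intermediate claim that $R^qf_*\mathcal M=0$ for every $q>0$. This is false in general, and no argument based on the strict support condition can prove it: Theorem \ref{c-thm5.5} (i) only constrains which subvarieties can be associated to $R^qf_*\mathcal M$; it never forces these sheaves to vanish. For a concrete counterexample take $I=\emptyset$ (so $V=\emptyset$, $T_V=0$), $\Delta=0$, and let $X$ be the cone over an elliptic curve (a log canonical but non-rational singularity) with $\pi$ the identity and $\mathcal L$ arbitrary; for the log resolution given by blowing up the vertex one has $\Delta_Z=E$ with coefficient one, $A=0$, $\mathcal M=f^*\mathcal L$, and $R^1f_*\mathcal M=\mathcal L\otimes R^1f_*\mathcal O_Z\ne 0$. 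The same failure occurs with $V\ne\emptyset$ as soon as some log canonical center not contained in $V$ carries such a singularity. Fortunately this step is also unnecessary: Theorem \ref{c-thm5.5} (ii) is applied for each $q$ separately, and the case $q=0$ already gives $R^p\pi_*\left(f_*\mathcal M\right)=0$ for all $p>0$ over a neighborhood of $W$; together with the identification $f_*\mathcal M\simeq\mathcal J\otimes\mathcal L$ this is exactly the asserted vanishing, and no control of $R^qf_*\mathcal M$ for $q>0$ is needed. With that step deleted, your argument is essentially the proof in the paper: the same sheaf $\mathcal O_Z(f^*\mathcal L+A-T_V)$, the same boundary $\{\Delta_Z\}+\Delta_Z^{=1}-T_V$, and the same invocation of Theorem \ref{c-thm5.5} (ii).

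Two further points. First, your identification $f_*\mathcal O_Z(A-T_V)=\mathcal I_V\cdot \mathcal J_{\NLC}(X,\Delta)$ is indeed an elementary local computation, but only because you insist that every $C_i$ is the image of a coefficient-one prime divisor on $Z$, which guarantees $f(T_V)=V$; you should justify that such a resolution exists (blow up the strata of $\Delta_Z^{=1}$ dominating the $C_i$ and resolve again, or, as in the paper, require $f^{-1}(V)$ to be a simple normal crossing divisor, which forces such components to appear). The paper instead obtains the identification from the exact sequence of direct images, whose connecting map vanishes by the strict support condition. Second, your sketch of $R^i\pi_*(\mathcal L|_V)=0$ by restricting to $(T_V,B'|_{T_V})$ needs the additional isomorphism $(f|_{T_V})_*\mathcal O_{T_V}(A|_{T_V})\simeq \mathcal O_V$, which is not a formality (it is precisely what the connecting-homomorphism argument in the paper delivers, and it is how Theorem \ref{c-thm6.3} is proved); it is simpler to apply your vanishing both to $\mathcal J$ and to $\mathcal J_{\NLC}(X,\Delta)$ (the case $V=\emptyset$) and use the sequence $0\to \mathcal J\to \mathcal J_{\NLC}(X,\Delta)\to \mathcal O_V\to 0$, as the paper does. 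Finally, the reduction to $\mathbb Q$-divisors via Lemma \ref{c-lem4.4} is superfluous (Theorem \ref{c-thm5.5} handles $\mathbb R$-divisors), and as stated that lemma does not promise that the prescribed centers $C_i$ remain log canonical centers, so it would cost you an extra argument as well.
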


The result and argument in Step \ref{c-thm6.1-step1} 
in the proof of Theorem \ref{c-thm6.1} 
is the most important part of this paper. 
We will use them repeatedly in subsequent sections. 

\begin{proof}[Proof of Theorem \ref{c-thm6.1}]
In Steps \ref{c-thm6.1-step1} and \ref{c-thm6.1-step2}, we will use 
the strict support condition (see Theorem \ref{c-thm5.5} (i)) 
and the vanishing theorem (see Theorem \ref{c-thm5.5} (ii)), 
respectively. The assumption that $\mathcal L-(K_X+\Delta)$ is 
$\pi$-ample over $W$ will be used only in Step \ref{c-thm6.1-step2}. 

We take an arbitrary point $w\in W$. 
Then it is sufficient to prove the desired vanishing theorem 
over some open neighborhood of $w$ by the compactness 
of $W$. 
Therefore, we may replace $Y$ with a relatively compact 
Stein open neighborhood of $w$ 
and may assume that $\mathcal L-(K_X+\Delta)$ is 
$\pi$-ample over $Y$. 
\setcounter{step}{0}
\begin{step}\label{c-thm6.1-step1}
We can take a resolution of singularities $f\colon Z\to X$ 
of $X$ such that $f$ is projective and that 
$\Supp f^{-1}_*\Delta\cup \Exc(f)$ is a simple normal 
crossing divisor on $Z$. 
We may further assme that $f^{-1}(V)$ is a simple 
normal crossing divisor on $Z$. 
Then we can write 
\begin{equation*}
K_Z+\Delta_Z=f^*(K_X+\Delta). 
\end{equation*}
Let $T$ be the union of the irreducible 
components of $\Delta^{=1}_Z$ that 
are mapped into $V$ by $f$. 
We consider 
the following 
short exact sequence 
\begin{equation*}
0\to \mathcal O_Z(A-N-T)\to \mathcal O_Z(A-N)\to \mathcal 
O_T(A-N)\to 0, 
\end{equation*}
where $A:=\lceil -(\Delta^{<1}_Z)\rceil$ 
and $N:=\lfloor \Delta^{>1}_Z\rfloor$. 
By definition, $A$ is an effective 
$f$-exceptional 
divisor on $Z$. 
We obtain the following long exact sequence 
\begin{equation*}
\begin{split}
0&\to f_*\mathcal O_Z(A-N-T)\to f_*\mathcal O_Z(A-N)
\to f_*\mathcal O_T(A-N) 
\\&\overset{\delta}\to R^1f_*\mathcal O_Z(A-N-T)\to \cdots.  
\end{split}
\end{equation*}
Since 
\begin{equation*}
A-N-T-(K_Z+\{\Delta_Z\}+\Delta^{=1}_Z-T)=
-(K_Z+\Delta_Z)\sim_{\mathbb R}
-f^*(K_X+\Delta), 
\end{equation*}
every associated subvariety of $R^1f_*\mathcal O_Z(A-N-T)$ 
is the $f$-image of some stratum of $(Z, \{\Delta_Z\}+\Delta^{=1}_Z-T)$ 
by the strict support condition in Theorem \ref{c-thm5.5} (i). 
Since $f^{-1}(V)$ is a simple normal crossing divisor, 
there are no strata of $(Z, \{\Delta_Z\}+\Delta^{=1}_Z-T)$ that 
are mapped into $V$ by $f$. 
On the other hand, $V=f(T)$ holds by construction. 
Thus, 
the connecting homomorphism $\delta$ is a zero map. 
Hence we have a short 
exact sequence 
\begin{equation}\label{c-eq6.1}
0\to f_*\mathcal O_Z(A-N-T)\to f_*\mathcal O_Z(A-N)\to 
f_*\mathcal O_T(A-N)\to 0. 
\end{equation}
We put $\mathcal J:=f_*\mathcal O_Z(A-N-T)\subset \mathcal O_X$. 
Since $V$ is disjoint from $\Nlc(X, \Delta)$ by assumption, 
the ideal 
sheaf 
$\mathcal J$ coincides with $\mathcal I_V$ and $\mathcal J_{\NLC}(X, \Delta)$ 
in a neighborhood of $V$ and $\Nlc (X, \Delta)$, 
respectively. Therefore, 
we have 
$\mathcal J=\mathcal I_V\cdot \mathcal J_{\NLC}(X, \Delta)$. 
We note that if $V$ is empty then $\mathcal I_V=\mathcal O_X$ 
and $\mathcal J=\mathcal J_{\NLC}(X, \Delta)$. 
We put $X^*:=X\setminus \Nlc (X, \Delta)$ and $Z^*:=f^{-1}(X^*)$. 
By restricting (\ref{c-eq6.1}) to $X^*$, 
we obtain 
\begin{equation*}
0\to f_*\mathcal O_{Z^*}(A-T)
\to f_*\mathcal O_{Z^*}(A)\to f_*\mathcal O_T(A)\to 0. 
\end{equation*}
Since $f_*\mathcal O_{Z^*}(A)\simeq \mathcal O_{X^*}$, we 
have $f_*\mathcal O_T(A)\simeq \mathcal O_V$. 
This implies that $\mathcal O_V\simeq f_*\mathcal O_T$ holds. 
\end{step}
\begin{step}\label{c-thm6.1-step2}
Since 
\begin{equation*}
f^*\mathcal L+A-N-T-(K_Z+\{\Delta_Z\}+\Delta^{=1}_Z-T)\sim_{\mathbb R}
f^*(\mathcal L-(K_X+\Delta)), 
\end{equation*}
we have 
\begin{equation*}
R^i\pi_*(\mathcal J\otimes \mathcal L)\simeq 
R^i\pi_*(f_*\mathcal O_Z(A-N-T)\otimes \mathcal L)=0 
\end{equation*} 
for every $i>0$ by the vanishing theorem in 
Theorem \ref{c-thm5.5} (ii). 
If we put $V=\emptyset$, 
then we have $\mathcal J=\mathcal J_{\NLC}(X, \Delta)$. 
Therefore, 
\begin{equation*}
R^i\pi_*\left(\mathcal J_{\NLC}(X, \Delta)\otimes \mathcal L\right)=0
\end{equation*} 
holds for every $i>0$ as a special case. 
By considering the short exact sequence 
\begin{equation*}
0\to \mathcal J\to \mathcal J_{\NLC}(X, \Delta)\to \mathcal O_V\to 0, 
\end{equation*}
we obtain 
\begin{equation*}
\cdots \to R^i\pi_*(\mathcal J_{\NLC}(X, \Delta)\otimes 
\mathcal L)\to R^i\pi_*(\mathcal L|_V)\to 
R^{i+1}\pi_*\mathcal (\mathcal J\otimes \mathcal L)\to \cdots.  
\end{equation*} 
Since we have already 
checked 
\begin{equation*}
R^i\pi_*(\mathcal J_{\NLC}(X, \Delta)\otimes \mathcal L)
=R^i\pi_*(\mathcal J\otimes \mathcal L)=0 
\end{equation*}
for every $i>0$, we have 
$R^i\pi_*(\mathcal L|_V)=0$ for 
all $i>0$. 
Finally, we consider the following 
short exact sequence 
\begin{equation*}
0\to \mathcal J\to \mathcal O_X\to \mathcal O_V
\oplus \mathcal O_{\Nlc(X, \Delta)}
\to 0. 
\end{equation*}
By taking $\otimes \mathcal L$ and $R^i\pi_*$, 
we obtain that 
\begin{equation*}
0\to \pi_*(\mathcal J\otimes \mathcal L)\to
\pi_*\mathcal L\to \pi_*(\mathcal L|_V)
\oplus \pi_*(\mathcal L|_{\Nlc(X, \Delta)})\to 0 
\end{equation*}
is exact. 
\end{step}
We finish the proof. 
\end{proof}

The following remark is obvious by the proof of 
Theorem \ref{c-thm6.1}. 

\begin{rem}\label{c-rem6.2}
If $\left(\mathcal L-(K_X+\Delta)\right)|_{\pi^{-1}(y)}$ is ample 
for every $y\in Y$, then the proof of Theorem \ref{c-thm6.1} 
shows that Theorem \ref{c-thm6.1} holds over $Y$. 
This means that we can take $U=Y$ in Theorem \ref{c-thm6.1}. 
\end{rem}

We prepare one more vanishing theorem. 

\begin{thm}[{see \cite[Theorem 11.1]{fujino-fundamental}}]\label{c-thm6.3}
Let $\pi\colon X\to Y$ be a projective morphism of complex 
analytic spaces such that $X$ is a normal complex variety and 
let $W$ be a compact subset 
of $Y$. 
Let $\Delta$ be an effective $\mathbb R$-divisor 
on $X$ such that 
$K_X+\Delta$ is $\mathbb R$-Cartier. 
Let $\{C_i\}_{i\in I}$ be any set of log canonical 
centers of the pair $(X, \Delta)$. 
We put $V:=\bigcup _{i\in I}C_i$ with the 
reduced structure. 
We assume that $V$ is disjoint from the 
non-lc locus $\Nlc(X, \Delta)$ of $(X, \Delta)$. 
Let $\mathcal M$ be a line bundle on $V$ 
such that $\mathcal M-(K_X+\Delta)|_V$ is $\pi$-ample 
over $W$. 
Then there exists some open neighborhood 
$U$ of $W$ such that 
$R^i\pi_*\mathcal M=0$ holds on $U$ for every $i>0$. 
\end{thm}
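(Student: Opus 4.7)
My plan is to reduce the vanishing for $\mathcal{M}$ on $V$ to a vanishing on the snc divisor $T$ of Theorem \ref{c-thm6.1} by pulling $\mathcal{M}$ back along $f|_T\colon T\to V$, and then to apply Theorem \ref{c-thm5.5} (ii). Fix an arbitrary $w\in W$; by compactness it suffices to prove the statement in some open neighborhood of $w$. Replace $Y$ by a relatively compact Stein open neighborhood of $w$ so that the $\mathbb R$-line bundle $\mathcal H:=\mathcal M-(K_X+\Delta)|_V$ is $\pi|_V$-ample over the new $Y$ (which is possible by Lemma \ref{c-lem3.2}). Take the resolution $f\colon Z\to X$ as in the proof of Theorem \ref{c-thm6.1}, so that $K_Z+\Delta_Z=f^*(K_X+\Delta)$ with $\operatorname{Supp}\Delta_Z$ and $f^{-1}(V)$ simple normal crossing, and let $T$, $A=\lceil -\Delta_Z^{<1}\rceil$ and $N=\lfloor \Delta_Z^{>1}\rfloor$ have the same meaning as there. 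Recall in particular the isomorphism $f_*\mathcal O_T\simeq \mathcal O_V$ already established in the proof of Theorem \ref{c-thm6.1}.

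Set $\widetilde{\mathcal L}_T:=(f|_T)^*\mathcal M+A|_T-N|_T$ on $T$, and define the $\mathbb R$-divisor
\begin{equation*}
B_T:=\bigl((A+\Delta_Z^{<1})+(\Delta_Z^{=1}-T)+\{\Delta_Z^{>1}\}\bigr)\big|_T.
\end{equation*}
A coefficient-wise check shows that $A+\Delta_Z^{<1}$ has coefficients in $[0,1)$, that $\Delta_Z^{=1}-T$ is reduced and transverse to $T$, and that $\{\Delta_Z^{>1}\}$ has coefficients in $[0,1)$ and is also transverse to $T$. Hence $(T,B_T)$ is an analytic simple normal crossing pair with $B_T$ a boundary $\mathbb R$-divisor. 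Using the adjunction $K_T=(K_Z+T)|_T$, a direct computation gives
\begin{equation*}
\widetilde{\mathcal L}_T-(K_T+B_T)\sim_{\mathbb R} (f|_T)^*\mathcal H.
\end{equation*}

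Theorem \ref{c-thm5.5} (ii), applied to the composition $T\xrightarrow{f|_T} V\xrightarrow{\pi|_V} Y$ and to the line bundle $\widetilde{\mathcal L}_T$, now yields
\begin{equation*}
R^p(\pi|_V)_*R^q(f|_T)_*\widetilde{\mathcal L}_T=0\qquad \text{for every }p>0.
\end{equation*}
Since $V$ is disjoint from $\operatorname{Nlc}(X,\Delta)$ and $\operatorname{Supp} N\subset f^{-1}(\operatorname{Nlc}(X,\Delta))$, we have $N|_T=0$; combining the projection formula with the identity $(f|_T)_*\mathcal O_T(A|_T)\simeq\mathcal O_V$ established in the proof of Theorem \ref{c-thm6.1}, we obtain $(f|_T)_*\widetilde{\mathcal L}_T\simeq \mathcal M$. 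Looking at the $(i,0)$-entry of the Leray spectral sequence for $\pi|_V\circ f|_T$, we conclude
\begin{equation*}
R^i\pi_*\mathcal M\simeq R^i(\pi|_V)_*(f|_T)_*\widetilde{\mathcal L}_T=0\qquad(i>0)
\end{equation*}
on the chosen neighborhood of $w$, as desired.

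The main delicate point will be the bookkeeping that turns $B_T$ into an honest boundary making $(T,B_T)$ a simple normal crossing pair; once that is verified, the rest is a formal application of Theorem \ref{c-thm5.5} (ii) together with the projection formula and the isomorphism $f_*\mathcal O_T\simeq\mathcal O_V$ already available from Theorem \ref{c-thm6.1}.
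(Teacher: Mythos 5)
Your proposal is correct and follows essentially the same route as the paper: reduce to a neighborhood of a point of $W$, reuse the resolution and the divisor $T$ from the proof of Theorem \ref{c-thm6.1}, note that your boundary $B_T$ is just $(\{\Delta_Z\}+\Delta^{=1}_Z-T)|_T$ (and $N|_T=0$ since $V\cap \Nlc(X,\Delta)=\emptyset$), and apply Theorem \ref{c-thm5.5} (ii) to $f|_T\colon T\to V$ together with the projection formula and $(f|_T)_*\mathcal O_T(A|_T)\simeq\mathcal O_V$. This is exactly the paper's argument, only with the bookkeeping for the boundary written out more explicitly.
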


\begin{proof} 
As in Theorem \ref{c-thm6.1}, it is sufficient to prove the desired 
vanishing theorem for some open neighborhood 
of any point $w\in W$. 
We will use the same notation as in the 
proof of Theorem \ref{c-thm6.1}. 
We note that 
\begin{equation*}
A-N-(K_Z+\{\Delta_Z\}+\Delta^{=1}_Z)
\sim_{\mathbb R}
-f^*(K_X+\Delta) 
\end{equation*} holds. 
We put $f_T:=f|_T\colon T\to V$. 
Then 
\begin{equation*}
f^*_T\mathcal M+A|_T-(K_T+(\{\Delta_Z\}+\Delta^{=1}_Z-T)|_T)
\sim _{\mathbb R} f^*_T(\mathcal M-(K_X+\Delta)|_V)
\end{equation*} 
holds. Note that $(T, (\{\Delta_Z\}+\Delta^{=1}_Z-T)|_T)$ 
is an analytic globally embedded simple normal crossing pair. 
Thus, by the vanishing theorem in Theorem \ref{c-thm5.5} (ii),  
\begin{equation*}
R^i\pi_*\mathcal M\simeq R^i\pi_*\left(\mathcal M
\otimes (f_T)_*\mathcal O_T(A|_T)\right)=0
\end{equation*} 
for every $i>0$. 
Here, we used the following isomorphism 
$(f|_T)_*\mathcal O_T(A|_T)\simeq \mathcal O_V$ 
obtained in Step \ref{c-thm6.1-step1} in the proof of 
Theorem \ref{c-thm6.1}. 
\end{proof}

We make two remarks on Theorem \ref{c-thm6.3}. 

\begin{rem}\label{c-rem6.4}
If $\left(\mathcal M-(K_X+\Delta)|_V\right)|_{\pi^{-1}_V(y)}$ is 
ample for every $y\in \pi(V)$, where 
$\pi_V:=\pi|_V$, then Theorem \ref{c-thm6.3} holds true over $Y$, 
that is, we can take $U=Y$ in Theorem \ref{c-thm6.3}. 
We can check it by the proof of Theorems \ref{c-thm6.1} and 
\ref{c-thm6.3}. 
\end{rem}

\begin{rem}\label{c-rem6.5}
In \cite[Theorem 11.1]{fujino-fundamental}, 
$V$ is assumed to be a {\em{minimal}} log canonical 
center of $(X, \Delta)$ which 
is disjoint from $\Nlc(X, \Delta)$. 
Moreover, the proof of \cite[Theorem 11.1]{fujino-fundamental} 
depends on \cite{bchm}. 
For the details, see \cite[Remark 11.2]{fujino-fundamental}. 
\end{rem}

\section{On log canonical centers}\label{c-sec7} 
The main purpose of this section is to prove the 
following very fundamental theorem on log canonical 
centers, which is 
an easy application of Theorem \ref{c-thm6.1} and 
its proof. 
It will play an important role in this paper. 

\begin{thm}[Basic properties of log canonical 
centers]\label{c-thm7.1}
Let $(X, \Delta)$ be a log canonical pair. 
Then the following properties hold. 
\begin{itemize}
\item[(1)] The number of log canonical centers 
of $(X, \Delta)$ is locally finite. 
\item[(2)] The intersection of two log canonical centers 
is a union of some log canonical centers. 
\item[(3)] Let $x\in X$ be any point such that $(X, \Delta)$ 
is log canonical but is not kawamata log terminal at $x$. 
Then there exists a unique minimal {\em{(}}with respect to the 
inclusion{\em{)}} log canonical center $C_x$ passing through $x$. 
Moreover, $C_x$ is normal at $x$. 
\end{itemize}
\end{thm}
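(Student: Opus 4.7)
The plan is to reduce everything to consequences of Theorem \ref{c-thm6.1}---in particular, the identification $(f|_T)_*\mathcal{O}_T \simeq \mathcal{O}_V$ obtained in Step 1 of its proof---together with a standard tie-breaking perturbation adapted to the complex analytic setting. Statement (1) is local and immediate: by \cite[Theorem 13.2]{bierstone-milman}, after shrinking $X$ around any compact subset we have a projective bimeromorphic log resolution $f\colon Z \to X$ with $K_Z + \Delta_Z = f^*(K_X + \Delta)$ and $\Supp \Delta_Z$ simple normal crossing. The log canonical centers of $(X, \Delta)$ are precisely the $f$-images of the strata of $\Delta_Z^{=1}$; since $\Delta_Z^{=1}$ has only finitely many irreducible components locally, (1) follows.

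For (3), fix a non-klt point $x$ and pick, by (1), a log canonical center $V$ through $x$ of minimal dimension. Let $T$ be the union of those components $E_j$ of $\Delta_Z^{=1}$ with $f(E_j) = V$. Since $(X, \Delta)$ is log canonical we have $\Nlc(X, \Delta) = \emptyset$, so Step 1 in the proof of Theorem \ref{c-thm6.1}, applied over a neighborhood of $V$, yields $(f|_T)_*\mathcal{O}_T \simeq \mathcal{O}_V$. To promote this to normality of $V$ at $x$, I will tie-break: using Lemmas \ref{c-lem2.3} and \ref{c-lem4.4} and shrinking $X$ around $x$, I choose a small effective $\mathbb{R}$-Cartier divisor $D$ through $x$ with $V \not\subset \Supp D$, together with $\epsilon > 0$, so that $(X, \Delta + \epsilon D)$ is still log canonical along $V$ but has exactly one component of its $\Delta_Z^{=1}$-analogue dominating $V$, say $E_0$. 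Then $f|_{E_0}\colon E_0 \to V$ is surjective and birational, $(f|_{E_0})_*\mathcal{O}_{E_0} \simeq \mathcal{O}_V$, and factoring through the normalization $V^{\nu} \to V$ forces $\mathcal{O}_V \simeq \mathcal{O}_{V^{\nu}}$ near $x$; i.e.\ $V$ is normal at $x$.

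For uniqueness in (3) and for (2), I will argue via a Koll\'ar--Shokurov style connectedness principle. Fix $V$ minimal as above, and suppose $W$ is any other lc center of $(X, \Delta)$ through $x$; I claim $V \subset W$. If not, $V \cap W$ is a proper closed analytic subset of $V$ containing $x$. In a small analytic neighborhood of $x$, choose an effective $\mathbb{R}$-Cartier divisor $H$ with $W \subset \Supp H$ and $V \not\subset \Supp H$ (possible after shrinking, by Stein locality and the affine-subspace formalism of \ref{c-say2.10}). The log canonical threshold of $H$ with respect to $(X, \Delta)$ near $x$ then produces a new lc center through $x$ contained in $V \cap \Supp H \subsetneq V$, contradicting the minimality of $\dim V$. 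Uniqueness in (3) follows. Given (3), statement (2) is formal: for an irreducible component $Z$ of $W_1 \cap W_2$ with generic point $z$, the unique minimal lc center $V_z$ through $z$ lies in both $W_1$ and $W_2$, hence in $W_1 \cap W_2$; as $V_z$ is irreducible and contains $z$, one has $V_z = Z$, so $Z$ is an lc center. The main technical obstacle I expect is the tie-breaking step: constructing the local divisors $D$ and $H$ in the complex analytic setting, and verifying that the perturbed pair has the single-dominating-component structure required to re-apply Step 1 of Theorem \ref{c-thm6.1}. This is routine but verbose, using the rational polytope structure of lc pairs from \ref{c-say2.10}.
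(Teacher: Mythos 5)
Your proof of (1) is fine and coincides with the paper's. The other two ingredients, however, have genuine gaps. First, the tie-breaking you propose for normality in (3) cannot work as stated: you require an effective divisor $D$ through $x$ with $V\not\subset \Supp D$, but then every component $E$ of $\Delta^{=1}_Z$ with $f(E)=V$ satisfies $\operatorname{mult}_E f^*D=0$, hence $a(E,X,\Delta+\epsilon D)=a(E,X,\Delta)=-1$ for every $\epsilon>0$. Adding such a $D$ never removes any lc place dominating $V$, so you can never arrange ``exactly one component $E_0$'' this way. (Also $f|_{E_0}\colon E_0\to V$ is essentially never birational for dimension reasons, though that is not the crucial point.) Correct tie-breaking would need a divisor containing $V$ together with scaling $\Delta$ down, and then new lc centers can appear inside $V$ precisely through special points such as the given $x$ where normality must be proved. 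The paper avoids all of this: it keeps the whole divisor $T$ of lc places over $V$, uses minimality of $V$ to shrink around $x$ so that every stratum of $T$ dominates $V$, and then applies Hironaka's flattening and desingularization to obtain $p\colon T^\dag\to T$ with $p_*\mathcal O_{T^\dag}\simeq \mathcal O_T$ and a morphism $q\colon T^\dag\to V^\nu$, from which $\mathcal O_V\hookrightarrow \nu_*\mathcal O_{V^\nu}\hookrightarrow f_*p_*\mathcal O_{T^\dag}\simeq \mathcal O_V$ gives normality.

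Second, the lct argument for the containment claim (and hence for uniqueness in (3) and for (2)) fails. If $W$ is an lc center of $(X,\Delta)$ through $x$ with $W\subset \Supp H$, then any divisor $E$ over $X$ with $a(E,X,\Delta)=-1$ and center $W$ has $a(E,X,\Delta+tH)<-1$ for every $t>0$, so the log canonical threshold of $H$ at $x$ is $0$ and $(X,\Delta+tH)$ is non-lc along $W$ for all $t>0$. No lc center of $(X,\Delta)$ contained in $V\cap \Supp H$ is produced; any non-klt center of the perturbed pair is not an lc center of the original pair, so there is no contradiction with the minimality of $\dim V$. The paper proves (2) first (and deduces existence and uniqueness of the minimal center from it, the reverse of your logical order) by the one trick your proposal never uses: apply Step \ref{c-thm6.1-step1} of the proof of Theorem \ref{c-thm6.1} to the \emph{reducible} set $V:=C_1\cup C_2$. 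The isomorphism $f_*\mathcal O_T\simeq\mathcal O_V$ says $T\to V$ has connected fibers, so over a point $P\in C_1\cap C_2$ two components $T_1$, $T_2$ of $T$ lying over $C_1$ and $C_2$ must meet, and the image of a stratum of $T_1\cap T_2$ is an lc center through $P$ contained in $C_1\cap C_2$. Without this (or a working substitute), your claim that the minimal center lies in every center through $x$ is unproved, and with it your derivation of (2) from (3) becomes unnecessary.
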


\begin{proof}
We note that (1) is almost obvious by definition. 
We take an arbitrary point $x\in X$ and shrink $X$ around $x$ suitably. 
Then we may assume that there exists a projective bimeromorphic 
morphism $f\colon Y\to X$ from a smooth 
complex analytic space $Y$ such that 
$K_Y+\Delta_Y:=f^*(K_X+\Delta)$, $\Supp \Delta_Y$ is a 
simple normal crossing divisor on $Y$, 
and $\Supp \Delta_Y$ has only finitely many irreducible components 
(see \cite[Theorem 13.2]{bierstone-milman}). 
Let $\Delta^{=1}_Y:=\sum _{i\in I}\Delta_i$ be the irreducible 
decomposition. Then $C$ is a log canonical 
center of $(X, \Delta)$ if and only if $C=f(S)$, where 
$S$ is an 
irreducible component of $\Delta_{i_1}
\cap \cdots \cap \Delta_{i_k}$ for 
some $\{i_1, \ldots, i_k\}\subset I$. 
Therefore, there exists only finitely many log canonical centers 
on some open neighborhood of $x$. Thus we obtain (1). 

From now on, we will use the same notation as in 
the proof of Theorem \ref{c-thm6.1} with $Y=X$. 
Let $C_1$ and $C_2$ be two log canonical centers of $(X, \Delta)$. 
We fix a closed point $P\in C_1\cap C_2$. 
We replace $X$ with a relatively compact Stein 
open neighborhood of $P\in X$ and apply 
the argument in the proof of Theorem \ref{c-thm6.1}. 
For the 
proof of (2), it is enough to find a log canonical center $C$ 
such that $P\in C\subset C_1\cap C_2$. 
We put 
$V:=C_1\cup C_2$. 
By Step \ref{c-thm6.1-step1} in the proof of Theorem \ref{c-thm6.1}, 
we obtain $f_*\mathcal O_T\simeq \mathcal O_V$. This means 
that $f\colon T\to V$ has connected fibers. 
We note that $T$ is a simple normal crossing divisor on $Z$. 
Thus, there exist irreducible 
components $T_1$ and $T_2$ of $T$ such that  
$T_1\cap T_2\cap f^{-1}(P)\ne \emptyset$ and that 
$f(T_i)\subset C_i$ for $i=1, 2$. 
Therefore, we can find a log canonical center $C$ with 
$P\in C\subset C_1\cap C_2$. 
We finish the proof of (2). 
Finally, we will prove (3). 
The existence and the uniqueness of 
the minimal log canonical center follow from (2). 
We take the unique minimal log canonical center $C=C_x$ passing through 
$x$. We put $V:=C$. We may replace $X$ with 
a relatively compact Stein open neighborhood of $x\in X$. 
Then, by Step \ref{c-thm6.1-step1} in the proof of 
Theorem \ref{c-thm6.1}, 
we have $f_*\mathcal O_T\simeq \mathcal O_V$. 
By shrinking $V$ around $x$, we can assume that 
every stratum of $T$ dominates $V$. Let $\nu\colon 
V^\nu\to V$ be the normalization of $V$. 
By applying Hironaka's flattening theorem (see \cite{hironaka}) 
to the graph of $T\dashrightarrow V^\nu$ and 
then using the desingularization theorem 
(see \cite[Theorems 13.3 and 12.4]{bierstone-milman}), 
we can obtain the following commutative diagram: 
\begin{equation*}
\xymatrix{
T^\dag \ar[d]_-q\ar[r]^-p & T\ar[d]^-f \\ 
V^\nu \ar[r]_-\nu& V, 
}
\end{equation*}
where $p\colon T^\dag \to T$ is a projective 
bimeromorphic morphism such that 
$T^\dag$ is simple normal crossing 
with $p_*\mathcal O_{T^\dag}\simeq \mathcal O_T$ 
(see \cite[Lemma 2.15]{fujino-analytic-vanishing}). 
Hence 
\begin{equation*}
\mathcal O_V\hookrightarrow \nu_* \mathcal O_{V^\nu} 
\hookrightarrow \nu_*q_*\mathcal O_{T^\dag}\simeq 
f_*p_*\mathcal O_{T^\dag}\simeq f_*\mathcal O_T\simeq \mathcal O_V. 
\end{equation*} 
This implies that $\mathcal O_V\simeq \nu_*\mathcal O_{V^\nu}$ 
holds, that is, $V$ is normal. 
Thus we obtain (3). 
\end{proof}

By the above proof of Theorem \ref{c-thm7.1}, 
we see that Theorem \ref{c-thm7.1} (2) and (3) are consequences of 
the strict support condition in Theorem \ref{c-thm5.5} (i). 

\section{Non-vanishing theorem}\label{c-sec8}
In this section, we will explain the non-vanishing theorem 
for projective morphisms between complex analytic spaces. 

\begin{thm}[{see \cite[Theorem 12.1]{fujino-fundamental}}]\label{c-thm8.1}
Let $\pi\colon X\to Y$ be a projective morphism 
of complex analytic spaces such that 
$X$ is a normal complex variety and let $W$ be a compact 
subset of $Y$. 
Let $\Delta$ be an effective 
$\mathbb R$-divisor on $X$ 
such that $K_X+\Delta$ is 
$\mathbb R$-Cartier. 
Let $L$ be a Cartier divisor on $X$ 
which is $\pi$-nef over $W$, that is, 
$L|_{\pi^{-1}(w)}$ is nef for every $w\in W$. 
We assume that 
\begin{itemize}
\item[(i)] $aL-(K_X+\Delta)$ is $\pi$-ample over $W$ for 
some positive real number $a$, and 
\item[(ii)] $\mathcal O_{\Nlc(X, \Delta)}(mL)$ is 
$\pi|_{\Nlc(X, \Delta)}$-generated 
over some open neighborhood of $W$ 
for every $m\gg 0$. 
\end{itemize} 
Then for every $m\gg 0$ 
there exists a relatively compact open neighborhood 
$U_m$ of $W$ over which  
the relative base locus $\Bs _{\pi}|mL|$ contains 
no log canonical centers of $(X, \Delta)$ 
and is disjoint from $\Nlc(X, \Delta)$. 
We note that the open subset $U_m$ depends on $m$. 
\end{thm}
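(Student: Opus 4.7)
The plan is to split the two conclusions and handle them separately using Theorem \ref{c-thm6.1} together with the structural results on log canonical centers from Theorem \ref{c-thm7.1}.

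First, for the disjointness from $\Nlc(X,\Delta)$: for every $m\geq a$ one has
\[
mL-(K_X+\Delta)=(m-a)L+\bigl(aL-(K_X+\Delta)\bigr),
\]
where $(m-a)L$ is $\pi$-nef over $W$ and $aL-(K_X+\Delta)$ is $\pi$-ample over $W$, so the sum is $\pi$-ample over $W$. Applying Theorem \ref{c-thm6.1} to $\mathcal L:=\mathcal O_X(mL)$ with $V=\emptyset$ yields, over some open neighborhood of $W$, a surjection
\[
\pi_*\mathcal O_X(mL)\twoheadrightarrow \pi_*\mathcal O_{\Nlc(X,\Delta)}(mL).
\]
Combined with assumption (ii), this forces $\Bs_\pi|mL|$ to be disjoint from $\Nlc(X,\Delta)$ over a suitable relatively compact open neighborhood of $W$ for every $m\gg 0$.

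Next, for the avoidance of log canonical centers, I would argue by induction on $\dim X$. By Theorem \ref{c-thm7.1}(1) there are, after shrinking $Y$ around $W$, only finitely many log canonical centers of $(X,\Delta)$ meeting $\pi^{-1}(W)$; and since any lc center $C$ containing a minimal lc center $C'\not\subset\Bs_\pi|mL|$ must itself not be contained in $\Bs_\pi|mL|$, it suffices to handle the \emph{minimal} log canonical centers disjoint from $\Nlc(X,\Delta)$. Each such $C$ is normal by Theorem \ref{c-thm7.1}(3) and has $\dim C<\dim X$. The key step will be to equip $C$ with a natural pair structure $(C,\Delta_C)$ via an adjunction-type relation $(K_X+\Delta)|_C\sim_{\mathbb R} K_C+\Delta_C$ with $(C,\Delta_C)$ kawamata log terminal, so that $L|_C$ inherits the hypotheses of the theorem: $\pi|_C$-nefness over $\pi(C)\cap W$ and $\pi|_C$-ampleness of $aL|_C-(K_C+\Delta_C)$ over a neighborhood of $\pi(C)\cap W$. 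Applying the induction hypothesis (or the known kawamata log terminal case from \cite{nakayama1} and \cite{fujino-minimal}) to $(C,\Delta_C)$ produces, for each $m\gg 0$, sections of $\mathcal O_C(mL|_C)$ whose base locus does not contain $C$. Theorem \ref{c-thm6.1}, applied with $V$ equal to the union of these finitely many minimal centers — still disjoint from $\Nlc(X,\Delta)$ — then gives the surjection
\[
\pi_*\mathcal O_X(mL)\twoheadrightarrow \pi_*\mathcal O_V(mL)\oplus \pi_*\mathcal O_{\Nlc(X,\Delta)}(mL),
\]
which lifts the generating sections on $V$ and on $\Nlc(X,\Delta)$ back to $X$ and completes the argument.

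The main obstacle will be the adjunction step on a minimal log canonical center $C$ in the complex analytic setting. Algebraically, this is supplied by Kawamata's subadjunction, or more generally by the quasi-log structure inherited by $C$, but Remark \ref{c-rem1.10} explicitly defers the full theory of complex analytic quasi-log schemes to \cite{fujino-quasi-log-analytic}. I expect Lemma \ref{c-lem8.2} — flagged in the introduction as new and useful for the study of quasi-log structures — to provide precisely enough structure on $C$ to reduce the non-vanishing problem on $C$ to a case already known (the kawamata log terminal one) or accessible via the inductive hypothesis on dimension.
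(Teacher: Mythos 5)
Your first half is exactly the paper's argument: for $m\geq a$ the divisor $mL-(K_X+\Delta)$ is $\pi$-ample over $W$, so Theorem \ref{c-thm6.1} gives the surjection onto $\pi_*\mathcal O_{\Nlc(X,\Delta)}(mL)$, which together with (ii) makes $\Bs_\pi|mL|$ disjoint from $\Nlc(X,\Delta)$ over a suitable $U_m$. The reduction to finitely many minimal log canonical centers and the final lifting step via Theorem \ref{c-thm6.1} also match the paper.

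The genuine gap is the adjunction step on a minimal log canonical center $C$, and the fix you hope for is not what Lemma \ref{c-lem8.2} provides. A subadjunction-type statement $(K_X+\Delta)|_C\sim_{\mathbb R}K_C+\Delta_C$ with $(C,\Delta_C)$ kawamata log terminal is not established anywhere in this paper (and even in the algebraic category Kawamata's subadjunction only yields such a structure after perturbing by an arbitrarily small ample divisor and rests on inputs the paper deliberately avoids), so your induction on dimension has no base to stand on: what you need on $C$ is non-vanishing of $\pi_*\mathcal O_C(mL)$, and neither the inductive hypothesis (whose conclusion is about base loci and log canonical centers, vacuous for a klt pair) nor the unproved klt structure delivers it. The paper's mechanism is different and avoids adjunction entirely: Step \ref{c-thm6.1-step1} of the proof of Theorem \ref{c-thm6.1} equips $V=C$ with a projective morphism $f\colon (T,\Delta_T)\to V$ from an analytic globally embedded simple normal crossing pair with $\mathcal O_V\simeq f_*\mathcal O_T(\lceil-(\Delta_T^{<1})\rceil)$ and $K_T+\Delta_T\sim_{\mathbb R}f^*(K_X+\Delta)|_V$; Lemma \ref{c-lem8.2} then says that an \emph{analytically sufficiently general fiber} $F$ of $V\to\pi(V)$ is a projective quasi-log canonical pair; after arranging $L|_F$ nef by Lemma \ref{c-lem3.5}, the basepoint-free theorem for projective quasi-log canonical pairs (\cite[Theorem 6.5.1]{fujino-foundations}) gives freeness of $|mL|_F|$ for all $m\gg 0$, hence $\pi_*\mathcal O_V(mL)\ne 0$, and the surjectivity from Theorem \ref{c-thm6.1} concludes. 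So the missing ingredient in your proposal is precisely this qlc basepoint-free theorem applied to general fibers, used in place of any klt adjunction, and no induction on $\dim X$ is needed; note also a uniformity point your sketch glosses over: the threshold $m_2$ must be chosen to work simultaneously for the finitely many relevant minimal centers, which the paper handles by fixing them at the outset of the Claim in Step \ref{c-thm8.1-step2}.
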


We first prepare the following useful lemma, which is new, for the 
proof of Theorem \ref{c-thm8.1}. For the 
details of the theory of quasi-log schemes, 
see \cite[Chapter 6]{fujino-foundations}, 
\cite{fujino-cone}, and \cite{fujino-quasi}. 

\begin{lem}\label{c-lem8.2}
Let $\pi\colon X\to Y$ be a projective morphism 
between complex analytic spaces such that 
$X$ is a normal complex variety and let $f\colon (Z, \Delta_Z)\to X$ be a 
projective morphism from an analytic globally embedded 
simple normal crossing pair $(Z, \Delta_Z)$ such that 
$\Delta_Z$ is a subboundary $\mathbb R$-divisor 
on $Z$ and is a finite $\mathbb R$-linear combination 
of Cartier divisors, 
the natural map $\mathcal O_X\to f_*\mathcal O_Z(\lceil -(\Delta^{<1}_Z)
\rceil)$ is an isomorphism, and $K_Z+\Delta_Z\sim _{\mathbb R} 
f^*\omega$ holds for some $\mathbb R$-line bundle 
$\omega$ on $X$. 
Let $y$ be an analytically sufficiently general point of $\pi(X)$. 
Then 
\begin{equation*}
\left(X_y, \omega|_{X_y}, f_y\colon (Z_y, \Delta_{Z_y})\to X_y\right)
\end{equation*}
is a projective quasi-log canonical pair, where 
$X_y:=\pi^{-1}(y)$, $Z_y:=(\pi\circ f)^{-1}(y)$, 
$f_y:=f|_{X_y}$, and $\Delta_{Z_y}:=\Delta_Z|_{Z_y}$. 
\end{lem}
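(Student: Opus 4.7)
The plan is to verify the defining properties of a projective quasi-log canonical pair for the generic fiber directly, by combining a Bertini/generic smoothness argument with an analytic cohomology-and-base-change argument. Write $A:=\lceil -(\Delta_Z^{<1})\rceil$, so the hypothesis is $\mathcal O_X\simeq f_*\mathcal O_Z(A)$ and the target on fibers is $\mathcal O_{X_y}\simeq (f_y)_*\mathcal O_{Z_y}(A|_{Z_y})$.

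First I would apply generic smoothness to $\pi\circ f\colon Z\to Y$ stratum by stratum on $(Z,\Delta_Z)$, and to a suitable extension of $\pi\circ f$ to (a neighborhood in) the ambient smooth space $M$ of the globally embedded SNC pair. Combining the resulting countably many meagre exclusion sets (using \ref{c-say2.13}) yields an analytically meagre subset $\mathcal S_1\subset\pi(X)$ such that for $y\notin\mathcal S_1$ the fiber $(Z_y,\Delta_{Z_y})$ is again an analytic globally embedded SNC pair with ambient $M_y$, $\Delta_{Z_y}$ is a subboundary $\mathbb R$-divisor that is a finite $\mathbb R$-linear combination of Cartier divisors, and $\Delta_Z$ meets $Z_y$ transversely so that restriction commutes with rounding, giving $A|_{Z_y}=\lceil -(\Delta_{Z_y}^{<1})\rceil$. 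Adjunction for the smooth fiber gives $(K_Z+\Delta_Z)|_{Z_y}=K_{Z_y}+\Delta_{Z_y}$, and restricting $K_Z+\Delta_Z\sim_{\mathbb R}f^*\omega$ yields $K_{Z_y}+\Delta_{Z_y}\sim_{\mathbb R}f_y^*(\omega|_{X_y})$.

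It remains to verify the qlc structure isomorphism on the fiber. Working over a relatively compact open neighborhood of a compact subset of $Y$, we have the coherent sheaf $f_*\mathcal O_Z(A)\simeq\mathcal O_X$ on $X$. Using generic flatness for $\pi$ and $\pi\circ f$ together with the complex analytic cohomology-and-base-change theorem applied to $f$, one obtains an analytically meagre subset $\mathcal S_2\subset\pi(X)$ such that for $y\notin\mathcal S_2$ the base change map
\[
i^*f_*\mathcal O_Z(A)\longrightarrow (f_y)_*\mathcal O_{Z_y}(A|_{Z_y})
\]
(where $i\colon X_y\hookrightarrow X$) is an isomorphism. Combining with the hypothesis $\mathcal O_X\simeq f_*\mathcal O_Z(A)$, we obtain $\mathcal O_{X_y}\simeq (f_y)_*\mathcal O_{Z_y}(A|_{Z_y})$. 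Since $\pi$ is projective, $X_y$ is projective; taking $y$ outside the analytically meagre set $\mathcal S_1\cup\mathcal S_2$ then furnishes all the data for $(X_y,\omega|_{X_y},f_y\colon(Z_y,\Delta_{Z_y})\to X_y)$ to be a projective quasi-log canonical pair.

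The main obstacle is the analytic base-change step: unlike in the algebraic setting, the loci produced by semi-continuity and generic flatness are not automatically Zariski closed, so one must verify that the exceptional locus can be described as an analytically meagre subset in the sense of \ref{c-say2.13}, that is, a countable union of positive-codimension locally closed analytic subvarieties. Restricting to relatively compact opens to ensure coherence of the relevant direct images, invoking Frisch/Grauert-type generic flatness, and combining countably many such exceptional sets should suffice, and this is precisely what the notion of analytically meagre subset in \ref{c-say2.13} is designed to accommodate. The Bertini/generic smoothness step in the first paragraph requires an analogous combination of meagre sets, one for each stratum, but is otherwise standard.
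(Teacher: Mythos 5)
Your reduction steps (generic smoothness of the strata, transversality so that restriction commutes with rounding, adjunction on the fiber) match the paper's preliminary reductions, and the final citation you would need is \cite[Theorem 4.9]{fujino-pull-back}, as in the paper. But the heart of the lemma is the isomorphism $\mathcal O_{X_y}\simeq (f_y)_*\mathcal O_{Z_y}(\lceil -(\Delta^{<1}_{Z_y})\rceil)$, and here your argument has a genuine gap. The base change you need is along the closed embedding $X_y\hookrightarrow X$ for the morphism $f\colon Z\to X$; the theorems you invoke (Grauert-type semicontinuity and base change, Frisch generic flatness) concern direct images to the parameter space for sheaves flat over that space, so applying them to $\pi\circ f$ only controls $R^p(\pi\circ f)_*$ and the cohomology of $Z_y$, not $(f_y)_*$ as a sheaf on $X_y$, while applying them to $f$ itself fails because $\mathcal O_Z(\lceil -(\Delta^{<1}_Z)\rceil)$ is not flat over $X$ and $X_y$ is in no sense a general point of $X$. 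What your plan really requires is a generic base change theorem for the $Y$-morphism $f$ (the analytic analogue of Grothendieck's generic base change, proved algebraically by Noetherian dévissage), which is not available in the references this paper works with; your closing paragraph acknowledges the obstacle but the ``should suffice'' is precisely where the content of the lemma lies.

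The paper closes this gap with a different mechanism, namely the strict support condition of Theorem \ref{c-thm5.5} (i): after finitely many Zariski shrinkings of $Y$ one arranges that $\pi\circ f$ is flat and every stratum of $(Z,\Supp\Delta_Z)$ is smooth over $Y$; then, since $\lceil -(\Delta^{<1}_Z)\rceil-(K_Z+\{\Delta_Z\}+\Delta^{=1}_Z)\sim_{\mathbb R}-f^*\omega$, every associated subvariety of $R^pf_*\mathcal O_Z(\lceil -(\Delta^{<1}_Z)\rceil)$ is the $f$-image of a stratum, hence is not contained in $\pi^{-1}(z_i=0)$ for local coordinates $(z_1,\ldots,z_m)$ at $y$. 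Multiplication by $\pi^*z_i$ is therefore injective on $R^1f_*$, so pushing forward $0\to\mathcal O_Z(A)\to\mathcal O_Z(A)\to\mathcal O_{Z_1}(A|_{Z_1})\to 0$ gives surjectivity of $f_*\mathcal O_Z(A)\to f_*\mathcal O_{Z_1}(A|_{Z_1})$, and cutting down by the $m$ coordinates inductively yields the desired isomorphism for \emph{every} $y$ in the resulting Zariski open set. This torsion-freeness input, which substitutes for base change and keeps the exceptional set a finite union of proper analytic subsets rather than a countable union with delicate inductive genericity bookkeeping, is the idea missing from your proposal.
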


This lemma is also a consequence of the 
strict support condition in Theorem \ref{c-thm5.5} (i). 

\begin{proof}[Proof of Lemma \ref{c-lem8.2}]
By replacing $Y$ with $\pi(X)$, we may assume that 
$\pi$ is surjective and $Y$ is a complex variety. 
By replacing $Y$ with a Zariski open subset of $Y$, 
we may further assume that 
$Y$ is smooth. By replacing $Y$ with a suitable Zariski open subset, 
we may assume that $\pi\circ f$ is flat. 
Then, by replacing $Y$ with a suitable Zariski open subset again, 
we may assume that 
every stratum of $(Z, \Supp \Delta_Z)$ is smooth over $Y$. 
We take an arbitrary point $y\in Y$. 
Then $(Z_y, \Delta_{Z_y})$ is an analytic simple normal crossing pair. 
From now on, we will prove that 
\begin{equation*}
\left(X_y, \omega|_{X_y}, f_y\colon (Z_y, \Delta_{Z_y})\to X_y\right)
\end{equation*}
is a projective quasi-log canonical pair. 
Without loss of generality, we may assume that $Y$ is a polydisc $\Delta^m$ 
with $y=0\in \Delta^m$. 
Let $(z_1, \ldots, z_m)$ be the local coordinate system of $\Delta^m$. 
Then $\left((\pi\circ f)^*z_i=0\right)$ does not contain any strata of 
$(Z, \Supp \Delta_Z)$. 
Therefore, 
\begin{equation*}
\pi^*z_i\times\colon R^pf_*\mathcal O_Z(\lceil -(\Delta^{<1}_Z)
\rceil)\to R^pf_*\mathcal O_Z(\lceil -(\Delta^{<1}_Z)
\rceil)\end{equation*} 
is injective for every $i$ and every $p$ since 
$\lceil -(\Delta^{<1}_Z)\rceil-(K_Z+\{\Delta_Z\}
+\Delta^{=1}_Z)\sim _{\mathbb R} -f^*\omega$. We put $X_1:=(\pi^*z_1=0)$ 
and $Z_1:=\left((\pi\circ f)^*z_1=0\right)$. 
Since 
\begin{equation*}
\pi^*z_1\times\colon R^1f_*\mathcal O_Z(\lceil -(\Delta^{<1}_Z)
\rceil)\to R^1f_*\mathcal O_Z(\lceil -(\Delta^{<1}_Z)
\rceil)\end{equation*} 
is injective, we obtain the following short exact sequence: 
\begin{equation*}
\xymatrix{
0\ar[r]& f_*\mathcal O_Z(\lceil -(\Delta^{<1}_Z)\rceil)
\ar[r]^-{\times \pi^*z_1}
&
f_*\mathcal O_Z(\lceil -(\Delta^{<1}_Z)\rceil) 
\ar[r]
& f_*\mathcal O_{Z_1}(\lceil -(\Delta^{<1}_{Z_1})\rceil)
\ar[r]
& 0, 
}
\end{equation*}
where $\Delta_{Z_1}=\Delta_Z|_{Z_1}$. 
This implies that 
the natural map 
$\mathcal O_{X_1}\to f_*\mathcal O_{Z_1}(\lceil -(\Delta^{<1}_{Z_1})\rceil)$ 
is an isomorphism. 
By repeating this argument, we finally obtain that 
$\mathcal O_{X_y}\simeq (f_y)_*\mathcal O_{Z_y}(\lceil 
-(\Delta^{<1}_{Z_y})\rceil)$ holds. 
By \cite[Theorem 4.9]{fujino-pull-back}, this means that 
\begin{equation*}
\left(X_y, \omega|_{X_y}, f_y\colon (Z_y, \Delta_{Z_y})\to X_y\right)
\end{equation*}
is a projective quasi-log canonical pair. 
\end{proof}

Let us prove Theorem \ref{c-thm8.1}. 

\begin{proof}[Proof of Theorem \ref{c-thm8.1}]
We divide the proof into several small steps. 
\setcounter{step}{0}
\begin{step}\label{c-thm8.1-step1}
By shrinking $Y$ suitably, 
we may assume that there exists a positive integer $m_1$ 
such that $\mathcal O_{\Nlc(X, \Delta)}(mL)$ is 
$\pi|_{\Nlc(X, \Delta)}$-generated for every $m\geq m_1$ by 
(ii). We may further assume that 
$aL-(K_X+\Delta)$ is $\pi$-ample over $Y$. 
\end{step}
\begin{step}\label{c-thm8.1-step2}
In this step, we will prove the following claim. 
\begin{claim}\label{c-thm8.1-claim}
There exists a positive integer $m_2$ such that 
$\pi_*\mathcal O_V(mL)\ne 0$ 
holds for every $m\geq m_2$, where 
$V$ is any minimal log canonical center 
of $(X, \Delta)$ such that 
$\pi(V)\cap W\ne \emptyset$ and that $V\cap \Nlc(X, \Delta)
=\emptyset$ over some open neighborhood of $W$. 
\end{claim}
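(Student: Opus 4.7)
I would realize $V$ as a quasi-log canonical pair via the construction used in Step~1 of the proof of Theorem~\ref{c-thm6.1}, transfer this structure fiberwise through Lemma~\ref{c-lem8.2}, and conclude by applying the algebraic basepoint-free theorem for projective quasi-log canonical pairs on a general fiber, together with the pushforward criterion in~\ref{c-say2.14}. After further shrinking $Y$ around $W$, I may assume $\Supp\Delta$ has only finitely many components and that only finitely many minimal log canonical centers $V$ of $(X,\Delta)$ meet $\pi^{-1}(W)$ (Theorem~\ref{c-thm7.1}~(1)); each relevant $V$ is irreducible and normal by Theorem~\ref{c-thm7.1}~(3), and is disjoint from $\Nlc(X,\Delta)$ by hypothesis. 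Fix one such $V$, take a projective bimeromorphic log resolution $f\colon Z\to X$ with $K_Z+\Delta_Z=f^*(K_X+\Delta)$ and $\Supp\Delta_Z$ simple normal crossing, and let $T$ be the union of components of $\Delta_Z^{=1}$ whose $f$-image is contained in $V$. Step~1 of the proof of Theorem~\ref{c-thm6.1} gives $f_*\mathcal O_T\simeq \mathcal O_V$, and after a further modification as in the proof of Theorem~\ref{c-thm7.1}~(3) I may arrange every stratum of $T$ to dominate $V$. Setting $\Delta_T:=(\{\Delta_Z\}+\Delta_Z^{=1}-T)|_T$, so that $K_T+\Delta_T\sim_{\mathbb R}(f|_T)^*(K_X+\Delta)|_V$, the data $\bigl(V,(K_X+\Delta)|_V,\, f|_T\colon (T,\Delta_T)\to V\bigr)$ satisfy the hypotheses of Lemma~\ref{c-lem8.2}.

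Lemma~\ref{c-lem8.2} then supplies, for analytically sufficiently general $y\in\pi(V)$, a projective quasi-log canonical structure on $V_y:=V\cap\pi^{-1}(y)$ with quasi-log canonical class $\omega_y:=(K_X+\Delta)|_{V_y}$. Because Step~\ref{c-thm8.1-step1} has already made $aL-(K_X+\Delta)$ $\pi$-ample over $Y$, the divisor $aL|_{V_y}-\omega_y$ is ample. To obtain nefness of $L|_{V_y}$ for general $y$, pick $w_0\in\pi(V)\cap W$: then $L|_{V_{w_0}}$ is nef because $L$ is $\pi$-nef over $W$, so Lemma~\ref{c-lem3.5} applied to $\pi|_V\colon V\to \pi(V)$ produces an analytically meagre $\mathcal S\subset\pi(V)$ outside which $L|_{V_y}$ is nef.

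For such $y$, $V_y$ is a projective algebraic quasi-log canonical pair by Serre's GAGA (\ref{c-say2.4}), and the basepoint-free theorem for projective quasi-log canonical pairs (see \cite[Chapter~6]{fujino-foundations}) applied to the nef Cartier divisor $L|_{V_y}$ with $aL|_{V_y}-\omega_y$ ample furnishes an integer $m_V$ such that $mL|_{V_y}$ is basepoint-free, hence $H^0(V_y,mL|_{V_y})\neq 0$, for every $m\geq m_V$. The criterion in~\ref{c-say2.14} then yields $\pi_*\mathcal O_V(mL)\neq 0$ for all $m\geq m_V$, and setting $m_2$ equal to the maximum of $m_V$ over the finitely many $V$'s relevant to the claim completes the argument. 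The main technical obstacle I foresee is uniformity of $m_V$ in $y$ across the family $\{V_y\}$; I would address this by observing that the qlc structure on the fibers is inherited from the single globally defined family $T\cap(\pi\circ f)^{-1}(y)\to V_y$ of simple normal crossing pairs whose combinatorial and numerical invariants are locally constant on the analytically general locus of $\pi(V)$, so the effective basepoint-free bound depends only on these invariants and not on the particular fiber.
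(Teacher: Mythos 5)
Your strategy is the paper's own: realize $V$ with a quasi-log structure coming from Step \ref{c-thm6.1-step1} of the proof of Theorem \ref{c-thm6.1}, pass to analytically sufficiently general fibers via Lemma \ref{c-lem8.2}, get nefness of $L$ on such a fiber from Lemma \ref{c-lem3.5}, apply the basepoint-free theorem for projective quasi-log canonical pairs, and conclude with \ref{c-say2.14} and the finiteness of the relevant centers $V$. However, one step fails as written: with your choice $\Delta_T:=(\{\Delta_Z\}+\Delta_Z^{=1}-T)|_T$ the hypotheses of Lemma \ref{c-lem8.2} are not satisfied. Adjunction gives $K_T+\Delta_T=(K_Z+\{\Delta_Z\}+\Delta_Z^{=1})|_T\sim_{\mathbb R}(f^*(K_X+\Delta)+A-N)|_T$ with $A=\lceil -(\Delta_Z^{<1})\rceil$, $N=\lfloor \Delta_Z^{>1}\rfloor$; near $V$ one has $N|_T=0$, but $A|_T$ is in general a nonzero effective exceptional divisor, so $K_T+\Delta_T$ differs from $(f|_T)^*((K_X+\Delta)|_V)$ by $A|_T$, which is not a pull-back, and the asserted $\mathbb R$-linear equivalence is false. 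The correct input, as in the paper, is the subboundary $\Delta_T:=(\Delta_Z-T)|_T$: then $K_T+\Delta_T\sim_{\mathbb R}(f|_T)^*((K_X+\Delta)|_V)$, and the isomorphism required by Lemma \ref{c-lem8.2} is $\mathcal O_V\simeq f_*\mathcal O_T(\lceil -(\Delta_T^{<1})\rceil)=f_*\mathcal O_T(A|_T)$, which is exactly what Step \ref{c-thm6.1-step1} provides; the plain isomorphism $f_*\mathcal O_T\simeq \mathcal O_V$ together with a boundary $\Delta_T$ is not enough.

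The uniformity of $m_V$ in $y$ that you single out as the main obstacle is not actually needed, and your proposed remedy (an effective bound governed by locally constant invariants of the family) is not available, since the basepoint-free theorem for quasi-log canonical pairs is not effective. The argument only requires one well-chosen fiber: pick a single $y_0\in \pi(V)$ outside the countable union of the analytically meagre subsets involved, namely the one from Lemma \ref{c-lem8.2}, the one from Lemma \ref{c-lem3.5}, and, for every $m\in \mathbb N$, the one occurring in \ref{c-say2.14} for the sheaf $\mathcal O_V(mL)$ (a countable union of analytically meagre sets is analytically meagre, cf.\ Corollary \ref{c-cor3.6} and Remark \ref{c-rem3.7}). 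Applying the basepoint-free theorem on $F:=V\cap \pi^{-1}(y_0)$ gives one $m_2$ with $H^0(F, mL|_F)\ne 0$ for all $m\geq m_2$; if $\pi_*\mathcal O_V(mL)$ were zero for some such $m$, then \ref{c-say2.14} would force $H^0(F, mL|_F)=0$, a contradiction. With these two corrections your proof coincides with the paper's; the remaining ingredients (finiteness of the minimal log canonical centers meeting $\pi^{-1}(W)$, ampleness of $aL|_F-(K_X+\Delta)|_F$ after the shrinking done in Step \ref{c-thm8.1-step1}, and taking the maximum of the resulting integers over the finitely many $V$'s) are exactly as in the paper.
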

\begin{proof}[Proof of Claim]
We note that the number of the minimal log canonical 
centers $V$ of $(X, \Delta)$ 
with $\pi(V)\cap W\ne \emptyset$ is finite. 
We take a minimal log canonical center $V$ such that 
$\pi(V)\cap W\ne \emptyset$ and 
that $V\cap \Nlc(X, \Delta)=\emptyset$ 
over some open neighborhood of $W$. 
Let $y$ be an arbitrary point of $\pi(V)\cap W$. 
It is sufficient to prove $\pi_*\mathcal O_V(mL)\ne 0$ 
on a small open neighborhood of $y$. 
Therefore, we can replace $Y$ with a small 
relatively compact Stein open neighborhood 
of $y$.  
Thus, by Step \ref{c-thm6.1-step1} in the proof of Theorem \ref{c-thm6.1}, 
we can construct a projective surjective morphism 
$f\colon (T, \Delta_T)\to V$ from an analytic globally 
embedded simple normal crossing pair $(T, \Delta_T)$ such that 
$\Delta_T$ is a subboundary $\mathbb R$-divisor on $T$, 
the natural map $\mathcal O_V\to f_*\mathcal O_T(\lceil 
-(\Delta^{<1}_T)\rceil)$ is an isomorphism, and 
$K_T+\Delta_T\sim _{\mathbb R} f^*(K_X+\Delta)|_V$ holds. 
Thus, by Lemma \ref{c-lem8.2}, an analytically sufficiently general fiber 
$F$ of $\pi\colon V\to \pi(V)$ is a projective 
quasi-log canonical pair. 
By Lemma \ref{c-lem3.5}, we may assume that 
$L|_F$ is nef.  
Therefore, by the basepoint-free theorem 
for quasi-log canonical pairs (see \cite[Theorem 6.5.1]{fujino-foundations}), 
there 
exists a positive integer $m_2$ such that 
$|mL|_F|$ is basepoint-free 
for every $m\geq m_2$.  
This implies that $\pi_*\mathcal O_V(mL)\ne 0$ for 
every $m\geq m_2$. This is what we wanted. 
\end{proof}
\end{step}
\begin{step}\label{c-thm8.1-step3}
We put $m_0:=\max \{a, m_1, m_2\}$. 
Let $m$ be any positive integer with $m\geq m_0$. 
Since $aL-(K_X+\Delta)$ is $\pi$-ample 
over $W$ and $L$ is $\pi$-nef over $W$, 
$mL-(K_X+\Delta)$ is $\pi$-ample over $W$. 
By Theorem \ref{c-thm6.1}, we can find 
an open neighborhood $U_m$ of $W$ such that 
the vanishing theorem holds for $mL$ over $U_m$. 
Without 
loss of generality, 
we may assume that every minimal log canonical center 
$V$ of $(X, \Delta)$ with $\pi(V)\cap U_m\ne \emptyset$ always 
satisfies $\pi(V)\cap W\ne \emptyset$ by 
shrinking $U_m$ suitably. 
\end{step}
\begin{step}\label{c-thm8.1-step4}
In this final step, we will prove 
that over $U_m$ the relative base locus 
$\Bs_{\pi}|mL|$ contains no log canonical 
centers of $(X, \Delta)$ and is disjoint 
from $\Nlc(X, \Delta)$ for every $m\geq m_0$.  

By the vanishing theorem (see Theorem \ref{c-thm6.1}), 
we have $R^1\pi_*\left(\mathcal J_{\NLC}(X, \Delta)\otimes 
\mathcal O_X(mL)\right)=0$ on $U_m$. 
Thus the restriction map 
\begin{equation*}
\pi_*\mathcal O_X(mL)\to \pi_*\mathcal O_{\Nlc(X, \Delta)}(mL)
\end{equation*} 
is surjective on $U_m$. This implies that 
the relative base locus  
$\Bs_{\pi}|mL|$ is disjoint from $\Nlc(X, \Delta)$ over 
$U_m$. Let $V$ be a minimal log canonical 
center of $(X, \Delta)$ with $\pi(V)\cap U_m\ne \emptyset$. 
If $V\cap \Nlc(X, \Delta)\ne \emptyset$ over $U_m$, then 
$V\not\subset \Bs_{\pi}|mL|$ since 
$\Nlc(X, \Delta)\cap \Bs_{\pi}|mL|=\emptyset$ over 
$U_m$. 
Hence we may assume that $V\cap 
\Nlc(X, \Delta)=\emptyset$ over $U_m$. 
In this case, $\pi_*\mathcal O_V(mL)\ne 0$ by 
Claim in Step \ref{c-thm8.1-step2}. 
On the other hand, by the vanishing theorem 
(see Theorem \ref{c-thm6.1}), 
the restriction map 
\begin{equation*}
\pi_*\mathcal O_X(mL) \to \pi_*\mathcal O_V(mL)
\end{equation*} 
is surjective on $U_m$. 
This implies that $V\not\subset \Bs_{\pi}|mL|$. 
Hence $\Bs_{\pi}|mL|$ contains no log canonical centers of $(X, \Delta)$ 
over $U_m$. 
\end{step}
We finish the proof. 
\end{proof}

We make an important remark on Theorem \ref{c-thm8.1}. 

\begin{rem}\label{c-rem8.3}
In Step \ref{c-thm8.1-step3} in the proof of 
Theorem \ref{c-thm8.1}, the condition that 
$mL-(K_X+\Delta)$ is $\pi$-ample 
over $w\in W$ only implies that $mL-(K_X+\Delta)$ 
is $\pi$-ample over some open neighborhood 
$\mathcal U^m_w$ of $w$ in $Y$. 
We note that $\mathcal U^m_w$ depends on $m$. 
Therefore, $U_m$ in 
Theorem \ref{c-thm8.1} also depends on $m$. 
\end{rem}

For kawamata log terminal pairs, the non-vanishing theorem 
is formulated as follows. 

\begin{thm}[Non-vanishing theorem for 
kawamata log terminal pairs]\label{c-thm8.4}
Let $\pi\colon X\to Y$ be a projective morphism 
of complex analytic spaces such that 
$X$ is a normal complex variety and let $W$ be a compact 
subset of $Y$. 
Let $\Delta$ be an effective 
$\mathbb R$-divisor on $X$ 
such that $(X, \Delta)$ is kawamata log terminal. 
Let $L$ be a Cartier divisor on $X$ 
which is $\pi$-nef over $W$. 
We assume that 
$aL-(K_X+\Delta)$ is $\pi$-ample over $W$ for 
some positive real number $a$. 
Then $\pi_*\mathcal O_X(mL)\ne 0$ holds for every $m\gg 0$ 
\end{thm}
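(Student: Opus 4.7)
The strategy is to reduce Theorem~\ref{c-thm8.4} to Theorem~\ref{c-thm8.1} by performing a \emph{tie-breaking} construction that creates a log canonical center. Since $(X,\Delta)$ is kawamata log terminal, $\Nlc(X,\Delta)=\emptyset$ and hypothesis (ii) of Theorem~\ref{c-thm8.1} holds trivially; however, $(X,\Delta)$ may possess no log canonical centers at all, in which case the conclusion of Theorem~\ref{c-thm8.1} is vacuous and must be promoted by first introducing a center through a well-chosen point.

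By compactness of $W$ it suffices to find, for each $w\in W$, an open neighborhood of $w$ on which $\pi_{*}\mathcal{O}_{X}(mL)\neq 0$ for every $m\gg 0$. Fix $w\in W$. After shrinking $Y$ to a relatively compact Stein open neighborhood of $w$ and invoking Lemma~\ref{c-lem4.4}, I may assume that $\Supp\Delta$ has only finitely many irreducible components, that $\Delta$ is a $\mathbb{Q}$-divisor, and that $B:=aL-(K_{X}+\Delta)$ is a $\mathbb{Q}$-Cartier $\mathbb{Q}$-divisor which is $\pi$-ample over $Y$. Pick a smooth closed point $P$ of the projective fiber $X_{w}=\pi^{-1}(w)$ and set $n=\dim X$. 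On the projective variety $X_{w}$ the restriction $B|_{X_{w}}$ is ample, so an asymptotic Riemann--Roch comparison of $h^{0}(X_{w},kB|_{X_{w}})$ with the codimension of sections vanishing to order $(n+1)k$ at $P$ (after, if necessary, replacing $B$ by a sufficiently large integer multiple so that $(B|_{X_{w}})^{n}>(n+1)^{n}$) yields, for $k\gg 0$, a nonzero element of $H^{0}(X_{w},kB|_{X_{w}})$ whose zero divisor has multiplicity at least $(n+1)k$ at $P$. Serre vanishing on the fiber together with cohomology and base change for the coherent sheaf $\pi_{*}\mathcal{O}_{X}(kB)$ over the Stein base permits one to lift this fiberwise section to a global section, from which I extract an effective $\mathbb{Q}$-divisor $M$ on $X$ with $M\sim_{\mathbb{Q}}kB$ and $\mathrm{mult}_{P}(M)\geq (n+1)k$.

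Let $t_{0}>0$ be the log canonical threshold of $(X,\Delta;M)$ at $P$. The multiplicity bound and a blow-up discrepancy estimate force $t_{0}\leq n/((n+1)k)$, so in particular $t_{0}k<1$. After perturbing $M$ within its linear system by a Bertini-type argument (using Lemma~\ref{c-lem4.3} to control the non-lc ideal away from $P$), I may assume that $\Delta':=\Delta+t_{0}M$ satisfies $\Nlc(X,\Delta')=\emptyset$ and admits a log canonical center $V$ passing through $P$. By construction
\begin{equation*}
aL-(K_{X}+\Delta')\sim_{\mathbb{Q}}(1-t_{0}k)B,
\end{equation*}
which remains $\pi$-ample over $W$, and $L$ is still $\pi$-nef over $W$. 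Theorem~\ref{c-thm8.1} applied to $(X,\Delta')$ and $L$ then produces, for every $m\gg 0$, an open neighborhood $U_{m}$ of $w$ over which $\Bs_{\pi}|mL|$ contains no log canonical center of $(X,\Delta')$. In particular $V\not\subset\Bs_{\pi}|mL|$ over $U_{m}$, and since $V$ is nonempty this forces $\pi_{*}\mathcal{O}_{X}(mL)|_{U_{m}}\neq 0$, completing the proof.

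The main technical obstacle is the construction of $M$ with the prescribed high multiplicity at $P$: on a single projective variety it is classical, but in the relative analytic setting one must lift a fiberwise section on $X_{w}$ to a genuine global section of $\pi_{*}\mathcal{O}_{X}(kB)$ in such a way that the order of vanishing at $P\in X$ (rather than merely the order of vanishing along $X_{w}$) is at least $(n+1)k$; this requires combining Stein-theoretic extension arguments with a careful choice of representative in $\mathfrak{m}_{X,P}^{(n+1)k}\cdot\mathcal{O}_{X}(kB)_{P}$. A subsidiary difficulty is arranging $\Nlc(X,\Delta+t_{0}M)=\emptyset$ globally, which is handled by the Bertini-type statement Lemma~\ref{c-lem4.3} together with sufficient genericity of $M$ within the linear system achieving the multiplicity constraint at $P$.
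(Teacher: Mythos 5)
Your overall plan (tie-breaking to manufacture a log canonical center lying over $w$, then invoking Theorem \ref{c-thm8.1} to keep that center out of $\Bs_{\pi}|mL|$, hence $\pi_*\mathcal O_X(mL)\ne 0$ near $w$) is not circular, since Theorem \ref{c-thm8.1} is proved independently of Theorem \ref{c-thm8.4}; but as written the decisive step is missing. The divisor $M\sim_{\mathbb Q}kB$ with $\mathrm{mult}_P(M)\geq (n+1)k$ is never actually produced: the mechanism you propose --- an asymptotic Riemann--Roch count on the fiber $X_w$ followed by a lift of a fiberwise section via Serre vanishing and base change --- only controls the order of vanishing of the \emph{restriction} of the section to $X_w$. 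A section of $\mathcal O_X(kB)$ near the fiber whose restriction to $X_w$ vanishes to order $(n+1)k$ at $P$ can perfectly well have multiplicity $1$ at $P$ as a point of $X$ (think of $x^{N}+y$ on $\mathbb C^2\to\mathbb C_y$), and then the estimate $t_0\leq n/\mathrm{mult}_P(M)$ gives nothing; you flag this yourself but offer no argument closing it. Two further points in the same step are also unaddressed: the blow-up discrepancy bound requires $P$ to be a smooth point of $X$, whereas you only choose a smooth point of $X_w$ (a fiber can lie entirely inside the singular locus of $X$); and since the imposed vanishing order $(n+1)k$ grows at the same rate as $k$, the subsystem you use need not have base locus $\{P\}$, so Lemma \ref{c-lem4.3}, which only controls the non-lc ideal outside the base locus of the linear system, does not by itself yield $\Nlc(X,\Delta+t_0M)=\emptyset$ near $\pi^{-1}(w)$.

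These gaps are fillable, but by different means than you indicate. When $\dim\pi(X)\geq 1$ one can simply take $\Delta':=\Delta+t_0\,\pi^*D$ for an effective Cartier divisor $D$ on the Stein base passing through $w$: adding a pullback does not affect the $\pi$-ampleness of $aL-(K_X+\Delta')$, and the log canonical threshold computed along the compact fiber $\pi^{-1}(w)$ produces a center over $w$ while the non-lc locus, being closed and disjoint from $\pi^{-1}(w)$, disappears after shrinking $Y$ around $w$; this is exactly how the paper itself creates centers through fibers in the proofs of Theorems \ref{c-thm1.4} and \ref{c-thm13.1}. Alternatively, over a positive-dimensional Stein base the space of sections of $kB$ on $\pi^{-1}(Y')$ is infinite dimensional, so the finitely many jet conditions at $P$ can always be met by an honest section on $X$; and when $\pi(X)$ is a point one is in the classical projective situation. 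Note finally that the paper's own proof of Theorem \ref{c-thm8.4} avoids tie-breaking altogether: it restricts to an analytically sufficiently general fiber $F$, where $(F,\Delta|_F)$ is kawamata log terminal, $L|_F$ is nef by Lemma \ref{c-lem3.5} and $(aL-(K_X+\Delta))|_F$ is ample, applies the classical Kawamata--Shokurov basepoint-free theorem on $F$, and concludes $\pi_*\mathcal O_X(mL)\ne 0$ by \ref{c-say2.14}; your route, even once repaired, is considerably heavier than what is needed here.
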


\begin{proof} 
By shrinking $Y$ around $W$ suitably, 
we may assume that $aL-(K_X+\Delta)$ is $\pi$-ample 
over $Y$. 
Let $F$ be an analytically sufficiently general fiber 
of $\pi\colon X\to \pi(X)$. 
Then $(F, \Delta|_F)$ is kawamata log terminal. 
By Lemma \ref{c-lem3.5}, 
we may assume that $L|_F$ is nef.  
Hence $|mL|_F|$ is basepoint-free for every $m\gg 0$ by 
the usual Kawamata--Shokurov basepoint-free theorem for 
projective kawamata log terminal pairs. 
Thus, we obtain that $\pi_*\mathcal O_X(mL)\ne 0$ for every $m\gg 0$. 
This is what we wanted. 
\end{proof}

We will use Theorems \ref{c-thm8.1} and \ref{c-thm8.4} 
in the proof of the basepoint-freeness in Section \ref{c-sec9}. 

\section{Basepoint-free theorem}\label{c-sec9}
In this section, we will explain the basepoint-free 
theorem in the complex analytic setting. 

\begin{thm}[{see \cite[Theorem 13.1]{fujino-fundamental}}]\label{c-thm9.1}
Let $\pi\colon X\to Y$ be a projective morphism 
of complex analytic spaces such that 
$X$ is a normal complex variety and let $W$ be a compact 
subset of $Y$. 
Let $\Delta$ be an effective 
$\mathbb R$-divisor on $X$ 
such that $K_X+\Delta$ is 
$\mathbb R$-Cartier. 
Let $L$ be a Cartier divisor on $X$ 
which is $\pi$-nef over 
$W$. 
We assume that 
\begin{itemize}
\item[(i)] $aL-(K_X+\Delta)$ is $\pi$-ample 
over $W$ for 
some positive real number $a$, and 
\item[(ii)] $\mathcal O_{\Nlc(X, \Delta)}(mL)$ is 
$\pi|_{\Nlc(X, \Delta)}$-generated 
over some open neighborhood of $W$ for every $m\gg 0$. 
\end{itemize} 
Then there exists a relatively compact open neighborhood 
$U$ of $W$ such that 
$\mathcal O_X(mL)$ is $\pi$-generated 
over $U$ for 
every $m\gg 0$. 
\end{thm}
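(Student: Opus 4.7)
I will follow the Shokurov--Kawamata $X$-method, built on the non-vanishing theorem (Theorem \ref{c-thm8.1}), together with a two-exponent trick to extract a single relatively compact open neighborhood of $W$ valid for all $m \gg 0$. Theorem \ref{c-thm8.1} already supplies, for each $m \gg 0$, a neighborhood $U_m$ of $W$ over which $\Bs_\pi|mL|$ contains no log canonical center of $(X, \Delta)$ and avoids $\Nlc(X, \Delta)$; the task is to upgrade this to genuine $\pi$-freeness over a uniform neighborhood.

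Suppose, toward a contradiction, that the relative base locus of $|mL|$ persistently meets $\pi^{-1}(W)$ for all $m \gg 0$. Fix a point $x \in \pi^{-1}(y)$ with $y \in W$ lying in $\Bs_\pi|mL|$ for arbitrarily large $m$. Take a large $m_0$ and a general member $D \in |m_0 L|$ via Lemma \ref{c-lem4.3}, and choose $c_0 \in (0, 1/m_0]$ so that $(X, \Delta + c_0 D)$ is log canonical in a neighborhood of $x$ and of $W$ with $\Nlc(X, \Delta + c_0 D)$ unchanged there, while a new log canonical center passes through $x$; by Theorem \ref{c-thm7.1} there is a unique minimal such center $Z_x \ni x$. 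Iterating this construction with further general members of $|m_i L|$ for larger $m_i$---each such divisor passing through $x$ because $x$ lies in every relative base locus---I cut $Z_x$ down one dimension at a time, preserving minimality by Theorem \ref{c-thm7.1} and preserving the non-lc locus near $W$ by Lemma \ref{c-lem4.3}, until $Z_x = \{x\}$. Writing $\Delta' = \Delta + \sum_i c_i D_i$, the relation
\begin{equation*}
bL - (K_X + \Delta') \;\sim_{\mathbb{R}}\; \bigl(b - \textstyle\sum_i c_i m_i\bigr) L - (K_X + \Delta)
\end{equation*}
shows that the left hand side is $\pi$-ample over $W$ for $b$ sufficiently large. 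Theorem \ref{c-thm8.1} applied to $(X, \Delta')$ then forces $\{x\} = Z_x \not\subset \Bs_\pi|bL|$ over some neighborhood of $W$, i.e.\ $x \notin \Bs_\pi|bL|$, contradicting the standing assumption. Hence there exist $m_0$ and a relatively compact open neighborhood $V_0$ of $W$ on which $|m_0 L|$ is $\pi$-free. Running the same argument with $m_0 + 1$ in place of $m_0$ produces a neighborhood $V_1$ on which $|(m_0 + 1) L|$ is $\pi$-free. Since $\gcd(m_0, m_0 + 1) = 1$, every sufficiently large integer $m$ is of the form $a m_0 + b(m_0 + 1)$ with $a, b \in \mathbb{N}$, so $\mathcal{O}_X(mL)$ is $\pi$-generated over $U := V_0 \cap V_1$, as required.

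The main obstacle is the analytic subtlety of Remark \ref{c-rem8.3}: each application of Theorem \ref{c-thm8.1} yields a potentially different neighborhood of $W$, so the quantifier ``\,for every $m \gg 0$\,'' cannot be handled by a single invocation. The two-exponent reduction is precisely what bundles the conclusion into finitely many (in fact two) applications, so that intersecting the resulting neighborhoods gives one uniform $U$. A secondary technical point is the cut-down of $Z_x$ to a point in the analytic setting, which requires general members of high-multiple relative linear systems---produced via analytically meagre exceptional sets in Lemma \ref{c-lem4.3}---and careful local control of log canonical thresholds in a Stein neighborhood of $y$, so that the successive modifications of the boundary do not enlarge the non-lc locus near $W$.
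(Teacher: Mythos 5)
The crux of your iteration is the assertion that after adding $c_iD_i$, with $c_i$ chosen so that a new log canonical center passes through the fixed point $x$, the non-lc locus near $W$ is ``unchanged there,'' so that Theorem \ref{c-thm8.1} can be applied to the modified pair. This is where the argument breaks. To force a center through the prescribed point $x$ you must take $c_i$ to be the log canonical threshold \emph{at} $x$, which is in general strictly larger than the threshold at other points of $\Bs_\pi|m_iL|\cap\pi^{-1}(W)$; hence $(X,\Delta+c_iD_i)$ becomes non-lc at those points, and Lemma \ref{c-lem4.3} only guarantees that $\mathcal J_{\NLC}$ is unchanged \emph{outside} the base locus. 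So $\Nlc(X,\Delta+c_iD_i)$ may swallow part of $\Bs_\pi|m_iL|$ over $W$, and then hypothesis (ii) of Theorem \ref{c-thm8.1} (relative generation of $mL$ on the non-lc locus) is unverifiable for the new pair --- it is exactly what you are trying to prove there --- while the disjointness from $\Nlc$ required by Theorem \ref{c-thm6.1} and the quasi-log non-vanishing also fails; the later applications of Theorem \ref{c-thm8.1} driving your induction are therefore unjustified. The very first step is also not right as stated: a single general member $D\in|m_0L|$ with $c_0\in(0,1/m_0]$ need not produce any lc center through $x$ (take $D$ smooth at a klt point); producing non-lc-ness on the base locus is what requires $\dim X+1$ general members and Lemma \ref{c-lem2.3}. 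The paper sidesteps all of this by taking the threshold over the whole fiber $\pi^{-1}(y)\setminus \Nlc(X,\Delta)$, which keeps $\mathcal J_{\NLC}$ unchanged at the cost of not controlling which point the new center passes through, and accordingly it replaces your ``cut a center at a fixed point down to a point'' induction by a Noetherian descending chain of relative base loci $\Bs_\pi|p^{k_1}L|\supsetneq \Bs_\pi|p^{k_2}L|\supsetneq\cdots$ over a relatively compact neighborhood.

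The closing two-exponent trick also has a gap. Your contradiction hypothesis is ``for all $m\gg0$ the base locus meets $\pi^{-1}(W)$,'' so refuting it yields only infinitely many good integers $m$; it does not allow you to decree that $m_0$ and $m_0+1$ are both good, and an infinite, additively closed set of good integers can consist entirely of even numbers, so two coprime good exponents are not automatic. (The existence of your persistent point $x$ also needs an argument, e.g.\ nestedness of $\Bs_\pi|m!\,L|$ together with compactness of $\pi^{-1}(W)$.) This is precisely why the paper runs its descending-chain argument separately along powers of two distinct primes $p$ and $p'$: it produces good integers $p^{k}$ and $p'^{k'}$, coprime by construction, and only then concludes for every $m\gg0$, passing to a uniform neighborhood of $W$ by compactness at the very end.
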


Theorem \ref{c-thm9.1} is a consequence of the vanishing 
theorem (see 
Theorem \ref{c-thm6.1}) 
and the non-vanishing theorem (see Theorems \ref{c-thm8.1} and 
\ref{c-thm8.4}). 

\begin{proof}[Proof of Theorem \ref{c-thm9.1}]
We take an arbitrary point $y\in W$. 
It is sufficient to prove that $\mathcal O_X(mL)$ is 
$\pi$-generated for every $m \gg 0$ 
over some relatively compact Stein open neighborhood 
of $y$. 
Hence, we will sometimes freely replace $Y$ with a suitable 
relatively compact Stein open neighborhood 
of $y$ without mentioning it explicitly throughout this proof. 
So, from now on, we assume that $Y$ is Stein and that $\pi$ is 
surjective. 

\setcounter{step}{0}
\begin{step}\label{c-thm9.1-step1}
Let $p$ be a prime number. 
In this step, we will prove that 
there exists a positive integer $k$ such that 
$\Bs_{\pi}|p^kL|=\emptyset$ holds 
over some open neighborhood 
of $y$. 

By putting $W:=\{y\}$ and 
using the non-vanishing theorem (see Theorems \ref{c-thm8.1} and 
\ref{c-thm8.4}), 
we obtain $|p^{k_1}L|\ne 
\emptyset$ for some positive integer $k_1$. 
If $\Bs_{\pi}|p^{k_1}L|=\emptyset$, 
then there is nothing to prove. 
Hence we may assume that 
$\Bs_{\pi}|p^{k_1}L|\ne \emptyset$. 
By Theorem \ref{c-thm8.1}, 
we may assume that $\Bs_{\pi}|p^{k_1}L|$ contains no 
log canonical centers of $(X, \Delta)$ and is disjoint from $\Nlc(X, \Delta)$ 
after shrinking $Y$ suitably. 
By shrinking $Y$ around $W=\{y\}$, 
we may assume that $\pi(V)\cap W\ne \emptyset$, 
where $V$ is any irreducible component 
of $\Bs_{\pi}|p^{k_1}L|$. 
Without loss of generality, we may further assume that 
$aL-(K_X+\Delta)$ is $\pi$-ample over $Y$. 
We take general members $D_1, \ldots, D_{n+1}$ 
of $|p^{k_1}L|$ with $n=\dim X$. 
We put $D:=\sum _{i=1}^{n+1}D_i$. 
We may assume that $(X, \Delta+D)$ is log canonical 
outside $\Bs_{\pi}|p^{k_1}L|\cup 
\Nlc (X, \Delta)$. 
Let $x\in X$ be any point of $\Bs_{\pi}|p^{k_1}L|$. 
Then, by Lemma \ref{c-lem2.3}, $(X, \Delta+D)$ is not log canonical 
at $x$.  
We put 
\begin{equation*}
c:=\sup \left\{t\in \mathbb R \mid \text{$(X, \Delta+tD)$ is log canonical 
at $\pi^{-1}(y)\cap \left(X\setminus \Nlc(X, \Delta)\right)$}\right\}. 
\end{equation*} 
Then we can check that $0<c<1$. 
By shrinking $Y$ around $W=\{y\}$ suitably again, 
we may assume that $(X, \Delta+cD)$ is 
log canonical outside $\Nlc(X, \Delta)$. 
By Lemma \ref{c-lem4.3} and its proof, 
we see that $\mathcal J_{\NLC}(X, \Delta+cD)=\mathcal J_{\NLC}(X, 
\Delta)$ holds. 
By construction, 
\begin{equation*}
\left(c(n+1)p^{k_1}+a\right)L-(K_X+\Delta+cD)\sim 
_{\mathbb R} aL-(K_X+\Delta)
\end{equation*} 
is $\pi$-ample over $Y$. 
By construction again, 
there exists a log canonical center $V$ of $(X, \Delta+cD)$ 
which is contained in $\Bs_{\pi}|p^{k_1}L|$ 
such that $\pi(V)\cap W\ne \emptyset$. 
By the non-vanishing theorem (see Theorem \ref{c-thm8.1}), 
we can find $k_2>k_1$ such that 
$\Bs_{\pi}|p^{k_2}L|\subsetneq \Bs_{\pi} |p^{k_1}L|$. 
Here, we replaced $Y$ with a smaller open neighborhood of 
$y$. 
By repeating this process finitely many times, 
we can find $k$ such that $\Bs_{\pi} |p^kL|=\emptyset$ 
over some open neighborhood of $y$. 
\end{step}
\begin{step}\label{c-thm9.1-step2}
We take another prime number $p'$. 
Then there exists $k'$ such that 
$\Bs_{\pi}|p'^{k'}L|=\emptyset$ over some 
open neighborhood of $y$ by Step \ref{c-thm9.1-step1}. 
This implies that there exist a positive integer $m_0$ and 
some open neighborhood $U_y$ of $y$ such that 
for every $m\geq m_0$ the relative base locus 
$\Bs_{\pi}|mL|$ is empty over $U_y$. 
\end{step}
Since $W$ is compact, we obtain a desired open 
neighborhood $U$ of $W$. We finish the proof. 
\end{proof}

\begin{rem}\label{c-rem9.2}
Although the non-vanishing theorem (see Theorems \ref{c-thm8.1} 
and \ref{c-thm8.4}) and 
the basepoint-free theorem (see Theorem \ref{c-thm9.1}) 
were formulated for Cartier divisors $L$, 
it is obvious that they hold true even for line bundles $\mathcal L$. 
We will sometimes use the basepoint-free theorem for line bundles 
in subsequent sections. 
\end{rem}

\section{Rationality theorem}\label{c-sec10}

In this section, we will 
explain the rationality theorem 
in the complex analytic setting. 
Although the proof of \cite[Theorem 15.1]{fujino-fundamental}, 
which is the rationality theorem in the algebraic setting,  
works with some suitable modifications, 
we will explain the details for the reader's convenience. 
This is because the proof of the rationality theorem is complicated. 

\begin{thm}[{Rationality theorem, see 
\cite[Theorem 15.1]{fujino-fundamental}}]\label{c-thm10.1}
Let $\pi\colon X\to Y$ be a projective morphism 
of complex analytic spaces such that 
$X$ is a normal complex variety and let 
$W$ be a compact subset of $Y$. 
Let $\Delta$ be an effective 
$\mathbb Q$-divisor on $X$ such that 
$K_X+\Delta$ is $\mathbb Q$-Cartier. 
Let $H$ be a $\pi$-ample 
Cartier divisor on $X$. 
Assume that 
$K_X+\Delta$ is not $\pi$-nef over $W$ and that 
$r$ is a positive number such that 
\begin{itemize}
\item[(i)] $H+r(K_X+\Delta)$ is $\pi$-nef 
over $W$ but is not $\pi$-ample over $W$, and 
\item[(ii)] $\left(H+r(K_X+\Delta)\right)|_{\Nlc(X, \Delta)}$ 
is $\pi|_{\Nlc(X, \Delta)}$-ample over $W$. 
\end{itemize} 
Then $r$ is a rational number, and in reduced form, 
it has denominator at most $a(d+1)$, where 
$d:=\max_{w\in W}\dim \pi^{-1}(w)$ and 
$a$ is a positive integer such that 
$a(K_X+\Delta)$ is a Cartier divisor in a neighborhood of $\pi^{-1}(W)$. 
\end{thm}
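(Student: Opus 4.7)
The plan is to follow the classical Kawamata--Shokurov rationality argument, as presented in \cite[Theorem~15.1]{fujino-fundamental}, using as input the vanishing Theorem~\ref{c-thm6.1}, the non-vanishing Theorem~\ref{c-thm8.1}, and the basepoint-free Theorem~\ref{c-thm9.1} already established. Set $D := a(K_X + \Delta)$, which is Cartier in some neighborhood of $\pi^{-1}(W)$, and for $(p,q) \in \mathbb{Z}^2$ with $p > 0$ form the Cartier divisor $L(p,q) := pH + qD$ near $\pi^{-1}(W)$. Directly from the definition of $r$, one has that $L(p,q)$ is $\pi$-nef (resp.\ $\pi$-ample) over $W$ iff $q/p \le r/a$ (resp.\ $q/p < r/a$).

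I would argue by contradiction: suppose $r$ is irrational, or rational with denominator, in lowest terms, strictly greater than $a(d+1)$. By Dirichlet's approximation theorem applied to $r/a$, there exists an infinite family of positive integer pairs $(p, q(p))$, with $p$ arbitrarily large, such that $q(p)/p < r/a < (q(p)+1)/p$ and $r/a - q(p)/p < 1/(p(d+1))$. For each such $p$, the divisor $L(p, q(p))$ is $\pi$-ample over $W$, while $L(p, q(p)+1)$ fails to be $\pi$-nef over $W$. Hypothesis (i) of Theorem~\ref{c-thm9.1} applied to $L(p,q(p))$ is trivial by $\pi$-ampleness, while hypothesis (ii) is inherited from hypothesis (ii) of the current theorem, since $(H + r(K_X+\Delta))|_{\mathrm{Nlc}(X,\Delta)}$ being $\pi$-ample implies $L(p,q(p))|_{\mathrm{Nlc}(X,\Delta)}$ is $\pi$-ample for $p$ large. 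So $mL(p, q(p))$ is $\pi$-generated for $m \gg 0$.

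The heart of the proof is then to analyse the linear series $|mL(p, q(p))|$ together with $|mL(p, q(p)+1)|$: the latter failing $\pi$-nefness means its base locus, after shrinking $Y$, meets every sufficiently general fiber of $\pi$ over $W$. Applying the vanishing Theorem~\ref{c-thm6.1} to the exact sequence
\begin{equation*}
0 \to \mathcal{O}_X(L(p, q(p))) \to \mathcal{O}_X(L(p, q(p)+1)) \to \mathcal{O}_X(L(p, q(p)+1))|_E \to 0
\end{equation*}
for a log canonical center $E$ obtained as in the non-vanishing Theorem~\ref{c-thm8.1}, and comparing the resulting sections for several values of $p$ via a pigeonhole on $d+2$ distinct choices of $p$, one extracts a projective curve $C \subset \pi^{-1}(w)$ for some $w \in W$ with $(H + r(K_X+\Delta)) \cdot C = 0$. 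Since $C$ is contained in the projective fiber $\pi^{-1}(w)$ of dimension at most $d$, a fibrewise length-of-extremal-ray bound (Serre's GAGA reducing to the classical algebraic case) gives $|(K_X+\Delta)\cdot C| \le d+1$, hence $|D\cdot C| \le a(d+1)$. Writing $r = -a(H\cdot C)/(D\cdot C)$ then forces $r$ to be rational with denominator at most $a(d+1)$, contradicting the assumption.

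The main obstacle is the precise Diophantine/cohomological pigeonhole producing the curve $C$ together with the sharp bound $|(K_X+\Delta)\cdot C| \le d+1$ rather than the coarser Mori-type bound $2\dim X$: this requires simultaneously controlling the base loci of $L(p,q(p)+1)$ across $d+2$ values of $p$ and matching them up via their restrictions to a common fiber of $\pi$. The step is technical but parallels the algebraic argument of \cite[Theorem~15.1]{fujino-fundamental}; the complex analytic setting introduces no new obstruction, since the delicate analysis happens on projective fibers $\pi^{-1}(w)$ over points $w \in W$, where Serre's GAGA (see~\ref{c-say2.4}) makes the classical algebraic machinery directly applicable.
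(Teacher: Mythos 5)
Your proposal defers the entire substance of the argument to a step you yourself flag as the ``Diophantine/cohomological pigeonhole,'' and the mechanism you sketch for that step is not the one that makes the rationality argument work; as described it would fail. First, the object you want to extract --- a projective curve $C$ in a fiber over $W$ with $(H+r(K_X+\Delta))\cdot C=0$ --- is essentially the content of the cone/contraction theorem, which in this paper is deduced \emph{from} the rationality theorem in Section \ref{c-sec12}; nothing available at this stage produces such a curve, so the route is circular in spirit. Second, even granting such a curve, the bound you invoke, $|(K_X+\Delta)\cdot C|\leq d+1$, is not available in this generality: the length bound actually proved (Theorem \ref{c-thm13.2}, itself established later via quasi-log schemes and \cite{fujino-cone}) is $2\dim X$, and the same is true fibrewise in the algebraic case for log canonical or non-lc pairs, so GAGA does not help --- the problem is the size of the constant, not algebraicity. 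Consequently your route cannot produce the stated denominator bound $a(d+1)$. Note also that applying the basepoint-free theorem to the $\pi$-ample divisor $L(p,q(p))$ is vacuous and gives no leverage.

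The real source of the constant $a(d+1)$ is arithmetic rather than geometric: Lemma \ref{c-lem10.2} (\cite[Lemma 3.19]{kollar-mori}) states that a non-trivial polynomial of degree $\leq d$ vanishing at all sufficiently large integral points of the strip $0<aq-rp<\varepsilon$ forces $r\in\mathbb Q$ with denominator $\leq a(d+1)/\varepsilon$, and by Lemma \ref{c-lem10.3} the relevant polynomial is the Hilbert polynomial $\chi\bigl(F,\mathcal O_F(pH+qa(K_X+\Delta))\bigr)$ on an analytically sufficiently general fiber $F$ of $C\to\pi(C)$ for a suitable log canonical center $C$, which has degree $\leq\dim F\leq d$. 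The paper's proof runs as follows: for $M(p,q)=pH+qa(K_X+\Delta)$ one stabilizes the relative non-free locus $L(p,q)=\Supp\bigl(\Coker(\pi^*\pi_*\mathcal O_X(M(p,q))\to\mathcal O_X(M(p,q)))\bigr)$ for large $(p,q)$ in the strip to a set $L_0$ (noetherian induction after shrinking $Y$ around $W$); shows $L_0\cap\Nlc(X,\Delta)=\emptyset$ using hypothesis (ii) and Theorem \ref{c-thm6.1}; cuts with $n+1$ general members of $|M(p_0,q_0)|$ and passes to the log canonical threshold $c$ (Lemmas \ref{c-lem2.3} and \ref{c-lem4.3}) to create a log canonical center of $(X,\Delta+cD)$ inside $L_0$; then the vanishing theorem turns $\chi$ into $h^0$ on $F$ and makes $\pi_*\mathcal O_X(M(p,q))\to\pi_*\mathcal O_C(M(p,q))$ surjective, so any large $(p,q)$ in the strip with $P_F(p,q)\neq 0$ forces $L(p,q)\subsetneq L_0$, a contradiction --- and Lemma \ref{c-lem10.2} supplies such $(p,q)$ unless $r$ is rational with denominator $\leq a(d+1)$. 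None of these ingredients (the stabilized locus $L_0$, the cutting/lc-threshold construction, the Hilbert-polynomial lemma) appears in your outline, so the proposal has a genuine gap precisely where the theorem is proved.
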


In the proof of Theorem \ref{c-thm10.1}, we will use the 
following elementary lemmas. 
We do not prove Lemma \ref{c-lem10.2} here. 
For the proof, see, for example, \cite{fujino-fundamental}. 

\begin{lem}[{\cite[Lemma 3.19]{kollar-mori}}]\label{c-lem10.2}
Let $P(x, y)$ be a non-trivial 
polynomial of degree $\leq d$ and 
assume that 
$P$ vanishes for all sufficiently large integral solutions of 
$0<ay-rx<\varepsilon$ for some fixed positive integer $a$ and 
positive $\varepsilon$ for some $r\in \mathbb R$. 
Then $r$ is rational, and 
in reduced form, $r$ has denominator $\leq 
a(d+1)/\varepsilon$. 
\end{lem}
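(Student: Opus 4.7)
The plan is to handle the two cases $r\in\mathbb{Q}$ and $r\in\mathbb{R}\setminus\mathbb{Q}$ separately, exploiting in each the fact that a polynomial of degree $\leq d$ can vanish on at most $d$ distinct parallel lines.

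In the \emph{rational case}, write $r=p/q$ in lowest terms. For integer pairs, $0<ay-rx<\varepsilon$ rewrites as $0<aqy-px<q\varepsilon$, so the integer $N:=aqy-px$ ranges over $g\mathbb{Z}$, where $g:=\gcd(aq,p)=\gcd(a,p)$ (the second equality using $\gcd(p,q)=1$). For each positive multiple $n$ of $g$ with $n<q\varepsilon$, the line $L_n\colon aqy-px=n$ carries an arithmetic progression of integer points with common $x$-difference $aq/g$; all sufficiently large ones lie in the hypothesized vanishing set, so $P|_{L_n}$ is a univariate polynomial of degree $\leq d$ with infinitely many zeros, hence identically zero. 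The linear forms $aqy-px-n$ are pairwise coprime irreducible factors of $P$, so the number of admissible $n$ is at most $d$. Enumerating positive multiples of $g$ strictly below $q\varepsilon$ then yields $q\leq g(d+1)/\varepsilon\leq a(d+1)/\varepsilon$.

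In the \emph{irrational case}, I argue by induction on $d=\deg P$ that the hypothesis forces $P\equiv 0$. The base $d=0$ is immediate since the set $S:=\{(x,y)\in\mathbb{Z}^2:0<ay-rx<\varepsilon,\ x\gg 0\}$ is non-empty by Kronecker's density theorem. For the inductive step, Kronecker again supplies $(m,n)\in\mathbb{Z}^2$ (with $m$ arbitrarily large) such that $0<an-rm<\varepsilon$. The forward-difference $\Delta P(x,y):=P(x+m,y+n)-P(x,y)$ has total degree $\leq d-1$, and for $(x,y)\in S$ with the tighter bound $0<ay-rx<\varepsilon-(an-rm)$, both $(x,y)$ and $(x+m,y+n)$ lie in $S$; thus $\Delta P$ vanishes on a set of the same form with a smaller positive width. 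The inductive hypothesis yields $\Delta P\equiv 0$, so $P$ is invariant under the translation by $(m,n)$ and factors as $P(x,y)=R(nx-my)$ for a single-variable polynomial $R$ of degree $\leq d$. Because $an-rm\neq 0$, the linear form $nx-my$ is transverse to the strip and takes infinitely many values on $S$, forcing $R\equiv 0$ and contradicting $P\not\equiv 0$.

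The main obstacle is the inductive step in the irrational case: one has to upgrade the raw vanishing on an ad hoc discrete set to an honest translation symmetry of $P$. The crucial point is to arrange, via Kronecker, a shift vector $(m,n)$ whose defect $an-rm$ is positive and strictly smaller than $\varepsilon$, which is exactly the condition needed for the shift to preserve a non-empty sub-strip of $S$ and for the inductive hypothesis to apply to a slightly narrower strip of the same shape. Once this symmetry is in hand, the problem collapses to one dimension. By contrast, the rational case is essentially a careful counting of parallel zero-lines against the degree bound $d$.
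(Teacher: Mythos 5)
Your argument is correct. Note, though, that the paper itself does not prove Lemma \ref{c-lem10.2}: it explicitly defers to \cite{kollar-mori} and \cite{fujino-fundamental}, so the only comparison available is with the standard argument of \cite[Lemma 3.19]{kollar-mori}. That argument is organized differently: one first proves the number-theoretic dichotomy that for any $\eta>0$ the narrower strip $0<ay-rx<\eta$ admits arbitrarily large integral solutions unless $r$ is rational with denominator at most $a/\eta$ in reduced form; taking $\eta=\varepsilon/(d+1)$ and supposing the conclusion fails, one picks such a solution $(x_1,y_1)$, notes that the multiples $(kx_1,ky_1)$, $k=1,\dots,d+1$, all lie in the original strip, so that $P$ vanishes on the whole line through the origin and $(x_1,y_1)$; producing $d+1$ such lines with pairwise distinct slopes forces $\deg P\geq d+1$, a contradiction. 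You replace this with (a) a count of the parallel lines $aqy-px=n$ lying inside the strip in the rational case---the same line-counting principle, applied to lines parallel to the strip rather than to rays through the origin, and yielding the marginally sharper bound $q\leq \gcd(a,p)(d+1)/\varepsilon$---and (b) a finite-difference induction on $\deg P$ in the irrational case, where Kronecker supplies a lattice translation $(m,n)$ with small positive defect $an-rm$, the difference $\Delta P$ inherits the vanishing on a narrower strip while dropping in degree, and the resulting translation-invariance collapses $P$ to a one-variable polynomial $R(nx-my)$ with infinitely many roots. Both routes are sound; yours is longer but self-contained and avoids having to produce $d+1$ solutions with distinct slopes inside the $\varepsilon/(d+1)$-strip, while the Koll\'ar--Mori proof isolates all the arithmetic into one clean dichotomy. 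If you write this up, the only points deserving an explicit sentence are the verification that every line $aqy-px=n$ with $\gcd(a,p)\mid n$ carries infinitely many sufficiently large lattice points, and the (harmless, and automatic in the intended application, where $r>0$) fact that translating by $(m,n)$ preserves the ``sufficiently large'' condition.
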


\begin{lem}\label{c-lem10.3}  
Let $F$ be a projective variety and let $D_1$ and $D_2$ be Cartier 
divisors on $X$. 
Let us consider the Hilbert polynomial 
\begin{equation*}
P(u_1, u_2):=\chi (F, \mathcal O_F(u_1 D_1+u_2D_2)). 
\end{equation*} 
If $D_1$ is ample, then 
$P(u_1, u_2)$ is a non-trivial 
polynomial of total degree $\leq \dim F$. 
It is because $P(u_1, 0)=\dim_{\mathbb C} 
H^0(F, \mathcal O_F(u_1D_1))\not\equiv 0$ if $u_1$ 
is sufficiently large. 
\end{lem}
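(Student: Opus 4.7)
The plan is to split the proof into two independent parts: first, establish that $P(u_1,u_2)$ is a polynomial in $(u_1,u_2)$ of total degree at most $\dim F$; second, verify that this polynomial is not identically zero by evaluating at $u_2=0$.

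For polynomiality with the degree bound, I would invoke the classical Snapper--Kleiman lemma applied to the two Cartier divisors $D_1,D_2$ on the projective variety $F$. The standard argument proceeds by induction on $n:=\dim F$. First, write each $D_i$ as a difference of very ample Cartier divisors, $D_i=A_i-B_i$, which is possible on a projective variety. After this reduction it suffices to prove that, for any \emph{effective} Cartier divisors $E_1,\dots,E_r$, the function $(m_1,\dots,m_r)\mapsto \chi(F,\mathcal O_F(m_1E_1+\cdots+m_rE_r))$ is a numerical polynomial of total degree at most $n$; the original $P$ is then a $\mathbb Z$-linear combination of such functions, again polynomial of degree $\leq n$. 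For the induction, choose a very ample $H$ avoiding the (finitely many) associated primes of $F$ and of the $E_i$, and use the short exact sequence
\begin{equation*}
0\to \mathcal O_F(-H)\to \mathcal O_F\to \mathcal O_H\to 0
\end{equation*}
twisted by $\mathcal O_F(\sum m_iE_i)$. Taking Euler characteristics yields a difference equation whose right-hand side is, by inductive hypothesis, a numerical polynomial in $(m_1,\dots,m_r)$ of total degree $\leq n-1$ on the $(n-1)$-dimensional closed subscheme $H$; the standard finite-difference lemma then upgrades $\chi(F,\mathcal O_F(\sum m_iE_i))$ to a polynomial of total degree $\leq n$.

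For non-triviality, I would argue as the excerpt already suggests: since $D_1$ is ample, Serre vanishing gives $H^i(F,\mathcal O_F(u_1D_1))=0$ for all $i>0$ and all $u_1\gg 0$, hence
\begin{equation*}
P(u_1,0)=\chi\!\bigl(F,\mathcal O_F(u_1D_1)\bigr)=\dim_{\mathbb C}H^0\!\bigl(F,\mathcal O_F(u_1D_1)\bigr)
\end{equation*}
for $u_1\gg 0$. Ampleness of $D_1$ provides some $m>0$ with $mD_1$ very ample, so $H^0(F,\mathcal O_F(mD_1))\neq 0$, and taking tensor powers of a nonzero section gives $H^0(F,\mathcal O_F(kmD_1))\neq 0$ for every $k\geq 1$. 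Therefore $P(u_1,0)$ is positive for infinitely many $u_1$, so $P(u_1,0)\not\equiv 0$ as a polynomial in $u_1$, which in particular forces $P(u_1,u_2)\not\equiv 0$ as a polynomial in two variables.

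The main obstacle is the polynomiality-with-degree-bound step, since it is the content of Snapper's lemma and requires a careful induction on $\dim F$ together with the trick of writing Cartier divisors as differences of very ample ones; by contrast, the non-triviality follows immediately from Serre vanishing together with the existence of sections of a very ample multiple of $D_1$, and is essentially the argument already sketched in the statement.
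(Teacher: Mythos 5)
Your proposal is correct and takes essentially the same route as the paper: the paper treats the polynomiality and degree bound as the standard Snapper--Kleiman fact (which you spell out via the induction on $\dim F$ after writing each $D_i$ as a difference of very ample divisors), and its only stated justification is exactly your non-triviality argument, namely that Serre vanishing and the ampleness of $D_1$ give $P(u_1,0)=\dim_{\mathbb C}H^0(F,\mathcal O_F(u_1D_1))\not\equiv 0$ for $u_1\gg 0$.
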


Let us start the proof of Theorem \ref{c-thm10.1}. 

\begin{proof}[Proof of Theorem \ref{c-thm10.1}] 
Throughout this proof, we can freely shrink $Y$ around $W$ suitably. 
Hence we sometimes will replace $Y$ with a small 
open neighborhood of $W$ without mentioning it explicitly. 

Let $m$ be a positive integer such that 
$H':=mH$ is $\pi$-very ample after shrinking $Y$ around $W$ suitably. 
If $H'+r'(K_X+\Delta)$ is $\pi$-nef over $W$ but is not 
$\pi$-ample over $W$, 
and $\left(H'+r'(K_X+\Delta)\right)|_{\Nlc(X, \Delta)}$ 
is $\pi|_{\Nlc(X, \Delta)}$-ample over $W$, 
then 
\begin{equation*}
H+r(K_X+\Delta)=\frac{1}{m}(H'+r'(K_X+\Delta))
\end{equation*} 
holds. 
This implies that $r=(1/m)r'$ holds. 
Therefore, it is obvious that 
$r$ is rational if and only if $r'$ is rational. 
We further assume that $r'$ has denominator $v$. 
Then $r$ has denominator dividing $mv$. 
Since $m$ can be an arbitrary sufficiently large positive integer, 
this means that $r$ has denominator dividing $v$. 
Hence, by replacing $H$ with $mH$, we may assume that 
$H$ is $\pi$-very ample. 

For each $(p, q)\in \mathbb Z^2$, 
we put $M(p, q):=pH+qa(K_X+\Delta)$ and define 
\begin{equation*}
L(p, q):=\Supp (\Coker (\pi^*\pi_*
\mathcal O_X(M(p, q))\to \mathcal O_X(M(p, q)))). 
\end{equation*}
By definition, $L(p, q)=X$ holds 
if and only if $\pi_*\mathcal O_X(M(p, q))= 0$. 

\setcounter{cla}{0}
\begin{cla}\label{c-thm10.1-claim1}
Let $\varepsilon$ be a positive number. 
For $(p, q)$ sufficiently 
large and $0<aq-rp<\varepsilon$, $L(p, q)$ is the same 
subset of $X$ after shrinking $Y$ around $W$ suitably. 
We call this subset $L_0$. 
Let $I\subset \mathbb Z^2$ be the set of $(p, q)$ for 
which $0<aq-rp<1$ and $L(p, q)=L_0$. 
Then we note that $I$ contains all sufficiently 
large $(p, q)$ with $0<aq-rp<1$. 
\end{cla}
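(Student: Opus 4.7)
The plan is to prove stability of $L(p,q)$ by combining the subadditivity of relative base loci with a descending-chain argument on closed analytic subsets, exploiting the $\pi$-very ampleness of $H$ to obtain monotonicity in one lattice direction.

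First, I would shrink $Y$ to a relatively compact Stein open neighborhood of $W$, so that each $L(p,q)$ is a closed analytic subset of $\pi^{-1}(Y)$ and closed analytic subsets over the relatively compact piece satisfy the descending chain condition (by local finiteness of irreducible components over the compact closure). In particular, any family of such subsets admits a minimal element.

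The key technical input is that tensor products of relative sections produce the inclusion
\[
L(p_1+p_2,\, q_1+q_2) \;\subseteq\; L(p_1,q_1) \cup L(p_2,q_2).
\]
Since $H$ is $\pi$-very ample, $L(n,0) = \emptyset$ for every $n \geq 1$, so specializing gives the monotonicity
\[
L(p+n,\, q) \;\subseteq\; L(p,q) \qquad (n \geq 0).
\]
For fixed $q$, the family $\{L(p,q)\}_p$ is therefore decreasing and stabilizes by DCC to a closed analytic subset $L_q^\infty$. I would then extract a minimum $L_0$ of the family $\{L_q^\infty\}$ (as $q$ varies so that $(p,q)$ lies in the strip), writing $L_0 = L_{q_0}^\infty = L(p_0,q_0)$ for some large $(p_0,q_0)$, and compare $L_q^\infty$ with $L_0$ by applying the subadditivity inclusion to $(p,q)+(p_0,q_0)$ and then using horizontal translation to navigate back into the $\varepsilon$-strip and into the stable $p$-range. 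Combining these forces $L_q^\infty = L_0$, hence $L(p,q) = L_0$ for all sufficiently large $(p,q)$ in the $\varepsilon$-strip. The second assertion about $I$ then follows by taking $\varepsilon = 1$ and noting that the stable value $L_0$ is independent of $\varepsilon \in (0,1]$, since any narrower strip sits inside a wider one and inherits its stable value.

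The main obstacle is controlling the strip width during the lattice manipulations: summing two points of the $\varepsilon$-strip yields a point of the $2\varepsilon$-strip, while the $H$-direction translation $(p,q)\mapsto (p+n,q)$ decreases $aq-rp$ by $rn$ and eventually pushes the translate past the $\pi$-nef boundary into the $\pi$-ample region. The delicate step is to show that, for sufficiently large $(p,q)$ in the strip, one can arrange a combined translation (a multiple of $(p_0,q_0)$ offset by a compensating $(n,0)$-shift) that lands back inside the $\varepsilon$-strip and the stable range while preserving the required base-locus containments via the subadditivity together with the horizontal monotonicity. An additional subtlety is that $r$ need not be rational, so there is no integer lattice point on the ray $aq = rp$ to anchor the comparison; this obliges one to work with bands of lattice points near the ray rather than on it, but does not obstruct the argument so long as $(p,q)$ is taken large enough for the required integer shifts to exist.
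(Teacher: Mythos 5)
Your toolkit is the right one: the subadditivity $L(p_1+p_2,q_1+q_2)\subseteq L(p_1,q_1)\cup L(p_2,q_2)$ obtained by multiplying sections, its consequence $L(p+n,q)\subseteq L(p,q)$ via the $\pi$-very ampleness of $H$, and the noetherian (DCC) property of closed analytic subsets over a relatively compact neighborhood of $W$ are exactly the ingredients the paper uses. But the way you assemble them has a genuine gap. For fixed $q$, the stabilized set $L_q^\infty$ is attained at $p\gg 0$, i.e.\ far outside the strip $0<aq-rp<\varepsilon$, in the region where $M(p,q)=pH+qa(K_X+\Delta)$ is $\pi$-generated over a neighborhood of $\pi^{-1}(W)$ (relative global generation of the fixed line bundle $qa(K_X+\Delta)$ twisted by large multiples of the $\pi$-very ample $H$, after shrinking $Y$). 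Hence every $L_q^\infty=\emptyset$, your anchor $(p_0,q_0)$ with $L(p_0,q_0)=L_{q_0}^\infty$ lies in that ample region rather than in the strip, and the conclusion your scheme would force, namely $L(p,q)=\emptyset$ for large $(p,q)$ in the strip, is false: there $M(p,q)$ is not $\pi$-nef over $W$, and the nonemptiness of $L_0$ is in fact needed later in the proof of the rationality theorem. The step you yourself flag as delicate is not merely delicate but impassable in this setup: adding $(n,0)$ strictly decreases $aq-rp$, so horizontal shifts only move you further from the strip, and all your subadditivity inclusions point in that same direction; there is no combined translation that ``lands back inside the $\varepsilon$-strip.''

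What is missing is the paper's pivot: fix a point $(p_0,q_0)$ \emph{inside} the strip and decompose an arbitrary large strip point as $(p',q')=k(p_0,q_0)+(l,j)$ with $q'=kq_0+j$, $0\le j\le q_0-1$, $l=p'-kp_0$. Since $j$ ranges over only finitely many values, the $\pi$-very ampleness of $H$ gives (after shrinking $Y$ around $W$) an $m_0$ such that $M(l,j)$ is $\pi$-generated whenever $l>m_0$, and the strip inequalities force $l>m_0$ once $q'$ is large; this is the quantitative heart of the argument, the computation with $M=\lceil (m_0+1/r)/(a/r-p_0/q_0)\rceil$. Subadditivity then gives $L(p',q')\subseteq L(p_0,q_0)$, and choosing $(p_0,q_0)$ so that $L(p_0,q_0)$ is minimal among the $L(p,q)$ for large strip points --- this is where the noetherian induction over a relatively compact neighborhood is actually used --- upgrades the inclusion to the equality defining $L_0$; the statement about $I$ then follows as you indicate. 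So your proposal identifies the correct lemmas, but the construction of $L_0$ by row-wise stabilization and the return-to-the-strip manipulation do not work, and the bounded-remainder decomposition anchored at a strip point is the missing idea.
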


\begin{proof}[Proof of Claim \ref{c-thm10.1-claim1}]
We fix $(p_0, q_0)\in \mathbb Z^2$ such that 
$p_0>0$ and $0<aq_0-rp_0<1$. 
Since $H$ is $\pi$-very ample, 
there exists a positive integer 
$m_0$ such that 
$\mathcal O_X(mH+ja(K_X+\Delta))$ is $\pi$-generated 
for every $m>m_0$ and 
every $0\leq j\leq q_0-1$ after shrinking $Y$ around $W$ suitably. 
Let $M$ be the round-up of 
\begin{equation*}
\left.\left(m_0+\frac{1}{r}\right)\right\slash\left(\frac{a}{r}-\frac{p_0}{q_0}\right). 
\end{equation*}
If $(p', q')\in \mathbb Z^2$ such that 
$0<aq'-rp'<1$ and $q'\geq M+q_0-1$, 
then we can write 
\begin{equation*}
p'H+q'a(K_X+\Delta)=k(p_0H+q_0a(K_X+\Delta))+(lH+ja(K_X+\Delta))
\end{equation*} 
for some $k\geq 0$, $0\leq j\leq q_0-1$ 
with $l> m_0$. 
It is because we can uniquely write $q'=kq_0+j$ with $0\leq j\leq q_0-1$. Thus, 
we have $kq_0\geq M$. So, 
we obtain 
\begin{equation*}
l=p'-kp_0>\frac{a}{r}q'-\frac{1}{r}-(kq_0)\frac{p_0}{q_0}
\geq \left(\frac{a}{r}-\frac{p_0}{q_0}\right)M-\frac{1}{r}\geq m_0. 
\end{equation*}
Therefore, 
$L(p', q')\subset L(p_0, q_0)$. 
We note that we can use the noetherian induction over a relatively 
compact open neighborhood of $W$ (see \cite[0.40. Corollary]{fischer}). 
Therefore, after shrinking $Y$ around $W$ suitably again, 
we obtain the desired closed subset $L_0\subset X$. 
We can check that the subset 
$I\subset \mathbb Z^2$ contains all sufficiently large $(p, q)$ with 
$0<aq-rp<1$ without any difficulties. 
\end{proof}

\begin{cla}\label{c-thm10.1-claim2} 
We have $L_0\cap \Nlc (X, \Delta)=\emptyset$. 
\end{cla}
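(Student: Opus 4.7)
The plan is to combine Theorem \ref{c-thm6.1} with hypothesis (ii) via the $\mathbb R$-Cartier identity
\begin{equation*}
M(p,q) = p\bigl(H + r(K_X+\Delta)\bigr) + (aq - rp)(K_X+\Delta),
\end{equation*}
in which $aq - rp \in (0,1)$ for $(p,q) \in I$. Rearranging yields
\begin{equation*}
M(p,q) - (K_X+\Delta) = \frac{rp - aq + 1}{r}\, H + \frac{aq - 1}{r}\bigl(H + r(K_X+\Delta)\bigr).
\end{equation*}
For $(p,q)\in I$ sufficiently large the first coefficient is positive (since $aq - rp < 1$) and the second is non-negative (since $aq \geq 1$). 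Hence $M(p,q) - (K_X+\Delta)$ is a positive $\mathbb R$-combination of the $\pi$-ample line bundle $H$ and the $\pi$-nef-over-$W$ $\mathbb R$-line bundle $H + r(K_X+\Delta)$, so it is $\pi$-ample over $W$, and over some open neighborhood of $W$ by Lemma \ref{c-lem3.2}.

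Next I would apply Theorem \ref{c-thm6.1} with $\mathcal L = \mathcal O_X(M(p,q))$ and $V = \emptyset$: after shrinking $Y$ around $W$, the restriction map
\begin{equation*}
\pi_*\mathcal O_X(M(p,q)) \twoheadrightarrow \pi_*\mathcal O_{\Nlc(X,\Delta)}(M(p,q))
\end{equation*}
is surjective on some neighborhood of $W$. Since $L(p,q) = L_0$ for all sufficiently large $(p,q) \in I$ by Claim \ref{c-thm10.1-claim1}, the conclusion $L_0 \cap \Nlc(X,\Delta) = \emptyset$ reduces to exhibiting a single such $(p,q)$ for which $\mathcal O_{\Nlc(X,\Delta)}(M(p,q))$ is $\pi|_{\Nlc(X,\Delta)}$-generated over a neighborhood of $W$; composing surjections then forces $\pi^*\pi_*\mathcal O_X(M(p,q)) \to \mathcal O_{\Nlc(X,\Delta)}(M(p,q))$ to be surjective there, so that $L(p,q) \cap \Nlc(X,\Delta) = \emptyset$.

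For this last generation step, hypothesis (ii) together with the decomposition of the first paragraph restricted to $\Nlc(X,\Delta)$ shows that $M(p,q)|_{\Nlc(X,\Delta)}$ is $\pi|_{\Nlc(X,\Delta)}$-ample for $(p,q) \in I$ with $p$ large. To upgrade ampleness to generatedness I would reuse the decomposition $M(p,q) = k M(p_0,q_0) + M(l,j)$ with $0 \leq j \leq q_0 - 1$ in a finite set and $l > m_0$ from the proof of Claim \ref{c-thm10.1-claim1}. Since $H|_{\Nlc(X,\Delta)}$ is $\pi|_{\Nlc(X,\Delta)}$-very ample (being the restriction of the $\pi$-very ample $H$), Serre's theorem in the analytic setting, applied to each of the finitely many coherent sheaves $\mathcal O_{\Nlc(X,\Delta)}(ja(K_X+\Delta))$ for $0 \leq j \leq q_0 - 1$, yields a uniform threshold above which $\mathcal O_{\Nlc(X,\Delta)}(M(l,j))$ is $\pi|_{\Nlc(X,\Delta)}$-generated over a relatively compact neighborhood of $W$. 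Choosing $(p_0,q_0) \in I$ with $p_0$ large enough that $\mathcal O_{\Nlc(X,\Delta)}(M(p_0,q_0))$ is also $\pi|_{\Nlc(X,\Delta)}$-generated, and summing the two generated pieces, yields the desired generation. The principal technical obstacle is precisely this final step: passing from $\pi|_{\Nlc(X,\Delta)}$-ampleness to $\pi|_{\Nlc(X,\Delta)}$-generatedness on the potentially non-reduced analytic subspace $\Nlc(X,\Delta)$ in a manner compatible with the discrete family $I$, which is handled by reducing uniformity in $q$ to finitely many Serre thresholds via the Claim \ref{c-thm10.1-claim1} decomposition.
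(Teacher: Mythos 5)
Your overall skeleton matches the paper's: both decompositions you write are correct, the surjectivity of $\pi_*\mathcal O_X(M(p,q))\to \pi_*\mathcal O_{\Nlc(X,\Delta)}(M(p,q))$ via Theorem \ref{c-thm6.1} is exactly how the paper argues, and the reduction of the claim to exhibiting suitable $(p,q)$ with $\mathcal O_{\Nlc(X,\Delta)}(M(p,q))$ relatively generated (then invoking Claim \ref{c-thm10.1-claim1} to identify $L(p,q)$ with $L_0$) is sound. The gap is precisely at the step you flag as the main obstacle. You establish that $M(p,q)|_{\Nlc(X,\Delta)}$ is relatively ample for $(p,q)\in I$ with $p$ large, and you handle the remainders $M(l,j)$ with $0\leq j\leq q_0-1$ by Serre; but then you simply assert that one can choose $(p_0,q_0)\in I$ with $p_0$ large such that $\mathcal O_{\Nlc(X,\Delta)}(M(p_0,q_0))$ is relatively generated. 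Nothing you have proved gives this: relative ampleness of each individual $M(p,q)|_{\Nlc(X,\Delta)}$ only yields generation (or very ampleness) of its own high multiples, and the bundles $M(p,q)$ for $(p,q)\in I$ are a family of distinct bundles, not powers of a fixed one. Moreover the obvious remedy, replacing $(p_0,q_0)$ by $(np_0,nq_0)$, destroys the condition $0<aq-rp<1$, so insisting that the base point lie in $I$ blocks the very mechanism (Serre for a fixed ample bundle) that would produce generation. As stated, the existence of such a $(p_0,q_0)$ is essentially the statement being proved, so the argument is circular at this point.

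The paper's fix is to drop the requirement $(p_0,q_0)\in I$: the decomposition in the proof of Claim \ref{c-thm10.1-claim1} only uses $p_0>0$ and $aq_0-rp_0>0$ (the upper bound $<1$ is irrelevant for writing $M(p,q)=kM(p_0,q_0)+M(l,j)$ with $l>m_0$). One therefore picks $(\alpha,\beta)\in\mathbb Q^2$ with $\alpha,\beta>0$ and $a\beta/\alpha>r$ sufficiently close to $r$, so that $M(\alpha,\beta)|_{\Nlc(X,\Delta)}$ is relatively ample by hypothesis (ii) of Theorem \ref{c-thm10.1}, and then sets $(p_0,q_0)=(m'\alpha,m'\beta)$ for $m'$ large and divisible so that $M(p_0,q_0)|_{\Nlc(X,\Delta)}$ is relatively very ample (here Serre is applied to a multiple of a fixed ample bundle, which is legitimate, after shrinking $Y$ around a point of $W$ and using compactness of $W$). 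Since $aq_0-rp_0=m'(a\beta-r\alpha)>0$, the Claim \ref{c-thm10.1-claim1} argument still writes every sufficiently large $(p,q)$ in the strip as $kM(p_0,q_0)$ plus a $\pi$-very ample remainder, and restricting to $\Nlc(X,\Delta)$ gives relative very ampleness, hence the generation you need; the rest of your argument then goes through unchanged.
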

\begin{proof}[Proof of Claim \ref{c-thm10.1-claim2}]
We take $(\alpha, \beta)\in \mathbb Q^2$ such that 
$\alpha>0$, $\beta>0$, 
and $\beta a/\alpha>r$ is sufficiently close to 
$r$. 
Then $(\alpha H+\beta a(K_X+\Delta))|_{\Nlc(X, \Delta)}$ is 
$\pi|_{\Nlc(X, \Delta)}$-ample over $W$ because 
$(H+r(K_X+\Delta))|_{\Nlc(X, \Delta)}$ is 
$\pi|_{\Nlc(X, \Delta)}$-ample over $W$. 
We take any point $w\in W$. 
Then it is sufficient to prove that $L_0\cap \Nlc(X, \Delta)=\emptyset$ 
holds over some open neighborhood of $w$. 
From now on, we will freely shrink $Y$ around $w$ without mentioning it 
explicitly. We take a sufficiently large and divisible positive 
integer $m'$ such that 
\begin{equation*} 
m'(\alpha H+\beta a (K_X+\Delta))|_{\Nlc(X, \Delta)}
\end{equation*} 
is $\pi|_{\Nlc(X, \Delta)}$-very ample. 
We put $(p_0, q_0):=(m'\alpha, m'\beta)$ and 
apply the argument in the proof of Claim \ref{c-thm10.1-claim1}. 
Thus, if $0<aq-rp<1$ and $(p, q)\in \mathbb Z^2$ is 
sufficiently large, then we can write 
\begin{equation*} 
M(p, q)=mM(\alpha, \beta)+(M(p, q)-mM(\alpha, \beta))
\end{equation*} 
such 
that $M(p, q)-mM(\alpha, \beta)$ is $\pi$-very ample and 
that 
\begin{equation*} 
m(\alpha H+\beta a (K_X+\Delta))|_{\Nlc(X, \Delta)}
\end{equation*} 
is 
also $\pi|_{\Nlc(X, \Delta)}$-very ample. 
Hence, $\mathcal O_{\Nlc(X, \Delta)}(M(p, q))$ is $\pi$-very 
ample. 
We note that 
\begin{equation*}
M(p, q)-(K_X+\Delta)=pH+(qa-1)(K_X+\Delta)
\end{equation*} 
is $\pi$-ample over some open neighborhood of $w$ because 
$(p, q)$ is sufficiently large and $aq-rp<1$.  
Thus, by the vanishing theorem:~Theorem \ref{c-thm6.1}, 
the restriction map 
\begin{equation*} 
\pi_*\mathcal O_X(M(p, q))\to 
\pi_*\mathcal O_{\Nlc(X, \Delta)}(M(p, q))
\end{equation*} 
is surjective. 
Therefore, $L(p, q)\cap \Nlc(X, \Delta)=\emptyset$ holds 
over some open neighborhood of $w$. 
By Claim \ref{c-thm10.1-claim1}, we have 
$L_0\cap \Nlc(X, \Delta)=\emptyset$ over some 
open neighborhood of $w$. 
Since $w$ is an arbitrary point of $W$, 
$L_0\cap \Nlc(X, \Delta)=\emptyset$ holds 
over some open neighborhood of $W$. 
This is what we wanted. 
\end{proof}

\begin{cla}\label{c-thm10.1-claim3} 
We assume that $r$ is not rational or that 
$r$ is rational and has denominator $>a(d+1)$ in reduced form. 
Then, for $(p, q)$ sufficiently large and $0<aq-rp<1$, 
$\mathcal O_X(M(p, q))$ is $\pi$-generated 
at general points of every log canonical center 
of $(X, \Delta)$. 
\end{cla}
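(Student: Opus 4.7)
The plan is to reduce to minimal log canonical centers via Theorem~\ref{c-thm7.1}, place a projective quasi-log canonical structure on an analytically sufficiently general fiber via Lemma~\ref{c-lem8.2}, and finish with a Hilbert-polynomial computation that invokes Lemma~\ref{c-lem10.2} to force a contradiction with the arithmetic hypothesis on $r$. First, for any $(p,q) \in I$ with $p \gg 0$, the decomposition
\begin{equation*}
M(p,q) - (K_X + \Delta) = (p-s)\bigl(H + r(K_X + \Delta)\bigr) + sH, \qquad s = \tfrac{1 - (aq - rp)}{r} \in \bigl(0, \tfrac{1}{r}\bigr),
\end{equation*}
exhibits $M(p,q) - (K_X + \Delta)$ as a sum of a $\pi$-nef (by hypothesis (i)) and a $\pi$-ample divisor, hence $\pi$-ample over $W$. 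Combined with Claim~\ref{c-thm10.1-claim2}, this lets the vanishing theorem Theorem~\ref{c-thm6.1} deliver the surjection $\pi_* \mathcal O_X(M(p,q)) \twoheadrightarrow \pi_* \mathcal O_V(M(p,q))$ on a neighborhood of $W$ for every log canonical center $V$ contained in $L_0$. Since a non-zero section of $\mathcal O_V(M(p,q))$ vanishes on a proper analytic subset of $V$, it suffices to produce arbitrarily large $(p,q) \in I$ for which $\pi_* \mathcal O_V(M(p,q)) \ne 0$ near some point of $W \cap \pi(V)$.

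Next, by Theorem~\ref{c-thm7.1}(1)(2), at an analytically general point of $V$ the unique minimal log canonical center through that point is $V$ itself, so I reduce to the case where $V$ is minimal. Then $V$ is normal at its general points by Theorem~\ref{c-thm7.1}(3), and the construction in Step~1 of the proof of Theorem~\ref{c-thm6.1} provides a projective map $f\colon (T, \Delta_T) \to V$ from an analytic globally embedded simple normal crossing pair satisfying the hypotheses of Lemma~\ref{c-lem8.2}. By that lemma, an analytically sufficiently general fiber $F$ of $\pi|_V$ carries the structure of a projective quasi-log canonical pair with qlc class $(K_X + \Delta)|_F$. On $F$, $M(p,q)|_F$ is nef and $(M(p,q) - (K_X + \Delta))|_F$ ample, so the vanishing theorem for projective quasi-log canonical pairs (a consequence of Theorem~\ref{c-thm5.5}) gives
\begin{equation*}
\dim_{\mathbb C} H^0\bigl(F, M(p,q)|_F\bigr) \;=\; \chi\bigl(F, M(p,q)|_F\bigr) \;=:\; P_V(p, q),
\end{equation*}
and by Lemma~\ref{c-lem10.3} with $D_1 = H|_F$ (ample, since $H$ is $\pi$-ample) and $D_2 = a(K_X + \Delta)|_F$, $P_V$ is a non-trivial polynomial of total degree $\leq \dim F \leq d$.

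The hypothesis on $r$ now enters through Lemma~\ref{c-lem10.2}: were $P_V$ to vanish on all sufficiently large $(p,q) \in I$, the lemma with $\varepsilon = 1$ would force $r$ to be rational with denominator $\leq a(d+1)$ in reduced form, contradicting the assumption. Hence $h^0(F, M(p,q)|_F) > 0$ for arbitrarily large $(p,q) \in I$, and by the observation in~\ref{c-say2.14} one obtains $\pi_* \mathcal O_V(M(p,q)) \ne 0$ as a coherent sheaf on $\pi(V)$. Since $V$ is irreducible, this pushforward is non-zero on a Zariski dense open subset of $\pi(V)$, which meets any Euclidean neighborhood of a point of $W \cap \pi(V)$; lifting through the surjection of the first step produces the desired section of $\mathcal O_X(M(p,q))$ not vanishing at a general point of $V$.

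The hardest part will be the compatibility check around Lemma~\ref{c-lem8.2}: one must arrange the qlc structure on the analytically general fiber $F$ so that the restrictions of the fixed Cartier divisors $H$ and $K_X + \Delta$ behave numerically as expected, guaranteeing $P_V(p,q)$ is genuinely a non-trivial polynomial amenable to Lemma~\ref{c-lem10.2}. A subsidiary delicacy is ensuring $\pi_* \mathcal O_V(M(p,q))$ is nonzero near $W \cap \pi(V)$ rather than at an irrelevant point of $\pi(V)$, which uses irreducibility of $V$ together with density of Zariski open subsets in the Euclidean topology.
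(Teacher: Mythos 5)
Your proposal is correct in substance and follows the same skeleton as the paper's argument: the identity $M(p,q)-(K_X+\Delta)=\bigl(p-\tfrac{qa-1}{r}\bigr)H+\tfrac{qa-1}{r}\bigl(H+r(K_X+\Delta)\bigr)$ is exactly the decomposition used in the paper, the reduction to minimal log canonical centers disjoint from $\Nlc(X,\Delta)$ (via Claim \ref{c-thm10.1-claim2} and, for all sufficiently large $(p,q)$ in the strip, Claim \ref{c-thm10.1-claim1}) is the same, Theorem \ref{c-thm6.1} supplies the same surjection $\pi_*\mathcal O_X(M(p,q))\to\pi_*\mathcal O_V(M(p,q))$, and the endgame is the identical Hilbert-polynomial argument with Lemmas \ref{c-lem10.3} and \ref{c-lem10.2} (with $\varepsilon=1$) on an analytically sufficiently general fiber $F$ of $V\to\pi(V)$. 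The one genuinely different ingredient is how you obtain $h^0(F,M(p,q)|_F)=\chi(F,M(p,q)|_F)$: you put a projective quasi-log canonical structure on $F$ via Lemma \ref{c-lem8.2} and apply the vanishing theorem for quasi-log schemes, whereas the paper avoids quasi-log structures here altogether and instead uses Theorem \ref{c-thm6.1} (which gives $R^i\pi_*(\mathcal O_V(M(p,q)))=0$ over a neighborhood of $\pi(F)$) together with the generic-fiber statement in \ref{c-say2.14}. Your route is valid but heavier; it is essentially the mechanism the paper reserves for the non-vanishing theorem (Theorem \ref{c-thm8.1}), while the paper's route for this claim also directly yields the non-vanishing of $\pi_*\mathcal O_V(M(p,q))$ near $\pi(F)$ via \ref{c-say2.14}, which disposes of the ``subsidiary delicacy'' you worry about at the end.

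The only point you assert without justification is that $\bigl(M(p,q)-(K_X+\Delta)\bigr)|_F$ is ample (equivalently, that $\bigl(H+r(K_X+\Delta)\bigr)|_F$ is nef): hypothesis (i) only gives nefness on fibers over points of $W$, and the analytically sufficiently general fiber $F$ lies over a point of $\pi(V)$ that need not belong to $W$. This is precisely what Lemma \ref{c-lem3.5} (see also Remark \ref{c-rem3.7}) provides, and the paper invokes it explicitly when choosing $F$ so that $\bigl(H+r(K_X+\Delta)\bigr)|_F$ and $\bigl(H+r(K_X+\Delta)\bigr)|_{\pi^{-1}(\pi(F))}$ are nef. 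Since your $F$ is already chosen analytically sufficiently general, this is a one-line fix rather than a flaw in the approach, but as written it is the missing compatibility check you yourself flagged, and it should be stated via Lemma \ref{c-lem3.5} rather than left implicit.
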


We will explain the proof of Claim \ref{c-thm10.1-claim3} in detail 
because we have to change the proof of Claim 3 
in the proof of \cite[Theorem 15.1]{fujino-fundamental} slightly. 

\begin{proof}[Proof of Claim \ref{c-thm10.1-claim3}]
After shrinking $Y$ around $W$ suitably, 
it is sufficient to consider minimal log canonical centers $C$ of $(X, \Delta)$ 
such that $\pi(C)\cap W\ne \emptyset$. 
By Claim \ref{c-thm10.1-claim2}, we may assume that 
$C\cap \Nlc(X, \Delta)=\emptyset$ holds. 
We take a point $w\in \pi(C) \cap W$. 
It is sufficient to consider everything 
over some small open enighborhood 
of $w$ in $Y$. 
We take an analytically sufficiently general fiber $F$ of $C\to 
\pi(C)$. 
Then we may assume that 
$(H+r(K_X+\Delta))|_F$ and 
$(H+r(K_X+\Delta))|_{\pi^{-1}(\pi(F))}$ are 
both nef by Lemma \ref{c-lem3.5} (see also Remark \ref{c-rem3.7}). 
We note that 
\begin{equation*}
\begin{split}
M(p, q)-(K_X+\Delta)&=pH+(qa-1)(K_X+\Delta)
\\ &=\left(p-\frac{qa-1}{r}\right)H+\frac{(qa-1)}{r}(H+r(K_X+\Delta))
\end{split}
\end{equation*} 
holds. 
Therefore, if 
$aq-rp<1$ and $(p, q)$ is sufficiently large, 
then we see that 
\begin{equation*}
\left(M(p, q)-(K_X+\Delta)\right)|_{\pi^{-1}(\pi(F))}
\end{equation*} 
is ample. 
We note that 
\begin{equation*}
P_F(p, q):=\chi (F, \mathcal 
O_F(M(p, q)))
\end{equation*} 
is a non-zero polynomial of degree 
at most $\dim F\leq d$ by Lemma \ref{c-lem10.3}. 
We also note 
that $F$ is an analytically sufficiently general fiber of $C\to \pi(C)$. 
By Lemma \ref{c-lem10.2}, there exists $(p, q)$ such that 
$P_F(p, q)\ne 0$, 
$(p, q)$ sufficiently large, and $0<aq-rp<1$. 
By the $\pi$-ampleness of $M(p, q)-(K_X+\Delta)$ 
over some open neighborhood of $\pi(F)$, 
\begin{equation*}
P_F
(p, q)=
\chi (F, \mathcal O_F(M(p, q)))
=\dim _{\mathbb C}H^0(F, \mathcal O_F(M(p, q)))
\end{equation*} 
and 
\begin{equation*}
\pi_*\mathcal O_X(M(p, q))\to \pi_*\mathcal O_C(M(p, q))
\end{equation*} 
is surjective over some open neighborhood of 
$\pi(F)$ by Theorem \ref{c-thm6.1} (see also 
\ref{c-say2.14}). 
We note that $\pi_*\mathcal O_C(M(p, q))\ne 0$ by 
$P_F(p, q)\ne 0$ and that 
$C\cap \Nlc (X, \Delta)=\emptyset$ by assumption. 
Therefore, $\mathcal O_X(M(p, q))$ 
is $\pi$-generated 
at general points of $C$. 
By combining this fact 
with Claim \ref{c-thm10.1-claim1}, $\mathcal O_X(M(p, q))$ is 
$\pi$-generated at general points of every 
log canonical center of $(X, \Delta)$ if 
$(p, q)$ is sufficiently large with 
$0<aq-rp<1$. 
Hence we obtain Claim \ref{c-thm10.1-claim3}. 
\end{proof}

Note that $\mathcal O_X(M(p, q))$ is not $\pi$-generated 
for $(p, q)\in I$ 
because $M(p, q)$ is not $\pi$-nef over $W$. 
Therefore, $L_0\ne \emptyset$ with $\pi(L_0)\cap W\ne \emptyset$. 
We take a point $w\in \pi(L_0)\cap W$ and 
replace $Y$ with a relatively compact Stein open neighborhood of $w$. 
From now on, we will freely shrink $Y$ around $w$ suitably. 
Let $D_1, \ldots, D_{n+1}$ 
be general members of \begin{equation*}
\pi_*\mathcal O_X(M(p_0, q_0))
=H^0(X, \mathcal O_X(M(p_0, q_0)))
\end{equation*} 
with 
$(p_0, q_0)\in I$. 
We put $D:=\sum _{i=1}^{n+1}D_i$. 
Let $x\in X$ be any point of $L_0$. 
Then, by Lemma \ref{c-lem2.3}, 
$K_X+\Delta+D$ is not log canonical at $x$. 
On the other hand, we may assume that 
$K_X+\Delta+D$ 
is log canonical outside $L_0\cup \Nlc(X, \Delta)$ since 
$D_i$ is a general member of $|M(p_0, q_0)|$ for every $i$. 
We put 
\begin{equation*}
c:=\sup \left\{t\in \mathbb R\, \mid \text{$(X, \Delta+tD)$ is log canonical 
at $\pi^{-1}(w)\cap \left(X\setminus \Nlc(X, \Delta)\right)$}\right\}. 
\end{equation*} 
Then we can check that $0<c<1$ by Claim \ref{c-thm10.1-claim3}. 
We note that $w\in \pi(L_0)\cap W$. 
Thus, the pair $(X, \Delta+cD)$ has some 
log canonical centers contained in $L_0$ 
and intersecting $\pi^{-1}(w)$. 
By shrinking $Y$ around $w$, we may assume that 
$(X, \Delta+cD)$ is log canonical 
outside $\Nlc(X, \Delta)$. 
Let $C$ be a log canonical center 
contained in $L_0$ and intersecting $\pi^{-1}(w)$. 
We note that $\mathcal J_{\NLC}(X, \Delta+cD)=\mathcal 
J_{\NLC}(X, \Delta)$ by Lemma \ref{c-lem4.3} and its proof 
and that $C\cap \Nlc (X, \Delta+cD)=C\cap \Nlc (X, \Delta)=\emptyset$. 
We consider 
\begin{equation*}
K_X+\Delta+cD=c(n+1)p_0H+(1+c(n+1)q_0a)(K_X+\Delta). 
\end{equation*}  
Thus we have 
\begin{align*}
&pH+qa(K_X+\Delta)-(K_X+\Delta+cD)\\ &
=(p-c(n+1)p_0)H+(qa-(1+c(n+1)q_0a))(K_X+\Delta).
\end{align*} 
If $p$ and $q$ are large enough and $0<aq-rp\leq aq_0-rp_0$, 
then $$pH+qa(K_X+\Delta)-(K_X+\Delta+cD)$$ is $\pi$-ample 
over $w$. 
It is because 
\begin{align*}
&(p-c(n+1)p_0)H+(qa-(1+c(n+1)q_0a))(K_X+\Delta)\\
&=(p-(1+c(n+1))p_0)H+(qa-(1+c(n+1))q_0a)(K_X+\Delta)
\\ &\ +p_0H+(q_0a-1)(K_X+\Delta).  
\end{align*}
By shrinking $Y$ around $w$ suitably, 
we may further assume that 
it is $\pi$-ample over $Y$. 
We consider an analytically sufficiently general fiber $F$ of 
$C\to \pi(C)$ as in the proof of Claim \ref{c-thm10.1-claim3}. 
We note that $\left(H+r(K_X+\Delta)\right)|_{\pi^{-1}(\pi(F))}$ is 
nef by the choice of $F$. 

Suppose that $r$ is not rational. 
There exists an arbitrarily large $(p, q)\in \mathbb Z^2$ such that 
$0<aq-rp<\varepsilon =aq_0-rp_0$ and 
$\chi(F, \mathcal O_{F}(M(p, q)))\ne 0$ 
by Lemma \ref{c-lem10.2} because 
$P_{F}(p, q)=\chi (F, 
\mathcal O_{F}(M(p, q)))$ is 
a non-trivial 
polynomial of degree at most $\dim F\leq d$ by Lemma \ref{c-lem10.3}. 
Since 
\begin{equation*} 
\left(M(p, q)-(K_X+\Delta+cD)\right)|_{\pi^{-1}(\pi(F))}
\end{equation*} 
is ample by 
$0<aq-rp<aq_0-rp_0$, 
we have 
\begin{equation*}
\dim _{\mathbb C}H^0(F, \mathcal O_{F}(M(p, q)))=
\chi (F, \mathcal O_{F}(M(p, q)))\ne 0
\end{equation*} 
by the 
vanishing theorem:~Theorem \ref{c-thm6.1} 
(see also \ref{c-say2.14}). 
By the vanishing theorem:~Theorem \ref{c-thm6.1}, 
\begin{equation*}
\pi_*\mathcal O_X(M(p, q))\to \pi_*\mathcal O_C(M(p, q))
\end{equation*}
is surjective over a neighborhood 
of $\pi(F)$ because $\left(M(p, q)-(K_X+\Delta+cD)\right)|_{\pi^{-1}(\pi(F))}$ 
is ample.
We note that $C\cap \Nlc (X, \Delta+cD)=\emptyset$. 
Thus $C$ is not contained in $L(p, q)$. Therefore, 
$L(p, q)$ is a proper subset of $L(p_0, q_0)=L_0$, 
which gives the desired contradiction. 
Hence we know that $r$ is rational. 

We next suppose that the assertion of the theorem concerning 
the denominator of $r$ is false. 
We choose $(p_0, q_0)\in I$ such that 
$aq_0-rp_0$ is the maximum, 
say it is equal to $e/v$. 
If $0<aq-rp\leq e/v$ and 
$(p, q)$ is sufficiently large, then 
\begin{equation*}
\chi (F, 
\mathcal O_{F}(M(p, q)))=\dim _{\mathbb C} H^0(F, 
\mathcal O_{F}(M(p, q)))
\end{equation*} 
since 
$\left(M(p, q)-(K_X+\Delta+cD)\right)|_{\pi^{-1}(\pi(F))}$ is ample. 
There exists sufficiently 
large $(p, q)\in \mathbb Z^2$ in the strip 
$0<aq-rp<1$ with $\varepsilon =1$ for which 
\begin{equation*}
\dim_{\mathbb C} H^0(F, \mathcal O_{F}(M(p, q)))
=\chi (F, \mathcal O_{F}(M(p, q)))\ne 0
\end{equation*} 
by 
Lemma \ref{c-lem10.2} since 
$P_F(p, q)=\chi (F, \mathcal O_{F}(M(p, q)))$ is a 
non-trivial 
polynomial of degree at most $\dim F\leq d$ by Lemma \ref{c-lem10.3}. 
Note that $aq-rp\leq e/v=aq_0-rp_0$ holds automatically for 
$(p, q)\in I$. 
Since 
\begin{equation*}
\pi_*\mathcal O_X(M(p, q))
\to \pi_*\mathcal O_C(M(p,q))
\end{equation*}  
is surjective over some open neighborhood of 
$\pi(F)$ by the ampleness of 
\begin{equation*}
\left(M(p,q)-
(K_X+\Delta+cD)\right)|_{\pi^{-1}(\pi(F))}, 
\end{equation*}  
we 
obtain the desired contradiction by the same reason as above. 

Thus, we finish the proof of the rationality theorem. 
\end{proof}

We close this section with an easy remark. 

\begin{rem}\label{c-rem10.4}
The proof of Theorem \ref{c-thm10.1} shows that 
Theorem \ref{c-thm10.1} holds true under the assumption that 
$H$ is a $\pi$-ample line bundle. 
\end{rem}

\section{Kleiman--Mori cones}\label{c-sec11} 

In this section, we will define Kleiman--Mori cones for 
projective morphisms between complex analytic spaces 
under some suitable assumption. 

\begin{say}\label{c-say11.1}
Throughout this section, 
let $\pi\colon X\to Y$ be a projective 
morphism of complex analytic spaces and let 
$W$ be a compact subset of $Y$. 
Let $Z_1(X/Y; W)$ be the free abelian group 
generated by the projective integral curves $C$ on $X$ such that 
$\pi(C)$ is a point of $W$. 
Let $U$ be any open neighborhood of $W$. 
Then we can consider the following intersection pairing 
\begin{equation*} 
\cdot :
\Pic\!\left(\pi^{-1}(U)\right)\times Z_1(X/Y; W)\to \mathbb Z
\end{equation*}  
given by $\mathcal L\cdot C\in \mathbb Z$ for 
$\mathcal L\in \Pic\!\left(\pi^{-1}(U)\right)$ and 
$C\in Z_1(X/Y; W)$. 
We say that $\mathcal L$ is {\em{$\pi$-numerically 
trivial over $W$}} when $\mathcal L\cdot C=0$ for 
every $C\in Z_1(X/Y; W)$. 
We take $\mathcal L_1, \mathcal L_2\in 
\Pic\!\left(\pi^{-1}(U)\right)$. 
If $\mathcal L_1\otimes \mathcal L_2^{-1}$ 
is $\pi$-numerically trivial over $W$, 
then we write $\mathcal L_1\equiv_W\mathcal L_2$ 
and say that $\mathcal L_1$ is numerically equivalent to 
$\mathcal L_2$ over $W$. 
We put 
\begin{equation*}
\widetilde A(U, W):=\Pic\!\left(\pi^{-1}(U)\right)/{\equiv_W}
\end{equation*}  
and define 
\begin{equation*}
A^1(X/Y; W):=\underset{W\subset U}\varinjlim
\widetilde A(U, W), 
\end{equation*}  
where $U$ runs through all the open neighborhoods of 
$W$. 
\end{say}

\begin{say}\label{c-say11.2}
We assume that $A^1(X/Y; W)$ is a finitely generated 
abelian group. Then we can define the {\em{relative Picard number}} 
$\rho(X/Y; W)$ to be the rank of 
$A^1(X/Y; W)$. 
We put 
\begin{equation*}
N^1(X/Y; W):=A^1(X/Y; W)\otimes _{\mathbb Z} \mathbb R. 
\end{equation*} 
Let $A_1(X/Y; W)$ be the image of 
\begin{equation*} 
Z_1(X/Y; W)\to \Hom_{\mathbb Z} \left(A^1(X/Y; W), 
\mathbb Z\right)
\end{equation*} 
given by the above intersection pairing. 
Then we set 
\begin{equation*} 
N_1(X/Y; W):=A_1(X/Y; W)\otimes _{\mathbb Z}\mathbb R. 
\end{equation*}  
As usual, we can define the {\em{Kleiman--Mori cone}} 
\begin{equation*} 
\NE(X/Y; W)
\end{equation*} 
of $\pi\colon X\to Y$ over $W$, that is, 
$\NE(X/Y; W)$ is the closure of the convex cone in 
$N_1(X/Y; W)$ spanned by the projective 
integral curves $C$ on $X$ such that 
$\pi(C)$ is a point of $W$. 
An element $\zeta\in N^1(X/Y; W)$ is called 
{\em{$\pi$-nef over $W$}} or {\em{nef over $W$}} 
if $\zeta\geq 0$ on $\NE(X/Y; W)$, equivalently, 
$\zeta|_{\pi^{-1}(w)}$ is nef in the usual sense for 
every $w\in W$. 
\end{say}

\begin{rem}\label{c-rem11.3}
We assume that $\pi\colon X\to Y$ decomposes as 
\begin{equation*}
\xymatrix{
\pi\colon X\ar[r]^-\varphi & Z \ar[r]^-{\pi_Z}&Y, 
}
\end{equation*}
where $\pi_Z\colon Z\to Y$ is a projective morphism 
of complex analytic spaces. 
Then $\varphi$ is always projective and $\pi^{-1}_Z(W)$ is 
a compact subset of $Z$. 
Therefore, we can define $A^1\left(X/Z; \pi^{-1}_Z(W)\right)$ and 
$N^1\left(X/Z; \pi^{-1}_Z(W)\right)$ as above. 
By definition, $N^1\left(X/Z; \pi^{-1}_Z(W)\right)$ is a quotient 
vector space of $N^1\left(X/Y; W\right)$. 
Hence, if $\dim _{\mathbb R} N^1(X/Y; W)<\infty$, 
then we see that 
$\dim _{\mathbb R}N^1\left(X/Z; \pi^{-1}_Z(W)\right)<\infty$ holds. 
\end{rem}

\begin{lem}[{\cite[Proposition 4.7 (2)]{nakayama1}}]\label{c-lem11.4}
$\NE(X/Y; W)$ contains no lines of $N_1(X/Y; W)$. 
\end{lem}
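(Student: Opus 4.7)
The plan is to produce enough $\pi$-ample classes to embed $N_1(X/Y;W)$ into a Euclidean space where $\NE(X/Y;W)$ is mapped into the closed positive orthant, which visibly contains no lines.

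First, I would pin down positivity on curves. Since $\pi$ is projective there exists a $\pi$-ample line bundle $\mathcal H$ on $X$. By Lemma~\ref{c-lem3.3}, for each $w\in W$ a suitable power $\mathcal H^{\otimes m_w}$ is $\pi$-very ample over a Zariski open neighborhood of $w$; compactness of $W$ then allows one to choose a single integer $m$ such that $\mathcal H^{\otimes m}$ is $\pi$-very ample over an open neighborhood of $W$. Consequently $\mathcal H|_C$ has degree at least $1/m$ on every projective integral curve $C\subset X$ with $\pi(C)\in W$, giving the uniform lower bound $\mathcal H\cdot C\geq 1/m>0$. More generally, any class $\zeta\in N^1(X/Y;W)$ that is $\pi$-ample over $W$ satisfies $\zeta\cdot[C]>0$ for every such $C$, and hence $\zeta\cdot z\geq 0$ for every $z\in\NE(X/Y;W)$ by taking positive combinations and closing up.

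Next, I would verify that the $\pi$-ample classes form an open subset of $N^1(X/Y;W)$ and consequently span it. Given $\mathcal H$ and any perturbation represented by a line bundle $\mathcal E$, Lemma~\ref{c-lem3.4} applied at each $w\in W$, together with the compactness of $W$, produces an integer $N$ for which both $N\mathcal H+\mathcal E$ and $N\mathcal H-\mathcal E$ are $\pi$-ample over $W$. For $|t|<1/N$ one can then rewrite $\mathcal H+t\mathcal E$ as a strictly positive $\mathbb R$-linear combination of $\mathcal H$ and one of $N\mathcal H\pm\mathcal E$, showing that $\mathcal H+t\mathcal E$ is $\pi$-ample over $W$. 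Hence the $\pi$-ample cone is a non-empty open subset of the finite-dimensional space $N^1(X/Y;W)$ and contains an $\mathbb R$-basis $\mathcal H_1,\dots,\mathcal H_\rho$, where $\rho=\dim_{\mathbb R}N^1(X/Y;W)$.

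Finally, consider the linear map $\phi\colon N_1(X/Y;W)\to\mathbb R^\rho$ defined by $\phi(z):=(\mathcal H_1\cdot z,\dots,\mathcal H_\rho\cdot z)$. The first step yields $\phi(\NE(X/Y;W))\subseteq \mathbb R_{\geq 0}^\rho$. Since the $\mathcal H_i$'s form a basis of $N^1(X/Y;W)$ and the intersection pairing
\[
N^1(X/Y;W)\times N_1(X/Y;W)\to \mathbb R
\]
is non-degenerate (both sides are finite-dimensional and, by the definitions in \ref{c-say11.1}--\ref{c-say11.2}, each embeds into the dual of the other), the map $\phi$ is injective. The closed positive orthant $\mathbb R_{\geq 0}^\rho$ obviously contains no lines, so neither does $\NE(X/Y;W)$. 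The main obstacle in this outline is the openness of the $\pi$-ample cone in $N^1(X/Y;W)$, whose proof ultimately rests on the bootstrapping from ampleness on a single fiber to ampleness over a Zariski open neighborhood provided by Lemma~\ref{c-lem3.4}, coupled with the compactness of $W$.
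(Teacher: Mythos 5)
Your argument is correct in substance, but it takes a more structural route than the paper. The paper's proof is a short contradiction: if $\Gamma,-\Gamma\in \NE(X/Y;W)$, then a single $\pi$-ample $\mathbb R$-line bundle $\mathcal A$ must satisfy $\mathcal A\cdot \Gamma=0$; on the other hand, since $\Gamma\neq 0$ there is a line bundle $\mathcal M$ with $\mathcal M\cdot \Gamma>0$, and $m\mathcal A-\mathcal M$ is $\pi$-ample near $W$ for $m\gg 0$, forcing $\mathcal A\cdot \Gamma>0$. You instead prove that the $\pi$-ample classes form a non-empty open cone in $N^1(X/Y;W)$, pick a basis $\mathcal H_1,\dots,\mathcal H_\rho$ of ample classes, and embed $N_1(X/Y;W)$ into $\mathbb R^\rho$ carrying $\NE(X/Y;W)$ into the non-negative orthant. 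The key input is the same in both proofs (absorbing an arbitrary line bundle into a large multiple of an ample one, over a neighborhood of the compact set $W$), but the paper's version needs only one perturbation, does not use finite-dimensionality of $N^1(X/Y;W)$, and avoids any discussion of duality, whereas your version buys a stronger structural statement (openness of the relative ample cone, which is essentially the direction (ii)$\Rightarrow$(iii) of Kleiman's criterion proved later in Theorem \ref{c-lem11.5}) at the cost of extra bookkeeping. Two small points to tighten: the uniform bound $\mathcal H\cdot C\geq 1/m$ via Lemma \ref{c-lem3.3} is both unnecessary (fiberwise ampleness already gives $\mathcal H\cdot C>0$ for every projective integral curve $C$ over $W$) and not directly licensed, since Lemma \ref{c-lem3.3} is stated for $X$ and $Y$ irreducible while here $X$ need not be; and the injectivity of $N_1(X/Y;W)\to \left(N^1(X/Y;W)\right)^*$ should be justified explicitly from the definition of $A_1(X/Y;W)$ as a subgroup of $\Hom_{\mathbb Z}(A^1(X/Y;W),\mathbb Z)$ together with flatness of $\mathbb R$ over $\mathbb Z$, rather than by the vaguer remark that each space embeds into the dual of the other.
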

\begin{proof}
Suppose that $\NE(X/Y; W)$ contains a line of 
$N_1(X/Y; W)$. 
Then we can take $\Gamma \in \NE(X/Y; W)$ such that 
$\Gamma, -\Gamma \in \NE(X/Y; W)$. 
We take a $\pi$-ample $\mathbb R$-line bundle $\mathcal A$ on $X$. 
By definition, $\mathcal A$ is $\pi$-nef over $W$. 
Therefore, we obtain $\mathcal A\cdot \Gamma \geq 0$ and 
$-\mathcal A\cdot \Gamma \geq 0$. 
This means that $\mathcal A\cdot \Gamma =0$. 
On the other hand, after shrinking $Y$ around 
$W$ suitably, we can take a line bundle $\mathcal M$ 
on $X$ such that $\Gamma \cdot \mathcal M>0$ since $\Gamma 
\ne 0$ in $N_1(X/Y; W)$. 
Since $\mathcal A$ is $\pi$-ample, 
$m\mathcal A-\mathcal M$ is also 
$\pi$-ample over some open neighborhood of $W$, where  
$m$ is a large positive integer. 
This implies that $(m\mathcal A-\mathcal M)\cdot \Gamma \geq 0$ 
since $\Gamma \in \NE(X/Y; W)$. 
Thus, $m\mathcal A\cdot \Gamma \geq \mathcal M\cdot \Gamma >0$ holds. 
Hence we obtain $\mathcal A\cdot \Gamma >0$. 
This is a contradiction. 
Therefore, there are no lines in $\NE(X/Y; W)$. 
\end{proof}

The following theorem is Kleiman's ampleness criterion for 
projective morphisms between complex analytic spaces 
(see \cite[Proposition 4.7]{nakayama1}). 

\begin{thm}[Kleiman's ampleness criterion]\label{c-lem11.5}
Let $\pi\colon X\to Y$ be a projective morphism 
between complex analytic spaces and let $W$ be a compact 
subset of $Y$ such that the dimension of $N^1(X/Y; W)$ is finite. 
Let $\mathcal L$ be an $\mathbb R$-line bundle 
on $X$. 
Then the following conditions are equivalent. 
\begin{itemize}
\item[(i)] $\mathcal L$ is $\pi$-ample 
over $W$. 
\item[(ii)] $\mathcal L$ is $\pi$-ample over some open neighborhood 
$U$ of $W$. 
\item[(iii)] $\mathcal L$ is positive on $\NE(X/Y; W)\setminus \{0\}$. 
\end{itemize} 
\end{thm}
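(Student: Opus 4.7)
The equivalence $(i)\Leftrightarrow(ii)$ is exactly Lemma \ref{c-lem3.2}, so the remaining task is $(ii)\Leftrightarrow(iii)$. The key ingredients are Lemma \ref{c-lem11.4} (that $\NE(X/Y;W)$ contains no lines), the hypothesis that $N^1(X/Y;W)$ is finite-dimensional, so that $N_1(X/Y;W)$ is also finite-dimensional and compactness arguments apply, the openness of the $\pi$-ample condition in $N^1(X/Y;W)$, and the classical Kleiman ampleness criterion for $\mathbb R$-line bundles on projective schemes applied fibrewise via Serre's GAGA (see \ref{c-say2.4}). Openness follows from Lemma \ref{c-lem3.2}: if $\mathcal L=\sum_i a_i\mathcal L_i$ with $a_i>0$ and each $\mathcal L_i$ a $\pi$-ample line bundle over some open neighborhood $U$ of $W$, then any line bundle $\mathcal M$ on $\pi^{-1}(U)$ can be expressed as a difference of $\pi$-ample line bundles, so one can rewrite $\mathcal L+\epsilon\mathcal M$ as a finite positive $\mathbb R$-linear combination of $\pi$-ample line bundles for all small $\epsilon$.

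For $(ii)\Rightarrow(iii)$: by hypothesis $\mathcal L\cdot C>0$ for every projective integral curve $C$ with $\pi(C)\in W$, so $\mathcal L\geq 0$ on $\NE(X/Y;W)$ by continuity. Suppose $\mathcal L\cdot\zeta_0=0$ for some $\zeta_0\in\NE(X/Y;W)\setminus\{0\}$ and fix a $\pi$-ample line bundle $\mathcal A$. By openness, $\mathcal L-\epsilon\mathcal A$ remains $\pi$-ample over $W$ for small $\epsilon>0$; pairing with $\zeta_0$ yields $0=\mathcal L\cdot\zeta_0\geq\epsilon\mathcal A\cdot\zeta_0\geq 0$, forcing $\mathcal A\cdot\zeta_0=0$. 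Then for any line bundle $\mathcal M$ defined near $\pi^{-1}(W)$, both $m\mathcal A\pm\mathcal M$ are $\pi$-ample over $W$ for $m$ large, hence $\mathcal M\cdot\zeta_0=0$ for every such $\mathcal M$; by construction of $N_1(X/Y;W)$ this forces $\zeta_0=0$, a contradiction.

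For $(iii)\Rightarrow(i)$: pick any $\pi$-ample line bundle $\mathcal A$; by the implication just proved, $\mathcal A>0$ on $\NE(X/Y;W)\setminus\{0\}$. Since $\NE(X/Y;W)$ is a closed convex cone in the finite-dimensional space $N_1(X/Y;W)$ with no lines (Lemma \ref{c-lem11.4}), the slice $S:=\{\zeta\in\NE(X/Y;W)\mid\mathcal A\cdot\zeta=1\}$ is closed and bounded (compactness of the unit sphere intersected with $\NE$ and positivity of $\mathcal A$ there force an upper bound on $\|\zeta\|$ for $\zeta\in S$), hence compact. The continuous function $\zeta\mapsto\mathcal L\cdot\zeta$ is strictly positive on $S$ by hypothesis, so attains a minimum $c>0$; thus $\mathcal L\cdot\zeta\geq c\mathcal A\cdot\zeta$ on all of $\NE(X/Y;W)$, i.e., $\mathcal L-c\mathcal A$ is $\pi$-nef over $W$. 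For each $w\in W$, the restriction $\mathcal L|_{\pi^{-1}(w)}=(\mathcal L-c\mathcal A)|_{\pi^{-1}(w)}+c\mathcal A|_{\pi^{-1}(w)}$ is the sum of a nef and an ample $\mathbb R$-line bundle on the projective scheme $\pi^{-1}(w)$, hence ample by the classical Kleiman criterion for $\mathbb R$-line bundles on projective schemes. Therefore $\mathcal L$ is $\pi$-ample over $W$, establishing $(i)$.

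The technical point I expect to require the most care is the openness of the $\pi$-ample cone in $N^1(X/Y;W)$ for $\mathbb R$-line bundles, which underwrites the perturbation step in $(ii)\Rightarrow(iii)$; once this is in hand the rest is a direct transcription of the algebraic Kleiman argument, with the fibrewise invocation of classical Kleiman through Serre's GAGA handling the passage from numerical positivity back to honest relative ampleness in $(iii)\Rightarrow(i)$.
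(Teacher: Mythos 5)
Your proposal is correct and follows essentially the same route as the paper: the equivalence of (i) and (ii) is delegated to Lemma \ref{c-lem3.2}, (ii) $\Rightarrow$ (iii) is the pairing argument already used in the proof of Lemma \ref{c-lem11.4} (your detour through $\mathcal A\cdot \zeta_0=0$ and then $\mathcal M\cdot \zeta_0=0$ for all $\mathcal M$ is a mild variant of it), and (iii) $\Rightarrow$ (i) subtracts a small $\pi$-ample class to get a class that is nef over $W$ and then invokes the classical Kleiman criterion fibrewise, exactly as in Step 1 of the paper's proof. Your write-up merely makes explicit two points the paper leaves implicit, namely the compactness/slice argument producing the constant $c$ and the openness of relative ampleness near the compact set $W$.
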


\begin{proof}
We have already proved the equivalence of (i) and (ii) in 
Lemma \ref{c-lem3.2} without assuming $\dim _{\mathbb R} 
N^1(X/Y; W)<\infty$. 

\setcounter{step}{0}
\begin{step}\label{c-lem11.5-step1}
In this step, we will prove that (i) follows from (iii). 

We assume that $\mathcal L$ is positive on $\NE(X/Y; W)\setminus \{0\}$. 
Then we can take a $\pi$-ample 
$\mathbb R$-line bundle $\mathcal A$ on $X$ such that 
$\mathcal N:=\mathcal L-\mathcal A$ is non-negative on $\NE(X/Y; W)$. 
This means that $\mathcal N|_{\pi^{-1}(w)}$ is nef for every $w\in W$. 
Since $\mathcal L|_{\pi^{-1}(w)}=\mathcal N|_{\pi^{-1}(w)}
+\mathcal A|_{\pi^{-1}(w)}$, $\mathcal N|_{\pi^{-1}(w)}$ is nef, 
and $\mathcal A|_{\pi^{-1}(w)}$ is ample, 
$\mathcal L|_{\pi^{-1}(w)}$ is ample by the usual Kleiman's ampleness 
criterion. 
Hence (i) follows from (iii). 
\end{step}
\begin{step}\label{c-lem11.5-step2}
In this step, we will prove that (iii) follows from (ii). 

We assume that $\mathcal L$ is $\pi$-ample over some open neighborhood 
$U$ 
of $W$. By replacing $Y$ with $U$, we may assume that $Y=U$. 
In the proof of Lemma \ref{c-lem11.4}, we have already checked that 
$\mathcal L\cdot \Gamma >0$ for every $\Gamma \in 
\NE(X/Y; W)\setminus \{0\}$. 
This means that (iii) follows from (ii). 
\end{step}
We finish the proof. 
\end{proof}

From now on, we always assume that the dimension of 
$N^1(X/Y; W)$ is finite. 
In order to formulate the cone and contraction theorem, 
we need the following definitions. 

\begin{defn}\label{c-def11.6}
Let $\pi\colon X\to Y$ be a projective morphism of complex 
analytic spaces and let $W$ be a compact subset of $Y$. 
Let $X$ be a normal complex variety and let $\Delta$ be an effective 
$\mathbb R$-divisor on $X$ such 
that $K_X+\Delta$ is $\mathbb R$-Cartier. 
We assume that the dimension of $N^1(X/Y; W)$ is finite. 
Then we define a subcone 
\begin{equation*}
\NE(X/Y; W)_{\Nlc(X, \Delta)}
\end{equation*} 
of $\NE(X/Y; W)$ 
as the closure of the convex cone spanned by 
the projective integral curves $C$ on $\Nlc(X, \Delta)$ 
such that $\pi(C)$ is a point of $W$. 
Let $D$ be an element of $N^1(X/Y; W)$. 
We define 
$$
D_{\geq 0}:=\{z\in N_1(X/Y; W) \,|\, D\cdot z\geq 0\}. 
$$ 
Similarly, we can define $D_{>0}$, 
$D_{\leq 0}$, and 
$D_{<0}$. 
We also define 
$$
D^{\perp}:=\{ z\in N_1(X/Y; W) \, | \, D\cdot z=0\}. 
$$ 
We use the following notation 
$$
\NE(X/Y; W)_{D\geq 0} :=\NE(X/Y; W)\cap 
D_{\geq 0}, 
$$ 
and similarly for $>0$, $\leq 0$, 
and $<0$. 
\end{defn}

\begin{defn}\label{c-def11.7}
An {\em{extremal face}} of the 
Kleiman--Mori cone $\NE(X/Y; W)$ is a non-zero 
subcone $F\subset \NE(X/Y; W)$ such that 
$z, z'\in \NE(X/Y; W)$ and $z+z'\in F$ imply that 
$z, z'\in F$. Equivalently, 
$F=\NE(X/Y; W)\cap \mathcal H^{\perp}$ for some 
$\mathbb R$-line bundle $\mathcal H$ which is defined on 
some open neighborhood of $\pi^{-1}(W)$ and 
is $\pi$-nef over $W$. 
We call $\mathcal H$ 
a {\em{support function}} of $F$. 
An {\em{extremal ray}} 
is a one-dimensional 
extremal face. 
\begin{itemize}
\item[(1)] An extremal face $F$ is called 
{\em{$(K_X+\Delta)$-negative}} 
if 
\begin{equation*}
F\cap \NE(X/Y; W)_{(K_X+\Delta)\geq 0} 
=\{ 0\}. 
\end{equation*}
\item[(2)] An extremal face $F$ is called {\em{rational}} 
if we can choose 
a $\mathbb Q$-line bundle $\mathcal H$, which is 
defined on some open neighborhood 
of $\pi^{-1}(W)$ and is $\pi$-nef over $W$,  
as a support 
function of $F$. 
\item[(3)] An extremal face $F$ is 
called {\em{relatively ample at 
$\Nlc (X, \Delta)$}} if 
\begin{equation*}
F\cap \NE(X/Y; W)_{\Nlc (X, \Delta)}=\{0\}. 
\end{equation*}
Equivalently, $\mathcal H|_{\Nlc (X, \Delta)}$ is 
$\pi|_{\Nlc (X, \Delta)}$-ample over $W$ for 
every support function $\mathcal H$ of $F$. 
\item[(4)] An extremal face $F$ is called {\em{contractible 
at $\Nlc (X, \Delta)$}} if it has a rational support function 
$\mathcal H$ such that 
$\mathcal H|_{\Nlc (X, \Delta)}$ is $\pi|_{\Nlc (X, \Delta)}$-semiample 
over some open neighborhood of $W$. 
\end{itemize}
\end{defn}

We make a remark on (3) in Definition \ref{c-def11.7}. 

\begin{rem}\label{c-rem11.8}
In (3) in Definition \ref{c-def11.7}, 
the condition that $F$ is relatively ample at $\Nlc(X, \Delta)$ implies 
that the support function $\mathcal H$ of $F$ is 
positive on $\NE(X/Y; W)_{\Nlc(X, \Delta)}\setminus \{0\}$. 
Let $\mathcal A$ be a $\pi$-ample 
$\mathbb R$-line bundle on $X$. 
Then $\mathcal N:=\mathcal H-\varepsilon \mathcal A$ is 
positive on $\NE(X/Y; W)_{\Nlc(X, \Delta)}\setminus \{0\}$ for 
some $0<\varepsilon \ll 1$. 
Thus, it is easy to see that $\mathcal N|_{\Nlc(X, \Delta)}$ is 
$\pi|_{\Nlc(X, \Delta)}$-nef over $W$. 
Note that $\mathcal A|_{\Nlc(X, \Delta)}$ is obviously 
$\pi|_{\Nlc(X, \Delta)}$-ample. 
Hence $\mathcal H|_{\Nlc(X, \Delta)}$ is $\pi|_{\Nlc(X, \Delta)}$-ample 
over $W$. 
\end{rem}

\subsection{Nakayama's finiteness}\label{c-subsec11.1} 
In this subsection, we quickly recall Nakayama's finiteness. 
As we saw above, 
for the cone and contraction 
theorem in this paper, we need the assumption that 
the dimension of $N^1(X/Y; W)$ is finite. 
In general, the dimension of $N^1(X/Y; W)$ may be infinite. 
The author learned the following example from 
Noboru Nakayama. 

\begin{ex}[Nakayama]\label{c-ex11.9}
Let $\pi\colon X\to Y$ be a projective surjective morphism 
of complex analytic spaces 
such that $X$ is a normal 
complex variety with $\dim X\geq 2$ and 
that $Y=\{z\in \mathbb C\, |\, |z|<2\}$. 
We put 
\begin{equation*}
W:=\left\{\left. \frac{1}{n}\, \right|\, n \in 
\mathbb Z_{>0}\right\}\cup \{0\}. 
\end{equation*}
Then $W$ is a compact subset of $Y$. 
It is obvious that $W$ has infinitely many connected components. 
In this case, we can see that the abelian group 
$A^1(X/Y; W)$ is not finitely generated. 
Hence we have $\dim _{\mathbb R} N^1(X/Y; W)=\infty$. 
\end{ex}

The following theorem gives an important and useful 
sufficient condition for the finite-dimensionality of 
$N^1(X/Y; W)$. 
We state it here for the reader's convenience. 

\begin{thm}
[{Nakayama's finiteness, 
see \cite[Chapter II.~5.19.~Lemma]{nakayama2}}]
\label{c-thm11.10}
Let $\pi\colon X\to Y$ be a projective surjective 
morphism of complex analytic spaces such that 
$W$ is a compact subset of $Y$. 
We assume that $W\cap V$ has only finitely 
many connected components 
for every analytic subset $V$ defined over an 
open neighborhood of $W$. 
Then $A^1(X/Y; W)$ is a finitely generated 
abelian group. 
\end{thm}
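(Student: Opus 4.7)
The plan is to bound $A^1(X/Y;W)$ cohomologically, using the topological first Chern class and the constructibility of the direct image sheaf $R^2\pi_*\mathbb Z_X$ on $Y$. For each open $U\supset W$ the exponential sequence on $\pi^{-1}(U)$ gives a first Chern class homomorphism $c_1\colon\Pic(\pi^{-1}(U))\to H^2(\pi^{-1}(U),\mathbb Z)$, and composing with the Leray edge map yields $\gamma_U\colon\Pic(\pi^{-1}(U))\to H^0(U,R^2\pi_*\mathbb Z_X)$. For any projective integral curve $C\subset X_w$ with $w\in W$, the intersection number $\mathcal L\cdot C$ equals the pairing of the stalk $c_1(\mathcal L)_w\in H^2(X_w,\mathbb Z)=(R^2\pi_*\mathbb Z_X)_w$ with $[C]\in H_2(X_w,\mathbb Z)$. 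Hence $\mathcal L\equiv_W 0$ precisely when, for every $w\in W$, the stalk $\gamma_U(\mathcal L)_w$ lies in the subgroup $T_w\subset H^2(X_w,\mathbb Z)$ of classes annihilating every curve cycle class in $X_w$; and the image of $c_1$ in $H^2(X_w,\mathbb Z)/T_w$ is the N\'eron--Severi group of $X_w$ modulo torsion, which is finitely generated by the classical N\'eron--Severi theorem applied to the projective fiber.

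Next I would assemble the $T_w$ into a subsheaf $\mathcal T\subset R^2\pi_*\mathbb Z_X$ and form the quotient sheaf $\mathcal N:=R^2\pi_*\mathbb Z_X/\mathcal T$, which I claim is again a constructible sheaf of abelian groups on $Y$. Granted this, $\gamma_U$ factors through an injection $\overline\gamma_U\colon\Pic(\pi^{-1}(U))/{\equiv_W}\hookrightarrow H^0(U,\mathcal N)$; because $W$ is compact, stalkwise vanishing of $\overline\gamma_U(\mathcal L)$ along $W$ propagates to vanishing on some open neighborhood, so passing to the direct limit over $U\supset W$ gives an injection $A^1(X/Y;W)\hookrightarrow H^0(W,\mathcal N)$. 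To finish I would stratify $Y$ into locally closed analytic subsets $\{S_\alpha\}$ on which $\mathcal N$ is locally constant with finitely generated stalks, arranged so that each closure $\overline{S_\alpha}$ is a closed analytic subset of $Y$. Since the stratification is locally finite and $W$ is compact, only finitely many strata meet $W$; the hypothesis on $W$ directly bounds the connected components of each $W\cap\overline{S_\alpha}$, and an inductive argument peeling off $\overline{S_\alpha}\setminus S_\alpha$ bounds the components of $W\cap S_\alpha$ as well. A section of $\mathcal N$ over $W$ is then determined by finitely many elements lying in finitely generated stalks, so $H^0(W,\mathcal N)$ is a finitely generated abelian group, as required.

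The main technical obstacle is constructing $\mathcal T$ as a genuinely constructible subsheaf of $R^2\pi_*\mathbb Z_X$ rather than as a merely pointwise-defined family of subgroups. In the algebraic setting one would produce $\mathcal T^\perp$ as the image of the cycle class map from a relative Chow or Hilbert scheme of curves; in the complex analytic category the appropriate substitute is the relative Douady space of one-dimensional cycles in $\pi\colon X\to Y$, whose component structure, combined with proper base change for $R^2\pi_*\mathbb Z_X$ applied to the proper morphism $\pi$, supplies the required constructible family of cycle classes and hence the subsheaf $\mathcal T$. Handling this carefully, together with the compatibility of numerical equivalence with the cohomological formalism over an arbitrary compact $W$, is the substance of Nakayama's argument in \cite[Chapter II.~5.19.~Lemma]{nakayama2}, and I would follow that blueprint for the detailed implementation.
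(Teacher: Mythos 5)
The paper itself does not prove Theorem \ref{c-thm11.10}: the proof given there is a pointer to \cite[4.1]{fujino-minimal} and to Nakayama's original argument, so there is no detailed text to match you against. Your general frame (exponential sequence, topological first Chern class, proper base change identifying the stalk of $R^2\pi_*\mathbb Z_X$ at $w$ with $H^2(X_w,\mathbb Z)$, and constructibility of $R^2\pi_*\mathbb Z_X$) is indeed the expected blueprint. Incidentally, the obstacle you single out as the main one --- building the orthogonal-to-curves family $\mathcal T$ as a constructible subsheaf --- can be sidestepped entirely: the kernel of $\Pic(\pi^{-1}(U))\to H^0\!\left(W,(R^2\pi_*\mathbb Z_X)|_W\right)$ consists of line bundles that are topologically trivial on every fiber over $W$, hence numerically trivial over $W$, so $\Pic(\pi^{-1}(U))/\!\equiv_W$ is a quotient of the image of $\Pic(\pi^{-1}(U))$ in $H^0\!\left(W,(R^2\pi_*\mathbb Z_X)|_W\right)$; passing to the colimit, $A^1(X/Y;W)$ is a subquotient of $H^0\!\left(W,(R^2\pi_*\mathbb Z_X)|_W\right)$, and subquotients of finitely generated abelian groups are finitely generated. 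So no quotient sheaf $\mathcal N$ is needed.

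The genuine gap is in the step you treat as routine: the claim that the hypothesis on $W$ bounds the number of connected components of $W\cap S_\alpha$ for the locally closed strata $S_\alpha$, via ``peeling off'' $\overline{S_\alpha}\setminus S_\alpha$. This is false: the hypothesis controls $W\cap V$ only for \emph{closed} analytic subsets $V$ of neighborhoods of $W$, and removing a closed analytic piece can create infinitely many components. Concretely, in $Y=\mathbb C^2$ let $L=\{y=0\}$ and let $W\subset L\cong\mathbb C$ be the compact connected ``fan'' consisting of the segments joining $0$ to the points $1+\sqrt{-1}/n$ $(n\geq 1)$ and to $1$. For any closed analytic $V$ in a neighborhood of $W$, the set $V\cap L$ is either discrete (so $W\cap V$ is finite) or contains the whole component of $L$ meeting $W$ (so $W\cap V=W$ is connected); hence $W$ satisfies the hypothesis of the theorem. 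Yet for the locally closed analytic set $S=L\setminus\{0\}$, with $W\cap\overline S=W$ and $W\cap(\overline S\setminus S)=\{0\}$ both connected, the set $W\cap S$ has infinitely many connected components. Such configurations of strata do occur for the stratifications adapted to $R^2\pi_*\mathbb Z_X$, so your count ``finitely many strata, finitely many components each, hence finitely many determining stalks'' does not go through as stated. The finiteness of $H^0\!\left(W,(R^2\pi_*\mathbb Z_X)|_W\right)$ under the stated hypothesis is exactly the nontrivial content of Nakayama's lemma; its proof must exploit that a section over $W$ extends to a neighborhood in $Y$, so that values on the infinitely many pieces of $W$ inside a stratum are tied together through the ambient strata and through closed analytic loci canonically attached to the section (to which the hypothesis \emph{does} apply), rather than being counted stratum by stratum. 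In your framework this is precisely the point that still has to be supplied, either by reworking the last step along those lines or by following \cite[Chapter II.~5.19.~Lemma]{nakayama2} (see also \cite[4.1]{fujino-minimal}) in detail.
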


\begin{proof} 
For the details, see \cite[4.1.~Nakayama's finiteness]{fujino-minimal}. 
\end{proof}

In this paper, we do not need Theorem \ref{c-thm11.10} 
except in the proof of Corollary \ref{c-cor1.5}. 
We only need the assumption that the dimension of $N^1(X/Y; W)$ is 
finite. 

\begin{rem}\label{c-rem11.11}
Let $\pi\colon X\to Y$ be a smooth projective 
surjective morphism between smooth irreducible complex 
analytic spaces. 
Let $W$ be a compact subset of $Y$. 
Assume that $W$ is connected. 
Then we can easily check that the dimension of 
$N^1(X/Y; W)$ is finite. 
However, $W\cap V$ may have infinitely many connected components 
for some analytic subset $V$ defined over an 
open neighborhood of $W$.  
\end{rem}

We close this subsection with some remarks on Nakayama's 
fundamental paper \cite{nakayama1}. 

\begin{rem}\label{c-rem11.12}
Example \ref{c-ex11.9} shows that \cite[Proposition 4.3]{nakayama1} 
is not correct. In \cite[Section 4]{fujino-kawamata}, we gave an 
alternative simple proof of \cite[Theorem 5.5]{nakayama1} 
(see also \cite[5.3]{fujino-kawamata}). 
\end{rem}

\section{Cone theorem}\label{c-sec12}
In this section, we will explain the cone and contraction 
theorem of 
normal pairs for projective morphisms between 
complex analytic spaces. 
The proof given in this section is essentially the same as 
that in \cite{fujino-fundamental} for algebraic varieties. 
The main ingredients of this section is the 
basepoint-free theorem (see Theorem \ref{c-thm9.1}) and 
the rationality theorem (see Theorem \ref{c-thm10.1}). 

We first treat the contraction theorem, which is 
a direct consequence of 
the basepoint-free theorem:~Theorem \ref{c-thm9.1}. 
We will use it in the proof of the cone 
theorem:~Theorem \ref{c-thm12.2}. 

\begin{thm}[Contraction theorem]\label{c-thm12.1}
Let $\pi\colon X\to Y$ be a projective morphism of complex 
analytic spaces such that $X$ is a normal complex 
variety and let $W$ be a compact subset of $Y$ 
such that the dimension of $N^1(X/Y; W)$ is 
finite. 
Let $\Delta$ be an effective $\mathbb R$-divisor 
on $X$ such that $K_X+\Delta$ is $\mathbb R$-Cartier. 
Let $\mathcal H$ be a 
line bundle which is defined on some open neighborhood of 
$\pi^{-1}(W)$ and is 
$\pi$-nef over $W$ such that 
the extremal face 
\begin{equation*}
F=\mathcal H^{\perp}\cap \NE(X/Y; W)
\end{equation*} 
is $(K_X+\Delta)$-negative 
and contractible at $\Nlc (X, \Delta)$. 
Then, after shrinking $Y$ around $W$ suitably, 
there exists a projective morphism 
$\varphi_F\colon X\to Z$ over $Y$ with the following properties. 
\begin{itemize}
\item[$(1)$] 
Let $C$ be a projective integral curve on $X$ such that 
$\pi(C)$ is a point of $W$. 
Then $\varphi_F(C)$ is a point if and 
only if the numerical equivalence class 
$[C]$ of $C$ is in $F$. 
\item[$(2)$] The natural map 
$\mathcal O_Z\to (\varphi_F)_*\mathcal O_X$ is an isomorphism.  
\item[$(3)$] Let $\mathcal L$ be a line bundle on $X$ such 
that $\mathcal L\cdot C=0$ for 
every curve $C$ with $[C]\in F$. 
Assume that $\mathcal L^{\otimes m}|_{\Nlc(X, \Delta)}$ 
is $\varphi_{F}|_{\Nlc (X, \Delta)}$-generated for every $m \gg 0$. 
Then, after shrinking $Y$ around $W$ suitably again, 
there exists a line bundle $\mathcal L_Z$ on $Z$ such 
that $\mathcal L\simeq \varphi^*_F\mathcal L_Z$ holds. 
\end{itemize} 
\end{thm}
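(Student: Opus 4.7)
The plan is to apply the basepoint-free theorem (Theorem~\ref{c-thm9.1}) twice: first to a high multiple of a support function of $F$ in order to construct $\varphi_F$, and then to $m\mathcal{H}_0+\mathcal{L}$ in order to descend $\mathcal{L}$ in~(3). After shrinking $Y$ around $W$ I may assume $\mathcal{H}$ is a line bundle on $X$. Since $F$ is contractible at $\Nlc(X,\Delta)$, Definition~\ref{c-def11.7}(4) supplies a rational support function $\mathcal{H}_0$ of $F$ whose restriction to $\Nlc(X,\Delta)$ is $\pi|_{\Nlc(X,\Delta)}$-semiample over some open neighborhood of $W$. Clearing denominators, I take $\mathcal{H}_0$ integral; since the contraction depends only on the face $F$, using $\mathcal{H}_0$ in place of $\mathcal{H}$ is harmless for the construction.

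The first key step is to verify hypothesis~(i) of Theorem~\ref{c-thm9.1}: there is $a>0$ such that $a\mathcal{H}_0-(K_X+\Delta)$ is $\pi$-ample over $W$. Fix a $\pi$-ample line bundle $\mathcal{A}$ on $X$ and consider the compact slice $B:=\{z\in\NE(X/Y;W)\mid\mathcal{A}\cdot z=1\}$. On $B$ the function $z\mapsto \mathcal{H}_0\cdot z$ is continuous, nonnegative, and vanishes precisely on $F\cap B$, while the $(K_X+\Delta)$-negativity of $F$ forces $(K_X+\Delta)\cdot z<0$ on $F\cap B$. A standard compactness/separation argument on $B$ produces $a\gg0$ for which $a\mathcal{H}_0-(K_X+\Delta)$ is strictly positive on $\NE(X/Y;W)\setminus\{0\}$; Kleiman's criterion (Theorem~\ref{c-lem11.5}) then gives the desired $\pi$-ampleness over $W$. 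Hypothesis~(ii) of Theorem~\ref{c-thm9.1} is the semiampleness arrangement above. Thus Theorem~\ref{c-thm9.1} yields a relatively compact open neighborhood $U$ of $W$ and an $m_0$ such that $\mathcal{O}_X(m\mathcal{H}_0)$ is $\pi$-generated over $U$ for all $m\geq m_0$. After shrinking $Y$ to $U$, the surjection $\pi^*\pi_*\mathcal{O}_X(m\mathcal{H}_0)\twoheadrightarrow\mathcal{O}_X(m\mathcal{H}_0)$ induces a morphism $f_m\colon X\to\mathbb{P}_Y(\pi_*\mathcal{O}_X(m\mathcal{H}_0))$, whose Stein factorization $X\xrightarrow{\varphi_m}Z_m\to\mathbb{P}_Y$ satisfies $(\varphi_m)_*\mathcal{O}_X\simeq\mathcal{O}_{Z_m}$. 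A projective integral curve $C$ with $\pi(C)\in W$ is contracted by $\varphi_m$ iff $\mathcal{H}_0\cdot C=0$ iff $[C]\in F$, using $F=\mathcal{H}_0^{\perp}\cap\NE(X/Y;W)$. For any large $m,m'$ the morphism $f_{mm'}$ factors through both $f_m$ and $f_{m'}$, so their Stein factorizations are canonically identified to a common contraction $\varphi_F\colon X\to Z$, proving~(1) and~(2).

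For~(3), apply Theorem~\ref{c-thm9.1} to $L_m:=m\mathcal{H}_0+\mathcal{L}$ for $m\gg0$. A compactness argument on $B$ analogous to the one above shows that $L_m$ is $\pi$-nef over $W$ and that $a'L_m-(K_X+\Delta)$ is $\pi$-ample over $W$ for $m$ sufficiently large (depending on $\mathcal{L}$). Hypothesis~(ii) follows by combining the $\pi|_{\Nlc(X,\Delta)}$-semiampleness of $\mathcal{H}_0|_{\Nlc(X,\Delta)}$ with the additional hypothesis of~(3) on $\mathcal{L}|_{\Nlc(X,\Delta)}$. After a further shrinking of $Y$ around $W$, $\mathcal{O}_X(L_m)$ is $\pi$-generated. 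Since $L_m\cdot C=0$ for every curve $C$ with $[C]\in F$, the restriction of $\mathcal{O}_X(L_m)$ to every fiber of $\varphi_F$ is a globally generated numerically trivial line bundle, hence trivial; by cohomology and base change over the Stein target, $(\varphi_F)_*\mathcal{O}_X(L_m)$ is a line bundle $\mathcal{M}_m$ on $Z$ with $\varphi_F^*\mathcal{M}_m\simeq\mathcal{O}_X(L_m)$. The same reasoning applied to $m\mathcal{H}_0$ gives a line bundle $\mathcal{N}_m$ on $Z$ with $\varphi_F^*\mathcal{N}_m\simeq\mathcal{O}_X(m\mathcal{H}_0)$. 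Setting $\mathcal{L}_Z:=\mathcal{M}_m\otimes\mathcal{N}_m^{-1}$ yields $\varphi_F^*\mathcal{L}_Z\simeq\mathcal{L}$.

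The main obstacle is the careful bookkeeping of successive shrinkings of $Y$ around $W$ through each application of the basepoint-free theorem and through the Stein factorization, together with the verification that the Stein factorizations $\varphi_m$ are canonically compatible as $m$ varies so that $\varphi_F$ is well-defined independently of auxiliary choices. A secondary technical point is the simultaneous verification of hypothesis~(ii) of Theorem~\ref{c-thm9.1} for both $m\mathcal{H}_0$ and $m\mathcal{H}_0+\mathcal{L}$ on $\Nlc(X,\Delta)$, where the contractibility of $F$ and the extra assumption of~(3) on $\mathcal{L}|_{\Nlc(X,\Delta)}$ need to be combined.
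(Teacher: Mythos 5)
Your construction of $\varphi_F$ and your proofs of (1) and (2) are essentially the paper's argument: verify hypotheses (i) and (ii) of Theorem \ref{c-thm9.1} for a (rational, after clearing denominators) support function of $F$ whose restriction to $\Nlc(X,\Delta)$ is relatively semiample, conclude that a multiple is $\pi$-generated after shrinking $Y$ around $W$, and take the Stein factorization; the compactness-on-the-slice argument for hypothesis (i) via Theorem \ref{c-lem11.5} is exactly what is implicit in the paper, and the extra discussion of compatibility of the Stein factorizations for different $m$ is harmless.

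Part (3), however, has a genuine gap. You apply Theorem \ref{c-thm9.1} over $Y$ to $L_m:=m\mathcal H_0+\mathcal L$ and assert that ``a compactness argument on $B$ analogous to the one above'' shows $L_m$ is $\pi$-nef over $W$ (and that $a'L_m-(K_X+\Delta)$ is $\pi$-ample over $W$) for $m\gg 0$. This is false for a general closed convex cone: it requires the ratio $(-\mathcal L\cdot z)/(\mathcal H_0\cdot z)$ to stay bounded as $z\in\NE(X/Y;W)$ approaches $F$, and nothing forces this if the cone approaches the hyperplane $\mathcal H_0^{\perp}$ tangentially. Concretely, for the closed cone in $\mathbb R^3$ generated by the rays through $(1,s^2,s)$, $s\in[0,1]$, with $F$ the ray through $(1,0,0)$, $\mathcal H_0=y$ and $\mathcal L=-t$, one has $\mathcal H_0\geq 0$ with $\mathcal H_0^{\perp}\cap\NE=F$ and $\mathcal L|_F=0$, yet $m\mathcal H_0+\mathcal L$ is negative somewhere on the cone for every $m$. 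In the compactness argument for hypothesis (i) of the first step this problem does not arise because $-(K_X+\Delta)$ is \emph{strictly} positive on $F\setminus\{0\}$, whereas here $\mathcal L$ merely vanishes on $F$. Ruling out the tangential behaviour would require local polyhedrality of $\NE(X/Y;W)$ near $F$, i.e.\ the cone theorem, which in the paper is proved \emph{after} (and using) Theorem \ref{c-thm12.1}, so invoking it here would at least require rearranging the logic. The paper avoids the issue entirely: since $\NE\!\left(X/Z;\pi_Z^{-1}(W)\right)=F$ by construction, $\mathcal L$ is numerically trivial over $\pi_Z^{-1}(W)$ and $-(K_X+\Delta)$ is $\varphi_F$-ample over $\pi_Z^{-1}(W)$, so Theorem \ref{c-thm9.1} applies over $Z$ directly to $\mathcal L$ (this is also where the hypothesis that $\mathcal L^{\otimes m}|_{\Nlc(X,\Delta)}$ is $\varphi_F|_{\Nlc(X,\Delta)}$-generated is used verbatim, whereas your route would additionally have to convert $\varphi_F$-generation into $\pi$-generation on $\Nlc(X,\Delta)$); then $\mathcal L^{\otimes m}$ and $\mathcal L^{\otimes(m+1)}$ are both pull-backs from $Z$ after shrinking around $\pi_Z^{-1}(W)$, and their difference gives $\mathcal L_Z$. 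Note also that your descent step speaks of triviality on ``every fiber of $\varphi_F$'', but numerical triviality of $\mathcal L$ is only known on curves lying over $W$, so the argument must in any case be localized around $\pi_Z^{-1}(W)$ as in the paper.
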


As we mentioned above, Theorem \ref{c-thm12.1} 
easily follows from the basepoint-free theorem:~Theorem \ref{c-thm9.1}. 

\begin{proof}[Proof of Theorem \ref{c-thm12.1}]
Since $F$ is contractible at $\Nlc(X, \Delta)$ 
by assumption, 
we may assume that $\mathcal H|_{\Nlc(X, \Delta)}$ 
is $\pi|_{\Nlc(X, \Delta)}$-semiample over some 
open neighborhood of $W$. 
Since $F$ is $(K_X+\Delta)$-negative by assumption, 
we can take some positive integer $a$ such that 
$a\mathcal H-(K_X+\Delta)$ is $\pi$-ample 
over $W$. 
By the basepoint-free theorem (see 
Theorem \ref{c-thm9.1}), after shrinking $Y$ around 
$W$ suitably, $\mathcal H^{\otimes m}$ is $\pi$-generated 
for some positive integer $m$. 
We take the Stein factorization of the associated morphism. 
Then we can obtain a 
contraction morphism 
$\varphi_F\colon X\to Z$ over $Y$ satisfying the properties (1) and (2). 
We consider $\varphi_F\colon X\to Z$ and $\NE(X/Z; \pi_Z^{-1}(W))$, 
where $\pi_Z\colon Z\to Y$ is the structure morphism. 
Then $\NE(X/Z; \pi_{Z}^{-1}(W))=F$ holds 
by construction, $\mathcal L$ is numerically 
trivial over $\pi_Z^{-1}(W)$, and $-(K_X+\Delta)$ is $\varphi_F$-ample 
over $\pi_{Z}^{-1}(W)$. 
We use the basepoint-free theorem over $Z$ 
(see Theorem \ref{c-thm9.1}). 
Then, after shrinking $Z$ around 
$\pi^{-1}_Z(W)$ suitably, 
both $\mathcal L^{\otimes m}$ and $\mathcal L^{\otimes (m+1)}$ 
are pull-backs of line bundles on $Z$. 
Their difference gives a line bundle $\mathcal L_Z$ 
on $Z$ such that 
$\mathcal L\simeq \varphi^*_F\mathcal L_Z$ holds. 
We finish the proof of Theorem \ref{c-thm12.1}.  
\end{proof}

The following theorem is the main result of this section, which 
is the cone theorem of normal pairs 
for projective morphisms between complex 
analytic spaces. 

\begin{thm}[{Cone theorem, see 
\cite[Theorem 16.6]{fujino-fundamental}}]\label{c-thm12.2}
Let $\pi\colon X\to Y$ be a projective morphism of complex 
analytic spaces such that $X$ is a normal complex variety 
and let $W$ be a compact subset of $Y$ 
such that the dimension of $N^1(X/Y; W)$ is finite. 
Let $\Delta$ be an effective $\mathbb R$-divisor 
on $X$ such that $K_X+\Delta$ is $\mathbb R$-Cartier. 
Then we have the following 
properties. 
\begin{itemize}
\item[$(1)$] We can write 
\begin{equation*}
\NE(X/Y; W)=\NE(X/Y; W)_{(K_X+\Delta)\geq 0} 
+\NE(X/Y; W)_{\Nlc (X, \Delta)}+\sum R_j, 
\end{equation*} 
where $R_j$'s are the $(K_X+\Delta)$-negative 
extremal rays of $\NE(X/Y; W)$ that are 
rational and relatively ample at $\Nlc (X, \Delta)$. 
In particular, each $R_j$ is spanned by 
an integral curve $C_j$ on $X$ such that 
$\pi(C_j)$ is a point of $W$.  
\item[$(2)$] Let $\mathcal A$ be a $\pi$-ample $\mathbb R$-line 
bundle defined on some open neighborhood of $\pi^{-1}(W)$. 
Then there are only finitely many $R_j$'s included in 
$\NE(X/Y; W)_{(K_X+\Delta+\mathcal A)<0}$. In particular, 
the $R_j$'s are discrete in the half-space 
$\NE(X/Y; W)_{(K_X+\Delta)<0}$. 
\item[$(3)$] Let $F$ be a $(K_X+\Delta)$-negative extremal 
face of $\NE(X/Y; W)$ that is 
relatively ample at $\Nlc (X, \Delta)$. 
Then $F$ is a rational face. 
In particular, $F$ is contractible at $\Nlc (X, \Delta)$. 
\end{itemize}
\end{thm}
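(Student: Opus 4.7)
The plan is to deduce Theorem \ref{c-thm12.2} from the basepoint-free theorem (Theorem \ref{c-thm9.1}), the rationality theorem (Theorem \ref{c-thm10.1}), and the contraction theorem (Theorem \ref{c-thm12.1}), following the classical Kawamata--Mori--Koll\'ar strategy adapted to the analytic setting. A preliminary reduction via Lemma \ref{c-lem4.4}, after shrinking $Y$ around $W$, allows us to write $\Delta = \sum_{i=1}^k r_i \Delta_i$ with each $K_X+\Delta_i$ being $\mathbb Q$-Cartier and $\Nlc(X,\Delta_i) = \Nlc(X,\Delta)$; this is needed because Theorem \ref{c-thm10.1} is formulated for $\mathbb Q$-divisors. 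I would then establish assertions (1) and (2) together and deduce (3) as a corollary.

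The core step is the following claim: for any fixed $\pi$-ample $\mathbb R$-line bundle $\mathcal A$ on $X$, there exist finitely many rational $(K_X+\Delta)$-negative extremal rays $R_1, \ldots, R_N$, each relatively ample at $\Nlc(X,\Delta)$, such that
\begin{equation*}
\NE(X/Y;W) = \NE(X/Y;W)_{(K_X+\Delta+\mathcal A)\geq 0} + \NE(X/Y;W)_{\Nlc(X,\Delta)} + \sum_{j=1}^N R_j.
\end{equation*}
To establish this, I would argue by contradiction. If the claim failed, then by Hahn--Banach in the finite-dimensional space $N_1(X/Y;W)$ (here the hypothesis $\dim_{\mathbb R}N^1(X/Y;W)<\infty$ is essential) one could find a class $\mathcal L$ positive on $\NE(X/Y;W)_{\Nlc(X,\Delta)}\setminus\{0\}$, non-negative on the right-hand side but not $\pi$-nef over $W$; after a small $\pi$-ample perturbation, $\mathcal L$ may be taken to be the class of a Cartier divisor. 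Applying the rationality theorem to $H:=\mathcal L+\varepsilon \mathcal A$ for suitable small $\varepsilon>0$ yields that
\begin{equation*}
r := \sup\{t \geq 0 : \mathcal L + t(K_X+\Delta)\ \text{is $\pi$-nef over $W$}\}
\end{equation*}
is a positive rational number with denominator bounded by $a(d+1)$, where $d=\max_{w\in W}\dim \pi^{-1}(w)$. Then $\mathcal H := \mathcal L + r(K_X+\Delta)$ is rational, $\pi$-nef, not $\pi$-ample, and its orthogonal face $F := \mathcal H^\perp \cap \NE(X/Y;W)$ is a rational $(K_X+\Delta)$-negative extremal face relatively ample at $\Nlc(X,\Delta)$. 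Finite-dimensionality forces $F$ to contain an extremal ray $R$ of the same type, and the fact that $R$ does not lie in the alleged right-hand side gives $R\subset \NE(X/Y;W)_{(K_X+\Delta+\mathcal A)<0}$. Adding $R$ to the list and iterating, one either reaches a finite list that verifies the claim or produces an infinite sequence of rays in $\NE(X/Y;W)_{(K_X+\Delta+\mathcal A)<0}$; the latter is ruled out by the compactness of the slice $\NE(X/Y;W) \cap \{(K_X+\Delta+\tfrac{1}{2}\mathcal A) = -1\}$ and the discreteness of extremal rays on a compact convex slice.

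Assertions (1) and (2) follow by letting $\mathcal A$ range over a countable cofinal family of $\pi$-ample classes tending to zero: each choice of $\mathcal A$ gives finitely many rays, and the union is the required countable family, discrete in $\NE(X/Y;W)_{(K_X+\Delta)<0}$. For assertion (3), given a $(K_X+\Delta)$-negative extremal face $F$ relatively ample at $\Nlc(X,\Delta)$, parts (1) and (2) imply that $F$ is the sum of finitely many $R_{j_1},\ldots,R_{j_s}$ from the countable list. Taking a positive $\mathbb Q$-linear combination of the rational support functions of these rays produces a rational support function for $F$; after a small ample perturbation (as in Remark \ref{c-rem11.8}) it becomes $\pi|_{\Nlc(X,\Delta)}$-ample and hence $\pi|_{\Nlc(X,\Delta)}$-semiample over a neighborhood of $W$, so $F$ is contractible at $\Nlc(X,\Delta)$, and Theorem \ref{c-thm12.1} supplies the associated contraction.

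The main obstacle is the contradiction argument invoking the rationality theorem: producing a class $\mathcal L$ in a form to which Theorem \ref{c-thm10.1} directly applies, arranging the relative ampleness of $\mathcal L|_{\Nlc(X,\Delta)}$ required by hypothesis (ii) of that theorem, and controlling the a priori finiteness of rays in $\NE(X/Y;W)_{(K_X+\Delta+\mathcal A)<0}$ via compactness of a slice of $\NE(X/Y;W)$. Auxiliary complications include the shrinking of $Y$ around $W$ at each invocation of Theorems \ref{c-thm9.1} and \ref{c-thm10.1}, and the reduction to $\mathbb Q$-Cartier via Lemma \ref{c-lem4.4} applied simultaneously to the finitely many auxiliary pairs $(X,\Delta_i)$.
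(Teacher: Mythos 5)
Your overall route (separating functional plus the rationality theorem, then the contraction theorem) is the same as the paper's, but several steps as written do not work. The decisive gap is the termination of your iteration: you rule out an infinite sequence of rays in $\NE(X/Y;W)_{(K_X+\Delta+\mathcal A)<0}$ by appealing to ``the discreteness of extremal rays on a compact convex slice.'' No such general principle holds: a circular cone in $\mathbb R^3$ has a continuum of extremal rays, all meeting any compact slice, and moreover your slice $\NE(X/Y;W)\cap\{(K_X+\Delta+\tfrac{1}{2}\mathcal A)=-1\}$ need not even be compact (it is unbounded as soon as some nonzero $z\in \NE(X/Y;W)$ satisfies $(K_X+\Delta+\tfrac{1}{2}\mathcal A)\cdot z=0$). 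Discreteness of the negative extremal rays is exactly what has to be proved, and in the paper it is extracted from the bounded-denominator conclusion of Theorem \ref{c-thm10.1}: each ray is first contracted (Theorem \ref{c-thm12.1}), a curve $C_j$ in a fiber gives $r_j=-A\cdot C_j/\left((K_X+\Delta)\cdot C_j\right)$ with denominator at most $a(\dim X+1)$, and then, with respect to a basis $K_X+\Delta, H_1,\dots,H_{\rho-1}$ of $N^1(X/Y;W)$, the rays meet a fixed lattice in the hyperplane $a(K_X+\Delta)\cdot z=-1$. You quote the denominator bound $a(d+1)$ but never use it, so your induction has no reason to stop.

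There are further gaps. From the rational face $F=\mathcal H^{\perp}\cap\NE(X/Y;W)$ you ``extract an extremal ray $R$ of the same type'': an extremal ray of $F$ is indeed an extremal ray of $\NE(X/Y;W)$, is $(K_X+\Delta)$-negative and relatively ample at $\Nlc(X,\Delta)$, but nothing guarantees it is \emph{rational}; the paper gets from faces down to rays by contracting $F$ (Theorem \ref{c-thm12.1}) and re-running the separation/rationality argument for $X\to Z$, by induction on $\dim F$. Your proof of (3) is also flawed: if $F$ is spanned by rays $R_{j_1},\dots,R_{j_s}$ with $s\geq 2$, a positive combination of rational support functions of the individual rays vanishes on $\NE(X/Y;W)$ only along $\bigcap_k R_{j_k}=\{0\}$, so it is not a support function of $F$; the paper instead observes that $F^{\perp}$ is a rational subspace and that the support functions of $F$ form a non-empty open subset of it, hence contain a rational point. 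Finally, the reduction to the $\mathbb Q$-Cartier case is more delicate than ``write $\Delta=\sum_i r_i\Delta_i$'': the rationality theorem cannot be applied to $K_X+\Delta$ itself when it is only $\mathbb R$-Cartier (there is no Cartier index $a$), so one must prove the theorem for each $(X,\Delta_i)$ and then combine, as the paper does in its last step: (2) for $\Delta$ follows from (2) for the $\Delta_i$ via $K_X+\Delta+\mathcal A=\sum_i r_i(K_X+\Delta_i+\mathcal A)$, (1) is then a formal consequence of (2), and (3) is obtained by perturbing $\Delta$ to a nearby $\mathbb Q$-divisor $\Delta^{\dag}$ with the same non-lc ideal (Lemma \ref{c-lem4.4}).
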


By combining Theorem \ref{c-thm12.2} with Theorem \ref{c-thm12.1}, 
we obtain the cone and contraction theorem 
of normal pairs for projective morphisms between complex 
analytic spaces. 

\begin{proof}[Proof of Theorem \ref{c-thm12.2}]
Without loss of generality, we can freely shrink $Y$ around 
$W$ suitably throughout this proof. 
From Step \ref{c-thm12.2-step1} to Step \ref{c-thm12.2-step5}, 
we will prove Theorem \ref{c-thm12.2} under 
the extra assumption that $K_X+\Delta$ is $\mathbb Q$-Cartier. 
Then, in Step \ref{c-thm12.2-step6}, we will treat the general case. 
We note that we may assume that $\dim _{\mathbb R}
N_1(X/Y; W)\geq 2$ and $K_X+\Delta$ is not $\pi$-nef over $W$. 
Otherwise, the theorem is obvious. 

\setcounter{step}{0}
\begin{step}\label{c-thm12.2-step1} 
In this step, we will prove: 
\setcounter{cla}{0}
\begin{cla}\label{c-thm12.2-claim1} 
When $K_X+\Delta$ is $\mathbb Q$-Cartier, 
the following equality 
\begin{equation*}
\NE(X/Y; W)=\overline 
{\NE(X/Y; W)_{(K_X+\Delta)\geq 0} 
+\NE(X/Y; W)_{\Nlc (X, \Delta)}+\sum _F F} 
\end{equation*} 
holds, 
where $F$'s vary among all rational 
proper $(K_X+\Delta)$-negative extremal faces that are relatively 
ample at $\Nlc (X, \Delta)$. 
\end{cla}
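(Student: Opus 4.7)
Let $B$ denote the closed convex cone on the right-hand side; the inclusion $B\subseteq\NE(X/Y;W)$ is immediate, so I argue the reverse by contradiction, assuming $B\subsetneq\NE(X/Y;W)$. Because $K_X+\Delta$ is $\mathbb{Q}$-Cartier, the subcones $\NE(X/Y;W)_{(K_X+\Delta)\geq 0}$ and $\NE(X/Y;W)_{\Nlc(X,\Delta)}$ are rational, and each face $F$ in the sum is rational by hypothesis, so $B$ is a closed rational convex cone in the finite-dimensional space $N_1(X/Y;W)$. Since classes of projective integral curves over $W$ yield a dense set of rational points of $\NE(X/Y;W)$, and $\NE(X/Y;W)\setminus B$ is open in $\NE(X/Y;W)$, I may choose a rational $z_0\in\NE(X/Y;W)\setminus B$, which automatically satisfies $(K_X+\Delta)\cdot z_0<0$. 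Separating the rational point $z_0$ from the rational closed convex cone $B$ by a rational hyperplane yields $\mathcal{M}\in N^1(X/Y;W)_{\mathbb{Q}}$ with $\mathcal{M}\cdot z\geq 0$ for every $z\in B$ and $\mathcal{M}\cdot z_0<0$. After adding a sufficiently small rational positive multiple of a rational $\pi$-ample class to $\mathcal{M}$, I may further arrange $\mathcal{M}\cdot z>0$ for every nonzero $z\in\NE(X/Y;W)_{\Nlc(X,\Delta)}$.

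Fix a $\pi$-ample Cartier divisor $H$ on $X$ with $H|_{\Nlc(X,\Delta)}$ $\pi$-ample over $W$, and set
$$\mu:=\inf\bigl\{t\geq 0\,\bigm|\,\mathcal{M}+tH\text{ is $\pi$-nef over }W\bigr\}.$$
Kleiman's criterion (Theorem \ref{c-lem11.5}) together with the finite-dimensionality of $N^1(X/Y;W)$ give $0<\mu<\infty$, so $\mathcal{L}_\mu:=\mathcal{M}+\mu H$ is $\pi$-nef but not $\pi$-ample over $W$, and its supporting face $F_\mu:=\mathcal{L}_\mu^\perp\cap\NE(X/Y;W)$ is proper and nontrivial. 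For $z\in F_\mu$ one has $\mathcal{M}\cdot z=-\mu H\cdot z\leq 0$; combined with the nonnegativity of $\mathcal{M}$ on $B$, the strict positivity of $\mathcal{M}$ on $\NE(X/Y;W)_{\Nlc(X,\Delta)}\setminus\{0\}$, and the ampleness of $H$, this forces $F_\mu$ to be $(K_X+\Delta)$-negative, relatively ample at $\Nlc(X,\Delta)$, and to satisfy $F_\mu\cap B=\{0\}$. If $\mu\in\mathbb{Q}$, then $\mathcal{L}_\mu$ is $\mathbb{Q}$-Cartier, so $F_\mu$ is a rational $(K_X+\Delta)$-negative extremal face relatively ample at $\Nlc(X,\Delta)$; by the definition of $B$, $F_\mu\subseteq B$, contradicting $F_\mu\cap B=\{0\}$.

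For the remaining case $\mu\notin\mathbb{Q}$, I exploit the uniform denominator bound in the rationality theorem. Let $N_0$ be a positive integer such that $N_0\mathcal{M}$ is Cartier. For each rational $q>\mu$ sufficiently close to $\mu$, $H'_q:=\mathcal{M}+qH$ is $\mathbb{Q}$-Cartier and $\pi$-ample over $W$ with $\pi$-ample restriction to $\Nlc(X,\Delta)$. Writing $q=p/n$ in lowest terms, the divisor $MH'_q$ with $M:=N_0n$ is a $\pi$-ample Cartier divisor, and I set
$$r_q:=\sup\bigl\{s\geq 0\,\bigm|\,MH'_q+s(K_X+\Delta)\text{ is $\pi$-nef over $W$ and $\pi$-ample on }\Nlc(X,\Delta)\bigr\}.$$
Since $MH'_q$ is $\pi$-ample and $K_X+\Delta$ is not $\pi$-nef over $W$, $0<r_q<\infty$, and the rationality theorem (Theorem \ref{c-thm10.1}) yields $r_q\in\mathbb{Q}$ with denominator at most $a(d+1)$ --- the bound depending only on $K_X+\Delta$ and $d:=\max_{w\in W}\dim\pi^{-1}(w)$, not on $M$ or $q$ --- so $r_q\geq 1/(a(d+1))$. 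Conversely, for any nonzero $z\in F_\mu$, using $\mathcal{M}\cdot z=-\mu H\cdot z$ I compute
$$(MH'_q+s(K_X+\Delta))\cdot z=M(q-\mu)H\cdot z+s(K_X+\Delta)\cdot z,$$
so $\pi$-nefness at $z$ forces $s\leq M(q-\mu)H\cdot z/|(K_X+\Delta)\cdot z|$; taking the infimum over $z\in F_\mu\setminus\{0\}$ gives $r_q\leq Mc(q-\mu)$ with $c:=\inf_{z\in F_\mu\setminus\{0\}}H\cdot z/|(K_X+\Delta)\cdot z|>0$ (positivity of $c$ follows from the $(K_X+\Delta)$-negativity of $F_\mu$, ampleness of $H$, and compactness of any slice of $F_\mu$). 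Dirichlet approximation of the irrational $\mu$ provides rational $q>\mu$ with $n(q-\mu)$ arbitrarily small; choosing it less than $1/(N_0c\,a(d+1))$ makes $Mc(q-\mu)=N_0nc(q-\mu)<1/(a(d+1))$, contradicting $r_q\geq 1/(a(d+1))$.

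\textbf{Main obstacle.} The irrational-$\mu$ case is the main obstacle. Its success rests on the fact that the denominator bound $a(d+1)$ in the rationality theorem is independent of the chosen $\pi$-ample Cartier divisor, which permits pitting the rationality-theoretic lower bound on $r_q$ against an upper bound engineered via Dirichlet approximation of $\mu$. Checking that $(MH'_q+r_q(K_X+\Delta))|_{\Nlc(X,\Delta)}$ is $\pi$-ample over $W$ --- the second hypothesis of Theorem \ref{c-thm10.1} --- relies on the strict positivity of $\mathcal{M}$ on $\NE(X/Y;W)_{\Nlc(X,\Delta)}\setminus\{0\}$ arranged at the outset.
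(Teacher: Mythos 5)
Your route is genuinely different from the paper's, and the central idea of the irrational case (pitting the uniform denominator bound $a(d+1)$ of Theorem \ref{c-thm10.1} against a Dirichlet approximation of $\mu$) is attractive; but there is a real gap exactly at the point you gesture at. To get the lower bound $r_q\geq 1/(a(d+1))$ you must apply Theorem \ref{c-thm10.1} with $r=r_q$, and its hypothesis (ii) requires $(MH'_q+r_q(K_X+\Delta))|_{\Nlc(X,\Delta)}$ to be $\pi|_{\Nlc(X,\Delta)}$-ample over $W$ \emph{at the threshold itself}. For $s<r_q$ the class is ample on $X$ (convex combination of the ample $MH'_q$ and a nef class), so the condition holds below the threshold; but at $s=r_q$ nefness only gives that the class is $\geq 0$ on curves in $\Nlc(X,\Delta)$, and the face cut out at the threshold may a priori meet $\NE(X/Y;W)_{\Nlc(X,\Delta)}$. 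The strict positivity of $\mathcal M$ on $\NE(X/Y;W)_{\Nlc(X,\Delta)}\setminus\{0\}$ alone does not exclude this, because the term $r_q(K_X+\Delta)$ can be arbitrarily negative on $\Nlc$-curves when $r_q$ is not small; and without (ii) the rationality theorem gives you nothing for that $q$. The gap is repairable, but only by intertwining the verification with the Dirichlet choice: for those $q$ your a priori bound $r_q\leq N_0c\,n(q-\mu)$ makes $r_q$ small, and then strict positivity of $\mathcal L_\mu=\mathcal M+\mu H$ on $\NE(X/Y;W)_{\Nlc(X,\Delta)}\setminus\{0\}$ plus a compact-slice estimate yields positivity of $MH'_q+s(K_X+\Delta)$ on that cone for all $0\leq s\leq r_q$, which one then upgrades to $\pi|_{\Nlc(X,\Delta)}$-ampleness over $W$ by the nef-plus-small-ample device of Remark \ref{c-rem11.8} (not by Kleiman's criterion for $\Nlc(X,\Delta)$, whose finite-dimensionality hypothesis you do not have). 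A second, smaller gap: you assert that $F_\mu$ is proper, which fails exactly when $\mathcal L_\mu=0$ in $N^1(X/Y;W)$, i.e.\ when $\mathcal M$ happens to be numerically $-\mu H$; your constraints on $\mathcal M$ only forbid this when $B\neq\{0\}$, and since $\mu$ is then rational your rational-$\mu$ branch breaks in that corner case. It must be excluded, e.g.\ using the ambient reduction to $\dim_{\mathbb R}N_1(X/Y;W)\geq 2$ made just before the Claim and a more careful choice of $\mathcal M$.

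For contrast, the paper avoids both difficulties by a different normalization of the separating class: it chooses $M$ positive on the whole right-hand cone and, via the dual cone of $\NE(X/Y;W)_{(K_X+\Delta)\geq 0}$, writes $M=H+p(K_X+\Delta)$ with $H$ a $\pi$-ample $\mathbb Q$-line bundle and $p\in\mathbb Q_{\geq 0}$. The threshold is then taken in the $K_X+\Delta$-direction starting from $H$, so Theorem \ref{c-thm10.1} applies directly and the threshold $r$ is rational outright (no irrational case, no Diophantine argument), hypothesis (ii) at the threshold is automatic because $H+r(K_X+\Delta)$ with $r<p$ is a positive combination of $H$ and $M$, both positive on $\NE(X/Y;W)_{\Nlc(X,\Delta)}\setminus\{0\}$, and the contradiction is the purely numerical clash $r<p$ versus $p<r$. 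If you close the two points above, your argument does constitute an alternative proof, at the cost of a noticeably more delicate analysis than the paper's.
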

We note that in Claim \ref{c-thm12.2-claim1} 
$\raise0.5ex\hbox{\textbf{-----}}$ 
denotes the closure 
with respect to the real topology. 
\begin{proof}[Proof of Claim \ref{c-thm12.2-claim1}]We put 
\begin{equation*}
\mathfrak B=\overline {\NE(X/Y; W)_{(K_X+\Delta)\geq 0} 
+\NE(X/Y; W)_{\Nlc (X, \Delta)}+\sum _F F}. 
\end{equation*}
The inclusion $\NE(X/Y; W)\supset \mathfrak B$ obviously holds 
by definition. 
We note that each $F$ is spanned by curves 
on $X$ mapped to points in $W$ by 
Theorem \ref{c-thm12.1} (1). 
From now on, we suppose 
$\NE(X/Y; W)\ne \mathfrak B$. 
Then we will derive a contradiction. 
We can take a separating function $M$ which 
is a line bundle 
on some open neighborhood of $\pi^{-1}(W)$ 
and is not a multiple of $K_X+\Delta$ in $N^1(X/Y; W)$ such 
that $M>0$ on $\mathfrak B\setminus \{0\}$ and 
$M\cdot z_0<0$ for some $z_0\in \NE(X/Y; W)$. 
Let $\mathcal C$ be the dual 
cone of $\NE(X/Y; W)_{(K_X+\Delta)\geq 0}$, 
that is, 
\begin{equation*}
\mathcal C=\{D\in N^1(X/Y; W)\, | \, D\cdot z\geq 0 \ \text{for}\ z\in 
\NE(X/Y; W)_{(K_X+\Delta) \geq 0}\}. 
\end{equation*}
Then $\mathcal C$ is generated by $K_X+\Delta$ and 
$\mathbb R$-line bundles on $X$ 
which are $\pi$-nef over $W$. 
Since $M>0$ on $\NE(X/Y; W)_{(K_X+\Delta)\geq 0}\setminus \{0\}$, 
$M$ is in the interior of $\mathcal C$. 
Hence there 
exists a $\pi$-ample $\mathbb R$-line bundle $A$ such that 
\begin{equation*}
M-A=L'+p(K_X+\Delta)
\end{equation*} 
in $N^1(X/Y; W)$, where 
$L'$ is an $\mathbb R$-line bundle on 
some open neighborhood of $\pi^{-1}(W)$ which is $\pi$-nef 
over $W$, and $p$ is a non-negative rational number. 
Therefore, $M$ is expressed in the form 
\begin{equation*}
M=H+p(K_X+\Delta)
\end{equation*} in 
$N^1(X/Y; W)$, where $H=A+L'$ is a 
$\mathbb Q$-line bundle on $X$ which is $\pi$-ample 
over $W$. 
The rationality theorem (see Theorem \ref{c-thm10.1}) 
implies that there exists a positive 
rational number $r<p$ such that 
\begin{equation*}
L=H+r(K_X+\Delta)
\end{equation*} 
is 
$\pi$-nef over $W$ but not 
$\pi$-ample over $W$, and $L|_{\Nlc (X, \Delta)}$ is 
$\pi|_{\Nlc (X, \Delta)}$-ample over $W$. 
We note that $L\ne 0$ in $N^1(X/Y; W)$ since 
$M$ is not a multiple of $K_X+\Delta$. 
Thus the extremal face $F_L$ associated to 
the support function $L$ is contained 
in $\mathfrak B$, which implies $M>0$ on $F_L$. 
Therefore, $p<r$. It is a contradiction. 
This completes the proof of Claim \ref{c-thm12.2-claim1}. 
\end{proof}
\end{step} 

\begin{step}\label{c-thm12.2-step2}
In this step, we will prove: 
\begin{cla}\label{c-thm12.2-claim2}
In the equality in Claim \ref{c-thm12.2-claim1}, 
we can assume that every extremal face $F$ is one-dimensional. 
\end{cla}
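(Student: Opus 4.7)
The plan is to prove Claim \ref{c-thm12.2-claim2} by induction on $\dim_{\mathbb R}F$, using the contraction theorem (Theorem \ref{c-thm12.1}) to replace $F$ with a strictly smaller cone to which the inductive hypothesis applies. The base case $\dim F=1$ is trivial. For the inductive step, suppose $\dim F\geq 2$. Because $F$ is rational, $(K_X+\Delta)$-negative, and relatively ample at $\Nlc(X,\Delta)$, the basepoint-free theorem (Theorem \ref{c-thm9.1}) applied to a rational support function $\mathcal H$ of $F$ shows that $F$ is contractible at $\Nlc(X,\Delta)$. Invoking Theorem \ref{c-thm12.1}, and shrinking $Y$ around $W$ if necessary, I obtain a contraction morphism $\varphi_F\colon X\to Z$ over $Y$, together with the structure morphism $\pi_Z\colon Z\to Y$.

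Next I would set $W':=\pi_Z^{-1}(W)$; this is a compact subset of $Z$, and by Remark \ref{c-rem11.3} the dimension of $N^1(X/Z;W')$ is finite. By the defining property of $\varphi_F$ in Theorem \ref{c-thm12.1}(1), the natural injection $N_1(X/Z;W')\hookrightarrow N_1(X/Y;W)$ (dual to the surjection in Remark \ref{c-rem11.3}) identifies $\NE(X/Z;W')$ with $F$. Since $F$ is $(K_X+\Delta)$-negative and satisfies $F\cap \NE(X/Y;W)_{\Nlc(X,\Delta)}=\{0\}$, under this identification both $\NE(X/Z;W')_{(K_X+\Delta)\geq 0}$ and $\NE(X/Z;W')_{\Nlc(X,\Delta)}$ are zero. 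Now I apply Claim \ref{c-thm12.2-claim1}, already established in Step \ref{c-thm12.2-step1}, to the morphism $\varphi_F\colon X\to Z$ with compact subset $W'$ to conclude
\begin{equation*}
F=\NE(X/Z;W')=\overline{\sum_{F'}F'},
\end{equation*}
where each $F'$ is a rational proper $(K_X+\Delta)$-negative extremal face of $\NE(X/Z;W')$ that is relatively ample at $\Nlc(X,\Delta)$. A standard convex-geometric argument shows a face of a face of a cone is itself a face, so each $F'$ is an extremal face of $\NE(X/Y;W)$; its rationality in $N^1(X/Y;W)$ is obtained by taking $\varphi_F^*\mathcal H'+n\mathcal H$ for a $\mathbb Q$-support function $\mathcal H'$ of $F'$ in $N^1(X/Z;W')$ and $n\gg 0$, which is nef on $\NE(X/Y;W)$ and vanishes exactly on $F'$. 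Since $F'\subsetneq F$, we have $\dim F'<\dim F$, and the induction hypothesis expresses each $F'$ as a sum of one-dimensional extremal faces of $\NE(X/Y;W)$; substituting this into the displayed decomposition and into Claim \ref{c-thm12.2-claim1} finishes the proof.

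The main technical obstacle is checking that the relevant structure transfers cleanly under the identification $F\cong\NE(X/Z;W')$: that extremal-face, rational, $(K_X+\Delta)$-negative, and relatively-ample-at-$\Nlc(X,\Delta)$ status pass back and forth between $\NE(X/Y;W)$ and $\NE(X/Z;W')$, and that a $\mathbb Q$-support function of $F'$ inside $N^1(X/Z;W')$ can be upgraded to a $\mathbb Q$-support function of $F'$ inside $N^1(X/Y;W)$. These are routine verifications using Kleiman's criterion (Theorem \ref{c-lem11.5}) and the finite-dimensionality of the two $N^1$ spaces, but they must be carried out carefully because the $N^1$ spaces are not identical.
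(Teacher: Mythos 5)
Your overall strategy coincides with the paper's: contract $F$ by Theorem \ref{c-thm12.1}, identify $F=\NE(X/Z;\pi^{-1}_Z(W))$, apply Claim \ref{c-thm12.2-claim1} to $\varphi_F\colon X\to Z$, observe that the resulting faces are smaller-dimensional $(K_X+\Delta)$-negative extremal faces of the original cone, and induct on dimension. (A small slip: contractibility of $F$ at $\Nlc(X,\Delta)$ does not come from Theorem \ref{c-thm9.1} but simply from relative ampleness at $\Nlc(X,\Delta)$, cf.\ Remark \ref{c-rem11.8}; Theorem \ref{c-thm9.1} is what is used inside Theorem \ref{c-thm12.1}.)

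The genuine gap is the rationality upgrade. Because your induction hypothesis is formulated for rational faces of $\NE(X/Y;W)$, you must show that each $F'$ admits a rational support function which is $\pi$-nef over $W$, and you propose $\mathcal H'+n\mathcal H$ for $n\gg 0$, where $\mathcal H'$ is a support function of $F'$ relative to $Z$ and $\mathcal H$ one of $F$ relative to $Y$. This does not follow: $\mathcal H'$ may be negative on classes $z\in \NE(X/Y;W)\setminus F$ for which $\mathcal H\cdot z>0$ is arbitrarily small compared with $-\mathcal H'\cdot z$, i.e.\ the cone may approach $\mathcal H^{\perp}$ tangentially near $F'$, and then no finite $n$ makes $\mathcal H'+n\mathcal H$ non-negative on the whole cone (already closed convex cones in $\mathbb R^{4}$ exhibit this). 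Excluding such behaviour is exactly the local polyhedrality/discreteness of $\NE(X/Y;W)$ on the $(K_X+\Delta)$-negative side, which at this stage is not yet available: it is established in Step \ref{c-thm12.2-step3}, and rationality of faces of $\NE(X/Y;W)$ (Theorem \ref{c-thm12.2} (3)) is proved only in Step \ref{c-thm12.2-step5}, after the decomposition and the discreteness of the rays are known. So this is not a routine verification via Kleiman's criterion. The paper sidesteps the issue by never upgrading: the smaller faces $G$ are retained as rational faces of $\NE(X/Z;\pi^{-1}_Z(W))$, the next contraction is performed over $Z$ (and so on along the tower of contraction targets), and the only information recorded about the $G$'s inside $\NE(X/Y;W)$ is that they are $(K_X+\Delta)$-negative extremal faces, relatively ample at $\Nlc(X,\Delta)$, of smaller dimension---which suffices for the induction and, later, for Step \ref{c-thm12.2-step3}, where the contractions of the resulting rays are the ones produced by this construction. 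If you rephrase your induction over the successive intermediate bases instead of over $Y$, your argument goes through; as written, the assertion that each $F'$ is a rational face of $\NE(X/Y;W)$ is unproved and is the weak point of the proposal.
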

\begin{proof}[Proof of Claim \ref{c-thm12.2-claim2}]
Let $F$ be a rational proper $(K_X+\Delta)$-negative extremal face 
that is relatively ample at $\Nlc (X, \Delta)$. 
We assume that $\dim F\geq 
2$. After shrinking $Y$ around $W$ suitably, 
we can take the contraction morphism 
$\varphi_F:X\to Z$ over $Y$ associated to $F$ 
(see Theorem \ref{c-thm12.1}). 
We note that $F=\NE(X/Z; \pi^{-1}_Z(W))$, 
where $\pi_Z\colon Z\to Y$ is the structure morphism, 
and that $-(K_X+\Delta)$ is $\varphi_F$-ample 
over $\pi^{-1}_Z(W)$ by construction. 
By Claim \ref{c-thm12.2-claim1} in Step \ref{c-thm12.2-step1}, 
we obtain 
\begin{equation}\label{c-thm12.2-eq1}
F=\NE(X/Z; \pi^{-1}_Z(W))=\overline {\sum _G G}, 
\end{equation} 
where the $G$'s in \eqref{c-thm12.2-eq1} are the rational proper 
$(K_X+\Delta)$-negative extremal faces of $\NE(X/Z; \pi^{-1}_Z(W))$. 
We note that $\NE(X/Z; \pi^{-1}_Z(W))_{\Nlc (X, \Delta)}=0$ holds 
because 
$\varphi_F$ embeds $\Nlc (X, \Delta)$ into $Z$. 
The $G$'s are also 
$(K_X+\Delta)$-negative 
extremal faces of $\NE(X/Y; W)$ that are 
ample at $\Nlc (X, \Delta)$ 
with $\dim G<\dim F$. By induction, we finally obtain 
\begin{equation}\label{c-thm12.2-eq2} 
\NE(X/Y; W)=\overline {\NE(X/Y; W)_{(K_X+\Delta)\geq 0} 
+\NE(X/Y; W)_{\Nlc (X, \Delta)} +\sum R_j},
\end{equation}
where the $R_j$'s are $(K_X+\Delta)$-negative rational 
extremal rays. 
Note that each $R_j$ does not 
intersect 
$\NE(X/Y; W)_{\Nlc (X, \Delta)}$. 
We finish the proof of Claim \ref{c-thm12.2-claim2}. 
\end{proof}
\end{step}
\begin{step}\label{c-thm12.2-step3} 
In this step, we still assume that $K_X+\Delta$ is 
$\mathbb Q$-Cartier. 
We will finish the proof of (1) when $K_X+\Delta$ is $\mathbb Q$-Cartier. 

The contraction theorem (see Theorem \ref{c-thm12.1}) 
guarantees that 
for each extremal ray $R_j$, 
which is $(K_X+\Delta)$-negative, rational, and 
relatively ample at $\Nlc(X, \Delta)$, there exists a 
projective integral curve $C_j$ on $X$ such that 
$[C_j]\in R_j$. 
Let $\psi_j:X\to Z_j$ be the contraction 
morphism of $R_j$ over $Y$ after shrinking $Y$ 
around $W$ suitably, and 
let $A$ be a $\pi$-ample 
line bundle on $X$. 
Let $\pi_{Z_j}\colon Z_j\to Y$ be the structure morphism. 
We set 
\begin{equation*}
r_j=-\frac{A\cdot  C_j}{(K_X+\Delta)\cdot C_j}. 
\end{equation*} 
Then $A+r_j(K_X+\Delta)$ is $\psi_j$-nef 
over $\pi^{-1}_{Z_j}(W)$ but not $\psi_j$-ample 
over $\pi^{-1}_{Z_j}(W)$, 
and 
\begin{equation*}
(A+r_j (K_X+\Delta))|_{\Nlc (X, \Delta)}
\end{equation*} 
is $\psi_j|_{\Nlc (X, \Delta)}$-ample over $\pi^{-1}_{Z_j}(W)$. 
By the rationality theorem (see Theorem \ref{c-thm10.1}), 
expressing 
$r_j =u_j /v_j$ with 
$u_j, v_j\in \mathbb Z_{>0}$ and 
$(u_j, v_j)=1$, we have 
the inequality $v_j\leq a(\dim X+1)$. 
After shrinking $Y$ around $W$ suitably, we take $\pi$-ample line bundles 
$H_1, H_2, \ldots, H_{\rho-1}$ on $X$ such that 
$K_X+\Delta$ and the $H_i$'s form a basis of $N^1(X/Y; W)$, where 
$\rho =\dim _{\mathbb R}N^1(X/Y; W)<\infty$. 
As we saw above, the intersection of  the extremal 
rays $R_j$ with the hyperplane 
\begin{equation*}
\{z\in N_1(X/Y; W)\, | \, a(K_X+\Delta)\cdot z=-1\}
\end{equation*} in 
$N_1(X/Y; W)$ lie on the lattice 
\begin{equation*}
\Lambda=
\{z\in N_1(X/Y; W)\, | \, a(K_X+\Delta)
\cdot z=-1, H_i \cdot z\in (a(a(\dim 
X+1))!)^{-1}\mathbb Z\}. 
\end{equation*}
This implies that the extremal rays are discrete in the 
half space 
\begin{equation*}
\{z \in N_1(X/Y; W) \, | \, (K_X+\Delta)\cdot z<0\}. 
\end{equation*} 
Thus we can omit the closure sign 
$\raise0.5ex\hbox{\textbf{-----}}$ 
from 
the formula (\ref{c-thm12.2-eq2}) 
and this completes the proof of (1) when $K_X+\Delta$ is 
$\mathbb Q$-Cartier. 
\end{step} 

\begin{step}\label{c-thm12.2-step4}
In this step, we will prove (2) under the assumption that 
$K_X+\Delta$ is $\mathbb Q$-Cartier. 

Let $\mathcal A$ be a $\pi$-ample $\mathbb R$-line bundle on $X$. 
We choose $0<\varepsilon_i\ll 1$ for $1\leq i\leq \rho-1$ such 
that $\mathcal A-\sum_{i=1}^{\rho-1}\varepsilon _i H_i$ is still 
$\pi$-ample. 
Then the $R_j$'s included in $(K_X+\Delta+\mathcal A)_{<0}$ correspond to 
some elements of the above lattice $\Lambda$ in 
Step \ref{c-thm12.2-step3} for which 
$\sum_{i=1}^{\rho-1}\varepsilon _i H_i\cdot z<1/a$.
Therefore, we obtain (2) when $K_X+\Delta$ is $\mathbb Q$-Cartier. 
\end{step}

\begin{step}\label{c-thm12.2-step5} 
In this step, we will prove (3) under the extra 
assumption that $K_X+\Delta$ is $\mathbb Q$-Cartier. 

Let $F$ be a $(K_X+\Delta)$-negative extremal face as in (3). 
The vector space $V=F^{\perp}\subset N^1(X/Y; W)$ is 
defined over $\mathbb Q$ because 
$F$ is generated by some of the $R_j$'s. 
There exists a $\pi$-ample $\mathbb R$-line bundle 
$\mathcal A$ such that $F$ is contained in $(K_X+\Delta+\mathcal A)_{<0}$. 
Let $\langle F\rangle$ be the vector space 
spanned by $F$. 
We put 
\begin{equation*}
\mathcal C_F:=\NE(X/Y; W)_{(K_X+\Delta+\mathcal A)\geq 0}+
\NE(X/Y; W)_{\Nlc (X, \Delta)}+\sum _{R_j\not\subset F}R_j. 
\end{equation*} 
Then $\mathcal C_F$ is a closed cone, 
\begin{equation*}
\NE(X/Y; W)=\mathcal C_F+F, 
\end{equation*} 
and 
\begin{equation*} 
\mathcal C_F\cap \langle F\rangle=\{0\}. 
\end{equation*} 
The support 
functions of $F$ are the elements of $V$ that are 
positive on $\mathcal C_F\setminus \{0\}$. 
This is a non-empty open subset of $V$ and thus it 
contains a rational element that, after scaling, 
gives a line bundle $L$ defined over 
some open neighborhood of $W$ such that $L$ is $\pi$-nef over $W$ and 
that 
$F=L^{\perp}\cap \NE(X/Y; W)$. Therefore, 
$F$ is rational. 
Hence, we obtain (3) when $K_X+\Delta$ is $\mathbb Q$-Cartier. 
\end{step}

We finish the proof of Theorem \ref{c-thm12.2} under the 
extra assumption that $K_X+\Delta$ is $\mathbb Q$-Cartier. 
Therefore, from now on, we can freely use Theorem \ref{c-thm12.2} 
when $K_X+\Delta$ is $\mathbb Q$-Cartier. 

\begin{step}\label{c-thm12.2-step6}
In this final step, we will treat the general case. 
This means that we will treat the case where 
$K_X+\Delta$ is $\mathbb R$-Cartier. 

Let $\mathcal A$ be a $\pi$-ample $\mathbb R$-line bundle on $X$. 
First we will prove (2). 
By Lemma \ref{c-lem4.4}, 
after shrinking $Y$ around $W$ suitably, 
we can take effective $\mathbb Q$-divisors $\Delta_1, \ldots, \Delta_k$ 
on $X$ and positive real numbers $r_1, \ldots, r_k$ with $\sum _{i=1}^kr_i=1$ 
such that 
\begin{equation*}
K_X+\Delta=\sum _{i=1}^k r_i (K_X+\Delta_i)
\end{equation*} 
and 
that $\mathcal J_{\NLC}(X, \Delta_i)=\mathcal J_{\NLC}(X, 
\Delta)$ holds for every $i$. 
Since $K_X+\Delta_i$ is $\mathbb Q$-Cartier, 
there are only finitely many 
$(K_X+\Delta_i+\mathcal A)$-negarive extremal 
rays of $\NE(X/Y; W)$ which are 
rational and relatively ample at $\Nlc(X, \Delta_i)=\Nlc(X, \Delta)$ for 
every $i$. 
Therefore, since 
\begin{equation*}
K_X+\Delta+\mathcal A=\sum _{i=1}^k 
r_i (K_X+\Delta_i+\mathcal A)
\end{equation*} 
holds, 
there exist only finitely many $(K_X+\Delta+\mathcal A)$-negative 
extremal rays of $\NE(X/Y; W)$ which are rational 
and relatively ample at $\Nlc(X, \Delta)$. 
Thus we obtain (2) in full generality. 
The statement (1) is a direct and formal consequence of 
(2). For the details, see, for example, the proof of 
\cite[Chapter III.~1.2 Theorem]{kollar-rational}. 
Finally, we will prove (3). 
Let $F$ be a $(K_X+\Delta)$-negative extremal face of 
$\NE(X/Y; W)$ as in (3). 
By using Lemma \ref{c-lem4.4}, after shrinking $Y$ around $W$ suitably, 
we can 
take an effective $\mathbb Q$-divisor $\Delta^\dag$ on $X$, 
which is sufficiently close to $\Delta$, such that 
$K_X+\Delta^\dag$ is $\mathbb Q$-Cartier, 
$\mathcal J_{\NLC}(X, \Delta^\dag)
=\mathcal J_{\NLC}(X, \Delta)$ holds, 
and $F$ is $(K_X+\Delta^\dag)$-negative. 
Therefore, we see that 
$F$ is a rational face of $\NE(X/Y; W)$. 
This is what we wanted.  
\end{step}
We finish the proof of the cone theorem. 
\end{proof}

\subsection{Proof of Theorem \ref{c-thm1.4} and 
Corollary \ref{c-cor1.5}}\label{c-subsec12.1}

In this subsection, we will prove Theorem \ref{c-thm1.4} as an 
application of the vanishing theorem for projective quasi-log 
schemes (see \cite[Theorem 6.3.5 (ii)]{fujino-foundations}). 
Note that 
Corollary \ref{c-cor1.5} is an easy consequence of Theorem \ref{c-thm1.4}. 
For the details of the framework of quasi-log schemes, 
see \cite[Chapter 6]{fujino-foundations}, 
\cite{fujino-pull-back}, \cite{fujino-cone}, and \cite{fujino-quasi}. 
Let us start with an easy lemma. 

\begin{lem}[{see \cite[Lemma 4.2]{fujino-miyamoto1}}]
\label{c-lem12.3}Let $[V, \omega]$ be an irreducible 
positive-dimensional projective quasi-log scheme 
with $\dim \Nqlc(V, \omega)=0$ or $\Nqlc(V, \omega)=\emptyset$ 
and 
let $\mathcal M$ be an ample line bundle 
on $V$. Assume that 
$\omega+r\mathcal M$ is numerically trivial for some 
real number $r$. 
Then $r\leq \dim V+1$ holds. 
\end{lem}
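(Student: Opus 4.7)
The plan is to prove the bound by contradiction through a Hilbert polynomial argument in the spirit of Kobayashi--Ochiai. The case $r\leq 0$ is immediate since $\dim V+1\geq 2$, so I would assume $r>0$ and, toward a contradiction, $r>\dim V+1$.

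The central object is the Euler characteristic
\begin{equation*}
P(t):=\chi(V,t\mathcal M).
\end{equation*}
By asymptotic Riemann--Roch on the projective scheme $V$, this extends to a polynomial in $t$ of degree exactly $n:=\dim V$, with positive leading coefficient $(\mathcal M^n)/n!$ coming from the ampleness of $\mathcal M$. My goal is to exhibit $\lceil r\rceil-1$ distinct integer roots of $P$, which forces $\lceil r\rceil-1\leq n$ and hence $r\leq\dim V+1$, contradicting the assumption.

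For each integer $j$ with $1\leq j<r$, set $L_j:=-j\mathcal M$. Since $\omega+r\mathcal M\equiv 0$, one has $L_j-\omega\equiv(r-j)\mathcal M$, which is ample. I would then invoke the vanishing theorem for projective quasi-log schemes (the quasi-log analogue of Theorem \ref{c-thm6.1}) applied to the ideal $\mathcal I:=\mathcal I_{\Nqlc(V,\omega)}$ to obtain $H^i(V,\mathcal I\otimes L_j)=0$ for every $i>0$. Combined with the short exact sequence
\begin{equation*}
0\to\mathcal I\otimes L_j\to L_j\to L_j|_{\Nqlc(V,\omega)}\to 0
\end{equation*}
and the hypothesis $\dim\Nqlc(V,\omega)\leq 0$ (so that cohomology of any sheaf on $\Nqlc(V,\omega)$ vanishes in positive degrees trivially), this yields $H^i(V,L_j)=0$ for $i>0$. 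Therefore $\chi(V,L_j)=h^0(V,-j\mathcal M)$, and because $V$ is irreducible, reduced, and positive-dimensional while $\mathcal M$ is ample, $h^0(V,-j\mathcal M)=0$. Hence $P(-j)=0$ for every integer $j$ with $1\leq j<r$, supplying the required $\lceil r\rceil-1$ roots.

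The main obstacle is the quasi-log vanishing step: establishing $H^i(V,\mathcal I_{\Nqlc(V,\omega)}\otimes L)=0$ for $i>0$ whenever $L-\omega$ is ample on a projective quasi-log canonical pair. This is the quasi-log counterpart of Theorem \ref{c-thm6.1} and, transported through an auxiliary quasi-log resolution of $[V,\omega]$, ultimately reduces to the strict support condition and vanishing part of Theorem \ref{c-thm5.5}. Once this vanishing is secured, the remaining polynomial-counting argument is entirely formal.
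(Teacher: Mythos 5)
Your overall strategy---quasi-log vanishing for $\mathcal I_{\Nqlc(V,\omega)}$ twisted by negative multiples of $\mathcal M$, followed by counting integer roots of the Hilbert polynomial---is the same as the paper's, but there is one genuine gap: you assert that $V$ is reduced, and this is not among the hypotheses. A quasi-log scheme in Fujino's framework may carry nilpotents along its non-qlc locus, and this possibility actually occurs in the applications inside this paper: in the proofs of Theorem \ref{c-thm1.4} and Theorem \ref{c-thm13.1} the lemma is applied to a subscheme $X'$ defined by an ideal of the form $f_*\mathcal O_Y(A-N-T)$, which is in general non-reduced at the (zero-dimensional) non-qlc points. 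For such $V$ your key identity $h^0(V,-j\mathcal M)=0$ fails: the nilradical $\mathcal N\subset \mathcal O_V$ is a skyscraper supported on $\Nqlc(V,\omega)$, and the exact sequence $0\to \mathcal N\to \mathcal O_V\to \mathcal O_{V_{\mathrm{red}}}\to 0$ twisted by $-j\mathcal M$ gives $h^0(V,-j\mathcal M)=h^0(V,\mathcal N\otimes(-j\mathcal M))=\operatorname{length}\mathcal N$, a constant which may be positive; hence $P(-j)$ need not vanish and the root count collapses.

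The missing step is precisely what the paper supplies: pass to the unique maximal qlc stratum $V'$ of $[V,\omega]$, which is a reduced and irreducible projective variety with $\dim V'=\dim V$, note that the kernel of $\mathcal O_V\to\mathcal O_{V'}$ is supported in dimension $\leq 0$ because $\dim \Nqlc(V,\omega)\leq 0$, transfer the vanishing $H^i(V,\mathcal M^{\otimes l})=0$ for $i>0$ and $l>-r$ to $V'$, and then run the Hilbert-polynomial argument on $V'$, where $H^0$ of negative twists really does vanish. Alternatively, your computation can be repaired by observing that, granted your vanishing on $V$, one has $P(-j)=h^0(V,-j\mathcal M)=\operatorname{length}\mathcal N=:c$ for every integer $1\leq j<r$, so the degree-$n$ polynomial $P(t)-c$ has the required $\lceil r\rceil-1$ integer roots; but some such device is needed, and as written the reducedness claim is unjustified. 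The remaining ingredients of your proposal (the vanishing theorem for quasi-log schemes applied to $\mathcal I_{\Nqlc(V,\omega)}$, the use of $\dim\Nqlc(V,\omega)\leq 0$, and the polynomial counting) are correct and coincide with the paper's argument.
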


\begin{proof}
If $r\leq 0$, then $r\leq \dim V+1$ is obvious. 
Hence we may assume that $r$ is positive. 
We consider the following short exact sequence: 
\begin{equation}\label{c-eq12.3}
0\to \mathcal I_{\Nqlc(V, \omega)}\to \mathcal O_V\to 
\mathcal O_{\Nqlc(V, \omega)}\to 0, 
\end{equation} 
where $\mathcal I_{\Nqlc(V, \omega)}$ is the defining ideal sheaf 
of $\Nqlc(V, \omega)$ on $V$. 
Since $l\mathcal M-\omega$ is ample 
for $l>-r$, 
we have 
\begin{equation*} 
H^i(V, \mathcal I_{\Nqlc(V, \omega)}\otimes \mathcal M^{\otimes l})
=0
\end{equation*} 
for every $i\ne 0$ and $l>-r$ by 
the vanishing theorem 
for quasi-log schemes (see \cite[Theorem 6.3.5 (ii)]{fujino-foundations}). 
Since $\dim \Nqlc(V, \omega)=0$ or 
$\Nqlc(V, \omega)=\emptyset$, 
\begin{equation*} 
H^i(V, \mathcal O_{\Nqlc(V, \omega)}\otimes \mathcal M^{\otimes l})=0
\end{equation*} 
for every $i\ne 0$ and every $l$. 
Therefore, we obtain $H^i(V, \mathcal M^{\otimes l})=0$ for 
every $i\ne 0$ and $l>-r$ by \eqref{c-eq12.3}. 
Let $V'$ be the unique maximal (with respect to the 
inclusion) qlc stratum of 
$[V, \omega]$. Then we have the following short exact sequence: 
\begin{equation*}
\xymatrix{
0\ar[r]& \Ker \alpha \ar[r]&\mathcal O_V\ar[r]^-{\alpha}
& \mathcal O_{V'}\ar[r]& 0
}
\end{equation*}
such that $\dim \Supp \Ker\alpha\leq 0$ since 
$\dim \Nqlc(V, \omega)=0$ or $\Nqlc(V, \omega)=\emptyset$. 
Hence we have $H^i(V', \mathcal M^{\otimes l}|_{V'})=0$ for 
every $i\ne 0$ and $l>-r$. 
Since $\dim V'=\dim V>0$, it is obvious 
that $H^0(V', \mathcal M^{\otimes l}|_{V'})=0$ holds 
for every $l<0$. 
We consider 
\begin{equation*}
\chi (t):=\sum _{i=0}^{\dim V'}(-1)^i \dim _{\mathbb C} H^i(V', 
\mathcal M^{\otimes t}|_{V'}). 
\end{equation*} 
Then it is well known that $\chi (t)$ is a non-trivial polynomial of 
$\deg \chi (t)=\dim V'=\dim V$. By the above observation, 
$\chi (l)=0$ for $l \in \mathbb Z$ with $-r<l<0$. 
This implies that $r\leq \dim V+1$. 
We finish the proof. 
\end{proof}

Let us start the proof of Theorem \ref{c-thm1.4}. 

\begin{proof}[Proof of Theorem \ref{c-thm1.4}]
We put $\mathcal L:=f^*\mathcal A_{Y^\flat}$. 
Without loss of generality, we may assume that $\dim X\geq 1$. 
Since $\mathcal L$ is $\pi$-nef over $W$, 
$R$ is a $(K_X+\Delta)$-negative extremal ray of $\NE(X/Y; W)$. 
Therefore, by Theorem \ref{c-thm12.2} (3) and 
Theorem \ref{c-thm12.1}, 
after shrinking $Y$ around $W$ suitably, 
we obtain a contraction morphism 
$\varphi_R\colon X\to Z$ over $Y$ associated to $R$. 
It is sufficient to prove that $R\cdot \mathcal L=0$ holds. 
\setcounter{step}{0}
\begin{step}\label{c-thm1.4-step1}
In this step, we will treat the case where $\dim Z=0$. 

From now on, we assume that $\dim Z=0$ holds. 
Then $X$ is projective with $\rho (X)=1$ and $\mathcal L$ is a 
nef line bundle on $X$ in the usual sense. Suppose that 
$\mathcal L$ is ample. 
Then we obtain that $K_X+\Delta+r\mathcal L$ is numerically 
trivial for some $r>\dim X+1$ since 
$(K_X+\Delta+(\dim X+1)\mathcal L)\cdot R<0$ and $\rho(X)=1$. 
We note that $[X, K_X+\Delta]$ naturally becomes 
an irreducible projective quasi-log scheme with 
$\Nqlc(X, K_X+\Delta)=\emptyset$ 
(see, for example, \cite[6.4.1]{fujino-foundations}). 
Therefore, we get a contradiction by Lemma \ref{c-lem12.3}. 
This implies that $\mathcal L$ is numerically trivial, 
that is, $R\cdot f^*\mathcal A_{Y^\flat}=R\cdot 
\mathcal L=0$. 
This is what we wanted. 
\end{step}
\begin{step}\label{c-thm1.4-step2}
In this step, we will treat the case where $\dim Z\geq 1$. 

From now on, we assume that $\dim Z\geq 1$ holds. 
Then we can always take a point $P\in Z$ such that 
$\dim \varphi^{-1}_R(P)\geq 1$. 
We shrink $Z$ around $P$ and assume that $Z$ is 
Stein. 
Then we can take an effective $\mathbb R$-Cartier divisor 
$B$ on $Z$ such that $(X, \Delta+\varphi^*_RB)$ is log canonical 
outside $\varphi^{-1}_R(P)$, 
there exists a positive-dimensional log canonical 
center $C$ of $(X, \Delta+\varphi^*_RB)$ with 
$\varphi_R(C)=P$, and 
$\dim \Nlc(X, \Delta+\varphi^*_RB)=0$ or $\Nlc(X, 
\Delta+\varphi^*_RB)=\emptyset$. 
After shrinking $Z$ around $P$ suitably again, 
we can take a projective bimeromorphic 
morphism $f\colon Y\to X$ from a smooth complex variety $Y$ 
such that $f^{-1}(C)$ and the exceptional locus $\Exc(f)$ of 
$f$ are both simple normal crossing divisors on $Y$ and 
that the union of $f^{-1}(C)$, 
$\Exc(f)$, and $\Supp \left(f^{-1}_*(\Delta+\varphi^*_RB)\right)$ 
is a simple normal crossing divisor on $Y$ 
(see \cite[Theorem 13.2]{bierstone-milman}). 
We define $B_Y$ by the formula 
$K_Y+B_Y=f^*(K_X+\Delta+\varphi^*B)$. 
Then we see that 
$\Supp B_Y$ is a simple normal crossing divisor 
on $Y$. 
By shrinking $X$ around $C$, we assume that 
$(X\setminus C)\cap \Nlc(X, \Delta+\varphi^*_RB)=\emptyset$. 
Let $T$ be the union of the irreducible components 
of $B^{=1}_Y$ that are mapped to $C$ by $f$. 
We define $B_T$ by adjunction:~$K_T+B_T=(K_Y+B_Y)|_T$.  
We consider 
the following short exact sequence: 
\begin{equation*}
\begin{split}
0&\to \mathcal O_Y(\lceil -(B^{<1}_Y)\rceil-\lfloor B^{>1}_Y\rfloor-T)\to 
\mathcal O_Y(\lceil -(B^{<1}_Y)\rceil-\lfloor B^{>1}_Y\rfloor)\\ 
&\to 
\mathcal O_T(\lceil -(B^{<1}_T)\rceil-\lfloor B^{>1}_T\rfloor)\to 0. 
\end{split}
\end{equation*} 
We note that 
\begin{equation}\label{c-eq12.4}
\lceil -(B^{<1}_Y)\rceil-\lfloor B^{>1}_Y\rfloor-T-
(K_Y+\{B_Y\}+B^{=1}_Y-T)=-f^*(K_X+\Delta+\varphi^*B). 
\end{equation} 
By taking $R^if_*$, 
we have a long exact sequence: 
\begin{equation}\label{c-eq12.5}
\begin{split}
0&\longrightarrow f_*\mathcal O_Y(\lceil -(B^{<1}_Y)
\rceil-\lfloor B^{>1}_Y\rfloor-T)\longrightarrow 
\mathcal J_{\NLC}(X, \Delta+\varphi^*B)\\ 
&\longrightarrow 
f_*\mathcal O_T(\lceil -(B^{<1}_T)\rceil-\lfloor B^{>1}_T\rfloor)
\overset{\delta}{\longrightarrow} 
R^1f_* \mathcal O_Y(\lceil -(B^{<1}_Y)
\rceil-\lfloor B^{>1}_Y\rfloor-T)\longrightarrow \cdots. 
\end{split}
\end{equation}
The support of $f_*\mathcal O_T(\lceil -(B^{<1}_T)\rceil-
\lfloor B^{>1}_T\rfloor)$ 
is contained in $C$ since $f(T)=C$. On the 
other hand, 
any associated subvarieties of 
$R^1f_*\mathcal O_Y(\lceil -(B^{<1}_Y)\rceil-\lfloor B^{>1}_Y\rfloor-T)$ 
are not 
contained in $C$ by 
\eqref{c-eq12.4} and Theorem \ref{c-thm5.5} (i).  
Hence,  
the connecting homomorphism $\delta$ in \eqref{c-eq12.5} is zero. 
We put 
\begin{equation*}
\mathcal J:=f_*\mathcal O_Y(\lceil -(B^{<1}_Y)
\rceil-\lfloor B^{>1}_Y\rfloor-T). 
\end{equation*}  
Then it is an ideal sheaf contained in $\mathcal J_{\NLC}(X, \Delta+\varphi^*B)$. 
Let $X'$ denote the closed analytic subspace of $X$ defined 
by $\mathcal J$. 
By applying the snake lemma to the following commutative diagram: 
\begin{equation*}
\xymatrix{
&0\ar[r]& \mathcal J 
\ar[r]\ar@{^{(}->}[d]& \mathcal J_{\NLC}(X, \Delta+\varphi^*B)
\ar@{^{(}->}[d]\ar[r]& f_*\mathcal O_T(\lceil -(B^{<1}_T)\rceil-
\lfloor B^{>1}_T\rfloor)\ar[r]\ar[d] 
&0\\ 
&0\ar[r]& \mathcal O_X\ar@{=}[r] & \mathcal O_X\ar[r]& 0\ar[r]&0, 
}
\end{equation*}
we obtain the short exact sequence: 
\begin{equation*}
0\to f_*\mathcal O_T(\lceil -(B^{<1}_T)\rceil-\lfloor B^{>1}_T\rfloor)
\to \mathcal O_{X'} \to  
\mathcal O_{\Nlc(X, \Delta+\varphi^*B)}\to 0. 
\end{equation*}
Since $C$ is projective and $T$ is projective over $C$ by construction, 
\begin{equation*}
\left(X', (K_X+\Delta)|_{X'}, f\colon (T, B_T)\to X'\right)
\end{equation*} 
is a projective quasi-log scheme 
with $\Nqlc(X', (K_X+\Delta)|_{X'})=\Nlc(X, \Delta+\varphi^*_RB)$ 
by \cite[Theorem 4.9]{fujino-pull-back}. 
In particular, $\dim \Nqlc(X', (K_X+\Delta)|_{X'})=0$ 
or $\Nqlc(X', (K_X+\Delta)|_{X'})=\emptyset$ holds. 
We note that $X'=C$ holds set theoretically 
by construction. 
We put $\omega:=(K_X+\Delta)|_{X'}$. 
Then $-\omega$ is ample 
since $\varphi_R(X')=P$. 

Suppose that $R\cdot \mathcal L>0$ holds. 
Then $\mathcal L':=\mathcal L|_{X'}$ is ample 
and $\omega+r\mathcal L'$ is numerically trivial 
on $X'$ for some positive real number $r$ with $r>\dim X+1
>\dim X'+1$. 
This is a contradiction by Lemma \ref{c-lem12.3}. 
Hence we obtain $R\cdot \mathcal L=0$. 
Therefore, $R$ is a $(K_X+\Delta)$-negative 
extremal ray of $\NE(X/{Y^\flat}; g^{-1}(W))$.  
\end{step}

We finish the proof. 
\end{proof}

\begin{proof}[Proof of Corollary \ref{c-cor1.5}]
Let $P\in Y$ be any point. We put $W:=\{P\}$. 
Then the dimension of $N^1(X/Y; W)$ is finite by Theorem 
\ref{c-thm11.10}. 
Suppose that $K_X+\Delta+(\dim X+1)\mathcal A$ is not $\pi$-nef 
over $W$. 
Then there exists a $(K_X+\Delta+(\dim X+1)\mathcal A)$-negative 
extremal ray $R$ of $\NE(X/Y; W)$. 
We put $Y^\flat:=Y$, $\mathcal A_{Y^\flat}:=\mathcal A$, 
and $f:=id_Y$. 
Then we use Theorem \ref{c-thm1.4}. 
Thus we obtain $R\cdot \mathcal A=0$. 
This is a contradiction since $\mathcal A$ is $\pi$-ample 
over $W$. 
Therefore, $K_X+\Delta+(\dim X+1)\mathcal A$ is 
$\pi$-nef over $W$. 
Since $P$ is any point of $Y$, this means 
that $K_X+\Delta+(\dim X+1)\mathcal A$ is nef over 
$Y$. 
\end{proof}

We close this section with a remark on Theorem \ref{c-thm1.4} and 
Corollary \ref{c-cor1.5}. 

\begin{rem}\label{c-rem12.4} 
In Theorem \ref{c-thm1.4} and Corollary \ref{c-cor1.5}, 
we can replace $(\dim X+1)$ with $\dim X$ when $\pi(X)$ is not 
a point. 
We can check it easily by the proof of Theorem \ref{c-thm1.4}. 
\end{rem}

\section{Lengths of extremal rational curves}\label{c-sec13}

In this section, we will quickly explain that 
every extremal ray is spanned by a rational curve. 
Our result in this section generalizes Kawamata's 
famous result in \cite{kawamata}. 
We first prove the following theorem as an application of 
\cite[Theorem 1.12]{fujino-cone}. 

\begin{thm}\label{c-thm13.1}
Let $\varphi 
\colon X\to Z$ be a projective morphism of complex analytic spaces 
such that $X$ is a normal complex variety and let $\Delta$ be an 
effective $\mathbb R$-divisor on $X$ such that 
$K_X+\Delta$ is $\mathbb R$-Cartier. 
Assume that 
$-(K_X+\Delta)$ is $\varphi$-ample. 
Let $P$ be an arbitrary point of $Z$. 
Let $E$ be any positive-dimensional irreducible component 
of $\varphi^{-1}(P)$ such that $E\not\subset \Nlc(X, \Delta)$. 
Then $E$ is covered by possibly singular rational curves $\ell$ 
with 
\begin{equation*}
0<-(K_X+\Delta)\cdot \ell\leq 2 \dim E. 
\end{equation*} 
In particular, $E$ is uniruled. 
\end{thm}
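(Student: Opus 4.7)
The plan is to reduce Theorem \ref{c-thm13.1} to the length estimate for projective quasi-log canonical schemes, namely \cite[Theorem 1.12]{fujino-cone}. Concretely, I will construct a projective quasi-log canonical structure $[V, \omega]$ whose underlying topological space sits inside $E$ and passes through a prescribed general point of $E$, with $-\omega$ ample, and then apply the qlc lengths bound.

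First I would shrink $Z$ around $P$ so that $Z$ is Stein. Then $\varphi^{-1}(P)$ is projective, in particular $E$ is projective, and $-(K_X+\Delta)|_E$ is ample in the usual sense because $-(K_X+\Delta)$ is $\varphi$-ample and $E$ maps to a point. Fix an analytically sufficiently general point $x\in E$ (outside $\Nlc(X,\Delta)$, which is possible since $E\not\subset\Nlc(X,\Delta)$). Using Stein-ness of $Z$, I would choose an effective $\mathbb{R}$-Cartier divisor $B$ on $Z$ through $P$ whose multiplicity at $P$ is sufficiently large; pulling back gives $\varphi^*B$ with high multiplicity along $E$. Then, mimicking the perturbation argument in Step 2 of the proof of Theorem \ref{c-thm1.4} (combined with Lemma \ref{c-lem2.3} to force non-lc-ness at $x$ and Lemma \ref{c-lem4.3} to preserve the non-lc ideal), I pick $0<c<1$ maximal so that $(X,\Delta+c\varphi^*B)$ is log canonical outside $\Nlc(X,\Delta)$, with $\mathcal{J}_{\NLC}(X,\Delta+c\varphi^*B)=\mathcal{J}_{\NLC}(X,\Delta)$, and a positive-dimensional log canonical center $C_x$ through $x$ contained in $E$ (here we use that the pair became non-lc precisely along a subset of $E$, since $\varphi^*B$ was pulled back and hence concentrated on fibers).

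Now the snake-lemma construction of Step 2 in the proof of Theorem \ref{c-thm1.4}, together with \cite[Theorem 4.9]{fujino-pull-back}, yields a projective quasi-log canonical scheme $[V, \omega]$ with underlying set $C_x$ and $\omega=(K_X+\Delta+c\varphi^*B)|_V$ in the quasi-log sense. Because $\varphi^*B$ is pulled back from $Z$ and $V\subseteq\varphi^{-1}(P)$, the $\mathbb{R}$-line bundle $\omega$ agrees numerically with $(K_X+\Delta)|_V$, so $-\omega$ is ample. Applying \cite[Theorem 1.12]{fujino-cone} to $[V,\omega]$ produces, through $x$, a rational curve $\ell\subset V\subset E$ with
\begin{equation*}
0<-\omega\cdot\ell\leq 2\dim V\leq 2\dim E,
\end{equation*}
and since $\omega\equiv(K_X+\Delta)|_V$ we obtain $0<-(K_X+\Delta)\cdot\ell\leq 2\dim E$. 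Running this construction over all analytically sufficiently general $x\in E$ and closing up gives a covering family of such rational curves on $E$, proving uniruledness and the length bound.

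The principal obstacle is arranging the perturbation $c\varphi^*B$ so that the minimal log canonical center of the perturbed pair passing through $x$ actually lies in $E$ (not in some other fiber component or escaping $E$). This requires carefully controlling the tie-breaking step so that non-lc-ness is triggered first along $E$; the hypothesis $E\not\subset\Nlc(X,\Delta)$, the projectivity and irreducibility of $E$, and the freedom to take $B$ with arbitrarily large multiplicity at $P$ (thanks to $Z$ being Stein) make this possible. Once the qlc structure $[V,\omega]$ on a subvariety of $E$ through a general $x$ is in place, the remainder of the argument is a direct citation of \cite[Theorem 1.12]{fujino-cone}.
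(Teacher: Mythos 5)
Your overall target is the right one --- this is exactly the paper's strategy: add a divisor of the form $c\varphi^*B$ pulled back from the Stein base, use the strict support condition and the snake-lemma construction to equip a subspace of the fiber with a projective quasi-log structure whose quasi-log canonical class is numerically $(K_X+\Delta)$ (so $-\omega$ is ample over $P$), and then quote \cite[Theorem 1.12]{fujino-cone}. But the step you yourself flag as ``the principal obstacle'' is a genuine gap, not a technicality. You want, for a \emph{prescribed} analytically general point $x\in E$, a positive-dimensional log canonical center $C_x$ of $(X,\Delta+c\varphi^*B)$ with $x\in C_x\subset E$. The log canonical threshold construction with a divisor pulled back from $Z$ cannot be steered to a chosen point: $\varphi^*B$ does not distinguish the points of $E$ (it is constant along the fiber), so the locus where log canonicity first fails, and hence the new log canonical centers, sit along the most singular locus of $(X,\Delta)$ together with $\Sing X$ inside the fiber --- a closed subset that for general $x$ simply misses $x$. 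Taking $B$ with large multiplicity at $P$ does not help, since this rescales $\varphi^*B$ uniformly along $E$; and introducing an auxiliary divisor with high multiplicity at $x$ to force a center through $x$ would destroy both the containment of the center in $E$ and the identification $\omega\equiv(K_X+\Delta)|_V$. The only center you can honestly produce at the generic point of $E$ by this method is one containing $E$ itself, and that is precisely what the paper does: it chooses $B$ so that $E$ \emph{is} a log canonical center of $(X,\Delta+\varphi^*B)$, builds the quasi-log structure on all of $E$ with $\omega=(K_X+\Delta)|_E$, and lets \cite[Theorem 1.12]{fujino-cone} produce the covering family on $E$; no pointwise tie-breaking is needed or possible.

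Two secondary points would also need repair even if $C_x$ existed. First, the snake-lemma construction you import from Step 2 of the proof of Theorem \ref{c-thm1.4} produces a quasi-log structure on the subspace $X'$ which set-theoretically equals $C_x\cup\Nlc(X,\Delta+c\varphi^*B)$; in Theorem \ref{c-thm1.4} this is harmless only because there $B$ is arranged so that $\dim\Nlc(X,\Delta+\varphi^*_RB)\leq 0$, whereas in the present situation the non-lc locus (which contains the original $\Nlc(X,\Delta)$) can be positive-dimensional, and the paper's proof of Theorem \ref{c-thm13.1} has to do extra work (the splitting $T=T'+T''$ and the verification that $f_*\mathcal O_T(A-N)$ is an ideal sheaf on $E$) precisely to cut the structure down to $E$ alone before applying \cite[Theorem 4.9]{fujino-pull-back}. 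Second, your appeal to Lemma \ref{c-lem4.3} to preserve $\mathcal J_{\NLC}$ is not applicable: that lemma is a Bertini-type statement for \emph{general} members of a linear system away from its base locus, while $\varphi^*B$ is a fixed pull-back whose support contains the whole fiber you are working on.
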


In the proof of Theorem \ref{c-thm13.1}, we will use the theory of 
quasi-log schemes (see \cite[Chapter 6]{fujino-foundations}, 
\cite{fujino-pull-back}, \cite{fujino-cone}, and \cite{fujino-quasi}). 

\begin{proof}[Proof of Theorem \ref{c-thm13.1}]
If $\varphi(X)=P$, then $E=X$ obviously holds. 
In this case, the statement follows from 
\cite[Theorem 1.12]{fujino-cone} since we can see $[X, K_X+\Delta]$ 
as a projective quasi-log scheme 
(see, for example, \cite[6.4.1]{fujino-foundations}). 
Therefore, from now on, we may assume that $\varphi(X)\ne P$. 
We shrink $Z$ around $P$ and may assume that $Z$ is 
Stein. 
Then 
we can take an effective $\mathbb R$-Cartier divisor 
$B$ on $Z$ such that $E$ is a log canonical 
center of $(X, \Delta+\varphi^*B)$. 
After shrinking $Z$ around $P$ suitably again, 
we can take a projective bimeromorphic 
morphism $f\colon Y\to X$ from a smooth complex 
variety $Y$ such that $f^{-1}(E)$ is a simple 
normal crossing divisor on $Y$, 
\begin{equation*}
K_Y+B_Y:=f^*(K_X+\Delta+\varphi^*B), 
\end{equation*} 
and $\Supp B_Y$ is a simple normal crossing divisor 
on $Y$ (see \cite[Theorem 13.2]{bierstone-milman}). 
We may further assume that 
the support of the union of $f^{-1}(E)$ and $\Supp B_Y$ is 
also a simple normal crossing divisor on $Y$. 
Let $T$ be the union of the irreducible components 
of $B^{=1}_Y$ that are mapped to $E$ by $f$. 
We put $A:=\lceil -(B^{<1}_Y)\rceil$ and 
$N:=\lfloor B^{>1}_Y\rfloor$ and consider 
the following short exact sequence: 
\begin{equation*}
0\to \mathcal O_Y(A-N-T)\to \mathcal O_Y(A-N)\to 
\mathcal O_T(A-N)\to 0. 
\end{equation*} 
We note that 
\begin{equation}\label{c-thm13.1-eq1}
A-N-T-(K_Y+\{B_Y\}+B^{=1}_Y-T)=-f^*(K_X+\Delta+\varphi^*B). 
\end{equation} 
By taking $R^if_*$, 
we have a long exact sequence: 
\begin{equation}\label{c-thm13.1-eq2}
\begin{split}
0&\longrightarrow f_*\mathcal O_Y(A-N-T)\longrightarrow 
\mathcal J_{\NLC}(X, \Delta+\varphi^*B)\longrightarrow 
f_*\mathcal O_T(A-N)\\&\overset{\delta}{\longrightarrow} 
R^1f_* \mathcal O_Y(A-N-T)\longrightarrow \cdots. 
\end{split}
\end{equation}
The support of $f_*\mathcal O_T(A-N)$ 
is contained in $E$ since $f(T)=E$. On the 
other hand, 
any associated subvarieties of $R^1f_*\mathcal O_Y(A-N-T)$ are not 
contained in $E$ by \eqref{c-thm13.1-eq1} and 
Theorem \ref{c-thm5.5} (i).  
Hence, 
the connecting homomorphism $\delta$ in 
\eqref{c-thm13.1-eq2} is zero. 
We put $\mathcal J:=f_*\mathcal O_Y(A-N-T)$. 
Then it is an ideal sheaf contained in 
$\mathcal J_{\NLC}(X, \Delta+\varphi^*B)$. 
Let $X'$ denote the closed analytic subspace of $X$ defined 
by $\mathcal J$. 
Thus we obtain the following big commutative diagram. 
\begin{equation*}
\xymatrix{
&& 0 \ar[d]& 0\ar[d]& &\\ 
&0\ar[r]& \mathcal J 
\ar[r]\ar[d]& \mathcal J_{\NLC}(X, \Delta+\varphi^*B)
\ar[d]\ar[r]& f_*\mathcal O_T(A-N)\ar[r] 
&0\\ 
&& \mathcal O_X\ar[d]\ar@{=}[r] & \mathcal O_X\ar[d]& & \\ 
0\ar[r] & f_*\mathcal O_T(A-N)\ar[r] & \mathcal O_{X'} \ar[r]\ar[d]& 
\mathcal O_{\Nlc(X, \Delta+\varphi^*B)}\ar[d] \ar[r]& 0&&\\ 
&& 0&0 && 
}
\end{equation*}
We note that $X'=E\cup \Nlc(X, \Delta+\varphi^*B)$ holds 
set theoretically. On $X'$, by 
the above big commutative diagram, 
we see that $\Nlc(X, \Delta+\varphi^*B)$ is 
defined by the ideal sheaf $f_*\mathcal O_T(A-N)$. 
We write $T=T'+T''$, where $T''$ is the union of 
the irreducible components of $T$ mapped 
to $\Nlc(X, \Delta+\varphi^*B)$ by $f$. 
We put $\mathcal I:=f_*\mathcal O_{T''}(A-N-T')$. 
Then 
\begin{equation*}
\mathcal I=f_*\mathcal O_{T''}(A-N-T')\subset 
f_*\mathcal O_T(A-N)\subset \mathcal O_{X'}. 
\end{equation*} 
Since $\mathcal I\subset f_*\mathcal O_T(A-N)$, $\mathcal I$ 
is zero when 
it is restricted to $\Nlc(X, \Delta+\varphi^*B)$. 
Since $f(T'')\subset 
\Nlc(X, \Delta+\varphi^*B)$, $\mathcal I$ is zero on $X'\setminus 
\Nlc(X, \Delta+\varphi^*B)$. 
Thus, we have $\mathcal I=\{0\}$. 
By construction, we see that $E$ is a closed analytic subvariety 
of $X'$. 
Let $\mathcal I_E$ be the defining ideal sheaf of $E$ on $X'$. 
Then we obtain 
\begin{equation*}
\mathcal I_E\cap f_*\mathcal O_T(A-N)\subset 
f_*\mathcal O_{T''}(A-N-T')=\mathcal I=\{0\}. 
\end{equation*} 
Hence, we can see $f_*\mathcal O_T(A-N)$ as an 
ideal sheaf on $E$. 
Since $E$ is projective and $T$ is projective over $E$ by construction, 
\begin{equation*}
\left(E, (K_X+\Delta)|_E, f\colon (T, B_T)\to E\right)
\end{equation*} 
is a projective quasi-log scheme, where $K_T+B_T:=(K_Y+B_Y)|_T$, 
by \cite[Theorem 4.9]{fujino-pull-back}. 
Since $\varphi(E)=P$, 
$-(K_X+\Delta)|_E$ is ample. 
Thus, by \cite[Theorem 1.12]{fujino-cone}, 
$E$ is covered by possibly singular rational curves $\ell$ with 
$0<-(K_X+\Delta)\cdot \ell\leq 2\dim E$. 
In particular, this implies that $E$ is uniruled. 
\end{proof}

Theorem \ref{c-thm13.2} is an easy consequence of Theorem \ref{c-thm13.1}. 
It seems to be indispensable for the minimal model 
program with scaling. 

\begin{thm}[Lengths of extremal rational curves]\label{c-thm13.2}
Let $\pi\colon X\to Y$ be a projective morphism of 
complex analytic spaces such that 
$X$ is a normal complex variety and let $W$ be a compact subset of $Y$ 
such that the dimension of $N^1(X/Y; W)$ is finite. 
Let $\Delta$ be an effective $\mathbb R$-divisor on $X$ such that 
$K_X+\Delta$ is $\mathbb R$-Cartier. 
If $R$ is a $(K_X+\Delta)$-negative extremal ray of $\NE(X/Y; W)$ 
which is relatively ample at $\Nlc(X, \Delta)$, 
then there exists a rational curve $\ell$ spanning $R$ with 
\begin{equation*}
0<-(K_X+\Delta)\cdot \ell\leq 2\dim X. 
\end{equation*}
\end{thm}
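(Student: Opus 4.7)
The plan is to reduce Theorem \ref{c-thm13.2} to Theorem \ref{c-thm13.1} by using the contraction morphism associated with the ray $R$. First, I would apply Theorem \ref{c-thm12.2} (3) together with Theorem \ref{c-thm12.1}: since $R$ is a $(K_X+\Delta)$-negative extremal ray relatively ample at $\Nlc(X,\Delta)$, it is a rational face and contractible at $\Nlc(X,\Delta)$, so after shrinking $Y$ around $W$ suitably I obtain a contraction morphism $\varphi_R\colon X\to Z$ over $Y$ satisfying $\NE(X/Z;\pi_Z^{-1}(W))=R$, where $\pi_Z\colon Z\to Y$ is the structure morphism. By construction of $\varphi_R$ from a support function $\mathcal{H}$ of $R$, the divisor $-(K_X+\Delta)$ is $\varphi_R$-ample (this is the standard consequence of Kleiman's criterion in the form of Theorem \ref{c-lem11.5}, since $R$ is $(K_X+\Delta)$-negative and $\varphi_R$ contracts precisely the classes in $R$).

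Next, I would produce a fiber component of $\varphi_R$ on which to apply Theorem \ref{c-thm13.1}. By Theorem \ref{c-thm12.2} (1), the ray $R$ is spanned by an integral curve $C$ on $X$ with $\pi(C)$ a point of $W$. Let $P:=\varphi_R(C)\in Z$ and let $E$ be the irreducible component of $\varphi_R^{-1}(P)$ containing $C$. Then $\dim E\geq 1$. The hypothesis that $R$ is relatively ample at $\Nlc(X,\Delta)$ means $R\cap \NE(X/Y;W)_{\Nlc(X,\Delta)}=\{0\}$, so any projective integral curve lying in $\Nlc(X,\Delta)$ with $\pi$-image in $W$ cannot have class in $R$; applied to $C$, this forces $C\not\subset \Nlc(X,\Delta)$, and hence $E\not\subset \Nlc(X,\Delta)$.

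Finally, I would apply Theorem \ref{c-thm13.1} to the projective morphism $\varphi_R\colon X\to Z$, the point $P$, and the component $E$: this yields rational curves $\ell$ covering $E$ with
\begin{equation*}
0<-(K_X+\Delta)\cdot \ell\leq 2\dim E\leq 2\dim X.
\end{equation*}
Any such $\ell$ lies in $\varphi_R^{-1}(P)$, so $\varphi_R(\ell)$ is a point and $\pi(\ell)=\pi_Z(P)=\pi(C)\in W$; hence $[\ell]\in \NE(X/Y;W)$ and $\varphi_R$ contracts $\ell$, so $[\ell]\in R$. Since $-(K_X+\Delta)\cdot \ell>0$, the class $[\ell]$ is nonzero and therefore spans $R$, as required.

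The main conceptual work has already been absorbed into Theorem \ref{c-thm13.1}, which supplies the bend-and-break argument via the quasi-log scheme structure put on $E$; the present statement then amounts to an assembly of previous results. The only delicate point here is verifying that the positive-dimensional fiber component $E$ we pick is not entirely contained in $\Nlc(X,\Delta)$, which is exactly where the hypothesis that $R$ is relatively ample at $\Nlc(X,\Delta)$ is used.
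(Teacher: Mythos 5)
Your proposal is correct and follows essentially the same route as the paper: contract $R$ via Theorems \ref{c-thm12.2} (3) and \ref{c-thm12.1}, observe that $-(K_X+\Delta)$ is relatively ample for the contraction, and apply Theorem \ref{c-thm13.1} to a positive-dimensional fiber component not contained in $\Nlc(X,\Delta)$. The only cosmetic difference is that the paper ensures $E\not\subset \Nlc(X,\Delta)$ by noting that the contraction is finite on $\Nlc(X,\Delta)$ by construction, whereas you deduce it from $R\cap \NE(X/Y;W)_{\Nlc(X,\Delta)}=\{0\}$ applied to a spanning curve; both are fine.
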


\begin{proof}
By the cone and 
contraction theorem (see Theorems \ref{c-thm12.1} and \ref{c-thm12.2}), 
after shrinking $Y$ around $W$ suitably, 
we obtain a contraction morphism $\varphi\colon X\to Z$ 
over $Y$ associated to $R$. 
We note that $-(K_X+\Delta)$ is $\varphi$-ample and 
$\varphi\colon \Nlc(X, \Delta)\to \varphi(\Nlc(X, \Delta))$ 
is finite by construction. 
Therefore, we can find a rational curve $\ell$ in a fiber 
of $\varphi$ with $0<-(K_X+\Delta)\cdot \ell \leq 2\dim X$ by 
Theorem \ref{c-thm13.1}. 
This $\ell$ is a desired rational curve spanning $R$. 
\end{proof}

\section{On Shokurov's polytopes}\label{c-sec14}

In this section, we will discuss Shokurov's polytopes 
for projective morphisms of complex analytic spaces. 
Here, we will follow the presentation in 
\cite[Section 4.7]{fujino-foundations}. 
Let us recall the definition of {\em{extremal curves}}. 

\begin{defn}[Extremal curves]\label{c-def14.1} 
Let $\pi\colon X\to Y$ 
be a projective morphism of complex analytic spaces such that 
$X$ is a normal complex variety and let $W$ be a compact 
subset of $Y$ such that the dimension of $N^1(X/Y; W)$ is finite. 
A curve $\Gamma$ on $X$ is called {\em{extremal over $W$}} 
if the following properties hold. 
\begin{itemize}
\item[(i)] $\Gamma$ generates an extremal ray $R$ of $\NE(X/Y; W)$. 
\item[(ii)] There exists a $\pi$-ample 
line bundle $\mathcal H$ over some open neighborhood 
of $W$ such that 
\begin{equation*}
\mathcal H\cdot \Gamma=\min_{\ell} \{\mathcal H\cdot \ell\}, 
\end{equation*} 
where $\ell$ ranges over curves generating $R$. 
\end{itemize}
\end{defn}

By Lemma \ref{c-thm13.2}, we have: 

\begin{lem}\label{c-lem14.2}
Let $\pi\colon X\to Y$ be a projective morphism of complex 
analytic spaces and let $(X, \Delta)$ be a log canonical pair. 
Let $W$ be a compact subset of $Y$ such that 
the dimension of $N^1(X/Y; W)$ is finite. 
Let $R$ be a $(K_X+\Delta)$-negative extremal ray 
of $\NE(X/Y; W)$. 
If $\Gamma$ is an extremal curve over $W$ generating $R$, then 
\begin{equation*}
0<-(K_X+\Delta)\cdot \Gamma \leq 2\dim X
\end{equation*} 
holds. 
\end{lem}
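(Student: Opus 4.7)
The plan is to reduce the statement to Theorem \ref{c-thm13.2} and then exploit the minimality built into the definition of an extremal curve. Since $(X,\Delta)$ is log canonical we have $\Nlc(X,\Delta)=\emptyset$, so $\NE(X/Y;W)_{\Nlc(X,\Delta)}=\{0\}$, and hence the hypothesis ``$R\cap \NE(X/Y;W)_{\Nlc(X,\Delta)}=\{0\}$'' of Theorem \ref{c-thm13.2} holds for free. Applying that theorem to $R$ produces a (possibly singular) rational curve $\ell$ spanning $R$ with
\begin{equation*}
0<-(K_X+\Delta)\cdot \ell \leq 2\dim X.
\end{equation*}

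Next I would compare $\Gamma$ with $\ell$. Because $R$ is a one-dimensional face of $\NE(X/Y;W)$ and both $[\Gamma]$ and $[\ell]$ lie in $R\setminus\{0\}$, there exists a unique $\lambda>0$ with $[\Gamma]=\lambda[\ell]$ in $N_1(X/Y;W)$. Let $\mathcal H$ be a $\pi$-ample line bundle over some open neighborhood of $W$ that witnesses the minimality condition (ii) of Definition \ref{c-def14.1} for $\Gamma$. Since $\ell$ generates $R$, the defining minimality gives $\mathcal H\cdot \Gamma \leq \mathcal H\cdot \ell$; and $\mathcal H\cdot \ell>0$ by $\pi$-ampleness of $\mathcal H$ together with the fact that $\ell$ is a projective integral curve in a fiber over $W$. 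Therefore $\lambda\leq 1$. Intersecting $[\Gamma]=\lambda[\ell]$ with $-(K_X+\Delta)$ and using the previous bound yields
\begin{equation*}
-(K_X+\Delta)\cdot \Gamma \;=\; \lambda\cdot\bigl(-(K_X+\Delta)\cdot \ell\bigr)\;\leq\; -(K_X+\Delta)\cdot \ell \;\leq\; 2\dim X,
\end{equation*}
while $-(K_X+\Delta)\cdot \Gamma>0$ is immediate from $\Gamma$ generating a $(K_X+\Delta)$-negative ray.

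There is no real obstacle here: the entire content is Theorem \ref{c-thm13.2} (lengths of extremal rational curves) plus the elementary observation that, along a single ray, the extremal curve minimizes $\mathcal H$-degree and hence maximizes (by sign) any other linear functional only up to a factor $\lambda\leq 1$. The only step that deserves a careful sentence is the verification that Theorem \ref{c-thm13.2} is applicable, i.e.~that $R$ is relatively ample at $\Nlc(X,\Delta)$, which is automatic under the log canonical hypothesis.
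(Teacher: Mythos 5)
Your proposal is correct and is essentially the paper's own argument: invoke Theorem \ref{c-thm13.2} to produce a rational curve $\ell$ spanning $R$ with $0<-(K_X+\Delta)\cdot \ell\leq 2\dim X$, then use that $[\Gamma]$ and $[\ell]$ lie on the same ray together with the minimality $\mathcal H\cdot \Gamma\leq \mathcal H\cdot \ell$ from Definition \ref{c-def14.1} to conclude $-(K_X+\Delta)\cdot \Gamma\leq -(K_X+\Delta)\cdot \ell$. The only cosmetic difference is that you make explicit the (automatic) verification that $R$ is relatively ample at $\Nlc(X,\Delta)=\emptyset$, while the paper phrases the scaling step as an equality of ratios rather than via the factor $\lambda\leq 1$.
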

\begin{proof}
By Theorem \ref{c-thm13.2}, 
we can take a rational curve $\ell$ spanning $R$ such that 
\begin{equation*}
0<-(K_X+\Delta)\cdot \ell \leq 2\dim X. 
\end{equation*} 
Let $\mathcal H$ be a line bundle as in Definition \ref{c-def14.1}. Then 
\begin{equation*}
\frac{-(K_X+\Delta)\cdot \Gamma}{\mathcal H\cdot \Gamma} 
=\frac{-(K_X+\Delta)\cdot \ell}{\mathcal H\cdot \ell}. 
\end{equation*} holds. 
Hence we obtain 
\begin{equation*}
0<-(K_X+\Delta)\cdot \Gamma =\left(-(K_X+\Delta)\cdot 
\ell\right)\cdot \frac{\mathcal H\cdot \Gamma}{\mathcal H\cdot 
\ell} \leq 2\dim X. 
\end{equation*}
This is what we wanted. 
\end{proof}

One of the main purposes of this section is to explain the 
following theorem, which is very well known and 
has already played an important role when $\pi\colon X\to Y$ 
is algebraic. 

\begin{thm}\label{c-thm14.3}
Let $\pi\colon X\to Y$ 
be a projective morphism of complex 
analytic spaces such that $X$ is a normal complex variety 
and let $W$ be a compact 
subset of $Y$ 
such that the dimension of $N^1(X/Y; W)$ is finite. 
Let $V$ be a finite-dimensional 
affine subspace of $\WDiv_{\mathbb R}(X)$, which 
is defined over the rationals. 
We fix an $\mathbb R$-divisor $\Delta\in \mathcal L(V; \pi^{-1}(W))$, 
that is, $\Delta\in V$ and $(X, \Delta)$ is log canonical at $\pi^{-1}(W)$. 
Then we can find positive real numbers $\alpha$ and $\delta$, which 
depend on $(X, \Delta)$ and $V$, with the following properties. 
\begin{itemize}
\item[(1)] If $\Gamma$ is any extremal curve over $W$ and 
$(K_X+\Delta)\cdot \Gamma>0$, 
then $(K_X+\Delta)\cdot \Gamma >\alpha$. 
\item[(2)] If $D\in \mathcal L(V; \pi^{-1}(W))$, 
$|\!| D-\Delta|\!|<\delta$, and $(K_X+D)\cdot \Gamma \leq 0$ for 
an extremal curve $\Gamma$ over $W$, 
then $(K_X+\Delta)\cdot \Gamma\leq 0$. 
\item[(3)] Let $\{R_t\}_{t\in T}$ be any set of extremal rays of $\NE(X/Y; W)$. 
Then 
\begin{equation*}
\mathcal N_T:=\{D\in \mathcal L(V; \pi^{-1}(W))\, |\, 
{\text{$(K_X+D)\cdot R_t\geq 0$ for every $t\in T$}}\} 
\end{equation*} 
is a rational polytope in $V$. 
In particular, 
\begin{equation*}
\mathcal N^\sharp _{\pi} (V; W):=\{\Delta\in 
\mathcal L(V; \pi^{-1}(W)) \, |\, 
{\text{$K_X+\Delta$ is nef over $W$}}\}
\end{equation*}
is a rational polytope. 
\end{itemize}
\end{thm}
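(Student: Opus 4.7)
The plan is to adapt the standard algebraic proof of Shokurov's polytope theorem (see \cite[Proposition 4.7.2]{fujino-foundations}) to the complex analytic setting. All the analytic ingredients have already been established: the length bound (Lemma \ref{c-lem14.2}), the cone theorem (Theorem \ref{c-thm12.2}), the finite-dimensionality of $N^1(X/Y;W)$, and the fact that $\mathcal L(V;\pi^{-1}(W))$ is a rational polytope (\ref{c-say2.10}). I will prove (3) first; parts (1) and (2) will follow from the finiteness statement used to prove (3).

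For (3), I will focus on the set
\begin{equation*}
\mathcal R := \bigl\{\,R\subset \NE(X/Y;W)\ \text{extremal ray}\ :\ (K_X+D)\cdot R<0\ \text{for some}\ D\in\mathcal L(V;\pi^{-1}(W))\bigr\},
\end{equation*}
since $\mathcal N_T$ equals the intersection of $\mathcal L(V;\pi^{-1}(W))$ with finitely many half-spaces once only the rays of $\{R_t\}\cap \mathcal R$ are retained. Rationality of each $R\in\mathcal R$ is automatic: such $R$ is $(K_X+D)$-negative for a log canonical $D\in\mathcal L(V;\pi^{-1}(W))$ with $\Nlc(X,D)=\emptyset$, so Theorem \ref{c-thm12.2}(3) applies. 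The key step is finiteness of $\mathcal R$. Fix a reference $\Delta_0\in \mathcal L(V;\pi^{-1}(W))$ and a $\pi$-ample $\mathbb R$-line bundle $\mathcal A_0$; since the $\pi$-ample cone in $N^1(X/Y;W)$ is open and $\mathcal L(V;\pi^{-1}(W))$ is compact, after enlarging $\mathcal A_0$ I may assume $\mathcal A_0+(D-\Delta_0)$ is $\pi$-ample for every $D\in\mathcal L(V;\pi^{-1}(W))$. For each $R\in\mathcal R$ and extremal curve $\Gamma_R$ spanning $R$, Lemma \ref{c-lem14.2} gives $-2\dim X\le (K_X+D)\cdot \Gamma_R<0$ for the witnessing $D$; combined with the identity $(K_X+\Delta_0+\mathcal A_0)\cdot\Gamma_R=(K_X+D)\cdot\Gamma_R+(\mathcal A_0+\Delta_0-D)\cdot \Gamma_R$ and the boundedness of $D-\Delta_0$ in $N^1(X/Y;W)$, one verifies that every $R\in\mathcal R$ is $(K_X+\Delta_0+\mathcal A_0')$-negative for a single, sufficiently large $\pi$-ample $\mathcal A_0'$ depending only on $\mathcal L$, $\Delta_0$, and $\mathcal A_0$. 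Finiteness of $\mathcal R$ then follows from Theorem \ref{c-thm12.2}(2) applied at $\Delta_0$ with perturbation $\mathcal A_0'$.

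For (1), I argue by contradiction. Suppose extremal curves $\Gamma_n$ satisfy $0<(K_X+\Delta)\cdot\Gamma_n\to 0$. Pick a rational $\Delta'\in \mathcal L(V;\pi^{-1}(W))$ with $\Delta'\ne\Delta$ and consider the line $\Delta_t=(1-t)\Delta+t\Delta'$. For each $n$, either there exists $t_n>0$ with $\Delta_{t_n}\in\mathcal L(V;\pi^{-1}(W))$ and $(K_X+\Delta_{t_n})\cdot\Gamma_n<0$ (so the ray generated by $\Gamma_n$ belongs to $\mathcal R$, hence lies in a finite set by the previous step, and $(K_X+\Delta)\cdot\Gamma_n$ takes only finitely many values on each such ray, contradicting the convergence to $0$), or $(K_X+D)\cdot\Gamma_n\ge 0$ for every $D$ on the segment; in the latter case an analogous argument using the opposite direction of perturbation, together with finiteness via Lemma \ref{c-lem14.2} and Theorem \ref{c-thm12.2}(2), again forces only finitely many ratios $(K_X+\Delta)\cdot\Gamma_n/(\mathcal H\cdot\Gamma_n)$, a contradiction. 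Part (2) then follows from (1) and continuity: for $R\in\mathcal R$, the function $D\mapsto (K_X+D)\cdot R$ is linear, so the condition $(K_X+D)\cdot R\le 0$ is preserved under perturbations of size $o(\alpha)$, while for extremal curves outside $\mathcal R$, $(K_X+D)\cdot\Gamma\ge 0$ holds throughout $\mathcal L(V;\pi^{-1}(W))$.

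The main obstacle is the finiteness step in (3), specifically the construction of the single $\pi$-ample $\mathcal A_0'$ that simultaneously witnesses negativity of all $R\in \mathcal R$. This uses the finite-dimensionality of $N^1(X/Y;W)$ in an essential way — the boundedness of $D-\Delta_0$ in a finite-dimensional space, combined with Lemma \ref{c-lem3.2} (openness of $\pi$-ampleness), is what allows one to absorb the varying perturbations into a single large ample class. The analogous steps for (1) and (2) are then technical but routine, mirroring the algebraic arguments once the finiteness in (3) is in hand.
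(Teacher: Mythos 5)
There is a genuine gap, and it sits at the heart of your argument: the set $\mathcal R$ of extremal rays that are $(K_X+D)$-negative for \emph{some} $D\in\mathcal L(V;\pi^{-1}(W))$ is in general infinite, so the reduction of $\mathcal N_T$ to the intersection of $\mathcal L(V;\pi^{-1}(W))$ with finitely many half-spaces collapses. Already for a single boundary (take $0\in\mathcal L(V;\pi^{-1}(W))$ and $X$ a blow-up of $\mathbb P^2$ at nine very general points over $Y=W=$ point) there are infinitely many $K_X$-negative extremal rays; Theorem \ref{c-thm12.2} (2) only gives discreteness in the open negative half-space, never global finiteness, and letting $D$ vary over the polytope only enlarges $\mathcal R$. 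The mechanism you propose to force finiteness is also incoherent on two counts. First, enlarging the $\pi$-ample class $\mathcal A_0'$ makes $(K_X+\Delta_0+\mathcal A_0')\cdot\Gamma_R$ \emph{more} positive, so no choice of a ``sufficiently large'' ample class can witness negativity of rays. Second, boundedness of $D-\Delta_0$ in the finite-dimensional space $N^1(X/Y;W)$ does not bound the numbers $(D-\Delta_0)\cdot\Gamma_R$: the degrees of the extremal curves $\Gamma_R$ against any fixed ample class are unbounded as $R$ ranges over $\mathcal R$, so you cannot pass from $-2\dim X\leq (K_X+D)\cdot\Gamma_R<0$ to any control of the sign of $(K_X+\Delta_0+\mathcal A_0')\cdot\Gamma_R$. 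Since (1) and (2) in your sketch are deduced from this finiteness (and your continuity argument for (2) has the same defect, as a perturbation of size $\delta$ in $\|\cdot\|$ changes $(K_X+D)\cdot\Gamma$ by an amount that is not $O(\delta)$ uniformly in $\Gamma$), the whole proposal does not go through.

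The paper's route avoids any global finiteness of rays and you should compare it with your plan. For (1) one writes $K_X+\Delta=\sum_j a_j(K_X+D_j)$ with rational log canonical $D_j\in\mathcal L(V;\pi^{-1}(W))$ via Lemma \ref{c-lem4.4}; for an extremal curve $\Gamma$ the length bound of Lemma \ref{c-lem14.2} gives $(K_X+D_j)\cdot\Gamma\geq -2\dim X$, and since $m(K_X+D_j)$ is Cartier these intersection numbers lie in $\frac1m\mathbb Z$ and are bounded above whenever $(K_X+\Delta)\cdot\Gamma<1$, so only finitely many values of $(K_X+\Delta)\cdot\Gamma$ below $1$ can occur; this discreteness of \emph{values} (not finiteness of rays) produces $\alpha$. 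Then (2) follows by writing $K_X+D=(1-s)(K_X+\Delta)+s(K_X+D')$ with $0\leq s\leq \alpha/(\alpha+2\dim X)$ and using $(K_X+D')\cdot\Gamma\geq-2\dim X$. Finally (3) is proved by reducing $T$ to a countable set (via the vertices of $\mathcal L(V;\pi^{-1}(W))$ and the countability of negative extremal rays from the cone theorem), covering the compact set $\mathcal N_T$ by finitely many balls on which (2) applies, reducing to the case where some $D\in\mathcal N_T$ satisfies $(K_X+D)\cdot R_t=0$ for all $t$, and then inducting on the dimension of the polytope through its proper faces. If you want to salvage your outline, replace the finiteness of $\mathcal R$ by this local statement coming from (1) and (2).
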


\begin{proof}[Proof of Theorem \ref{c-thm14.3}]

Throughout this proof, we can freely shrink $Y$ around $W$ suitably. 
We first note that $\mathcal L(V; \pi^{-1}(W))$ is a rational polytope 
in $V$ (see \ref{c-say2.10}). 

(1) If $\Delta$ is a $\mathbb Q$-divisor, 
then we may assume that $m(K_X+\Delta)$ is 
Cartier for some positive integer $m$ by shrinking $Y$ around 
$W$ suitably. 
Therefore, the statement is obvious even if $\Gamma$ is not extremal. 
Hence, from now on, we assume 
that $\Delta$ is not a $\mathbb Q$-divisor. 
Then we can write $K_X+\Delta=\sum _j a_j (K_X+D_j)$ as 
in Lemma \ref{c-lem4.4}. This means 
that $a_j$ is a positive real number for every $j$ with 
$\sum _j a_j=1$ and that $D_j\in \mathcal L(V; \pi^{-1}(W))$ is 
a $\mathbb Q$-divisor for every $j$. 
Thus we have 
$(K_X+\Delta)\cdot \Gamma =\sum _j a_j (K_X+D_j)\cdot \Gamma$. 
If $(K_X+\Delta)\cdot \Gamma<1$, 
then 
\begin{align*}
-2\dim X\leq (K_X+D_{j_0})\cdot \Gamma &<\frac{1}{a_{j_0}}\left\{-\sum 
_{j\ne j_0}a_j (K_X+D_j)\cdot \Gamma+1\right\}\\ 
&\leq \frac{2\dim X+1}{a_{j_0}}
\end{align*}
for $a_{j_0}\ne 0$. 
This is because $(K_X+D_j)\cdot \Gamma \geq -2\dim X$ holds for 
every $j$ by Lemma \ref{c-lem14.2}. 
Thus there are only finitely many possibilities 
of the intersection numbers $(K_X+D_j)\cdot \Gamma$ for 
$a_j\ne 0$ when $(K_X+\Delta)\cdot \Gamma <1$. 
Therefore, the existence of $\alpha$ is obvious. 

(2) If we take $\delta$ sufficiently small, 
then, for every $D\in \mathcal L(V; \pi^{-1}(W))$ with 
$|\!|D-\Delta|\!|<\delta$, 
we can always find $D'\in \mathcal L(V; \pi^{-1}(W))$ such that 
\begin{equation*}
K_X+D =(1-s)(K_X+\Delta)+s(K_X+D')
\end{equation*} 
with 
\begin{equation*} 
0\leq s\leq \frac{\alpha}{\alpha+2\dim X}. 
\end{equation*} 
Since $\Gamma$ is extremal, we have 
$(K_X+D')\cdot \Gamma \geq -2\dim X$ for 
every $D'\in \mathcal L(V; \pi^{-1}(W))$ by Lemma \ref{c-lem14.2}. 
We assume that $(K_X+\Delta)\cdot \Gamma>0$. Then 
$(K_X+\Delta)\cdot \Gamma >\alpha$ by (1). 
Therefore, 
\begin{align*}
(K_X+D)\cdot \Gamma&=(1-s)(K_X+\Delta)\cdot \Gamma +s(K_X+D')\cdot \Gamma 
\\ &>(1-s)\alpha +s(-2\dim X)\geq 0. 
\end{align*} 
This is a contradiction. 
Hence, we obtain $(K_X+\Delta)\cdot \Gamma\leq 0$. 
We complete the proof of (2). 

(3) For every $t\in T$, we may assume that 
there is some $D(t)\in \mathcal L(V; \pi^{-1}(W))$ such that 
$(K_X+D(t))\cdot R_t<0$. 
Let $B_1, \ldots, B_r$ be the vertices of $\mathcal L(V; \pi^{-1}(E))$. 
We note that 
$(K_X+D)\cdot R_t<0$ for some $D \in \mathcal L(V; \pi^{-1}(W))$ 
implies $(K_X+B_j)\cdot R_t<0$ for some $j$. 
Therefore, we may assume that 
$T$ is contained in $\mathbb N$. 
This is because there are only countably many 
$(K_X+B_j)$-negative 
extremal rays for every $j$ by 
the cone theorem (see Theorem \ref{c-thm12.2}). 
We note that $\mathcal N_T$ is a closed 
convex subset of $\mathcal L(V; \pi^{-1}(W))$ by 
definition. 
If $T$ is a finite set, 
then the claim is obvious. 
Thus, we may assume that 
$T=\mathbb N$. 
By (2) and by the compactness of $\mathcal N_T$, 
we can take $\Delta _1, \ldots, \Delta_n\in \mathcal N_T$ and 
$\delta_1, \ldots, \delta_n>0$ such that 
$\mathcal N_T$ is covered by 
\begin{equation*} 
\mathcal B_i=\{D \in \mathcal L(V; \pi^{-1}(W))\, |\, |\!|D-\Delta_i|\!|<\delta_i\}
\end{equation*} 
and that if $D\in \mathcal B_i$ with $(K_X+D)\cdot R_t<0$ for 
some $t$, then $(K_X+\Delta_i)\cdot R_t=0$. 
If we put 
\begin{equation*} 
T_i=\{t\in T\, |\, (K_X+D)\cdot R_t<0 \ \text{for some} 
\ D\in \mathcal B_i\}, 
\end{equation*} 
then $(K_X+\Delta_i)\cdot R_t=0$ for every $t \in T_i$ by the above 
construction. 
Since $\{\mathcal B_i\}_{i=1}^n$ 
gives an open covering of $\mathcal N_T$, 
we have $\mathcal N_T=\bigcap _{1\leq i\leq n}
\mathcal N_{T_i}$ by the following claim. 
\setcounter{cla}{0}
\begin{cla}\label{c-thm14.3-claim1}
$\mathcal N_T=\bigcap _{1\leq i\leq n}\mathcal N_{T_i}$. 
\end{cla}
\begin{proof}[Proof of Claim \ref{c-thm14.3-claim1}]
We note that $\mathcal N_T\subset 
\bigcap _{1\leq i \leq n}\mathcal N_{T_i}$ is 
obvious. 
Suppose that 
$\mathcal N_T\subsetneq \bigcap _{1\leq i\leq n}\mathcal N_{T_i}$ holds. 
We take $D \in \bigcap _{1\leq i \leq n}\mathcal N_{T_i}\setminus 
\mathcal N_T$ which is very close to $\mathcal N_T$. 
Since $\mathcal N_T$ is covered by 
$\{\mathcal B_i\}_{i=1}^n$, 
there is some $i_0$ such that 
$D\in \mathcal B_{i_0}$. 
Since $D\not \in \mathcal N_T$, 
there is some $t_0\in T$ such that 
$(K_X+D)\cdot R_{t_0}<0$. 
Thus, $t_0\in T_{i_0}$. 
This is a contradiction 
because $D\in \mathcal N_{T_{i_0}}$. 
Therefore, we obtain the desired equality 
$\mathcal N_T=\bigcap _{1\leq i\leq n}
\mathcal N_{T_i}$. 
\end{proof}
Therefore, it is sufficient to 
see that 
each $\mathcal N_{T_i}$ is a rational polytope in $V$. 
By replacing $T$ with 
$T_i$, 
we may assume that 
there is some $D\in \mathcal N_T$ such that 
$(K_X+D)\cdot R_t=0$ for every $t\in T$. 

\begin{cla}\label{c-thm14.3-claim2}
If $\dim _{\mathbb R}\mathcal L(V; \pi^{-1}(W))=1$, 
then $\mathcal N_T$ is a rational polytope in $V$. 
\end{cla}

\begin{proof}[Proof of Claim \ref{c-thm14.3-claim2}]
As we explained above, we can take 
some $D\in \mathcal N_T$ such that 
$(K_X+D)\cdot R_t=0$ for every $t\in T$. 
Since $\dim _{\mathbb R} \mathcal L(V; \pi^{-1}(W))=1$, 
we can write  
\begin{equation*} 
K_X+D=b_1(K_X+D_1)+b_2(K_X+D_2)
\end{equation*} 
such that 
$K_X+D_i\in \mathcal L(V; \pi^{-1}(W))$, 
$D_i$ is a $\mathbb Q$-divisor, and 
$0\leq b_i \leq 1$ for $i=1, 2$, and 
$b_1+b_2=1$. 
By $(K_X+D)\cdot R_t=0$, 
we see that $b_1$ and $b_2$ are rational numbers. 
This implies that $D$ is a $\mathbb Q$-divisor. 
Therefore, $\mathcal N_T$ is a rational polytope in $V$. 
\end{proof}

Hence we assume $\dim _{\mathbb R}
\mathcal L(V; \pi^{-1}(W))>1$. 
Let $\mathcal L^1, \ldots, \mathcal L^p$ be the 
proper faces of $\mathcal L(V; \pi^{-1}(W))$. Then 
$\mathcal N_{T}^i=\mathcal N_T\cap \mathcal L^i$ is a rational 
polytope by induction on dimension. 
Moreover, for each $D''\in \mathcal N_T$ which is not $D$, 
there is $D'$ on some proper face of $\mathcal L(V; \pi^{-1}(W))$ such that 
$D''$ is on the line segment determined by $D$ and $D'$. 
Note that $(K_X+D)\cdot R_t=0$ for every $t\in T$. 
Therefore, if $D'\in \mathcal L^i$, 
then $D'\in \mathcal N_{T}^i$. 
Thus, $\mathcal N_T$ is the convex hull of $D$ and all 
the $\mathcal N_{T}^i$. 
There is a finite subset $T'\subset T$ such that 
\begin{equation*} 
\bigcup _i \mathcal N_T^i=\mathcal N_{T'}\cap (\bigcup _i\mathcal L^i). 
\end{equation*} 
Therefore, the convex hull of $D$ 
and $\bigcup_i \mathcal N_{T}^i$ is just $\mathcal N_{T'}$. 
We complete the proof of (3). 
\end{proof}

As an application of Theorem \ref{c-thm14.3}, 
we have: 

\begin{thm}\label{c-thm14.4} 
Let $\pi\colon X\to Y$ 
be a projective morphism of complex analytic spaces such that 
$X$ is a normal complex variety and let $W$ be a compact 
subset of $Y$ such that the dimension of $N^1(X/Y; W)$ is finite. 
Let $(X, \Delta)$ be a log canonical pair and let $H$ be an effective 
$\mathbb R$-Cartier $\mathbb R$-divisor on $X$ such that 
$(X, \Delta+H)$ is log canonical and that $K_X+\Delta+H$ is nef over 
$W$. 
Then, either $K_X+\Delta$ is nef over $W$ or there 
is a $(K_X+\Delta)$-negative extremal ray $R$ of $\NE(X/Y; W)$ such that 
$(K_X+\Delta+\lambda H)\cdot R=0$, 
where 
\begin{equation*}
\lambda:=\inf \{t\geq 0
\, |\, {\text{$K_X+\Delta+tH$ is nef over $W$}}\}. 
\end{equation*} 
Of course, $K_X+\Delta+\lambda H$ is nef over $W$. 
\end{thm}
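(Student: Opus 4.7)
The plan is to deduce Theorem \ref{c-thm14.4} from the rationality of the nef polytope established in Theorem \ref{c-thm14.3} (3) applied to the one-parameter family $\{\Delta + tH\}_{t \in [0,1]}$. First I would reduce to the non-trivial case by assuming $K_X + \Delta$ is not $\pi$-nef over $W$, so $\lambda \in (0, 1]$ since $K_X + \Delta + H$ is nef over $W$. After shrinking $Y$ around $W$, we may take $\Supp(\Delta) \cup \Supp(H)$ to have only finitely many irreducible components (see \ref{c-say1.1}), and let $V \subset \WDiv_{\mathbb{R}}(X)$ be the finite-dimensional $\mathbb{R}$-vector space they span, equipped with its natural $\mathbb{Q}$-structure. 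Since $(X, \Delta + H)$ is log canonical and log canonicity is preserved under decreasing the boundary, the whole segment $\{\Delta + tH : 0 \leq t \leq 1\}$ lies in $\mathcal{L}(V; \pi^{-1}(W))$.

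Next I would apply Theorem \ref{c-thm14.3} (3) to get that
\begin{equation*}
\mathcal{N}^\sharp_\pi(V; W) = \{D \in \mathcal{L}(V; \pi^{-1}(W)) : K_X + D \text{ is nef over } W\}
\end{equation*}
is a rational polytope, cut out by finitely many inequalities of the form $(K_X + D) \cdot R_i \geq 0$ indexed by extremal rays $R_1, \ldots, R_n$ of $\NE(X/Y; W)$. Since the set $\{t \geq 0 : K_X + \Delta + tH \text{ is nef over } W\}$ is closed, the infimum $\lambda$ is achieved, so $\Delta + \lambda H \in \mathcal{N}^\sharp_\pi(V; W)$, giving nefness of $K_X + \Delta + \lambda H$ over $W$.

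Then I would argue that because $K_X + \Delta + (\lambda - \varepsilon)H$ fails to be nef over $W$ for every $\varepsilon \in (0, \lambda]$, at least one inequality $(K_X + D) \cdot R_i \geq 0$ is violated at $D = \Delta + (\lambda - \varepsilon)H$ for arbitrarily small $\varepsilon > 0$. Finiteness of the $R_i$'s forces some fixed index $i$ to witness this on a sequence $\varepsilon_k \to 0^+$; passing to the limit yields $(K_X + \Delta + \lambda H) \cdot R_i = 0$ and $H \cdot R_i > 0$, so $(K_X + \Delta) \cdot R_i = -\lambda H \cdot R_i < 0$, and $R := R_i$ is the desired $(K_X + \Delta)$-negative extremal ray.

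The main obstacle is already absorbed into Theorem \ref{c-thm14.3} (3): its proof needs the length bound $0 < -(K_X + \Delta) \cdot \Gamma \leq 2\dim X$ from Lemma \ref{c-lem14.2}, which in turn rests on the bend-and-break result Theorem \ref{c-thm13.2}. Once the polytope rationality is taken as an input, the remaining argument is a routine boundary-of-a-polytope analysis along the line segment $\{\Delta + tH\}$.
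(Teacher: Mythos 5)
Your overall scheme (reduce to the segment $\{\Delta+tH\}$ inside $\mathcal L(V;\pi^{-1}(W))$ and use Theorem \ref{c-thm14.3}) is reasonable, and the easy parts are fine: $\lambda\in(0,1]$, the segment lies in $\mathcal L(V;\pi^{-1}(W))$, and nefness of $K_X+\Delta+\lambda H$ over $W$ follows from closedness of the nef condition. The gap is the assertion that Theorem \ref{c-thm14.3} (3) gives finitely many extremal rays $R_1,\ldots,R_n$ such that $\mathcal N^\sharp_\pi(V;W)$ is cut out inside $\mathcal L(V;\pi^{-1}(W))$ by the inequalities $(K_X+D)\cdot R_i\geq 0$. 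The theorem only says that $\mathcal N^\sharp_\pi(V;W)$ is a rational polytope; it is defined by the \emph{infinite} family of constraints coming from all $(K_X+\Delta)$-negative extremal rays, and these rays are only discrete in the open half-space $(K_X+\Delta)<0$, so they may accumulate against the hyperplane where $K_X+\Delta+\lambda H$ vanishes. A polytope cut out by infinitely many half-spaces need not have its facets on any of the constraint hyperplanes (compare $\bigcap_{n\geq 1}\{y\leq 1/n\}=\{y\leq 0\}$, where no finite subfamily suffices and the limiting hyperplane is not a constraint). Concretely, in your limiting step the (suitably normalized) classes of the rays violated at $\Delta+(\lambda-\varepsilon_k)H$ may converge to a boundary class of $\NE(X/Y;W)$ that is \emph{not} extremal, so you do not obtain an extremal ray $R$ with $(K_X+\Delta+\lambda H)\cdot R=0$. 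This is precisely the difficulty flagged in Remark \ref{c-rem1.3}: with an ample class the finiteness of rays is immediate (part (7) of Theorem \ref{c-thm1.2}), but with merely effective $H$ one needs the length bound.

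The paper closes this gap differently: it writes $\lambda=\sup_j\mu_j$ over extremal curves $C_j$ of the $(K_X+\Delta)$-negative rays, uses Lemma \ref{c-lem4.4} and Lemma \ref{c-lem14.2} to bound numerators and denominators of $-(K_X+\Delta)\cdot C_j$, and uses Theorem \ref{c-thm14.3} (3) (applied to $\Delta+H$, which lies in the relative interior of the polytope $\mathcal N$) to express $K_X+\Delta+H$ with bounded rational data, so that only finitely many values of $\mu_j$ occur near the supremum and the supremum is attained by some $R_{j_0}$. If you want to keep your softer route, you can repair it using Theorem \ref{c-thm14.3} (2) instead of the unjustified finiteness claim: set $\Delta':=\Delta+\lambda H$; for $t<\lambda$ with $(\lambda-t)\,|\!|H|\!|<\delta$, any ray violated at $\Delta+tH$ is $(K_X+\Delta)$-negative, hence by the cone theorem is spanned by a curve and admits an extremal curve $\Gamma$ over $W$, and (2) gives $(K_X+\Delta+\lambda H)\cdot \Gamma\leq 0$, hence $=0$ by nefness, while $H\cdot\Gamma>0$ forces $(K_X+\Delta)\cdot\Gamma<0$. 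Note, however, that (2) itself rests on (1) and Lemma \ref{c-lem14.2}, i.e.\ on the same length-bound input; it cannot be bypassed by convexity alone.
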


\begin{proof}We assume 
that $K_X+\Delta$ is not $\pi$-nef over $W$. 
Let $\{R_j\}$ be the set of the $(K_X+\Delta)$-negative extremal rays 
of $\NE(X/Y; W)$. 
Let $C_j$ be an extremal curve over $W$ 
spanning $R_j$ for every $j$. 
We put 
$\mu=\underset{j}{\sup} \{\mu_j\}$, 
where 
\begin{equation*} 
\mu_j =\frac{-(K_X+\Delta)\cdot C_j}{H\cdot C_j}. 
\end{equation*} 
By definition, it is obvious that $\lambda=\mu$ and 
$0<\mu\leq 1$ hold. 
Hence it is sufficient to 
prove that $\mu=\mu_{j_0}$ for some $j_0$. 
By Lemma \ref{c-lem4.4}, after shrinking 
$Y$ around $W$ suitably, 
we can find effective $\mathbb Q$-divisors 
$\Delta_1, \ldots, \Delta_k$ and positive real numbers 
$r_1, \ldots, r_k$ with 
$\sum _{i=1}^kr_i=1$ such that $m(K_X+\Delta_i)$ is Cartier 
for every $i$, $\Delta=\sum _{i=1}^kr_i \Delta_i$ holds, and 
$(X, \Delta_i)$ is log canonical for every $i$. 
Therefore, by Lemma \ref{c-lem14.2}, we can write 
\begin{equation*}
-(K_X+\Delta)\cdot C_j =\sum _{i=1}^{l}\frac{r_in_{ij}}{m}>0, 
\end{equation*} 
where 
$n_{ij}$ is an integer with $n_{ij}\leq 2m\dim X$ for 
every $i$ and $j$ since $C_j$ is extremal over $W$. 
If $(K_X+\Delta+H)\cdot R_{j_0}=0$ for some 
$j_0$, then there are nothing to 
prove since $\lambda=1$ and 
$(K_X+\Delta+H)\cdot R=0$ with $R=R_{j_0}$. 
Thus, we assume that $(K_X+\Delta+H)\cdot R_j>0$ for every $j$. 
We put $F=\Supp (\Delta+H)$. Let 
$F=\sum _k F_k$ be the irreducible decomposition. 
We put $V=\bigoplus _k \mathbb R F_k$, 
\begin{equation*} 
\mathcal L(V; \pi^{-1}(W)): 
=\{D\in V\, |\, (X, D) \ \text{is log canonical at $\pi^{-1}(W)$}\}, 
\end{equation*}  
and 
\begin{equation*}
\mathcal N:=
\{D\in \mathcal L(V; \pi^{-1}(W))\, |\, (K_X+D)\cdot R_j\geq 0 
\ \text{for every $j$}\}. 
\end{equation*}  
Then $\mathcal N$ is a rational polytope in $V$ 
by Theorem \ref{c-thm14.3} (3) and $\Delta+H$ is in the relative 
interior of $\mathcal N$ by the above assumption. 
Therefore, after shrinking $Y$ around $W$ suitably again, 
we can write 
\begin{equation*}
K_X+\Delta+H=\sum _{p=1}^{q}r'_p(K_X+D_p), 
\end{equation*} 
where $r'_1, \ldots, r'_q$ are positive real numbers such that 
$\sum _p r'_p=1$, $(X, D_p)$ is log canonical for every $p$, 
$m'(K_X+D_p)$ is Cartier for some positive 
integer $m'$ and every $p$, 
and $(K_X+D_p)\cdot C_j> 0$ for every $p$ and $j$. 
So, 
we obtain 
\begin{equation*}
(K_X+\Delta+H)\cdot C_j=\sum _{p=1}^{q}\frac{r'_p n'_{pj}}{m'} 
\end{equation*}  
with $0< n'_{pj}=m'(K_X+D_p)\cdot C_j\in \mathbb Z$.  
Note that $m'$ and $r'_p$ are independent of $j$ for 
every $p$. 
We also note that 
\begin{align*}
\frac{1}{\mu_j}=\frac{H\cdot C_j}{-(K_X+\Delta)\cdot C_j}&=
\frac{(K_X+\Delta+H)\cdot C_j}{-(K_X+\Delta)\cdot C_j}+1
\\ &
=\frac{m\sum _{p=1}^q r'_p n'_{pj}}{m'\sum _{i=1}^lr_i n_{ij}}+1. 
\end{align*} 
Since 
\begin{equation*} 
\sum _{i=1}^{l}\frac{r_i n_{ij}}{m}>0 
\end{equation*} 
for every $j$ and $n_{ij}\leq 2m\dim X$ with $n_{ij}\in \mathbb Z$ for 
every $i$ and $j$, 
the number of the set $\{n_{ij}\}_{i, j}$ is finite. 
Thus, 
\begin{equation*} 
\inf _j \left\{\frac{1}{\mu_j}\right\}=\frac{1}{\mu_{j_0}}
\end{equation*}  
for some $j_0$. Therefore, we obtain $\mu=\mu_{j_0}$. 
We finish the proof. 
\end{proof}

\section{Basepoint-free theorem for $\mathbb R$-divisors}\label{c-sec15}

In this section, we will discuss the basepoint-free theorem 
for $\mathbb R$-divisors, although we do not need it in this paper. 
The proof of Theorem \ref{c-thm15.1} needs the cone theorem 
(see Theorem \ref{c-thm12.2}). Hence Theorem \ref{c-thm15.1} 
looks much deeper than the basepoint-free theorem 
for Cartier divisors (see Theorem \ref{c-thm9.1}). 

\begin{thm}[Basepoint-free theorem for 
$\mathbb R$-divisors]\label{c-thm15.1}
Let $\pi\colon X\to Y$ be a projective 
morphism of complex analytic spaces such that $X$ is 
a normal complex variety and let $W$ be a compact subset of $Y$ 
such that the dimension of $N^1(X/Y; W)$ is finite. 
Let $\Delta$ be an effective $\mathbb R$-divisor 
on $X$ such that $(X, \Delta)$ is log canonical. 
Let $D$ be an $\mathbb R$-Cartier $\mathbb R$-divisor 
defined on some open neighborhood of $\pi^{-1}(W)$ such that $D$ is 
$\pi$-nef over $W$. 
Assume that $aD-(K_X+\Delta)$ 
is $\pi$-ample over $W$ for some positive 
real number $a$. 
Then there exists an open neighborhood $U$ of $W$ such that 
$D$ is semiample over $U$. 
\end{thm}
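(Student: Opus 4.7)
The plan is to reduce Theorem \ref{c-thm15.1} to the $\mathbb{Q}$-coefficient basepoint-free theorem for Cartier divisors (Theorem \ref{c-thm9.1}) by writing $D$ as a finite positive $\mathbb{R}$-linear combination of $\mathbb{Q}$-Cartier divisors each satisfying the hypotheses of Theorem \ref{c-thm9.1}. The cone theorem (Theorem \ref{c-thm12.2}) provides the rationality needed to make this perturbation possible.

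First, I set $F := D^{\perp}\cap \NE(X/Y;W)$ and check that $F$ is a $(K_X+\Delta)$-negative extremal face: for any non-zero $v\in F$, Kleiman's criterion (Theorem \ref{c-lem11.5}) applied to the $\pi$-ample class $aD-(K_X+\Delta)$ gives
\begin{equation*}
(K_X+\Delta)\cdot v = -\bigl(aD-(K_X+\Delta)\bigr)\cdot v <0.
\end{equation*}
Since $(X,\Delta)$ is log canonical, $\Nlc(X,\Delta)=\emptyset$ and $F$ is trivially relatively ample at $\Nlc(X,\Delta)$, so Theorem \ref{c-thm12.2}~(3) yields that $F$ is a rational face. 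In particular, the subspace $F^\perp\subset N^1(X/Y;W)_{\mathbb{R}}$ is defined over $\mathbb{Q}$.

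Next, after shrinking $Y$ around $W$, I write $D=\sum b_i D_i$ as a finite $\mathbb{R}$-linear combination of Cartier divisors and let $V\subset\WDiv_{\mathbb{R}}(X)$ be the $\mathbb{Q}$-rational finite-dimensional vector space they span. The preimage $V_F\subset V$ of $F^\perp$ under the numerical map to $N^1(X/Y;W)_{\mathbb{R}}$ is a $\mathbb{Q}$-rational affine subspace containing $D$. Within $V_F$, each of the three conditions \emph{($D'$ is $\pi$-nef over $W$)}, \emph{($aD'-(K_X+\Delta)$ is $\pi$-ample over $W$)}, and \emph{(${D'}^\perp\cap\NE(X/Y;W)=F$)} depends only on the numerical class $[D']$; the second is open by Kleiman's criterion, and the third is open because the discrete set of $(K_X+\Delta+\mathcal{A})$-negative extremal rays (Theorem \ref{c-thm12.2}~(2)) applied to a suitable $\pi$-ample $\mathcal{A}$ allows only finitely many rays to come close to $F$. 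Since $D$ lies in the intersection of these open conditions and $V_F$ is $\mathbb{Q}$-rational, I choose $\mathbb{Q}$-Cartier $\mathbb{Q}$-divisors $D^{(1)},\dots,D^{(m)}\in V_F$ close enough to $D$, together with positive reals $r_1,\dots,r_m$ with $\sum r_j=1$, $D=\sum r_j D^{(j)}$, and every $D^{(j)}$ still satisfying all three conditions.

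Finally, each $D^{(j)}$ is $\mathbb{Q}$-Cartier and $\pi$-nef over $W$ with $aD^{(j)}-(K_X+\Delta)$ $\pi$-ample over $W$, while hypothesis~(ii) of Theorem \ref{c-thm9.1} is vacuous because $\Nlc(X,\Delta)=\emptyset$. Applying Theorem \ref{c-thm9.1} to a positive integer multiple $m_j D^{(j)}$ that is Cartier produces a relatively compact open neighborhood $U_j$ of $W$ over which $D^{(j)}$ is $\pi$-semiample. Setting $U=\bigcap_j U_j$, the equality $D=\sum_j r_j D^{(j)}$ exhibits $D$ as a finite positive $\mathbb{R}$-linear combination of $\pi$-semiample Cartier divisors on $\pi^{-1}(U)$, which is the desired $\mathbb{R}$-semiampleness. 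The hard part will be the perturbation step: because $D$ lies exactly on the face of the $\pi$-nef cone cut out by $F$, a perturbation within $F^\perp$ could in principle create negativity on some other extremal ray, and the control needed to rule this out rests on the combined rationality and discreteness conclusions of Theorem \ref{c-thm12.2}.
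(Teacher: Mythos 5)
Your route is the same as the paper's: set $F=D^{\perp}\cap \NE(X/Y;W)$, use the cone theorem to see $F$ is a rational $(K_X+\Delta)$-negative face spanned by finitely many rays, perturb $D$ inside the rational affine space of divisor classes vanishing on $F$ into $\mathbb Q$-Cartier divisors $D^{(j)}$ with $D=\sum_j r_jD^{(j)}$, and apply Theorem \ref{c-thm9.1} to each $D^{(j)}$ (hypothesis (ii) being vacuous since $\Nlc(X,\Delta)=\emptyset$). The gap is precisely the step you yourself defer as ``the hard part'': you never establish that the perturbed divisors $D^{(j)}$ remain $\pi$-nef over $W$, and the substitute you offer does not deliver it. Nefness is a closed condition, not an open one; your third condition ${D'}^{\perp}\cap\NE(X/Y;W)=F$ is not open for a general closed convex cone (for a round cone, any perturbation inside $F^{\perp}$ changes the zero set), and even when it holds it does not by itself force $D'\geq 0$ on $\NE(X/Y;W)$ --- a class can vanish exactly on $F$ and be negative elsewhere. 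So listing the three conditions and asserting that nearby rational points ``still satisfy all three'' is exactly the point that has to be proved, and the one-line appeal to discreteness of extremal rays is not an argument.

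What closes the gap (and is how the paper argues) is the following device. Fix a $\pi$-ample $\mathcal A$ with $D-(K_X+\Delta+\mathcal A)$ still $\pi$-ample over $W$. Then $F$ is spanned by finitely many of the $(K_X+\Delta+\mathcal A)$-negative rays, and one puts $\mathcal C_F:=\NE(X/Y;W)_{(K_X+\Delta+\mathcal A)\geq 0}+\sum_{R_j\not\subset F}R_j$, a closed cone with $\mathcal C_F\cap F=\{0\}$ and $\NE(X/Y;W)=F+\mathcal C_F$. Inside the rational affine space $\mathcal R$ of classes supported on $\Supp D$ and vanishing on $F$, the subset $\mathcal R^{+}$ of classes positive on $\mathcal C_F\setminus\{0\}$ is open and contains $D$; concretely, for $B$ near $D$ in $\mathcal R$ one has $B\cdot z\geq (K_X+\Delta+\mathcal A)\cdot z\geq 0$ on $\NE(X/Y;W)_{(K_X+\Delta+\mathcal A)\geq 0}$ because $B-(K_X+\Delta+\mathcal A)$ stays ample over $W$, $B\cdot R_j>0$ on the finitely many negative rays outside $F$ by openness, and $B$ vanishes on $F$; hence $B$ is $\pi$-nef over $W$. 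Choosing rational $B_i\in\mathcal R^{+}$ with $D=\sum r_iB_i$ and $B_i-(K_X+\Delta)$ $\pi$-ample over $W$ then lets Theorem \ref{c-thm9.1} apply. Until this (or an equivalent) nefness-preservation argument is supplied, your proof is incomplete at its decisive step.
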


Theorem \ref{c-thm15.1} is an application of the cone 
theorem (see Theorem \ref{c-thm12.2}) and 
the basepoint-free theorem for Cartier divisors 
(see Theorem \ref{c-thm9.1}).  

\begin{proof}[Proof of Theorem \ref{c-thm15.1}]
Without loss of generality, 
by replacing $D$ with $aD$, we may assume that 
$a=1$ holds. 
By replacing $Y$ with a relatively compact open neighborhood 
of $W$, we may further assume that 
$\Supp D$ has only finitely many irreducible components and that 
$D$ is a globally 
$\mathbb R$-Cartier $\mathbb R$-divisor on $X$. 
We consider 
\begin{equation*}
F=\{ z\in \NE(X/Y; W)\, | \, D\cdot z=0\}. 
\end{equation*}
Then $F$ is a face of $\NE(X/Y; W)$ and 
$(K_X+\Delta)\cdot z<0$ holds for $z\in F$. 
We take an ample $\mathbb R$-line bundle $\mathcal A$ 
on $X$ such that 
$D-(K_X+\Delta+\mathcal A)$ is still $\pi$-ample 
over $W$.  
Let $R$ be a $(K_X+\Delta)$-negative extremal ray 
of $\NE(X/Y; W)$ such that 
$R\subset F$. Then $R$ is automatically 
a $(K_X+\Delta+\mathcal A)$-negative 
extremal ray of $\NE(X/Y; W)$ 
since $D\cdot R=0$ and $D-(K_X+\Delta+\mathcal A)$ is 
$\pi$-ample over $W$. 
Therefore, $F$ contains only finitely many $(K_X+\Delta)$-negative 
extremal rays $R_1, \ldots, R_k$ of $\NE(X/Y; W)$. 
Thus, $F$ is spanned by 
the extremal rays $R_1, \ldots, R_k$. 
Let $\Supp D=\sum _j D_j$ be the irreducible 
decomposition of $\Supp D$.
Then we consider the finite-dimensional real vector space $V=\underset {j}
\bigoplus \mathbb RD_j$.  
In this situation, we can easily check that 
\begin{equation*}
\mathcal R:=\{ B\in V\, | \, \text{$B$ is a globally $\mathbb R$-Cartier 
$\mathbb R$-divisor 
and $B\cdot z=0$ for every $z\in F$} \} 
\end{equation*} 
is a rational affine subspace of $V$ with $D\in \mathcal R$. 
As in Step \ref{c-thm12.2-step5} in the 
proof of Theorem \ref{c-thm12.2}, 
we put 
\begin{equation*}
\mathcal C_F:=\NE(X/Y; W)_{(K_X+\Delta+\mathcal A)\geq 0}+
\sum _{R_j\not\subset F}R_j. 
\end{equation*} 
and 
\begin{equation*}
\mathcal R^+:=\{B\in \mathcal R\, |\, \text{$B$ is positive 
on $\mathcal C_F\setminus \{0\}$}\}. 
\end{equation*} 
We note that $\NE(X/Y; W)_{\Nlc (X, \Delta)}=\emptyset$ 
since $(X, \Delta)$ is log canonical. 
Then $\mathcal R^+$ is a non-empty open subset 
of $\mathcal R$ with $D\in \mathcal R^+$. 
Hence we can find positive real numbers $r_1, r_2, \ldots, r_m$ and 
globally $\mathbb Q$-Cartier $\mathbb Q$-divisors 
$B_1, B_2, \ldots, B_m \in \mathcal R^+$ such that 
$D=\sum _{i=1}^{m}r_iB_i$ and  
$B_i-(K_X+\Delta)$ is $\pi$-ample over $W$ 
for every $i$. 
We note that $B_i$ is automatically $\pi$-nef over $W$ for every $i$ since 
$B_i\in \mathcal R^+$. 
By the basepoint-free theorem for 
Cartier divisors (see Theorem \ref{c-thm9.1}), 
there exists a relatively compact open neighborhood 
$U$ of $W$ such that 
$B_i$ is $\pi$-semiample over $U$ for every $i$. 
Therefore, $D=\sum _{i=1}^mr_i B_i$ is $\pi$-semiample over $U$. 
This is what we wanted. 
\end{proof}

Theorem \ref{c-thm15.1} will play an important role 
in the study of minimal models of complex analytic spaces. 

\section{Proof of Main theorem}\label{c-sec16}
In this final section, we will prove Theorem \ref{c-thm1.2}, 
which is the main theorem of this paper. 

\begin{proof}[Proof of Theorem \ref{c-thm1.2}]
By Theorem \ref{c-thm12.2} (1) and (2), 
we obtain the following equality 
\begin{equation*}
\NE(X/Y; W)=\NE(X/Y; W)_{(K_X+\Delta)\geq 0} 
+\NE(X/Y; W)_{\Nlc (X, \Delta)}+\sum R_j
\end{equation*} 
satisfying (1), (2), and (3) in Theorem \ref{c-thm1.2}. 
By Theorem \ref{c-thm12.2} (3), 
$F$ in Theorem \ref{c-thm1.2} (4) is rational and 
hence contractible at $\Nlc(X, \Delta)$. 
Thus, by the contraction theorem 
(see Theorem \ref{c-thm12.1}), 
we obtain the desired contraction morphism 
$\varphi_F\colon X\to Z$ over $Y$ after shrinking 
$Y$ around $W$ suitably. 
By Theorem \ref{c-thm13.2}, 
we see that (5) holds. 
We note that (6) is nothing but Theorem \ref{c-thm14.4}. 
Finally, we will prove (7). 
We may assume that $K_X+\Delta$ is not $\pi$-nef 
over $W$. 
Then we can take a small positive real number $\varepsilon$ 
such that $K_X+\Delta+\varepsilon \mathcal H$ is not $\pi$-nef 
over $W$. 
By the cone theorem (see Theorem \ref{c-thm12.2} (2)), 
there exist only finitely many $(K_X+\Delta+\varepsilon \mathcal H)$-negative 
extremal rays $R_1, \ldots, R_k$ of 
$\NE(X/Y; W)$. We put 
\begin{equation*}
\mu_i:=\frac{-(K_X+\Delta)\cdot R_i}{\mathcal H\cdot R_i} 
\end{equation*} 
for every $i$. 
Then it is obvious that $\lambda =\max_{1\leq i\leq k} \mu_i$. 
If $\lambda=\mu_{i_0}$ holds for $1\leq i_0\leq k$, 
then $(K_X+\Delta+\lambda \mathcal H)\cdot R_{i_0}=0$. 
By construction, $K_X+\Delta+\lambda \mathcal H$ is $\pi$-nef 
over $W$. 
We finish the proof of Theorem \ref{c-thm1.2}. 
\end{proof}

\end{document}